\documentclass{amsart}

\usepackage{amsmath, amssymb, amsthm}
\usepackage{bbm, lmodern, mathrsfs, euscript}
\usepackage{tikz, tikz-cd}
\usetikzlibrary{decorations.pathreplacing}
\usepackage{xcolor}
\usepackage{arydshln}
\usepackage{enumitem}
\usepackage[colorlinks=true, pdfstartview=FitV, linkcolor=blue, citecolor=blue, urlcolor=blue, breaklinks=true]{hyperref}
\usepackage{cleveref}

\usepackage{forloop}
\newcounter{fonts}
\let\eeee\edef
\forloop{fonts}{1}{\value{fonts} < 27}{%
\expandafter\eeee\csname \Alph{fonts}\Alph{fonts}\endcsname{\noexpand\mathbb{\Alph{fonts}}}
\expandafter\eeee\csname b\Alph{fonts}\endcsname{\noexpand\mathbf{\Alph{fonts}}}
\expandafter\eeee\csname c\Alph{fonts}\endcsname{\noexpand\mathcal{\Alph{fonts}}}
\expandafter\eeee\csname f\Alph{fonts}\endcsname{\noexpand\mathfrak{\Alph{fonts}}}
\expandafter\eeee\csname fr\alph{fonts}\endcsname{\noexpand\mathfrak{\alph{fonts}}}
\expandafter\eeee\csname s\Alph{fonts}\endcsname{\noexpand\mathscr{\Alph{fonts}}}
\expandafter\eeee\csname e\Alph{fonts}\endcsname{\noexpand\EuScript{\Alph{fonts}}}
\expandafter\eeee\csname v\alph{fonts}\endcsname{\noexpand\overline{\alph{fonts}}}
}

\setlist[enumerate,1]{label={\rm(\roman*)}}

\newtheorem{thm}{Theorem}[section]
\newtheorem{defn}[thm]{Definition}
\newtheorem{prop}[thm]{Proposition}
\newtheorem{cor}[thm]{Corollary}
\newtheorem{lem}[thm]{Lemma}

\newtheoremstyle{remarkstyle}{.5\baselineskip\@plus.2\baselineskip\@minus.2\baselineskip}{.5\baselineskip\@plus.2\baselineskip\@minus.2\baselineskip}{\rm}{}{\bfseries}{.}{.5em}{}
\theoremstyle{remarkstyle}
\newtheorem{rem}[thm]{Remark}
\AtEndEnvironment{rem}{\hfill $\blacksquare$}

\AtEndEnvironment{rems}{\hfill $\blacksquare$}

\newtheorem{example}[thm]{Example}
\AtEndEnvironment{example}{\hfill $\blacksquare$}

\newtheoremstyle{solutionstyle}{\topsep}{\topsep}{\rm}{}{\it}{.}{.5em}{}
\theoremstyle{solutionstyle}

\AtEndEnvironment{sol}{\qed}
\newtheorem*{sol*}{Solution}
\AtEndEnvironment{sol*}{\qed}

\newcommand{\und}{\underline{\hspace{2ex}}}
\newcommand{\iso}{\overset{\sim}{\longrightarrow}}
\newcommand{\inj}{\hookrightarrow}
\newcommand{\surj}{\twoheadrightarrow}

\newcommand{\GL}{\mathbf{GL}}
\newcommand{\SL}{\mathbf{SL}}

\newcommand{\PGL}{\mathbf{PGL}}

\newcommand{\bCent}{\mathbf{Cent}}
\newcommand{\bNorm}{\mathbf{Norm}}
\newcommand{\bSU}{\mathbf{SU}}

\DeclareMathOperator{\nGL}{GL}

\DeclareMathOperator{\Grp}{\mathfrak{Grp}}
\DeclareMathOperator{\Ab}{\mathfrak{Ab}}
\DeclareMathOperator{\Sets}{\mathfrak{Sets}}
\DeclareMathOperator{\Rings}{\mathfrak{Rings}}

\DeclareMathOperator{\Sch}{\mathfrak{Sch}}

\newcommand{\cHom}{\mathcal{H}\hspace{-0.4ex}\textit{o\hspace{-0.2ex}m}}
\newcommand{\cEnd}{\mathcal{E}\hspace{-0.4ex}\textit{n\hspace{-0.2ex}d}}

\newcommand{\bAut}{\mathbf{Aut}}

\DeclareMathOperator{\Mat}{M}
\DeclareMathOperator{\Span}{Span}

\DeclareMathOperator{\Img}{Img}
\DeclareMathOperator{\Ker}{Ker}

\DeclareMathOperator{\Hom}{Hom}

\DeclareMathOperator{\Aut}{Aut}

\DeclareMathOperator{\Id}{Id}

\DeclareMathOperator{\Nrd}{Nrd}

\DeclareMathOperator{\Sym}{Sym}
\DeclareMathOperator{\Inn}{Inn}
\DeclareMathOperator{\Pic}{Pic}

\DeclareMathOperator{\Dyn}{Dyn} 
\newcommand{\cDyn}{\mathcal{D}\hspace{-0.4ex}\textit{y\hspace{-0.3ex}n}}
\DeclareMathOperator{\Of}{Of} 
\newcommand{\cOf}{\mathcal{O}\hspace{-0.3ex}\textit{f}}
\newcommand{\cOD}[1]{\cOf(\cDyn(#1))}

\DeclareMathOperator{\rank}{rank}

\newcommand{\cFlag}{\cF\hspace{-0.4ex}\ell\hspace{-0.35ex}\textit{a\hspace{-0.25ex}g}}
\newcommand{\LowFlag}{\cFlag^-}
\newcommand{\RaiFlag}{\cFlag^+}
\newcommand{\ConFlag}{\cC\hspace{-0.45ex}\cF\hspace{-0.4ex}\ell\hspace{-0.35ex}\textit{a\hspace{-0.25ex}g}}

\DeclareMathOperator{\SB}{SB}
\newcommand{\cGSB}{\cG\cS\cB}
\newcommand{\LowGSB}{\cGSB^-}
\newcommand{\RaiGSB}{\cGSB^+}
\newcommand{\ConIdeal}{\cC\hspace{-0.4ex}\cI\hspace{-0.4ex}\textit{d\hspace{-0.25ex}e\hspace{-0.25ex}a\hspace{-0.25ex}l}}

\newcommand{\cPar}{\cP\hspace{-0.45ex}\textit{a\hspace{-0.15ex}r}}
\DeclareMathOperator{\Par}{Par}
\newcommand{\bStab}{\mathbf{Stab}}
\newcommand{\bOut}{\mathbf{Out}}
\newcommand{\bInn}{\mathbf{Inn}}

\newcommand{\op}{\mathrm{op}}

\newcommand{\cStiefel}{\cS\hspace{-0.4ex}\mathit{t\hspace{-0.25ex}i\hspace{-0.3ex}e\hspace{-0.25ex}f\hspace{-0.5ex}e\hspace{-0.2ex}l}}
\newcommand{\cIdemp}{\cI\hspace{-0.4ex}\mathit{d\hspace{-0.4ex}e\hspace{-0.35ex}m\hspace{-0.35ex}p}}

\newcommand{\cPL}{\cP\hspace{-0.4ex}\cL}
\newcommand{\ConStiefel}{\cC\hspace{-0.4ex}\cS\hspace{-0.4ex}\mathit{t\hspace{-0.25ex}i\hspace{-0.3ex}e\hspace{-0.25ex}f\hspace{-0.5ex}e\hspace{-0.2ex}l}}
\newcommand{\LowStiefel}{\cStiefel^-}
\newcommand{\RaiStiefel}{\cStiefel^+}
\newcommand{\LowIdemp}{\cIdemp^-}
\newcommand{\RaiIdemp}{\cIdemp^+}

\newcommand{\SplitFlag}{\cS\hspace{-0.4ex}\mathit{p}\hspace{-0.1ex}\ell\hspace{-0.3ex}\mathit{i\hspace{-0.2ex}t}\cF\hspace{-0.4ex}\ell\hspace{-0.35ex}\textit{a\hspace{-0.25ex}g}}

\newcommand{\SubMod}{\cS\hspace{-0.4ex}\mathit{u\hspace{-0.3ex}b}\hspace{-0.2ex}\cM\hspace{-0.4ex}\mathit{o\hspace{-0.2ex}d}}
\newcommand{\SplitSubMod}{\cS\hspace{-0.4ex}\mathit{p}\hspace{-0.1ex}\ell\hspace{-0.3ex}\mathit{i\hspace{-0.2ex}t}\cS\hspace{-0.4ex}\mathit{u\hspace{-0.3ex}b}\hspace{-0.2ex}\cM\hspace{-0.4ex}\mathit{o\hspace{-0.2ex}d}}

\newcommand{\rhoStiefel}{\rho_\cS}
\newcommand{\rhoIdemp}{\rho_\cI}

\newcommand{\lann}{{^0}}

\newcommand{\sw}{\textrm{sw}}

\DeclareMathOperator{\Lex}{Lex}

\newcommand{\subrank}{\mathrm{subrank}}
\newcommand{\gap}{\mathrm{gap}}

\newcommand{\cOSone}{\cO_{\PP_S^n}(1)}
\newcommand{\cOone}{\cO_{\PP^n}(1)}

\DeclareMathOperator{\can}{can}
\DeclareMathOperator{\rad}{rad}

\title{Outer Type Severi-Brauer Schemes}
\begin{document}
\author[C. Ruether]{Cameron Ruether}
\address{The ``Simion Stoilow" Institute of Mathematics of the Romanian Academy, 21 Calea Grivitei Street, 010702 Bucharest, Romania.}
\email{cameronruether@gmail.com}

\thanks{This work was supported by the project ``Group schemes, root systems, and related representations" founded by the European Union - NextGenerationEU through Romania's National Recovery and Resilience Plan (PNRR) call no. PNRR-III-C9-2023-I8, Project CF159/31.07.2023, and coordinated by the Ministry of Research, Innovation and Digitalization (MCID) of Romania.}

\date{May 18, 2026}

\maketitle

\noindent{\bf Abstract:} {We introduce the notion of a lowered flag of $\cO$--modules in order to define a sheaf of flags of ideals isomorphic to the sheaf of parabolic subgroups for the general linear group $\GL_{1,\cA}$ of an Azumaya algebra over a general scheme $S$. This notion is extended to the outer type $A_n$ case and we define a suitable sheaf of flags of ideals isomorphic to the sheaf of parabolic subgroups for a unitary group over $S$. When the group is suitably split these are related to flags of submodules in a vector bundle or in a vector bundle with hermitian form, respectively. We also define a sheaf of tuples of idempotents in the associated algebra which is isomorphic to the sheaf of parabolic and Levi subgroup pairs. We show how the type morphism from parabolic subgroups to the Dynkin scheme can be defined in terms of these sheaves of flags. We review how the Severi-Brauer scheme associated to an Azumaya algebra $\cA$ is isomorphic to a particular fiber of this type morphism and we generalize this idea to the outer case in order to define outer Severi-Brauer schemes. We provide a new approach to Quillen's construction which produces an Azumaya algebra from a Severi-Brauer scheme and we show that an outer version of Quillen's construction also exists for outer Severi-Brauer schemes which produces an algebra with unitary involution.}
\medskip

\noindent{\bf Keywords: {Reductive groups, $A_n$, Parabolic subgroups, Severi-Brauer schemes, Flag schemes, Unitary groups, Unitary involutions}}\\
\medskip
\noindent {\em MSC 2020: Primary 20G35, Secondary 14L15, 14L35, 14M15, 16H05, 20G07.}
\bigskip

\section*{Introduction}
{%
\renewcommand{\thethm}{\Alph{thm}}
Over a field, Severi-Brauer varieties are to projective space as central simple algebras are to the matrix algebra. A Severi-Brauer variety over a field $\FF$ is any variety which is \'etale locally isomorphic to some projective space $\PP_\FF^n$ and a central simple algebra is any algebra which is \'etale locally isomorphic to some $\Mat_{n+1}(\FF)$. The construction relating these two types of objects works by sending a central simple $\FF$--algebra $A$ of degree $n+1$ to the space of its dimension $n+1$ right ideals (we use right ideals as our convention, though the construction also works with left ideals) which is denoted by $\SB(A)$ and called the Severi-Brauer variety of $A$. For the matrix algebra $\Mat_{n+1}(\FF)$ this recovers $n$--dimensional projective space since such right ideals in $\Mat_{n+1}(\FF)$ are of the form
\[
I_v = \left\{ \begin{bmatrix} a_0v & a_1v & \cdots & a_nv\end{bmatrix} \mid a_i\in \FF\right\}
\]
for a column vector $0\neq v\in \FF^{n+1}$ and where $I_v = I_w$ if and only if $w$ is a scalar multiple of $v$. This construction provides an equivalence of categories between central simple algebras of degree $n+1$ and Severi-Brauer varieties of dimension $n$. In particular, the group of $\FF$--variety automorphism of $\SB(A)$ agrees with the group $\PGL(A)$ of algebra automorphisms of $A$. Via $\PGL_A$ we also obtain an action of $\GL_{1,A}$ on both objects. Under this action, the stabilizer of a point in $\SB(A)$, equivalently of a dimension $n+1$ right ideal in $A$, is a parabolic subgroup of the reductive linear algebraic group $\GL_{1,A}$. However, not all parabolic subgroups can be obtained in this way by only considering ideals of dimension $n+1$. In fact, one needs to considers ideals of all possible dimensions (all of which are divisible by $n+1$) as well as all flags of ideals. Doing so, one obtains an isomorphism
\[
\{ 0 \subsetneq I_1 \subsetneq I_2 \subsetneq \ldots I_\ell \subsetneq A \} \iso \{\bP \subseteq \GL_{1,\cA} \text{ parabolic}\}
\]
giving by sending a flag of ideals to its stabilizer. It is important to note here than the flags occurring in the flags on the left are of ideals of strictly increasing dimension, i.e., we do not allow $I_j = I_{j+1}$, and that the set on the left contains flags of varying length $\ell$. The length $\ell$ may range from $0$, corresponding to the ``empty" flag $0\subseteq A$ whose stabilizer is all of $\GL_{1,\cA}$, up to a maximum of $n$. This variety of flags is called the generalized Severi-Brauer variety of $A$ and by simply considering flags of the form $(0\subsetneq I \subsetneq A)$ where $I$ is dimension $n+1$, we find $\SB(A)$ as a subvariety.

We aim to produce a similar story working with reductive linear algebraic groups of type $A_n$ over a general base scheme $S$. We consider such groups as sheaves of groups $\bG \colon \Sch_S \to \Grp$ on the large fppf site of $S$, though we note that our arguments also hold for the \'etale topology. When working with modules and algebras they will be $\cO$--modules or algebras where $\cO\colon \Sch_S \to \Rings$ is the structure sheaf of global sections. In this context, we know from \cite{SGA3} that the parabolic subgroups of $\bG$ form a sheaf
\[
\cPar_\bG \colon \Sch_S \to \Sets
\]
which is represented by a scheme which is projective over $S$. Initial attempts at introducing a sheaf of flags which is isomorphic to this sheaf of parabolic subgroups quickly run into problems related to the two features highlighted above, that we do not want equality between subsequent ideals in the flag and that we want the length of flags to be able to vary. Let $\cA \colon \Sch_S \to \Ab$ be an Azumaya $\cO$--algebra. Of course, we do not want equality between subsequent terms since if this is allowed, then unequal flags of right ideals of the form
\[
(0 \subseteq \cI \subseteq \cJ \subseteq \cJ \subseteq \cJ \subseteq \cA) \text{ and } (0 \subseteq \cI \subseteq \cI \subseteq \cJ \subseteq \cJ \subseteq \cA)
\]
will have the same stabilizer. However, imposing inequality between sheaves is only a global property, i.e., even if $\cI\subsetneq \cJ$ there may exists $U\in \Sch_S$ such that $\cI|_U = \cJ|_U$. So, one may think to define restrictions of flags in such a way that if some subsequence of ideals become equal, only one is kept. For example, if the flag
\[
0 \subsetneq \cI_1 \subsetneq \cI_2 \subsetneq \cI_3 \subsetneq \cI_4 \subsetneq \cA 
\]
has the property that $\cI_1|_U \neq \cI_2|_U = \cI_3|_U \neq \cI_4$, then the flag's restriction to $U$ is defined to be
\[
0 \subsetneq \cI_1|_U \subsetneq \cI_2|_U \subsetneq \cI_4|_U \subsetneq \cA|_U.
\]
This unfortunately causes the resulting presheaf to not be separated. Indeed, over a disconnected scheme such as $U\sqcup V$, the $\cO|_{U\sqcup V}$--algebra $\cA = (\cA_1,\cA_2)$ is determined by a $\cO|_U$--algebra $\cA_1$ and a $\cO|_V$--algebra $\cA_2$. Ideals in $\cA$ also come as a pair of ideals. Using flags $0 \subsetneq \cI_1\subsetneq \cI_2 \subsetneq \cA_1$ and $0 \subsetneq \cJ_1 \subsetneq \cJ_2 \subsetneq \cJ_3 \subsetneq \cA_2$, we can construct two unequal flags in $\cA$,
\begin{align*}
&0 \subsetneq (\cI_1,\cJ_1) \subsetneq (\cI_1,\cJ_2) \subsetneq (\cI_2,\cJ_3) \subsetneq \cA, \text{ and}\\
&0 \subsetneq (\cI_1,\cJ_1) \subsetneq (\cI_2,\cJ_2) \subsetneq (\cI_2,\cJ_3) \subsetneq \cA.
\end{align*} 
Then, both of these flags restrict to $0 \subsetneq \cI_1 \subsetneq \cI_2 \subsetneq \cA_1$ over $U$ and both restrict to $0 \subsetneq \cJ_1 \subsetneq \cJ_2 \subsetneq \cJ_3 \subsetneq \cA_2$ over $V$. Our method of avoiding these problems is to define the notion of a \emph{lowered flag}.
\begin{defn}
Let $\cE$ be a finite locally free $\cO$--module. A flag
\[
0 \subseteq \cV_1 \subseteq \ldots \subseteq \cV_\ell \subseteq \cE
\]
is called a \emph{lowered flag} if, for all $T\in \Sch_S$ and $i\in \{0,\ldots,\ell-1\}$,
\[
\cV_i|_T = \cV_{i+1}|_T \;\Rightarrow\; \cV_i|_T = \cE|_T = \cV_{i+1}|_T. 
\]
\end{defn}
Working with lowered flags avoids the problems discussed above by only allowing equality and truncation upon restriction to occur on the right side of the flag. For the example of the $\cA_1$ and $\cA_2$ flags above, their unique gluing into a lowered flag in $\cA$ is
\[
0 \subsetneq (\cI_1,\cJ_1) \subsetneq (\cI_2,\cJ_2) \subsetneq (\cA_1,\cJ_3) \subsetneq \cA
\]
where intuitively each flag is lowered as much as possible over its component. Working with these flags produces a suitable analogue of the isomorphism with the sheaf of parabolic subgroups.
\begin{thm}[\Cref{inner_case}]\label{intro_GSB_iso}
Let $\cA$ be an Azumaya $\cO$--algebra of constant degree $n+1$. Denote by $\LowGSB_\cA$ the sheaf of lowered flags of right ideals in $\cA$ which are locally direct summands of $\cA$. Then, there is a isomorphism of sheaves
\[
\LowGSB_\cA \iso \cPar_{\GL_{1,\cA}}
\]
which sends a flag to its stabilizer.
\end{thm}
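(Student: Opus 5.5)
We prove the isomorphism by reduction to the split case. Both $\LowGSB_\cA$ and $\cPar_{\GL_{1,\cA}}$ are fppf sheaves; for the latter this is \cite{SGA3}, and for the former because being a locally direct summand right ideal, inclusion, and the lowered condition are all local. Both constructions are compatible with base change. The stabilizer map $\phi\colon (\cI_1\subseteq\cdots\subseteq\cI_\ell)\mapsto \bigcap_i \bStab(\cI_i)$ is a morphism of sheaves once we know each stabilizer is a parabolic subgroup. Being a parabolic subgroup is fppf local, and $\cA$ is fppf (even \'etale) locally isomorphic to $\Mat_{n+1}(\cO)$. It therefore suffices to show two things: that $\phi$ is well defined, and that it is bijective on sections over schemes $T$ on which $\cA|_T\cong \cEnd(\cO^{n+1})$. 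For injectivity and surjectivity we may in fact work locally, using that $\phi$ commutes with restriction and both sides are sheaves. Hence we may assume $T$ is affine, $\cA=\Mat_{n+1}(\cO)$, and $\GL_{1,\cA}=\GL_{n+1}$.

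In the split case we use Morita theory. Right ideals of $\cEnd(\cE)$ that are locally direct summands correspond bijectively to locally direct summand submodules $\cV\subseteq\cE$ with $\cE=\cO^{n+1}$, via $\cV\mapsto \cHom(\cE,\cV)$, and the inverse sends $\cI$ to $\cI\cdot\cE$. This bijection respects inclusions and restriction, and it is $\GL(\cE)$-equivariant, so $\bStab(\cI)=\bStab(\cV)$. Since equality of ideals after restriction corresponds to equality of submodules, lowered flags of ideals correspond to lowered flags of submodules of $\cE$. This reduces the theorem to showing that sending a lowered flag of locally direct summands of $\cE$ to its stabilizer is an isomorphism onto $\cPar_{\GL(\cE)}$.

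For this statement we localize further so that every $\cV_i$ is free of constant rank. Then, after a change of basis (Zariski locally), the flag becomes a standard flag of coordinate subspaces. The lowered condition ensures that the ranks strictly increase until they reach $n+1$. Discarding the trailing terms equal to $\cE$ gives a standard flag of type $(r_1<\cdots<r_k)$, and its stabilizer is the standard parabolic subgroup of block upper triangular matrices. Conversely, by \cite{SGA3} every parabolic subgroup is locally conjugate to a standard one, and the standard parabolic subgroups of $\GL_{n+1}$ are exactly these stabilizers. This gives local surjectivity.

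For injectivity, suppose two lowered flags have the same stabilizer $\bP$. Locally we may assume that one of them is standard. A parabolic subgroup is its own normalizer, and the submodules stable under $\bP$ that are locally direct summands are exactly the members of the standard flag; this follows from the fact that they are $\bP$-stable, hence stable under the Borel subgroup. So the second flag consists of the same distinct submodules. The lowered condition then forces its indexing to coincide with that of the first flag: the terms strictly increase and then repeat $\cE$ up to length $\ell$.

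The main obstacle is the interaction between the lowered condition and locality when the ranks of the $\cV_i$ vary across connected components. Here both the flag length $\ell$ and the local types differ, and we must check that the glued local data define a single lowered flag of a fixed length. To handle this, I would fix $\ell=n$, padding with copies of $\cA$, so that lowered flags become canonical representatives. I would then verify that the locally constructed inverse $\bP\mapsto$ (its flag of invariant ideals, lowered and padded) is canonical, and hence glues by the sheaf property.
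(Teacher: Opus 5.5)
Your proposal is correct, and its skeleton matches the paper's. You reduce locally to $\cA=\cEnd(\cE)$ and pass from right ideals to submodules via $\cV\mapsto\cHom(\cE,\cV)$ equivariantly (the paper's \Cref{submodule_ideal_inverse} and \Cref{flags_to_SB}). You identify stabilizers of locally standard flags with standard parabolics, and you get surjectivity from local conjugacy of parabolics to standard ones.

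Where you differ is injectivity. The paper (\Cref{flag_isomorphism_with_Par}) argues as follows:
\begin{itemize}
\item the type computation of \Cref{type_of_ConFlag} shows that two flags with the same stabilizer have the same local rank sequence;
\item this produces $g$ with $g\overline{\cV}=\overline{\cW}$;
\item since parabolic subgroups are self-normalizing, $g\in\bStab(\overline{\cV})$.
\end{itemize}
You instead recover the flag intrinsically from $\bP$ as its set of $\bP$-stable locally direct summands. In your argument the self-normalizing property plays no role, so you can drop it.

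Your route gives an explicit inverse $\bP\mapsto(\text{invariant flag})$ and avoids both the type morphism and SGA3's normalizer result. The price is a small lemma the paper never needs, which you should spell out over a non-field base: a $\bB$-stable direct summand of constant rank $r$ in $\cO^{n+1}$ is $\Span(e_1,\dots,e_r)$. The proof is short. The weight decomposition under the diagonal torus gives $\cV=\bigoplus I_ie_i$ with each $I_i$ generated by an idempotent, and the positive root groups force the support to be an initial segment. The negative root groups contained in $\bP$ then cut the possible $r$ down to the partial sums.

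For the claim that the $\bP$-stable summands are ``exactly the members of the standard flag'', you must localize so that the members of both flags have constant rank. Over a disconnected base, a $\bP$-stable summand can mix different members.

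Finally, your ``main obstacle'' is just the sheaf property of $\LowGSB_\cA$ with its truncating restrictions. The paper proves this in \Cref{lowered_flags_sheaf}, or you can use the padding of \Cref{non-truncated_flags}. Once that is granted, local bijectivity of $\phi$ already yields the isomorphism, so no separate gluing of an inverse is needed.
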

The sheaf $\LowGSB_\cA$ is called the \emph{generalized Severi-Brauer sheaf} of $\cA$. Furthermore, if $\cA = \cEnd_\cO(\cE)$ for a finite locally free $\cO$--module $\cE$ of constant rank $n+1$, then these are also isomorphic to the sheaf $\LowFlag_\cE$ of lowered flags of $\cO$--submodules which are locally direct summands of $\cE$.

Also in \cite{SGA3}, one finds a type morphism for the sheaf of parabolic subgroups. Using the isomorphism of \Cref{intro_GSB_iso} we obtain a type morphism
\[
t\colon \LowGSB_\cA \to \cP_n
\]
where $\cP_n$ is the locally constant sheaf of subsets of $\{1,\ldots,n\}$ considered as increasing tuples. The type morphism then sends a flag of ideals to its locally constant tuple of ranks. The sheaf $\cP_n$ is represented by the scheme $\bigsqcup_{r\in \sP(\{1,\ldots,n\})} S$ and $\LowGSB_\cA$ is of course represented by the same scheme, say $X$, as $\cPar_{\GL_{1,\cA}}$. Thus, the type morphism corresponds to a morphism of schemes $X \to \bigsqcup_{r\in \sP(\{1,\ldots,n\})} S$. Mirroring the field case, the Severi-Brauer scheme $\SB(\cA)$ is the fiber over the $(1)$ component of this map.

The above describes the picture for inner type $A$ objects over a scheme, but the title of this paper is Outer Severi-Brauer Schemes. Any twisted form of $\GL_{n+1}$ is a unitary group $\bG = \bU_{(\cB,\tau)}$ for an Azumaya algebra $(f\colon L\to S,\cB,\tau)$ with involution of the second kind. In detail, this is the data of
\begin{enumerate}
\item a degree two \'etale cover of schemes $f\colon L \to S$,
\item an Azumaya $\cO|_L$--algebra of constant degree $n+1$, and
\item an $\cO$--linear involution $\tau\colon f_*(\cB) \to f_*(\cB)$ over $S$ which is $\cL=f_*(\cO|_L)$--semi-linear.
\end{enumerate}
In the third point, the \'etale cover $L\to S$ comes with a canonical order two automorphism $i\colon L \to L$ over $S$ and this in turn induces an order two $\cO$--algebra isomorphism $i^*\colon \cL \to \cL$. Requiring that the involution $\tau$ be $\cL$--semi-linear simply means that
\[
\tau(ab) = i^*(a)\tau(b)
\]
for all sections $a\in \cL$ and $b\in f_*(\cB)$. For such data, we also define a sheaf of flags isomorphic to $\cPar_\bG$. This sheaf is denoted $\LowGSB_{(\cB,\tau)}$ and involves management of some technicalities, see \Cref{outer_case}. Roughly, it consists of lowered flags of $\cL$--right ideals
\[
0 \subseteq \cI_1 \subseteq \ldots \subseteq \cI_\ell \subseteq f_*(\cB)
\]
such that this flag is equal to
\[
0 \subseteq \tau(\lann\cI_\ell) \subseteq \ldots \subseteq \tau(\lann\cI_1) \subseteq f_*(\cB)
\]
where $\lann\cI$ is the left annihilator of an ideal and applying $\tau$ makes this a right ideal again. This then gives us the outer version of \Cref{intro_GSB_iso}.
\begin{thm}[\Cref{outer_case}]\label{intro_GSB_outer_iso}
Let $(f\colon L\to S,\cB,\tau)$ be as above and $\bG = \bU_{(\cB,\tau)}$. Then there is an isomorphism
\[
\LowGSB_{(\cB,\tau)} \iso \cPar_\bG
\]
which sends a flag to its stabilizer.
\end{thm}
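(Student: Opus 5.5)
The approach is to reduce to the inner case of \Cref{intro_GSB_iso} by base change along the quadratic cover $f\colon L\to S$, and then to descend. First we check that the stabilizer map is well defined. Given a section of $\LowGSB_{(\cB,\tau)}$ over $T$, its stabilizer $\bP\subseteq \bU_{(\cB,\tau)}|_T$ should be a parabolic subgroup. Being parabolic is fppf (even \'etale) local on $T$, so we may pass to a cover $T'\to T$ that splits $L$, i.e.\ $L\times_S T'\cong T'\sqcup T'$. Over such a cover we have $f_*(\cB)\cong \cB_1\times\cB_2$ with $\cB_2\cong\cB_1^{\op}$, and $\tau$ becomes the exchange involution $(a,b)\mapsto(b,a)$ under this identification. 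Then $\bU_{(\cB,\tau)}\cong \GL_{1,\cB_1}$ via the first projection. An $\cL$--right ideal flag $(\cI_j)$ splits as a pair of flags $(\cI_j^{(1)},\cI_j^{(2)})$. The self-duality condition $\cI_j=\tau(\lann\cI_{\ell+1-j})$ says that the second component flag is determined by the first: it is the flag of annihilator-duals, read in reverse order and transported across the exchange. So the data reduces to the lowered flag $(\cI_j^{(1)})$ in $\cB_1$, and the lowering condition is preserved. Under this identification, the stabilizer of the $\tau$--stable flag in $\bU$ is the stabilizer of the $\cB_1$--flag in $\GL_{1,\cB_1}$. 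By \Cref{intro_GSB_iso}, this is parabolic.

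Second, we show that the resulting morphism of sheaves $\LowGSB_{(\cB,\tau)}\to\cPar_\bG$ is an isomorphism. Both sides are fppf sheaves: for $\LowGSB$ this should have been checked when the sheaf was defined, since descent of locally direct summand ideals is standard and both the lowered condition and the self-duality condition are local. A morphism of sheaves that becomes an isomorphism after restriction to a cover is an isomorphism. So it suffices to work over a splitting cover $T'$ as above. There the bijection from the first step, between self-dual lowered flags in $\cB_1\times\cB_1^{\op}$ and lowered flags in $\cB_1$, composed with the isomorphism $\LowGSB_{\cB_1}\iso\cPar_{\GL_{1,\cB_1}}$ of \Cref{intro_GSB_iso}, gives an isomorphism. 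We must also check that this composite agrees with the stabilizer map, which is compatible with base change.

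The main obstacle is the bookkeeping in the split case. We must verify that the map $(\cI_j^{(1)})\mapsto(\cI_j^{(1)},\text{dual})$ really lands in lowered flags of $f_*(\cB)$. The reversal in $\tau(\lann\cI_{\ell+1-j})$ turns right-end truncation into left-end truncation, so the second component would naively fail to be lowered. I expect the precise definition of $\LowGSB_{(\cB,\tau)}$ in \Cref{outer_case} handles this, for instance by letting trivial terms pad symmetrically, and I would first work out the rank-by-rank correspondence over a point to pin down the correct indexing. A second point to check is that $\lann\cI$ is again a locally direct summand right ideal when $\cI$ is one, with complementary rank. This I would verify locally for $\cB=\cEnd(\cE)$, where $\cI=\cHom(\cE,\cV)$ and $\lann\cI$ corresponds to the annihilator of $\cV$ in the dual module.
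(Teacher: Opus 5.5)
Your strategy is the paper's: pass to a cover splitting $L$, identify $\LowGSB_{(\cB,\tau)}$ there with an inner generalized Severi--Brauer sheaf by projecting to the first factor, apply the inner isomorphism, and conclude because a morphism of sheaves that is locally an isomorphism is an isomorphism. The paper splits further, to $((\Mat_d(\cO),\Mat_d(\cO)),\tau_d)$, but that difference is immaterial.

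The gap is that the split-case identification, which you defer as bookkeeping, is where the content lies. Your assertion that the first projection of a self-dual lowered flag is again lowered is false for the naive notion suggested by the introduction. Take $d=4$ and right ideals $\cI_1\subsetneq\cI_2\subsetneq\cI_3\subsetneq\Mat_4(\cO)$ of $\cO$--ranks $4,8,12$. The flag
\[
\cJ_1=\cI_1\times(\lann\cI_3)^t\subseteq\cJ_2=\cI_1\times(\lann\cI_2)^t\subseteq\cJ_3=\cI_2\times(\lann\cI_1)^t\subseteq\cJ_4=\cI_3\times(\lann\cI_1)^t
\]
satisfies $\cJ_j=\tau_d(\lann\cJ_{5-j})$. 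It has strictly increasing $\cO$--ranks $8,12,20,24$, so it is lowered. Yet its first projection repeats $\cI_1$. Its stabilizer is the same Borel subgroup as that of $\cI_1\times(\lann\cI_3)^t\subseteq\cI_2\times(\lann\cI_2)^t\subseteq\cI_3\times(\lann\cI_1)^t$, so the stabilizer map would not even be injective. The missing idea is the $\cL$--gap of \Cref{defn_L_gap} and \Cref{defn_Flags_LH}: you must require that $\gap_\cL(\cI_j\subseteq\cI_{j+1})=0$ forces $\cI_j=\cI_{j+1}=f_*(\cB)$. Equivalently, over a splitting cover consecutive terms must jump in rank in every factor. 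Only this $\cL$--lowered condition makes the first projection land in $\LowGSB_{\cB_1}$.

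Your second worry is also genuine, and padding does not cure it. Because $\cI\mapsto\tau(\lann\cI)$ reverses inclusions, it turns lowered flags into raised ones. Hence the condition $\cI_j=\tau(\lann\cI_{\ell+1-j})$ is not stable under the truncating restriction maps. Correspondingly, your inverse formula $(\cI^{(1)}_j)\mapsto(\cI^{(1)}_j\times\text{dual of }\cI^{(1)}_{\ell+1-j})$ produces the wrong flag over a disconnected base on which the length varies.

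The paper avoids this in three steps. It imposes self-duality only on the presheaf $\ConIdeal_{\cL,\cB}$ of constant-rank flags with positive $\cL$--gaps, where nothing is truncated. It then extends $\cI\mapsto\tau(\lann\cI)$ to an involution $\pi_\cB$ of $\LowGSB_{\cL,\cB}=(\ConIdeal_{\cL,\cB})^\sharp$ by the universal property of sheafification; equivalently, this is the naive map followed by the raising-to-lowering isomorphism. Finally, it takes $\LowGSB_{(\cB,\tau)}$ to be the $\pi_\cB$--fixed points (\Cref{sheafification_ConIdeal_LB}, \Cref{defn_ConIdeal_Bt}). The split identification \eqref{eq_ConIdeal_iso} is then obtained by sheafifying the injective, locally surjective map of constant-rank presheaves $\ConIdeal_{\Mat_d(\cO)}\to\ConIdeal_{((\Mat_d(\cO),\Mat_d(\cO)),\tau_d)}$, not from a pointwise formula. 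With these two definitions in place your argument goes through as the paper's does. Without them, the bijection in your second step is unproved, and for the naive definitions it is false.
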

In the outer case, the type morphism takes values in a twist of $\cP_n$, namely $\cOD{\bG}$, which is the sheaf of open and closed subschemes of the Dynkin scheme of $\bG$. It is represented by a scheme $\Of(\Dyn(\bG))$ which is a disjoint union of copies of $S$ and of $L$, where one copy of $L$ is locally isomorphic to the $(1)$ and $(n)$ components of $\bigsqcup_{r\in \sP(\{1,\ldots,n\})} S$.
\begin{thm}[\Cref{outer_SB_schemes}]\label{intro_outer_fiber}
Consider the map of representing schemes corresponding to the type morphism $t\colon \LowGSB_{(\cB,\tau)} \to \cOD{\bG}$. The fiber over the $(1)$ and $(n)$ component $L \subseteq \Of(\Dyn(\bG))$ is the Severi-Brauer scheme of $\cB$,
\[
\SB(\cB) \to L.
\]
Furthermore, there is a canonical isomorphism $\LowGSB_{(\cB,\tau)} \iso \LowGSB_{(\cB^\op,\tau)}$ which yields an isomorphism fitting into the diagram
\[
\begin{tikzcd}
\SB(\cB) \ar[d] \ar[r,"\sim"] & \SB(\cB^\op) \ar[d] \\
L \ar[r,"i"] & L.
\end{tikzcd}
\]
\end{thm}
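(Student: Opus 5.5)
The plan is to compute the fiber of the type morphism étale locally on $S$, after base change along $f\colon L\to S$, where the unitary group becomes inner and the fiber can be compared with $\SB(\cB)$ directly. The two things to establish are the identification of the fiber and the naturality of the $\cB\leftrightarrow\cB^\op$ swap.

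First, I will make the type morphism $t\colon \LowGSB_{(\cB,\tau)}\to \cOD{\bG}$ explicit on flags. Over $L$ the algebra $f_*(\cB)\otimes_\cO \cO|_L$ decomposes as $\cB\times \cB^{\op}$ (via $\tau$ and the splitting $L\times_S L\cong L\sqcup L$). Under this splitting, an $\cL$--right ideal flag $(\cI_j)$ with the self-duality condition $\cI_j=\tau(\lann\cI_{\ell+1-j})$ becomes a pair of flags of right ideals in $\cB$ and in $\cB^\op$. The self-duality says the second flag is determined by the first, through left annihilators transported by $\tau$. So the flag is determined by its $\cB$-component, a lowered flag in $\cB$, and by \Cref{intro_GSB_iso} its type is the tuple of reduced ranks. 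The swap $i$ on $L$ interchanges the two factors, and this exchanges the rank tuple $r$ with $n+1-r$ reversed. This is exactly the twist defining $\Of(\Dyn(\bG))$, and on the copy of $L$ it identifies the component $(1)$ with $(n)$.

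Second, I will define the comparison map. Its source is the fiber $t^{-1}(L)$, viewed as a sheaf over $L$: a point $x\in L(T)$ together with a self-dual flag of type $\{1\}$ or $\{n\}$ compatible with $x$. The map sends such a flag to $\cI_1$ when the point lies over the sheet where the type is $(1)$, and to $\tau(\lann\cI_1)$, i.e.\ the $\cB$-component of the rank-one piece, otherwise. Concretely, a $T$-point of $L$ gives an $\cO_T$-algebra map $\cL\to\cO_T$, hence $f_*(\cB)\otimes_{\cL}\cO_T=\cB_T$. The flag determines a right ideal of reduced rank $1$ in $\cB_T$ that is locally a direct summand, which is a $T$-point of $\SB(\cB)$. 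Conversely, from a rank-one ideal $\cJ\subseteq\cB_T$ I rebuild the self-dual flag $0\subseteq \cJ\times\tau(\lann\cJ)^\perp$-type pieces. Both constructions are functorial and mutually inverse, and this is checked étale locally on $T$, where everything reduces to the split case handled via $\LowFlag_\cE$ and the hermitian-form description. I expect the main obstacle to be this step: keeping track of the lowered-flag condition across the two sheets. Over the $(n)$ sheet the length-one flag must be read as a rank-$n$ ideal whose dual is rank one. I will treat this by working over $L\times_S L = L\sqcup L$ and using the fact that lowering is forced componentwise.

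Third, for the isomorphism $\LowGSB_{(\cB,\tau)}\iso\LowGSB_{(\cB^\op,\tau)}$ I will use that $\tau$ itself is an anti-isomorphism $f_*(\cB)\to f_*(\cB)$, so $f_*(\cB^\op)$ carries the involution $\tau$ and the same unitary group. Hence $\bU_{(\cB,\tau)}=\bU_{(\cB^\op,\tau)}$. Composing the two isomorphisms of \Cref{intro_GSB_outer_iso} with $\cPar_\bG$ gives the canonical isomorphism, which sends a flag of right ideals to the corresponding flag of left ideals read as right ideals of the opposite algebra, i.e.\ $\cI\mapsto\lann\cI$ up to $\tau$. It commutes with $t$ up to the diagram automorphism. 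Since the $\cL$-structure on $\cB^\op$ is twisted by $i^*$, the induced map on $\Of(\Dyn(\bG))$ restricts to $i$ on the copy of $L$. Restricting to fibers then gives the square with $\SB(\cB)\iso\SB(\cB^\op)$ over $i$.
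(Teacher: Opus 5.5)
Your argument is correct in substance, but it follows a different route from the paper in both halves.

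\textbf{The fiber.} The paper argues purely by twisting. In the split case the fiber over $S_{(1)}\sqcup S_{(n)}$ is $\SB(\Mat_d(\cO))\sqcup\SB(\Mat_d(\cO)^\op)=\PP_S^n\sqcup\PP_S^n\to S\sqcup S$, on which $\PGL_d\rtimes\ZZ/2\ZZ$ acts by $\psi\sqcup\theta'(\psi)$ and the switch. The cocycle of $(L,\cB,\tau)$ then twists this into $\SB(\cB)\to L$ via the Azumaya/Severi--Brauer equivalence.

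You instead build a natural bijection of functors of points. A $T$-point of the fiber is a pair $(\overline{\cI},x)$ with $x\in L(T)$ its type. The point $x$ splits $\cL|_T$, so that $f_*(\cB)|_T\cong x^*\cB\times(i\circ x)^*\cB$. The flag $(0\subseteq\cI_1\subseteq f_*(\cB)|_T)$ is sent to the image of $\cI_1$ in $x^*\cB$. The inverse is $\cJ\mapsto\cJ\times\tau_x(\lann\cJ)$, where $\tau_x\colon x^*\cB\to(i\circ x)^*\cB$ is the anti-isomorphism induced by $\tau$.

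This buys an explicit isomorphism that is visibly compatible with the maps to $L$. The price is that you must make the twisted type morphism concrete. The main point to check, étale locally in the split model, is this: with the paper's identification $L_{((1),(n))}=L$ (where $S_{(1)}$ is the first sheet), the image of $\cI_1$ in $x^*\cB$ has reduced rank $1$. With the opposite identification you would get $\SB(\cB^\op)$ instead.

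Once this is settled, the obstacle you anticipate disappears. Flags in this fiber have constant length one and satisfy $\tau(\lann\cI_1)=\cI_1$. So your case split between $\cI_1$ and $\tau(\lann\cI_1)$ is vacuous, and no dualization on the $(n)$ sheet is needed because $x$ already records the sheet. Likewise, no $\perp$ enters the inverse.

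\textbf{The opposite isomorphism.} The paper defines the map on constant-rank flags by $(\cI_1,\ldots,\cI_\ell)\mapsto(\lann\cI_\ell,\ldots,\lann\cI_1)$ and checks locally that types go from $\vr$ to $\vr^\vee$. Your composite through $\cPar_\bG$ gives the same map, with one correction. $\bU_{(\cB^\op,\tau)}$ is the opposite group of $\bU_{(\cB,\tau)}$, not equal to it. The two are identified by $b\mapsto b^{-1}=\tau(b)$, which fixes every subgroup as a subset, so parabolics correspond to themselves.

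By the symmetry $\lann\cI_{\ell+1-j}=\tau(\cI_j)$, the resulting map is simply $\tau$ applied componentwise. This is the correct form of your remark about $i^*$. The $\cL$-structure on $f_*(\cB^\op)$ is not twisted. Rather, $\tau\colon f_*(\cB)\to f_*(\cB^\op)$ is an $i^*$-semilinear ring isomorphism. Hence $(i,\tau)$ is an isomorphism of algebras with involution $(L,\cB,\tau)\iso(L,\cB^\op,\tau)$, and transport of structure along it covers $\Of(\Dyn(i))$, which is $i$ on $L_{((1),(n))}$.
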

Because of the correspondence between Azumaya algebras and Severi-Brauer schemes, given a Severi-Brauer scheme $Y\to S$ we may discuss the opposite Severi-Brauer scheme $Y^\op \to S$. That is, if $Y=\SB(\cA)$ then $Y^\op = \SB(\cA^\op)$. In an unsurprising reflection of the definition of an involution of the second kind, we define an outer Severi-Brauer scheme.
\begin{defn}\label{defn_outer_SB_scheme}
An \emph{outer type Severi-Brauer scheme} of constant relative dimension $n$ is the data of $(f\colon L \to S,P \to L,\tau)$ where
\begin{enumerate}
\item $L\to S$ is a degree $2$ \'etale cover with canonical automorphism $i\colon L \to L$,
\item $P \to L$ is a Severi-Brauer scheme over $L$ of constant relative dimension $n$,
\item $\tau \colon P \iso P^\op$ is an $S$--scheme isomorphism making the diagram
\[
\begin{tikzcd}
P \ar[r,"\tau"] \ar[d] & P^\op \ar[d] \\
L \ar[r,"i"] & L
\end{tikzcd}
\]
commute.
\end{enumerate}
\end{defn}

It is clear from construction that \Cref{intro_outer_fiber} gives us an outer Severi-Brauer scheme starting with an Azumaya algebra with involution of the second kind. To provide a construction in reverse, we generalize Quillen's construction from the inner case. We review Quillen's construction as presented in \cite{Kollar}. Given a Severi-Brauer scheme $P \to S$, i.e. a scheme \'etale locally isomorphic to relative projective space $\PP_S^n \to S$, there is an $\cO|_P$--module $\cF(P)$ which fits into the unique non-split exact sequence
\[
0 \to \cO|_P \to \cF(P) \to \cT_{P/S} \to 0
\]
where $\cT_{P/S}$ is the tangent bundle. Then, one considers $f_*(\cEnd_{\cO|_P}(\cF(P)))$ which is an Azumaya $\cO$--algebra. When $P=\SB(\cA)$ then this recovers $\cA$. We show that there is a slight variation to Quillen's construction which may be used in the inner case, and that with our definition of outer Severi-Brauer scheme that it generalizes to the outer case.
\begin{thm}[\Cref{Quillen_variation} and \Cref{outer_SB_schemes}]
Let $P\to S$ be a Severi-Brauer scheme. Then, the pushforward $f_*(\cF(P))$ already has a natural structure of an Azumaya $\cO$--algebra and when $P = \SB(\cA)$ we have $f_*(\cF(P)) \cong \cA$.

Consider a diagram
\[
\begin{tikzcd}
P \ar[r,"\tau"] \ar[d,"g"] & P^\op \ar[d,"g"] \\
L \ar[r,"i"] & L
\end{tikzcd}
\]
defining an outer Severi-Brauer scheme. Then, the isomorphism $\tau$ induces a canonical isomorphism $\cF(P)\iso \tau^*(\cF(P^\op))$ whose pushforward to $S$ is an isomorphism
\[
f_*\big(g_*(\cF(P))\big) \iso f_*\big(g_*(\tau^*(\cF(P^\op)))\big) \cong f_*\big(g_*(\cF(P^\op))\big)\otimes_{i^*} \cL
\]
which is equivalent to the information of an $\cL$--semi-linear involution
\[
\tau' \colon f_*\big(g_*(\cF(P))\big) \to f_*\big(g_*(\cF(P))\big).
\]
This makes $(L\to S, g_*(\cF(P)),\tau')$ an Azumaya algebra with involution of the second kind. 

When the outer Severi-Brauer scheme arises from $(L\to S,\cB,\tau)$, we recover $g_*(\cF(P)) \cong \cB$ and $\tau' = \tau$.
\end{thm}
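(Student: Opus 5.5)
We prove the two halves separately, beginning with the inner variation of Quillen's construction. Write $g\colon P\to S$ for the structure map. The first step is to give $g_*(\cF(P))$ an algebra structure without passing through $\cEnd$. The extension $0\to\cO|_P\to\cF(P)\to\cT_{P/S}\to0$ is, étale locally where $P\cong\PP_S^n$, the Euler sequence $0\to\cO\to\cO(1)^{\oplus n+1}\to\cT\to0$. So locally $\cF(P)\cong\cO(1)\otimes_\cO \cO^{n+1}$ and $g_*\cF(P)\cong \cO^{n+1}\otimes \Gamma(\cO(1))\cong \cE\otimes\cE^\vee\cong\Mat_{n+1}(\cO)$. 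To make this intrinsic, we use the canonical isomorphism $\cF(P)\cong g^*(\cE)\otimes\cO(1)$ whenever $P=\PP(\cE)$. The key point is that the non-split extension is unique up to unique scalar, hence has no nontrivial automorphisms compatible with the sequence. Consequently the locally defined multiplication on $g_*\cF(P)$ is invariant under $\PGL_{n+1}$ acting on $\PP^n$ and descends.

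Concretely, we will identify $g_*\cF(P)$ with $g_*\cEnd(\cF(P))$. We check locally that $g_*(\cO(1)\otimes \cE)=\cE\otimes\cE^\vee$ and that $g_*\cEnd(\cO(1)\otimes g^*\cE)=\cEnd(\cE)$ give the same sheaf. We then produce a canonical global map $g_*\cEnd(\cF(P))\to g_*\cF(P)$ by evaluating endomorphisms on the canonical section $1\in\cO|_P\subseteq\cF(P)$. This map is an isomorphism étale locally, hence globally by descent, and we transport the Azumaya structure along it. The claim that $P=\SB(\cA)$ gives $g_*\cF(P)\cong\cA$ then follows from Quillen's result as presented in \cite{Kollar}, or directly: on $\SB(\cA)$ the tautological ideal sheaf provides a map from $g^*\cA$ to $\cF$ which we check is an isomorphism on global sections locally.

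The outer statement reduces to functoriality. First we show that $P\mapsto\cF(P)$ is functorial for isomorphisms of schemes that need not lie over the base identity. An isomorphism $\tau\colon P\to P^\op$ covering $i$ pulls the defining extension for $P^\op/L$ back to an extension $0\to\cO|_P\to\tau^*\cF(P^\op)\to\tau^*\cT_{P^\op/L}\cong\cT_{P/L}\to0$. This extension is still non-split, so by uniqueness it is canonically isomorphic to $\cF(P)$, which gives the isomorphism in the statement. Pushing forward along $g$ and then $f$ gives an isomorphism $f_*g_*\cF(P)\to f_*g_*\cF(P^\op)$ that is $i^*$-semilinear over $\cL$ by the base change $g_*\tau^*=i^*g_*$. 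Next we identify $g_*\cF(P^\op)$ with $(g_*\cF(P))^\op$, using the first part together with $\SB(\cB^\op)$ and $\cB^\op$. Composing the two then gives an $\cL$-semilinear anti-automorphism $\tau'$.

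The remaining checks are as follows:
\begin{enumerate}
\item \textbf{Multiplicativity.} We verify étale locally that $\tau'$ reverses multiplication, which holds because all the maps are canonical.
\item \textbf{Involutivity.} We show $\tau'^2=\Id$. This uses that $\tau^\op\circ\tau=\Id_P$, which is implicit in the definition of an outer Severi-Brauer scheme via $(P^\op)^\op=P$.
\item \textbf{Recovering $\tau$.} If the data comes from $(L\to S,\cB,\tau)$ via \Cref{intro_outer_fiber}, then $\tau'=\tau$. We get this by tracing through the isomorphism $\SB(\cB)\to\SB(\cB^\op)$, which sends $\cI$ to $\tau(\lann\cI)$, and checking the induced map on global sections after a local splitting.
\end{enumerate}

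The main obstacle is the canonical identification $g_*\cF(P^\op)\cong(g_*\cF(P))^\op$ together with the opposite-compatibility of the $\cF$ construction. This requires relating the tautological data on $\SB(\cB)$ and $\SB(\cB^\op)$, namely the duality between right ideals of rank $n+1$ and left ideals of corank $n+1$. We would first try to work with the explicit split model $P=\PP(\cE)$ and $P^\op=\PP(\cE^\vee)$. There we check that the identification is $\PGL(\cE)$-equivariant, and then descend to the general case.
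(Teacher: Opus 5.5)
Your proposal is correct in substance. For the inner half, however, it takes a different route from the paper. The paper never passes through $\cEnd$. It fixes the $\cO$--module identification $f_*(\cOSone^d)\cong\Mat_d(\cO)$ and models $\PP^n$ and $\cOSone^d$ as stacks so that $\PGL_d$ acts on the pair. It then computes that the induced action on the pushforward is $T\mapsto ATA^{-1}$. Since this is by algebra automorphisms, the matrix structure survives the simultaneous twist of \Cref{simul_twisting}, and $f_*\cF(\SB(\cA))\cong\cA$ because the two torsors coincide.

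Your first sketch (rigidity of the extension, ``consequently'' the product is invariant) skips exactly this computation. Rigidity gives a canonical lift of the action to $\cF$, but not that the induced action on $\cE\otimes\cE^\vee$ is by algebra automorphisms. Your ``concretely'' version is nevertheless a complete alternative. Evaluation at the canonical section $s$ is a global map $g_*\cEnd(\cF(P))\to g_*\cF(P)$. In the split model it is $M\mapsto Mx$, which is the identity under the paper's identification, so it is an isomorphism and the two algebra structures agree, with $s$ as unit. Both the Azumaya property and $g_*\cF(\SB(\cA))\cong\cA$ are then transported from the classical Quillen--Koll\'ar theorem. This is intrinsic and stack-free. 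The cost is that it leans on the classical result over a general base and defines the product through $\cEnd$, which the paper's argument is designed to avoid.

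In the outer half, you obtain $\cF(P)\cong\tau^*\cF(P^\op)$ from uniqueness of the non-split extension, via $d\tau\colon\cT_{P/L}\cong\tau^*\cT_{P^\op/L}$. This is cleaner than the paper's equivariant twist of the switch on $(\PP^n_S\sqcup\PP^n_S,\cO(1))$. Your treatment of $g_*\cF(P^\op)\cong(g_*\cF(P))^\op$ through $\PP(\cE^\vee)$ and $\PGL$--equivariance is the same mechanism as the paper's $\theta'$--twisted action on the second copy.

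Two corrections. First, anti-multiplicativity and $\tau'^2=\Id$ do not follow from ``all maps being canonical''; they follow from the local computation that $\tau'$ becomes the switch transpose $\tau_d$. Second, $\tau^\op\circ\tau=\Id_P$ comes from the local-triviality clause of the definition, where $\tau_0$ is the switch, not from $(P^\op)^\op=P$ alone.
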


In addition to our consideration of the sheaf of parabolic subgroups of $\bG$, we also consider the sheaf of pairs $\bL \subseteq \bP$ of a Levi subgroup inside a parabolic subgroup, denoted $\cPL_\bG$. Instead of flags, we show that this is isomorphic to a sheaf of idempotents from the respective algebra. For an Azumaya algebra $\cA$, a tuple of pairwise orthogonal idempotents is a tuple $(e_1,\ldots,e_{\ell+1})$ such that each $e_i^2=e_i\in \cA$, $e_ie_j=0$ for $i\neq j$, $\sum_{i=1}^{\ell+1}e_i = 1$. The notion of a lowered tuple of idempotents is similar to the notion of a lowered flag. Such a tuple is lowered if for all $T\in \Sch_S$,
\[
e_i|_T = 0 \Rightarrow e_j|_T=0 \text{ for all } j\geq i.
\] 
These tuples form a sheaf, denoted $\LowIdemp_\cA$, in which restriction of sections involves truncating zeros on the right. In the outer case, defining the suitable notion of tuples of idempotents for an Azumaya algebra with involution of the second kind $(f\colon L\to S,\cB,\tau)$ involves similar technicalities as those we glossed over when discussing $\LowGSB_{(\cB,\tau)}$ above. To give a rough idea here as well, it consists of lowered tuples of idempotent in $f_*(\cB)$ such that
\[
(e_1,\ldots,e_{\ell+1}) = (\tau(e_{\ell+1}),\ldots,\tau(e_1)).
\]
We denote the sheaf of such tuples by $\LowIdemp_{(\cB,\tau)}$. Once given a suitable tuple of pairwise orthogonal idempotents $\ve = (e_1,\ldots,e_{\ell+1})$, we may use it to define a cocharacter $\lambda_{\ve} \colon \GG_m \to \bG = \bU_{(\cB,\tau)}$ and then apply the limit subgroup construction (detailed in \Cref{limit_subgroups}) to obtain a Levi and parabolic pair $\bL(\lambda_{\ve})\subseteq \bP(\lambda_{\ve})$. We may also use it to construct a flag in $\LowGSB_{(\cB,\tau)}$ by setting
\[
\overline{\cI}_{\ve} = (0\subseteq e_1\cdot f_*(\cB) \subseteq (e_1+e_2)\cdot f_*(\cB) \subseteq \ldots \subseteq (e_1+\ldots+e_\ell)\cdot f_*(\cB) \subseteq f_*(\cB)).
\]
Finally, all of these constructions fit together nicely.
\begin{thm}[\Cref{idempotent_section}]
Let $(f\colon L\to S,\cB,\tau)$ be an Azumaya algebra with involution of the second kind. The map
\begin{align*}
\LowIdemp_{(\cB,\tau)} &\to \cPL_{\bG} \\
\ve &\mapsto (\bL(\lambda_{\ve}),\bP(\lambda_{\ve}))
\end{align*}
is an isomorphism of sheaves. It fits into a commutative diagram
\[
\begin{tikzcd}
\LowIdemp_{(\cB,\tau)} \ar[r,"\sim"] \ar[d] & \cPL_\bG \ar[d] \\
\LowGSB_{(\cB,\tau)} \ar[r,"\sim"] & \cPar_\bG
\end{tikzcd}
\]
where the downward map on the left is the construction detailed above, the downward map on the right is simply $(\bL,\bP)\mapsto \bP$, and the isomorphism on the bottom is the isomorphism of \Cref{intro_GSB_outer_iso}.
\end{thm}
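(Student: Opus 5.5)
Since both sides are fppf sheaves, I will check that the map is well defined and compatible with restriction, and then that it is bijective fppf locally, where I may assume $L = S\sqcup S$ and $\cB$ split. Under these assumptions $f_*(\cB)\cong \cEnd(\cE)\times\cEnd(\cE)^\op$, $\tau$ is the switch, and $\bG\cong\GL(\cE)$ with $\cE$ free of rank $n+1$. In this split situation a $\tau$-symmetric lowered tuple $\ve$ is determined by its first coordinate. That coordinate is a lowered tuple of pairwise orthogonal idempotents in $\cEnd(\cE)$, which is the same as a lowered splitting $\cE=\cE_1\oplus\cdots\oplus\cE_{\ell+1}$ into locally direct summands. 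The outer problem therefore reduces to the inner one, namely $\LowIdemp_{\cEnd(\cE)}\iso\cPL_{\GL(\cE)}$.

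\textbf{Well-definedness.} The element $\lambda_{\ve}(t)=\sum_i t^{\ell+1-i}e_i$ (or a suitably symmetrized version compatible with $\tau$) lies in $\bU_{(\cB,\tau)}$ because $\tau(e_i)=e_{\ell+2-i}$. The limit subgroup construction of \Cref{limit_subgroups} then gives a parabolic $\bP(\lambda_{\ve})$ with Levi $\bL(\lambda_{\ve})=\bCent(\lambda_{\ve})$. Truncating zeros on the right does not change the weight spaces, so the assignment commutes with restriction. For commutativity of the square, I would compute $\bP(\lambda_{\ve})$ directly: it consists of the $g$ preserving the filtration by the partial sums $(e_1+\cdots+e_j)f_*(\cB)$. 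This is exactly the stabilizer of $\overline{\cI}_{\ve}$, so the square commutes by the definition of the bottom isomorphism in \Cref{outer_case}.

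\textbf{Injectivity.} The Levi $\bL(\lambda_{\ve})$ is the centralizer of the torus $\prod_i\GG_m$ acting on the $\cE_i$. It determines the unordered set of summands $\cE_i$, for instance as the common eigen-summands of its center, equivalently the minimal central idempotents of the centralizer algebra. The parabolic determines the flag of partial sums. Together these recover the ordering, since the $\cE_i$ are the successive complements of the flag inside the Levi decomposition. Lowered-ness removes the ambiguity coming from zero summands.

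\textbf{Surjectivity.} Given $(\bL,\bP)$, the bottom isomorphism $\LowGSB\iso\cPar$ produces a lowered flag $\cV_\bullet$ with stabilizer $\bP$. Fppf locally, $\bL$ is $\bP(\bar k)$-conjugate, in fact $\bR_u(\bP)$-conjugate, to the standard Levi of a splitting adapted to $\cV_\bullet$, because Levi subgroups of a parabolic form a torsor under its unipotent radical (SGA3). Conjugating the standard idempotents by this element gives $\ve$ with $(\bL(\lambda_{\ve}),\bP(\lambda_{\ve}))=(\bL,\bP)$. In the outer case I must also check that the tuple so produced is $\tau$-symmetric. This holds because $\tau$ induces on $\bU$ the identity and, on flags, the operation $\cI\mapsto\tau(\lann\cI)$, so uniqueness from injectivity forces $\ve=\tau(\ve)$ reversed.

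The main obstacle is the bookkeeping of lowered tuples under gluing and restriction when the type (rank vector) is only locally constant. This involves showing that the locally defined idempotent tuples glue into a single lowered tuple on $S$. I would handle it by working over the open and closed pieces where the type of $\bP$ is constant, where the lengths are fixed, and then gluing, using the uniqueness from injectivity.
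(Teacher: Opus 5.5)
Your overall architecture matches the paper's. You reduce fppf locally to the split case, where a $\tau$-symmetric tuple is determined by its first coordinate and $\bU\cong\GL_d$, and then treat the inner case; the square comes from computing $\bP(\lambda_{\ve})$ as a filtration stabilizer, as in \Cref{inner_PL_iso_diagram}. The gap is in the well-definedness step, which is the one genuinely outer ingredient. Your $\lambda_{\ve}(t)=\sum_i t^{\ell+1-i}e_i$ is not a cocharacter of $\bU_{(\cB,\tau)}$. Even granting $\tau(e_i)=e_{\ell+2-i}$, you get $\tau(\lambda_{\ve}(t))=\sum_i t^{i-1}e_i$, whereas $\lambda_{\ve}(t)^{-1}=\sum_i t^{i-\ell-1}e_i$, so the two agree only when $\ell=0$. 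Unlike in $\GL_{1,\cA}$, a central shift cannot repair this: $\tau$ fixes $\GG_m\subseteq\cO^\times$, so $t^c\in\bU$ forces $c=0$. The weights must be centred, $c_i=-c_{k+1-i}$ with $k$ the length of the tuple. Since $k$ is only a locally constant section $\underline{a}(\ve)\in\underline{\ZZ}$ (restriction truncates), the exponents must be built from it, which is why the paper uses $t^{\underline{a}(\ve)+1-2i}$. Likewise, the defining condition of $\LowIdemp_{(\cB,\tau)}$ is $\tau(e_i)=e'_{\ell+2-i}$ with $(e'_j)=\rhoIdemp(\ve)$, not $\tau(e_i)=e_{\ell+2-i}$. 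The latter fails wherever the tuple restricts to something shorter than $\ell+1$: there it forces $\tau(e_1)=0$ although $e_1\neq 0$. Verifying $\tau(\lambda_{\ve}(t))=\lambda_{\ve}(t)^{-1}$ globally is exactly the comparison of lowered and raised expressions in \Cref{cocharacter_raised_formula}. Your ``suitably symmetrized version'' leaves this unaddressed. Your remark that restriction only truncates zeros is also correct only for the centred cocharacter: with your formula, restricting to a piece of length $k$ changes $\lambda_{\ve}$ by the central factor $t^{\ell+1-k}$, which is harmless in $\GL$ but not in $\bU$.

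With the correct $\lambda_{\ve}$, its first coordinate in the split case has the same weight spaces and strictly decreasing weights as an inner cocharacter, so it gives the same pair, and your reduction to $\LowIdemp_{\cEnd(\cE)}\iso\cPL_{\GL(\cE)}$ goes through. There your injectivity argument differs from the paper's. The paper conjugates to a standard pair of constant type (\Cref{conjugating_limits_Levis}), notes that any cocharacter with centralizer $\bL$ lies in $Z(\bL)\cong\GG_m^p$ with distinct weights, and checks that among permutations of the block idempotents only the decreasing order yields $\bP$. You instead reconstruct $\ve$ intrinsically: the unordered summands from the centre of $\bL$, the flag from $\bP$ via the bottom isomorphism, and the order from which $e_i$ lie in which $(e_1+\cdots+e_k)\cA$. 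This works on pieces of constant type and avoids classifying cocharacters. Surjectivity via $\rad_u(\bP)$-conjugacy of Levi subgroups is essentially the paper's conjugation argument. Finally, the $\tau$-symmetry check at the end of your surjectivity step is superfluous after the split reduction, since the first-coordinate isomorphism of \Cref{split_outer_Idemp} builds symmetry in. As phrased it also does not make sense, because no cocharacter of $\bU$ is defined for a non-symmetric tuple, so there is no injectivity to invoke.
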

}

\noindent \textbf{Acknowledgements:} I would like to express thanks to Erhard Neher who noticed the connection between idempotent elements and pairs of a Levi subgroup and a parabolic subgroup. Additionally, I would like to thank Philippe Gille for many helpful discussions about the project.

\section{Preliminaries}
We work over a fixed base scheme $S$, in particular, we work on the ringed site $(\Sch_S,\cO)$ with the fppf topology. In general, uppercase latin letters are used for schemes, calligraphic letters for sheaves on $\Sch_S$, and uppercase bold letters for sheaves of groups on $\Sch_S$.

We are primarily concerned with reductive groups of type $A_n$ which are related to Azumaya algebras of degree $n+1$, so we fix the convention that $d=n+1$.

\subsection{Locally Constant Sheaves}\label{Locally_Constant_Sheaves}
Let $n\geq 2$ be a natural number and let $N = \{1,\ldots,n\}_S := S^{\sqcup n}$ be the $S$--scheme which represents the sheaf
\begin{align}
\cN \colon \Sch_S &\to \Sets \label{constant_sheaf_n}\\
T&\mapsto \{f\colon T \to \{1,\ldots,n\} \mid f \text{ continuous}\} \nonumber
\end{align}
where $\{1,\ldots,n\}$ is given the discrete topology. In other notation, $\cN=\underline{\{1,\ldots,n\}}$, which is the sheafification of the constant presheaf of the set $\{1,\ldots,n\}$. The set $\{1,\ldots,n\}$ has an involution, i.e., an order two bijection, given by reflecting the list $(1,\ldots,n)$ about its midpoint. In particular,
\begin{align*}
\und^\vee \colon \{1,\ldots,n\} &\to \{1,\ldots,n\} \\
r &\mapsto n+1-r.
\end{align*}
It is clear that $(r^\vee)^\vee = r$. This map induces an analogous involution $\und^\vee \colon \cN \to \cN$. Here, we reuse the notation $\und^\vee$, and will do so again further below, since the particular meaning will be clear from context and all variations are related to this reflection.

By writing sets in increasing order, we identify the power set $\sP(\{1,\ldots,n\}):= \sP_n$ with the set of tuples
\[
\{(r_1,\ldots,r_\ell) \mid 1\leq \ell \leq n,\; r_i \in \{1,\ldots,n\},\; r_i < r_{i+1}\} \cup \{()\}
\]
where the empty tuple $()$ corresponds to the empty set $\O \subset \{1,\ldots,n\}$. There are two important involutions on this set. First, we have the analogue of the reflection above, namely $\und^\vee \colon \sP_n \to \sP_n$ defined by
\[
(r_1,r_2,\ldots,r_{\ell-1},r_\ell)^\vee = (n+1-r_\ell,\, n+1-r_{\ell-1}, \ldots, n+1-r_2,\, n+1-r_1).
\]
The set of symmetric tuples will be denoted by
\begin{equation}\label{eq_Sym_Pn}
\Sym(\sP_n) = \{\vr \in \sP_n \mid \vr^\vee = \vr\}.
\end{equation}
For each non-symmetric tuple $\vr$, its orbit will be $\{\vr,\vr^\vee\}$, and one of these tuples will be strictly smaller with respect to the lexicographical order on $\sP_n$. Thus, we set
\begin{equation}\label{eq_Lex_Pn}
\Lex(\sP_n) = \{\vr \in \sP_n \mid \vr <_{\text{lex}} \vr^\vee\}
\end{equation}
which is a set of representatives of the size two orbits.

Second, we have $\und^c \colon \sP_n \to \sP_n$ defined by subset compliment within $\{1,\ldots,n\}$. That is,
\[
(r_1,\ldots,r_\ell)^c = (m_1,\ldots,m_{n-\ell})
\]
if and only if $\{r_1,\ldots,r_\ell\}^c = \{m_1,\ldots,m_{n-\ell}\}$. It is also clear that $(\vr^c)^c = \vr$. Furthermore, it is easy to verify that these two operations commute;
\[
(\vr^c)^\vee = (\vr^\vee)^c.
\]

The scheme $(\sP_n)_S =  S^{\sqcup (2^n)}$ represents the sheaf
\begin{align}
\cP_n \colon \Sch_S &\to \Sets \label{eq_sheaf_cP_n} \\
T &\mapsto \{f\colon T \to \sP_n \mid f \text{ continuous}\} \nonumber
\end{align}
where $\sP_n$ is given the discrete topology. By abuse of notation, given a tuple $\vr \in \sP_n$, we will write $\vr \in \cP_n(T)$ to refer to the constant function $T \to \sP_n$ with image $\{\vr\}$. Such sections will be called \emph{constant sections}. Both the scheme $(\sP_n)_S$ and the sheaf $\cP_n$ inherit order two automorphisms corresponding to $\und^\vee$ and $\und^c$ on $\sP_n$, which we denote with the same notation.

By \cite[Exp. XXVI, 3.1]{SGA3}, given a twisted constant scheme $T\in \Sch_S$, i.e., a scheme which is locally isomorphic to $S^{\sqcup m}$ for some $m\geq 1$, we can consider the sheaf
\begin{align*}
\cOf(T) \colon \Sch_S &\to \Sets \\
T' &\mapsto \{U\subseteq T\times_S T' \mid U \text{ is both an open and closed subscheme}\},
\end{align*}
denoted with $\cOf$ from the French \emph{ouverts et ferm\'es}. This sheaf is represented by a scheme $\Of(T)$ which is also a twisted constant scheme. The construction of $\Of(T)$ is compatible with base change, $\Of(T\times_S T') = \Of(T)\times_S T'$, as well as with descent. The basic example of this construction is
\[
\Of(S^{\sqcup n}) \cong (\sP_n)_S
\]
and thus $\cOf(S^{\sqcup n}) = \cP_n$.

\subsection{Reductive Groups of Type $A_n$}\label{reductive_groups}
Following \cite[\S 3]{CF}, we recall the definitions and descriptions of groups of type $A_n$. The split examples of groups of type $A_n$ are the familiar general, special, and projective linear groups, which we view here as group sheaves.
\begin{align*}
\GL_d \colon \Sch_S &\to \Grp \\
T &\mapsto \Mat_d(\cO(T))^\times \\
\SL_d \colon \Sch_S &\to \Grp \\
T &\mapsto \{ B \in \Mat_d(\cO(T)) \mid \det(B)=1\} \\
\PGL_d \colon \Sch_S &\to \Grp \\
T &\mapsto \Aut_{\cO|_T\textrm{--alg}}(\Mat_d(\cO|_T)).
\end{align*}
These are all reductive groups while $\SL_d$ is simple simply connected and $\PGL_d$ is simple adjoint. If $\bG$ is any of the three groups above, then the group of inner automorphisms is simply
\[
\bInn(\bG) = \bG/Z(\bG) \cong \PGL_d.
\]
Thus, the inner forms of these groups are all related to $\PGL_d$--torsors, i.e., Azumaya algebras of degree $d$. Respectively, their inner forms are
\begin{align*}
\GL_{1,\cA} \colon \Sch_S &\to \Grp \\
T &\mapsto \cA(T)^\times \\
\SL_{1,\cA} \colon \Sch_S &\to \Grp \\
T &\mapsto \{ a \in \cA(T) \mid \Nrd_\cA(a)=1\} \\
\PGL_\cA \colon \Sch_S &\to \Grp \\
T &\mapsto \Aut_{\cO|_T\textrm{--alg}}(\cA|_T)
\end{align*}
for some Azumaya algebra $\cA$ of degree $d$. If $\cA = \cEnd(\cE)$ for an $\cO$--module $\cE$, we will also write $\GL_\cE$, $\SL_\cE$, and $\PGL_\cE$ for these groups.

Any group automorphism of $\GL_d$, $\SL_d$, or $\PGL_d$ induces a graph automorphism of the Dynkin diagram
\[
\begin{tikzpicture}
\filldraw[black] (-5,0) circle (2pt);
\draw (-5,0) -- (-4,0);
\filldraw[black] (-4,0) circle (2pt);
\draw (-4,0) -- (-3.5,0);
\node at (-3,0) {\ldots};
\draw (-2.5,0)--(-2,0);
\filldraw[black] (-2,0) circle (2pt);
\draw (-2,0)--(-1,0);
\filldraw[black] (-1,0) circle (2pt);
\node at (-5,-0.3) {1};
\node at (-4,-0.3) {2};
\node at (-2,-0.3) {n-1};
\node at (-1,-0.3) {n};
\end{tikzpicture}
\]
of type $A_n$. An automorphism is inner if and only if it induces the trivial graph automorphism. Excluding the the case of type $A_1$, where the graph has no non-trivial automorphisms, we have an action of $\ZZ/2\ZZ$ on these groups by outer automorphisms. For $d\geq 3$, the automorphism
\begin{align}
\theta \colon \GL_d &\to \GL_d \label{GLd_outer_aut} \\
B &\mapsto (B^t)^{-1} \nonumber
\end{align}
is outer and of order two, and thus we use it to define the $\ZZ/2\ZZ$--action on $\GL_d$. This action immediately restricts to an action on $\SL_d$, and since $\theta$ stabilizes the center it also uniquely induces an automorphism $\theta'\colon \PGL_d \to \PGL_d$ which behaves locally as $\theta'(\Inn(B)) = \Inn((B^t)^{-1})$. In particular, we can consider the group
\[
\PGL_d \rtimes \ZZ/2\ZZ
\]
where the semi-direct product is defined by $\ZZ/2\ZZ$ action on $\PGL_d$ via $\theta'$. The following description is rephrased from \cite[3.4.0.81]{CF}.
\begin{thm}\label{split_group_automorphisms}
Let $\bG$ be one of $\GL_d$, $\SL_d$, or $\PGL_d$ for $d\geq 3$. Then,
\[
\bAut(\bG) = \PGL_d \rtimes \ZZ/2\ZZ
\]
where $\PGL_d$ acts by inner automorphism and $\ZZ/2\ZZ$ acts by $\theta'$.
\end{thm}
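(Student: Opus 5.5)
The plan is to reduce to the structure theory of SGA3, Exp.~XXIV, which applies to any reductive $S$--group scheme $\bG$ with a pinning. It gives a split exact sequence of group sheaves
\[
1 \to \bG^{\mathrm{ad}} \to \bAut(\bG) \to \bOut(\bG) \to 1,
\]
where $\bG^{\mathrm{ad}} = \bG/Z(\bG)$ acts by conjugation. The sequence is split by the subgroup of automorphisms preserving a chosen pinning, and $\bOut(\bG)$ is the constant sheaf of automorphisms of the based root datum of $\bG$. For each of our three groups we therefore have to do three things: identify $\bG^{\mathrm{ad}}$ with $\PGL_d$, compute the automorphism group of the based root datum, and check that $\theta$ (respectively $\theta'$) lies in the image of the pinning-preserving splitting, or at least maps onto the generator.

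\textbf{Adjoint quotient.} For $\GL_d$, $\SL_d$, $\PGL_d$ the adjoint quotient is $\PGL_d$, as recalled above via $\bInn(\bG) \cong \PGL_d$. The conjugation action is the usual $\Inn(B)$ action; for $\SL_d$ this is conjugation by a local lift to $\GL_d$.

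\textbf{Outer automorphisms.} This is the one step needing care. For $\SL_d$ and $\PGL_d$ the based root datum is semisimple, and its automorphisms are the Dynkin diagram automorphisms. For $A_n$ with $n = d-1 \geq 2$ these form $\ZZ/2\ZZ$; this is why we need $d \geq 3$.

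For $\GL_d$ the root datum is $(\ZZ^d, \ZZ^d)$ with roots $e_i - e_j$. An automorphism preserving the base $\{e_i - e_{i+1}\}$ must permute the simple roots as a diagram automorphism. Here there is a subtlety: the center is $\GG_m$, and one must check that the automorphism is then determined. I would argue as follows.
\begin{enumerate}
\item An automorphism fixing all simple roots and all simple coroots acts on $\ZZ^d$ preserving the sublattice of roots and the orthogonal of the coroots.
\item It therefore acts on the radical $\ZZ(1,\ldots,1)$ by $\pm 1$.
\item Compatibility with the pairing against $\ZZ^d/\text{roots}$, which is detected by the determinant character, forces $+1$.
\item Hence the automorphism is trivial.
\end{enumerate}
It then remains to show that reversal of the diagram extends to the root datum. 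The map $e_i \mapsto -e_{d+1-i}$ does this, and it is exactly the action of $\theta$ composed with conjugation by the antidiagonal permutation matrix. So $\bOut(\GL_d) = \ZZ/2\ZZ$.

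\textbf{Splitting.} Take the standard pinning: the diagonal torus, the upper triangular Borel, and root vectors $E_{i,i+1}$. The automorphism $\theta$ sends the diagonal torus to itself, the upper Borel to the lower Borel, and $E_{i,i+1}$ to $-E_{i+1,i}$. Composing with $\Inn(w_0 \cdot \mathrm{diag}(\pm 1))$ gives a pinning-preserving involution inducing the nontrivial diagram automorphism, where $w_0$ is the antidiagonal permutation and the signs are chosen to fix the root vectors. This shows $\theta$ represents the nontrivial outer class. Since $\bAut(\bG) \cong \bG^{\mathrm{ad}} \rtimes \bOut(\bG)$ for any section, and $\theta$ has order two, using $\theta$ itself as the section gives $\bAut(\bG) = \PGL_d \rtimes \ZZ/2\ZZ$ with $\ZZ/2\ZZ$ acting by $\theta'$ on the normal subgroup. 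Here one notes that $\theta \circ \Inn(B) \circ \theta^{-1} = \Inn(\theta(B))$, so the conjugation action of $\theta$ on $\PGL_d$ is $\theta'$.

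The same computation restricts to $\SL_d$, and descends to $\PGL_d$ because $\theta$ preserves the center. Since all constructions are compatible with base change, the identification holds as sheaves over $S$.
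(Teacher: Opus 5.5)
Your overall strategy is sound: use the split sequence $1\to\bG^{\mathrm{ad}}\to\bAut(\bG)\to\bOut(\bG)\to 1$ of SGA3 Exp.~XXIV for a pinned split group, compute the automorphisms of the based root datum, and use $\theta$ as an order-two lift of the nontrivial outer class, with $\theta\circ\Inn(B)\circ\theta^{-1}=\Inn(\theta(B))$ identifying the action on $\PGL_d$ as $\theta'$. The paper gives no argument of its own here; it quotes \cite[3.4.0.81]{CF}. Your $\SL_d$, $\PGL_d$, diagram-flip and splitting steps are fine.

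The gap is in the one step you flag as delicate: showing that an automorphism $\phi$ of the based root datum of $\GL_d$ fixing every simple root and coroot is trivial.

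\emph{Why step (iii) fails.} It gives no valid reason for the sign on $\ZZ(1,\ldots,1)=\ZZ\cdot\det$ to be $+1$. The automorphism $\phi$ acts on $\ZZ\cdot\det$ and on $\ZZ^d/Q\cong\ZZ$ (restriction to the central $\GG_m$) by the same sign $\epsilon$. Duality only says that $\phi^\vee$ acts on the central cocharacter by $\epsilon$ as well. So nothing there excludes $\epsilon=-1$.

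In fact your argument never uses $d\ge 3$, and for $d=2$ its conclusion is false. The map $e_1\mapsto -e_2$, $e_2\mapsto -e_1$ fixes the simple root and coroot and acts by $-1$ on $\ZZ(1,1)$. It comes from the outer automorphism $g\mapsto\det(g)^{-1}g$ of $\GL_2$.

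\emph{What actually forces triviality is integrality.}
\begin{enumerate}
\item Since $\phi^\vee$ fixes the coroots, $\phi(x)-x\in(R^\vee)^\perp=\ZZ(1,\ldots,1)$.
\item Since $\phi$ fixes $Q=\ker\bigl(x\mapsto\sum_i x_i\bigr)$, this gives $\phi(x)=x+c\bigl(\sum_i x_i\bigr)(1,\ldots,1)$ for some $c\in\ZZ$.
\item This map has determinant $1+cd$, and invertibility over $\ZZ$ requires $1+cd=\pm1$.
\item For $d\ge3$ this forces $c=0$.
\end{enumerate}
So $d\ge 3$ is needed for $\GL_d$ not only to give the Dynkin diagram a flip, but also to rule out this extra automorphism. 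Without this computation you have not shown $\bOut(\GL_d)=\ZZ/2\ZZ$.
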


To describe non-split groups of type $A_n$, including the outer twists, this automorphism group is related to Azumaya algebras with involution of the second type. These are objects related to degree $2$ \'etale covers of the base scheme $S$, which are scheme morphisms $L\to S$ which are locally isomorphic to the cover by the disjoint union $S\sqcup S \to S$. This disjoint cover is called the split \'etale cover or the trivial \'etale cover. A degree $2$ \'etale cover $L\to S$ comes with a unique order two $S$--automorphism $i\colon L \iso L$ which is locally isomorphic to the switch $\sw \colon S\sqcup S \iso S\sqcup S$. Since such a cover $f \colon L\to S$ is an affine morphisms, under the correspondence of \cite[Tag 01SA]{Stacks} it corresponds to a commutative $\cO$--algebra
\[
\cL = f_*(\cO|_L) \colon \Sch_S \to \Ab
\]
which is locally isomorphic to $\cO\times\cO$ and the $\cO$--algebra structure map $\cO \to \cL$ is locally isomorphic to the diagonal. The isomorphism $i\colon L \to L$ then appears as an order two isomorphism $i^* \colon \cL \to \cL$ locally isomorphic to the switch
\begin{align*}
\sw \colon \cO\times \cO &\iso \cO\times\cO \\
(a,b) &\mapsto (b,a)
\end{align*}
and it therefore fixes $\cO \subset \cL$. Given a $\cO|_L$--algebra $\cB$ over $L$, the pushforward $f_*(\cB)$ is naturally a $\cL$--algebra and of course also a $\cO$--algebra.

\begin{defn}
Using notation as above, an Azumaya algebra with involution of the second kind is $(f\colon L\to S,\cB,\tau)$ where
\begin{enumerate}
\item $L\to S$ is a degree $2$ \'etale cover, 
\item $\cB$ is an Azumaya $\cO|_L$--algebra on $L$, and
\item $\tau \colon f_*(\cB) \to f_*(\cB)$ is an $i^*$--semilinear involution of the $\cL$--algebra $f_*(\cB)$. That is,
\[
\tau(cb) = i^*(c)\tau(b)
\]
for $c\in \cL$ and $b\in f_*(\cB)$.
\end{enumerate}
An isomorphism $(g,\varphi)\colon (f_1\colon L_1\to S,\cB_1,\tau_1) \iso (f_2\colon L_2\to S,\cB_2,\tau_2)$ is given by
\begin{enumerate}\setcounter{enumi}{3}
\item an $S$--isomorphism $g\colon L_1 \iso L_2$ (which is automatically compatible with $i_1$ and $i_2$), equivalently an $\cO$--algebra isomorphism $g^*\colon \cL_2 \iso \cL_1$, and
\item $\varphi \colon f_{2*}(\cB_2) \iso f_{1*}(\cB_1)$ an isomorphism of $\cO$--algebras such that
\begin{align*}
\varphi(cb) &= g^*(c)\varphi(b), \text{ and}\\
\varphi(\tau_2(b)) &= \tau_1(\varphi(b))
\end{align*}
for $c\in \cL_2$ and $b\in f_{2*}(\cB_2)$. That is, $\varphi$ is compatible with both $g^*$ and the involutions $\tau_1$ and $\tau_2$.
\end{enumerate}
\end{defn}

The split example is $(S\sqcup S \to S,(\Mat_d(\cO),\Mat_d(\cO)),\tau_d)$, where $\tau_d$ is the switch transpose
\begin{align}
\tau_d \colon \Mat_d(\cO)\times\Mat_d(\cO) &\to \Mat_d(\cO)\times\Mat_d(\cO) \label{split_second_involution} \\
(B_1,B_2) &\mapsto (B_2^t,B_1^t). \nonumber
\end{align}
which is clearly compatible with the switch automorphism $\sw \colon \cO\times \cO \iso \cO\times\cO$. By \cite[2.7.0.35]{CF}, all Azumaya algebras with involution of the second kind are twisted forms over $S$ of this split example. The automorphism group of this split Azumaya algebra with involution of the second type is also
\[
\PGL_d \rtimes \ZZ/2\ZZ
\]
where the non-trivial constant section $\overline{1} \in \ZZ/2\ZZ$ acts by the switch on both $\cO\times\cO$ and $\Mat_d(\cO)\times\Mat_d(\cO)$, and where $\varphi \in \PGL_d$ acts trivially on $\cO\times\cO$ and by
\[
\varphi \cdot (B_1,B_2) = (\varphi(B_1),\theta'(\varphi)(B_2))
\]
for $(B_1,B_2)\in \Mat_d(\cO)\times\Mat_d(\cO)$. Furthermore, we can identify $\GL_d$ and $\SL_d$ within this algebra, namely for $T\in \Sch_S$
\begin{align*}
\GL_d(T) &= \{ (B,(B^t)^{-1}) \in \Mat_d(\cO(T))\times\Mat_d(\cO(T))\},\\
\SL_d(T) &= \{ (B,(B^t)^{-1}) \in \Mat_d(\cO(T))\times\Mat_d(\cO(T)) \mid \det(B) = 1\},
\end{align*}
and this identification causes the action of $\PGL_d\rtimes \ZZ/2\ZZ$ on $\Mat_d(\cO)\times\Mat_d(\cO)$ to restrict to its action on $\GL_d$ and $\SL_d$ by group automorphisms. Thus, all twisted forms of these groups will be found within an Azumaya algebra with involution of the second kind. In particular, they will be one of the following groups.
\begin{defn}
Let $(f\colon L\to S,\cB,\tau)$ be an Azumaya algebra with involution of the second type. Its \emph{unitary group} is
\begin{align*}
\bU_{(\cB,\tau)} \colon \Sch_S &\to \Grp \\
T &\mapsto \{ b \in f_*(\cB)(T) \mid \tau(b)=b^{-1}\}
\end{align*} 
and its \emph{special unitary group} is
\begin{align*}
\bSU_{(\cB,\tau)} \colon \Sch_S &\to \Grp \\
T &\mapsto \{ b \in f_*(\cB)(T) \mid \tau(b)=b^{-1},\Nrd_{f_*(\cB)}(b)=1\}.
\end{align*}
\end{defn}
\begin{thm}
Let $\bG$ be a form of $\GL_d$. Then $\bG = \bU_{(\cB,\tau)}$ for some Azumaya algebra with involution of the second type $(L\to S,\cB,\tau)$. Likewise, if $\bG$ is a form of $\SL_d$, then it is some $\bSU_{(\cB,\tau)}$.
\end{thm}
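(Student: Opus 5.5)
The argument is by descent from the split case, via \Cref{split_group_automorphisms} and the description of the automorphism group of the split algebra with involution. Let $\bG$ be a form of $\GL_d$ over $S$, that is, a group sheaf fppf-locally isomorphic to $\GL_d$. Since $\bAut(\GL_d)$ is representable, the sheaf $\cI = \mathbf{Isom}(\GL_d,\bG)$ is an $\bAut(\GL_d)$--torsor, and by \Cref{split_group_automorphisms} we have $\bAut(\GL_d) = \PGL_d\rtimes\ZZ/2\ZZ$. The case $d\le 2$ needs separate treatment; see the last paragraph.

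Next consider the split algebra with involution $(S\sqcup S\to S,(\Mat_d(\cO),\Mat_d(\cO)),\tau_d)$. As recalled above, its automorphism group is also $\PGL_d\rtimes\ZZ/2\ZZ$, and the embedding $\GL_d(T)=\{(B,(B^t)^{-1})\}$ identifies $\GL_d$ with $\bU_{(\Mat_d\times\Mat_d,\tau_d)}$. The first check is this identification: $(B_1,B_2)$ is unitary iff $(B_2^t,B_1^t)=(B_1^{-1},B_2^{-1})$, i.e. $B_2=(B_1^t)^{-1}$. The second check is that the restriction map
\[
\bAut(\text{split alg. with inv.})\to\bAut(\GL_d)
\]
is the identity of $\PGL_d\rtimes\ZZ/2\ZZ$. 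For this I verify that $\varphi\in\PGL_d$ acts on $(B,(B^t)^{-1})$ by $(\varphi(B),\theta'(\varphi)((B^t)^{-1}))=(\varphi(B),(\varphi(B)^t)^{-1})$, which is inner conjugation on $\GL_d$. I also verify that the switch sends $(B,(B^t)^{-1})$ to $((B^t)^{-1},B)$, which is $\theta(B)$ under the embedding. This is the main point: the restriction map is an isomorphism of group sheaves.

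Twisting then gives the result. The torsor $\cI$ yields a twisted form $(L\to S,\cB,\tau) = \cI\wedge^{\bAut}(\text{split})$, and such twists are effective: \cite[2.7.0.35]{CF} says these forms are exactly the twisted forms, and concretely $L$, $\cB$, and $\tau$ descend because affine/quasi-coherent data satisfy fppf descent. The formation of $\bU$ commutes with base change and is functorial in isomorphisms of algebras with involution. Hence $\bU_{(\cB,\tau)}$ is the twist of $\bU_{\text{split}}=\GL_d$ by $\cI$ through the equivariant identification, and that twist is $\bG$. The same argument applies to $\SL_d$, $\bSU$, using $\Nrd$ and the fact that $\bAut(\SL_d)$ is also $\PGL_d\rtimes\ZZ/2\ZZ$ and that the reduced norm of $(B,(B^t)^{-1})$ restricted to the first factor is $\det B$. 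Here the main obstacle is checking that the condition $\Nrd_{f_*(\cB)}(b)=1$ matches $\det B=1$ on the split form and is preserved by all automorphisms. Since $\det(\theta(B))=\det(B)^{-1}$, this holds exactly on elements with $\det B=1$.

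Low rank remains. For $d=1,2$, where \Cref{split_group_automorphisms} is not stated, I would either restrict the theorem to $d\ge3$ as the paper's standing convention, or compute $\bAut(\GL_d)$ directly. For $d=2$, $\bAut(\GL_2)$ also contains $B\mapsto(B^t)^{-1}$, which is inner composed with $B\mapsto\det(B)^{-1}B$. So the same semidirect product description holds, and the split model still realizes it; I expect the author simply handles this via \cite[\S3]{CF}.
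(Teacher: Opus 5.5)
Your proposal is correct and is essentially the twisting argument the paper relies on. The paper's proof simply cites \cite{CF}, and the discussion just before the statement sets up exactly your two checks: the automorphism group $\PGL_d\rtimes\ZZ/2\ZZ$ of the split algebra with involution acts on the embedded copies $B\mapsto(B,(B^t)^{-1})$ of $\GL_d$ and $\SL_d$ through the full automorphism groups. One small slip in low rank: $\bAut(\SL_2)=\PGL_2$, because $B\mapsto(B^t)^{-1}$ is inner on $\SL_2$, so there the restriction map is only split surjective. The conclusion still holds, since you can push the $\PGL_2$--torsor along $\PGL_2\inj\PGL_2\rtimes\ZZ/2\ZZ$, which amounts to using an inner form $((\cA,\cA^{\op}),\sw)$.
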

\begin{proof}
These results are contained in \cite[3.5.0.87]{CF} and \cite[3.5.0.92]{CF}.
\end{proof}

This result includes the case of inner forms of these groups. When twisting by inner automorphisms, the split \'etale extension $S\sqcup S\to S$ is not affected, and thus $(S\sqcup S \to S,(\Mat_d(\cO),\Mat_d(\cO),\tau_d))$ twists into $(S\sqcup S\to S,(\cA,\cA^\op),\sw)$ for an Azumaya $\cO$--algebra $\cA$ of degree $d$. Here, the switch involution acts as $\sw(a_1,a_2^\op) = (a_2,a_1^\op)$. Then we have the identifications
\begin{align*}
\GL_{1,\cA} &= \bU_{((\cA,\cA^\op),\sw)}, \text{ and}\\
\SL_{1,\cA} &= \bSU_{((\cA,\cA^\op),\sw)}.
\end{align*}

\begin{rem}
Technically, in \cite{CF} the authors use $(\Mat_d(\cO),\Mat_d(\cO)^\op)$ in their split example, and then the involution is simply the switch $(B_1,B_2^\op) \mapsto (B_2,B_1^\op)$ as in the case of an inner form. However, since the transpose provides an isomorphism $\Mat_d(\cO) \iso \Mat_d(\cO)^\op$, as well as in the reverse direction, this is equivalent to what we have stated above. We find that using the transpose makes the connection to $\theta$ and $\theta'$ clearer.
\end{rem}

\subsection{Hermitian Forms}
Sufficiently locally, such that the algebra becomes an endomorphism algebra, Azumaya algebras with involution of the second kind are related to regular $1$--hermitian forms. Let $f\colon L \to S$ be a degree $2$ \'etale cover with its canonical order two isomorphism $i \colon L \iso L$, equivalently $i^* \colon \cL \to \cL$. Given a finite locally free $\cO|_L$--module $\cH$, the pushforward $f_*(\cH)$ is a $\cL$--module and a \emph{hermitian form} on $\cH$ is a map
\[
h\colon f_*(\cH)\times f_*(\cH) \to \cL
\]
which is sesquilinear, i.e. additive in both arguments and such that for $c_1,c_2 \in \cL$ and $x,y\in f_*(\cH)$ we have
\[
h(c_1x,c_2y) = i^*(c_1)c_2 h(x,y).
\]
For $\lambda \in \cL(S)$ such that $\lambda \cdot i^*(\lambda)=1$, a $\lambda$--hermitian form further satisfies the condition that $i^*(h(x,y))=\lambda h(y,x)$ and therefore a $1$--hermitian form satisfies
\[
i^*(h(x,y)) = h(y,x).
\]
From now on we exclusively consider $1$--hermitian forms and outside of this subsection we will refer to them simply as hermitian forms. Finally, any hermitian form induces a map of $\cO$--modules
\begin{align*}
\tilde{h} \colon f_*(\cH) &\to \cHom_{\cL}(f_*(\cH),\cL) \\
x &\mapsto h(x,\und).
\end{align*}
The hermitian form is called \emph{regular} if $\tilde{h}$ is an isomorphism. Given a regular $1$--hermitian form, it has a involution of the second kind
\[
\tau_h \colon f_*(\cEnd_{\cO|_L}(\cH)) \to f_*(\cEnd_{\cO|_L}(\cH)),
\]
noting that $f_*(\cEnd_{\cO|_L}(\cH))\cong \cEnd_\cL(f_*(\cH))$ by \cite[Tag 01SB]{Stacks}, where $\tau_h$ is defined uniquely by requiring that
\[
h(x,\varphi(y)) = h(\tau_h(\varphi)(x),y)
\]
for all $\varphi \in \cEnd_\cL(f_*(\cH))$ and all $x,y\in f_*(\cH)$. 

Conversely, if $(L\to S,\cB,\tau)$ is an Azumaya algebra with involution of the second kind and $\cB = \cEnd_{\cO|_L}(\cH)$ for an $\cO|_L$--module $\cH$, then there exists a regular $1$--hermitian form $h\colon f_*(\cH)\times f_*(\cH) \to \cL$, unique up to a scalar in $\cL^\times$, such that $\tau= \tau_h$.

Given a regular hermitian form $(f\colon L \to S,\cH,h)$, setting
\begin{align*}
\GL_{(\cH,h)} \colon \Sch_S &\to \Grp \\
T &\mapsto \{\varphi \in f_*(\GL_\cH)(T) \mid h(\varphi(x),\varphi(y))=h(x,y)\}
\end{align*}
defines the general linear group of $(\cH,h)$. Then, there is a canonical isomorphism $\GL_{(\cH,h)}\cong \bU_{(\cEnd(\cH),\tau_h)}$.

The split example of a regular $1$--hermitian form is the following. On the trivial \'etale cover $S\sqcup S \to S$ we take the module $\cH_d = (\cO^d,\cO^d)$ where we view sections of $\cO^d$ as column vectors. The hermitian form is then defined by
\begin{align}
h_d\colon (\cO^d\times\cO^d) \times (\cO^d\times\cO^d) &\to \cO_S\times\cO_S \label{eq_split_hermitian_form} \\
((x,y),(z,w)) &\mapsto (y^t z, w^t x) \nonumber
\end{align}
where $\und^t$ denotes transpose. The form has automorphism group $\GL_d\rtimes \ZZ/2\ZZ$ which acts as follows. For a section $a\in \GL_d$, it acts as the identity on the \'etale cover $S\sqcup S \to S$ and as
\[
a\cdot(x,y) = (ax,(a^{-1})^t y)
\]
on the module. The non-trivial constant element $\overline{1}\in \ZZ/2\ZZ$ acts as the switch automorphism both on $S\sqcup S$ and on the module so that
\[
\overline{1}\cdot (x,y) = (y,x).
\]
It is clear this is a well-defined action since $\ZZ/2\ZZ$ acts on $\GL_d$ by the inverse-transpose. This action identifies the subgroup $\GL_d < \GL_d\rtimes \ZZ/2\ZZ$ with $\GL_{(\cH_d,h_d)}$. 

Any regular $1$--hermitian form of constant rank $d$ is locally isomorphic to the split example above, and therefore is the twist by a $(\GL_d\times \ZZ/2\ZZ)$--torsor of the split example. Thus, for any other such hermitian form $(L\to S,\cH,h)$, we have that
\[
(L\to S,\cH,h) \cong \cK \wedge^{\GL_d\rtimes\ZZ/2\ZZ} (S\sqcup S \to S,\cH_d,h_d)
\]
for some torsor $\cK$.

\subsection{Dynkin Schemes}
Consider the group $\bU(\cB,\tau)$ for an algebra with involution of the second type $(L\to S,(\cB,\tau))$. This corresponds to a $(\PGL_d\rtimes \ZZ/2\ZZ)$--torsor, which we denote by $\cK$. Since $\ZZ/2\ZZ$ acts on $\PGL_d$ by the outer automorphism $\theta'$, which corresponds to the reflection of the Dynkin diagram of type $A_n$, the map $\bAut(\PGL_d) \surj \bOut(\PGL_d)$ corresponds to the canonical projection $\PGL_d\rtimes \ZZ/2\ZZ \to \ZZ/2\ZZ$. The pushforward of $\cK$ along this canonical map yields a $\ZZ/2\ZZ$--torsor $\cL$ which corresponds to the \'etale cover $L\to S$. In particular, this means that $\cL \wedge^{\ZZ/2\ZZ} (S\sqcup S) \cong L$.

As in \cite[Exp. XXIV, 3.3]{SGA3}, we set the Dynkin scheme of $\GL_d$ to be $\Dyn(\GL_d)= S^{\sqcup n}$. As in \Cref{Locally_Constant_Sheaves}, $S^{\sqcup n}$ represents the constant sheaf $\cN$ of \eqref{constant_sheaf_n}, and correspondingly we can think of the components of $S^{\sqcup n}$ as being labelled by the vertices of the Dynkin diagram $A_n$, i.e.
\[
\Dyn(\GL_d) = \bigsqcup_{i=1}^n S_i
\]
where each $S_i = S$. The $\ZZ/2\ZZ$--action on $\cN$ by $\und^\vee \colon \cN \to \cN$ corresponds to the $\ZZ/2\ZZ$--action on $\Dyn(\GL_d)$ which acts as the switch on the pairs $S_r \sqcup S_{n+1-r}$ when $r\neq n+1-r$, and in the case where $n=2m+1$ is odd, it simply acts as the identity on the middle component $S_{m+1}$ since $(m+1)^\vee = m+1$. Continuing to follow \cite[Exp. XXIV, 3.3]{SGA3}, we set
\[
\Dyn(\bG) = \cL \wedge^{\ZZ/2\ZZ} \Dyn(\GL_d).
\]

\begin{lem}
Let $\bG = \bU(\cB,\tau)$ be the group of type $A_n$ associated to $(L\to S,(\cB,\tau))$. Let $\cL$ be the $\ZZ/2\ZZ$--torsor associated to $L\to S$ as reviewed above. Then,
\[
\Dyn(\bG) \cong \begin{cases} L_{(1,n)} \sqcup L_{(2,n-1)} \sqcup \ldots \sqcup L_{(m,m+1)} & n=2m \text{ is even,} \\ L_{(1,n)} \sqcup L_{(2,n-1)} \sqcup \ldots \sqcup L_{(m,m+2)} \sqcup S_{m+1} & n=2m+1 \text{ is odd.} \end{cases}
\]
where each $L_{(r,r^\vee)}=L$.
\end{lem}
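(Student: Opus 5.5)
The plan is to compute the contracted product $\cL \wedge^{\ZZ/2\ZZ} \Dyn(\GL_d)$ orbit by orbit. The twisting operation $\cL\wedge^{\ZZ/2\ZZ}(\und)$ commutes with finite disjoint unions of $\ZZ/2\ZZ$--stable pieces. Concretely, if $X = X_1\sqcup X_2$ with each $X_j$ stable under the action, then $\cL\wedge^{\ZZ/2\ZZ} X \cong (\cL\wedge^{\ZZ/2\ZZ} X_1)\sqcup(\cL\wedge^{\ZZ/2\ZZ} X_2)$. This follows because the contracted product is the fppf quotient sheaf of $\cL\times X$ by the diagonal action, and that quotient decomposes along the disjoint stable pieces. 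Equivalently, one can check it locally where $\cL$ is trivial and glue by descent.

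Next I decompose $\Dyn(\GL_d)=\bigsqcup_{i=1}^n S_i$ into $\und^\vee$--orbits. For $r<r^\vee$ the piece $S_r\sqcup S_{r^\vee}$ is stable, and the nontrivial element acts by the switch. These pieces are indexed by $r=1,\ldots,m$ in both parities: for $n=2m$ the last pair is $(m,m+1)$, and for $n=2m+1$ it is $(m,m+2)$. When $n=2m+1$ there is additionally the fixed component $S_{m+1}$, on which the action is trivial. By the previous paragraph, $\Dyn(\bG)$ is the disjoint union of the twists of these pieces.

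It remains to identify two kinds of twist.
\begin{enumerate}
\item A trivial-action piece: $\cL\wedge^{\ZZ/2\ZZ} S \cong \cL/(\ZZ/2\ZZ)\cong S$, since $\cL$ is a torsor.
\item A switch piece: $S\sqcup S$ with the switch is exactly the trivial $\ZZ/2\ZZ$--torsor $(\ZZ/2\ZZ)_S$ with its regular action, so $\cL\wedge^{\ZZ/2\ZZ}(\ZZ/2\ZZ)_S\cong \cL$ canonically via $[x,g]\mapsto x\cdot g$.
\end{enumerate}
The paper has already recorded that $\cL\wedge^{\ZZ/2\ZZ}(S\sqcup S)\cong L$. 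Hence each pair $(r,r^\vee)$ contributes a copy $L_{(r,r^\vee)}=L$, and the middle component contributes $S_{m+1}$.

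The only step needing care is the compatibility of the contracted product with disjoint unions. It is also the point where one must be sure that the switch on $S_r\sqcup S_{r^\vee}$ matches the regular action, which holds by the description of $\und^\vee$ given above. I expect no real obstacle, since everything can be checked fppf-locally where $\cL$ is trivial, and the isomorphisms above are canonical and therefore descend.
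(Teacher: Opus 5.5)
Your proposal is correct and follows the same route as the paper. You split $\Dyn(\GL_d)$ into the $\und^\vee$--stable pieces $S_r\sqcup S_{r^\vee}$, on which the action is the switch and which twist to $L$, and, for odd $n$, the fixed component $S_{m+1}$, which twists to $S$. The only difference is that you explicitly justify why the twist $\cL\wedge^{\ZZ/2\ZZ}(\und)$ commutes with disjoint unions and why the trivial-action piece twists to $S$, whereas the paper treats both as evident.
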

\begin{proof}
In the case that $n=2m$ is even, we write
\[
\Dyn(\GL_d) = (S_1 \sqcup S_n) \sqcup (S_2 \sqcup S_{n-1}) \sqcup \ldots \sqcup (S_m \sqcup S_{m+1})
\]
where now the $\ZZ/2\ZZ$--action acts as the switch on each pair. Thus, $\cL \wedge^{\ZZ/2\ZZ} \Dyn(\GL_d)$ is $m$ copies of the twist $\cL \wedge^{\ZZ/2\ZZ} (S\sqcup S) = L$. Thus it is clear we obtain the result stated above. Of course, when $n=2m+1$ is odd the picture is identical outside of the trivial action of $\ZZ/2\ZZ$ on $S_{m+1}$, and thus the result is clear in this case as well.
\end{proof}
The action of $\ZZ/2\ZZ$ on $\Dyn(\GL_d)$ clearly commutes with itself, so there is also a descended $\ZZ/2\ZZ$ action on $\Dyn(\bG)$ by an order two isomorphism which we denote
\[
\Dyn(i) \colon \Dyn(\bG) \iso \Dyn(\bG).
\]
By virtue of how $\Dyn(\bG)$ was defined, this action preserves each $L$ component of $\Dyn(\bG)$ and on such a component it locally agrees with the switch on $S\sqcup S$. The canonical $S$--automorphism $i\colon L \iso L$ of the \'etale cover $L\to S$ is exactly the twist by $\cL$ of the switch, so the map $\Dyn(i)$ appears as
\[
\begin{tikzcd}[column sep=-1ex]
\Dyn(\bG) \ar[d,"\Dyn(i)"] & = & L \ar[d,"i"] & \sqcup & L \ar[d,"i"] & \sqcup & \ldots & \sqcup & L \ar[d,"i"] & \sqcup & S \ar[d,"\Id"] \\
\Dyn(\bG)& = & L & \sqcup & L & \sqcup & \ldots & \sqcup & L & \sqcup & S 
\end{tikzcd}
\]
where the final $S$ component only occurs if $n$ is odd.

Because the sheaf $\cN$ is represented by the scheme $\Dyn(\GL_d)$, its twist by $\cL$ will be represented by $\Dyn(\bG)$. Thus, we denote
\[
\cDyn(\bG) = \cL \wedge^{\ZZ/2\ZZ} \cN
\]
to be the twisted sheaf. The $\ZZ/2\ZZ$--action on $\cDyn(\bG)$ corresponding to $\Dyn(i)$ locally agrees with $\und^\vee$, so we also denote it by
\[
\und^\vee \colon \cDyn(\bG) \iso \cDyn(\bG).
\]

We will actually make more use of the scheme of open and closed subschemes of the Dynkin scheme of $\bG$. As mentioned in \cite[Exp. XXVI, 3.1]{SGA3}, when $T \in \Sch_S$ is a twisted constant scheme, the construction of the scheme of open and closed subschemes $\Of(T)$ commutes with base change and is compatible with descent, thus
\begin{align*}
\Of(\Dyn(\bG)) &= \Of(\cL \wedge^{\ZZ/2\ZZ} \Dyn(\GL_d)) \\
&= \cL \wedge^{\ZZ/2\ZZ} \Of(\Dyn(\GL_d)) \\
&= \cL \wedge^{\ZZ/2\ZZ} (\sP_n)_S
\end{align*}
where $\ZZ/2\ZZ$ acts on $(\sP_n)_S$ via $\und^\vee$. In order to track components, we write
\[
(\sP_n)_S = \bigsqcup_{\vr \in \sP_n} S_{\vr}
\]
as a labeled disjoint union where each $S_{\vr} = S$. In order to track components in the following lemma, we recall the definition of $\Sym(\sP_n)$ from \eqref{eq_Sym_Pn} and of $\Lex(\sP_n)$ from \eqref{eq_Lex_Pn}.

\begin{lem}\label{cOD_description}
Let $\bG = \bU(\cB,\tau)$ be the group of type $A_n$ associated to $(L\to S,(\cB,\tau))$. Let $\cL$ be the $\ZZ/2\ZZ$--torsor associated to $L\to S$. Then,
\[
\Of(\Dyn(\bG)) = \left( \bigsqcup_{\vr \in \Lex(\sP_n)} L_{(\vr,\vr^\vee)} \right) \sqcup \left( \bigsqcup_{\vr\in \Sym(\sP_n)} S_{\vr} \right)
\]
where each $L_{(\vr,\vr^\vee)} = L$ and each $S_{\vr} = S$.
\end{lem}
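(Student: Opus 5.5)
The plan is to mirror the proof of the preceding lemma describing $\Dyn(\bG)$, now applied to the twisted constant scheme $(\sP_n)_S$. The starting point is the identification already recorded above,
\[
\Of(\Dyn(\bG)) = \cL \wedge^{\ZZ/2\ZZ} (\sP_n)_S,
\]
where $\ZZ/2\ZZ$ acts on $(\sP_n)_S = \bigsqcup_{\vr\in\sP_n} S_{\vr}$ through $\und^\vee$. Concretely, the action sends the component $S_{\vr}$ identically onto $S_{\vr^\vee}$.

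The first step is to decompose $\sP_n$ into $\und^\vee$--orbits. Since $\und^\vee$ is an involution, every orbit has size one or two. The size one orbits are exactly the elements of $\Sym(\sP_n)$. For a size two orbit $\{\vr,\vr^\vee\}$ with $\vr\neq\vr^\vee$, the lexicographic order on $\sP_n$ is total, so exactly one of $\vr<_{\text{lex}}\vr^\vee$ or $\vr^\vee<_{\text{lex}}\vr$ holds. Hence $\Lex(\sP_n)$ contains exactly one representative of each such orbit. This gives a $\ZZ/2\ZZ$--equivariant decomposition
\[
(\sP_n)_S = \Big(\bigsqcup_{\vr\in\Lex(\sP_n)} (S_{\vr}\sqcup S_{\vr^\vee})\Big) \sqcup \Big(\bigsqcup_{\vr\in\Sym(\sP_n)} S_{\vr}\Big).
\]
On each pair $S_{\vr}\sqcup S_{\vr^\vee}$ the action is the switch, and on each symmetric $S_{\vr}$ it is trivial.

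The second step is to observe that contracted product with the torsor $\cL$ commutes with $\ZZ/2\ZZ$--equivariant finite disjoint unions. This holds because twisting is computed by fppf descent, and descent data on a disjoint union of stable clopen pieces splits componentwise. Then one twists each piece separately:
\begin{itemize}
\item each switched pair becomes $\cL\wedge^{\ZZ/2\ZZ}(S\sqcup S)=L$, as recalled in the discussion of Dynkin schemes;
\item each trivially acted copy becomes $\cL\wedge^{\ZZ/2\ZZ} S = S$, since twisting a trivial action by a torsor yields the trivial form.
\end{itemize}
Labelling the resulting components $L_{(\vr,\vr^\vee)}$ and $S_{\vr}$ gives the stated formula.

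The only point needing any care is the compatibility of twisting with the disjoint union decomposition, that is, that the twist of a coproduct of invariant pieces is the coproduct of the twists. This follows from the twist being defined as the quotient sheaf of $\cL\times(\sP_n)_S$ by the diagonal action, which visibly respects invariant clopen decompositions. Everything else is bookkeeping of orbits, exactly as in the previous lemma.
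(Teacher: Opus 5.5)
Your proposal is correct and follows the paper's proof: you decompose $(\sP_n)_S$ into switched pairs $S_{\vr}\sqcup S_{\vr^\vee}$ indexed by $\Lex(\sP_n)$ and fixed copies $S_{\vr}$ indexed by $\Sym(\sP_n)$, then twist each piece using $\cL\wedge^{\ZZ/2\ZZ}(S\sqcup S)=L$. You also spell out two points the paper leaves implicit: that twisting commutes with equivariant disjoint unions, and that copies of $S$ with trivial action remain $S$.
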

\begin{proof}
Since $\Of(\Dyn(\bG)) = \cL \wedge^{\ZZ/2\ZZ} (\sP_n)_S$, we write
\[
(\sP_n)_S = \left( \bigsqcup_{\vr \in \Lex(\sP_n)} S_{\vr}\sqcup S_{\vr^\vee} \right) \sqcup \left( \bigsqcup_{\vr\in \Sym(\sP_n)} S_{\vr} \right)
\]
where $\ZZ/2\ZZ$ acts as the switch on each pair in the $\Lex(\sP_n)$ component, and trivially on each copy of $S$ in the $\Sym(\sP_n)$ component. Then, using the fact that $\cL \wedge^{\ZZ/2\ZZ} (S\sqcup S) = L$, we obtain the result.
\end{proof}

Since the sheaf $\cP_n$ is represented by the scheme $(\sP_n)_S$, the twisted sheaf $\cL \wedge^{\ZZ/2\ZZ} \cP_n$ is represented by $\Of(\Dyn(\bG))$. Thus, we denote
\begin{equation}
\cOD{\bG} = \cL \wedge^{\ZZ/2\ZZ} \cP_n \label{eq_OD_G}
\end{equation}
to be this twisted sheaf. Since both of the involutions $\und^\vee$ and $\und^c$ on $\sP_n$ commute with $\und^\vee$ which was used for twisting, the involutions on $\cP_n$ descend to involutions
\begin{align*}
\und^\vee \colon \cOD{\bG} &\iso \cOD{\bG} \\
\und^c \colon \cOD{\bG} &\iso \cOD{\bG}
\end{align*}
on the twisted sheaf. As for the case of $\Dyn(\bG)$ discussed above, the involution $\und^\vee$ on $\cOD{\bG}$ corresponds to the order two $S$--isomorphism
\begin{equation}\label{eq_OfDyn_i}
\Of(\Dyn(i)) \colon \Of(\Dyn(\bG)) \iso \Of(\Dyn(\bG))
\end{equation}
which acts as $i\colon L_{(\vr,\vr^\vee)} \iso L_{(\vr,\vr^\vee)}$ on each $L$ component and as the identity on the $S_{\vr}$ components.

\subsection{Parabolics and Levi Subgroup}\label{prelim_parabolics_and_levis}
\subsubsection{Parabolic Subgroups}
Following \cite[\S 1, Exp. XXVI]{SGA3}, the parabolic subgroups of a reductive group are those closed subgroups which are smooth over the base scheme and which contain a Borel subgroup. For a group $\bG \colon \Sch_S \to \Grp$ of type $A_n$, we denote by
\[
\cPar_\bG \colon \Sch_S \to \Sets
\]
the sheaf of parabolic subgroups in $\bG$. By \cite[3.3(ii), Exp. XXVI]{SGA3} the sheaf $\cPar_\bG$ is representable and we denote this representing scheme by $\Par(\bG) \in \Sch_S$. This sheaf of parabolic subgroups comes with the \emph{type morphism} of \cite[Exp. XXVI, 3.2]{SGA3} which is a map
\begin{equation}\label{type_map_parabolics}
t \colon \cPar_\bG \to \cOD{\bG}.
\end{equation}
The corresponding map of representing schemes $\Par(\bG) \to \Of(\Dyn(\bG))$ is smooth, projective, and has geometrically integral fibers, also by \cite[3.3(ii), Exp. XXVI]{SGA3}. We write $\cPar_{\vr,\bG}$ for the fiber of $t$ over a section $\vr\in \cOD{\bG}$. 

To add some intuition for what this type morphism is doing, we can look at how it behaves locally when $\bG$ split. For $\bG = \GL_d$ with $d=n+1$, this is of course a group of inner type $A_n$, and so we know from \Cref{cOD_description} that
\[
\cOD{\bG} = \cP_n
\]
which is the constant sheaf associated to the power set $\sP_n$ on $n$ elements. First, we focus on standard parabolics, i.e., those of the form
\begin{align}
\bP \colon \Sch_S &\to \Grp \nonumber \\
T &\mapsto \left\{ \begin{bmatrix} B_1 & * & \hdots & * \\ 0 & B_2 & \hdots & * \\ \vdots & \vdots & \ddots & \vdots \\ 0 & 0 & \hdots & B_k \end{bmatrix} \in \GL_d(T) \mid B_i \in \GL_{m_i}(T) \right\} \label{standard_parabolic}
\end{align}
for a fixed ordered partition $m_1+m_2+\ldots+m_k = d$, written with square blocks on the diagonal, zeroes below the diagonal blocks, and arbitrary entries above them. The standard realization of the root system of type $A_n$ within $\RR^d$ has the system of simple roots
\[
\{\alpha_1 = e_1-e_2, \alpha_2 = e_2-e_3,\ldots, \alpha_n = e_n-e_{n+1}\}
\]
which are associated to the upper off diagonal entries of a matrix. Thus, the negative roots correspond to the lower off diagonal and in particular to the root subgroups
\begin{align*}
\bU_{-\alpha_i} \colon \Sch_S &\to \Grp \\
T &\mapsto \{ I+cE_{i+1,i} \mid c\in \cO(T)\}
\end{align*}
where $I$ is the identity matrix and $E_{i+1,i}$ has a $1$ in the $(i+1)^\text{st}$ row and $i^\text{th}$ column. A parabolic of the form above is generated by the upper triangular matrices along with all negative root subgroups which it contains. For $1\leq i \leq n$, the root subgroup $\bU_{-\alpha_i}$ is contained in the parabolic $\bP$ above if and only if $i$ is not one of the partial sums $m_1+\ldots+m_j$ arising from the partition of $d$. Thus, by forming the increasing tuple of partial sums and taking the compliment,
\[
(m_1, m_1+m_2, \ldots, \sum_{j=1}^{k-2}m_j, \sum_{j=1}^{k-1}m_j)^c,
\]
we obtain a list of the negative root subgroups which generate $\bP$. This constant section of $\cP_n$ is defined to be the type of $\bP$ under the morphism \eqref{type_map_parabolics}.

\begin{example}
The parabolic subgroup
\[
\bP = \left\{ \begin{bmatrix} * & * & * & * \\ * & * & * & * \\ 0 & 0 & * & * \\ 0 & 0 & * & *\end{bmatrix} \in \GL_4 \right\}
\]
of $\GL_4$ has $t(\bP)=(1,3)$ since exactly the negative root subgroups
\begin{align*}
\bU_{-\alpha_1} &= \left\{ \begin{bmatrix} 1 & 0 & 0 & 0 \\ c & 1 & 0 & 0 \\ 0 & 0 & 1 & 0 \\ 0 & 0 & 0 & 1 \end{bmatrix} \mid c \in \cO \right\}, \text{ and} \\
\bU_{-\alpha_3} &= \left\{ \begin{bmatrix} 1 & 0 & 0 & 0 \\ 0 & 1 & 0 & 0 \\ 0 & 0 & 1 & 0 \\ 0 & 0 & c & 1 \end{bmatrix} \mid c \in \cO \right\}
\end{align*}
are included in $\bP$. 
\end{example}

For a general parabolic subgroup $\bP \in \cPar_{\GL_d}(T)$, it is conjugate to a unique parabolic subgroup $\bP'$ which contains the Borel subgroup of upper triangular matrices. Then, sufficiently locally, say over a cover $\{U_i \to T\}_{i\in I}$, the subgroup $\cP'|_{U_i}$ will become a standard parabolic and have a constant type $t(\cP'|_{U_i}) \in \cP_n(U_i)$. These local types will glue to produce $t(\bP') \in \cP_n(T)$, and the type morphism sets
\[
t(\bP) = t(\bP').
\]
This describes the type map $t\colon \cPar_{\GL_d} \to \cP_n$. The automorphism group $\bAut(\GL_d)$ also acts naturally on $\cPar_{\GL_d}$. Since the type of a parabolic is already defined by comparing it to a conjugate, inner automorphisms of $\GL_d$ do not change the type of a parabolic subgroup. However, the outer automorphism $\theta \colon \GL_d \to \GL_d$ of \eqref{GLd_outer_aut} sends a standard parabolic generated by negative root subgroups $\bU_{-\alpha_{i_1}},\ldots,\bU_{-\alpha_{i_k}}$ to the opposite parabolic, i.e., containing the lower triangular matrices and generated by the positive root subgroups $\bU_{\alpha_{i_1}},\ldots,\bU_{\alpha_{i_k}}$. Conjugating this opposite parabolic by the matrix with ones on the second diagonal produces the standard parabolic generated by
\[
\bU_{-\alpha_{n+1-i_k}},\ldots,\bU_{-\alpha_{n+1-i_1}},
\]
meaning that $t(\theta(\bP)) = t(\bP)^\vee$ for all parabolic subgroups. Thus, the type morphism
\[
t\colon \cPar_{\GL_d} \to \cP_n
\]
is $\ZZ/2\ZZ$--equivariant and respects the surjection $\bAut(\GL_d)=\PGL_d\rtimes \ZZ/2\ZZ \surj \ZZ/2\ZZ$. Thus, for an arbitrary group $\bG$ of type $A_n$, the type morphism
\[
t\colon \cPar(\bG) \to \cOD{\bG}
\]
is the twist of the type morphism in the split case by the associated $\PGL_d\rtimes \ZZ/2\ZZ$--torsor.

\subsubsection{Levi Subgroups}
Continuing to follow \cite[\S 1, Exp. XXVI]{SGA3}, Levi subgroups are defined in the context of a reductive groups $\bG$ and a parabolic subgroup $\bP \subseteq \bG$. The parabolic subgroup contains its unipotent radical $\rad_u(\bP)\subset \bP$ and then $\bP/\rad_u(\bP)$ is reductive by definition. A \emph{Levi subgroup of $\bP$} is any reductive subgroup $\bL \subseteq \bP$ such that
\[
\bP = \rad_u(\bP)\rtimes \bL, 
\]
equivalently such that the composition $\bL \inj \bP \surj \bP/\rad_u(\bP)$ is an isomorphism. Levi subgroups of $\bP$ are precisely linked with the maximal tori of $\bP$.
\begin{prop}[{\cite[1.6 (ii), Exp. XXVI]{SGA3}}]
Let $\bG$ be a reductive group scheme over $S$ and $\bP \subseteq \bG$ a parabolic subgroup. For each maximal torus $\bT \subseteq \bP$ there exists a reductive subgroup $\bL \subseteq \bP$ such that
\begin{enumerate}
\item $\bL$ is the largest reductive subgroup of $\bP$ which contains $\bT$, and
\item $\bP = \rad_u(\bP)\rtimes \bL$.
\end{enumerate}
\end{prop}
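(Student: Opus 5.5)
The statement is local on $S$ for the fppf (or étale) topology. Both properties are stable under base change and descend: reductivity of a subgroup, and the semidirect product decomposition $\bP = \rad_u(\bP)\rtimes \bL$. Maximality among reductive subgroups containing $\bT$ is checked on points over every $S$--scheme. So I would first pass to a cover over which $\bG$ is split, $\bT$ is a split maximal torus, and $\bP$ contains a Borel subgroup containing $\bT$. In our type $A_n$ setting this reduces to $\bG = \GL_d$, $\bT$ the diagonal torus, and $\bP$ a standard parabolic of the form \eqref{standard_parabolic}. Uniqueness of the $\bL$ attached to $\bT$ follows from characterization (i), so the local constructions agree on overlaps and glue by descent to a subgroup of $\bP$ over $S$.

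In the split case I would take $\bL = \bCent_\bG(\lambda)$ for a cocharacter $\lambda\colon \GG_m\to \bT$ with $\bP = \bP(\lambda)$ (the limit subgroup construction). Concretely $\lambda(t) = \mathrm{diag}(t^{k}I_{m_1}, t^{k-1}I_{m_2},\ldots, t I_{m_k})$, and then $\bL$ is the block diagonal subgroup $\prod \GL_{m_i}$. It is reductive and contains $\bT$. The unipotent radical is the block upper unitriangular group. Multiplying out block upper triangular matrices shows that $\rad_u(\bP)\rtimes \bL \to \bP$ is an isomorphism, which gives (ii).

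For (i), let $\bH\subseteq \bP$ be a reductive subgroup containing $\bT$. Working fppf locally again, $\bH$ is determined by $\bT$ and the set of roots $\Phi_\bH$ occurring in its Lie algebra, and $\Phi_\bH$ is symmetric under negation. This is where I would cite SGA3 Exp. XXII on subgroups of type (R). Each root of $\bH$ lies in $\Phi_\bP$, and since $-\alpha$ is also a root of $\bH$, it lies in $\Phi_\bP\cap -\Phi_\bP = \Phi_\bL$. Hence $\bH\subseteq \bL$.

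The main obstacle is justifying that a reductive subgroup containing $\bT$ is generated by $\bT$ and root subgroups. Over a general base this is the content of SGA3 XXII, 5.4 (closed subgroups containing a maximal torus), and I would invoke it rather than reprove it. I would also check fiberwise that $\bH$ being smooth with reductive fibers forces $\Phi_\bH$ to be closed and symmetric.
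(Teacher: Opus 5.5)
The paper gives no proof of this proposition; it simply cites \cite[Exp.~XXVI, 1.6]{SGA3}, whose argument works for an arbitrary reductive $\bG$ using the root-theoretic description of subgroups of type (R). Your outline has the same shape: localize, split, construct $\bL$, check (i) via root sets, and glue using the uniqueness forced by (i). The descent step and your argument for (i) are fine.

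The gap is the sentence ``in our type $A_n$ setting this reduces to $\bG=\GL_d$.'' The proposition is stated for an arbitrary reductive group scheme over $S$, and nothing in the hypotheses lets you assume $\bG$ is a form of $\GL_d$. Two of your steps depend essentially on the matrix model of $\GL_d$:
\begin{enumerate}
\item the construction of $\bL$ as the block-diagonal subgroup;
\item the proof of (ii) by factoring block upper triangular matrices, including the identification of $\rad_u(\bP)$ with the block unitriangular group.
\end{enumerate}
So the argument says nothing about, e.g., $\mathbf{Sp}_{2n}$ or $E_8$, and as written it does not even cover $\SL_d$ or $\PGL_d$. For the paper's own applications only forms of $\GL_d$ occur, so your computation would suffice there. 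It does not, however, prove the proposition as stated.

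To close the gap, replace the matrix computation with the general split structure theory. With $(\bG,\bT)$ split and $\bT\subseteq\bP$, the parabolic $\bP$ is determined, locally on $S$, by a parabolic subset $R_P$ of the roots: $R_P$ is closed and $R_P\cup -R_P=R$. Let $\bL$ be the subgroup of type (R) with root set $R_P\cap -R_P$. Let $\bU$ be the subgroup generated by the root groups $U_\alpha$ with $\alpha\in R_P\setminus(-R_P)$. The product decomposition for subgroups of type (R) in SGA3 Exp.~XXII, \S5 gives $\bU\times\bL\iso\bP$. Since $\bU$ is smooth and normal with unipotent connected fibres and reductive quotient, it equals $\rad_u(\bP)$.

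Alternatively, your cocharacter idea generalizes. Choose $\lambda\in X_*(\bT)$ with $\langle\alpha,\lambda\rangle=0$ on $R_P\cap -R_P$ and $\langle\alpha,\lambda\rangle>0$ on $R_P\setminus(-R_P)$; an integral multiple of the sum of the fundamental coweights of the simple roots not in the Levi works. Then $\bP(\lambda)=\bP$ and $\bCent_\bG(\lambda)=\bL$, by comparing root sets. The dynamic-method decomposition $\bP(\lambda)=U_\bG(\lambda)\rtimes \bCent_\bG(\lambda)$ (Conrad--Gabber--Prasad), valid for smooth affine groups over any base, then gives (ii). With $\bL$ defined this way, your proof of (i) goes through unchanged: $\Phi_\bH\subseteq \Phi_\bP\cap-\Phi_\bP$, and subgroups of type (R) containing $\bT$ are ordered by their root sets.
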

As a corollary of the above proposition, we know that for such subgroups $\bT \subseteq \bL \subseteq \bP$ we have the following identities on normalizers.
\begin{align*}
\bNorm_\bP(\bL) &= \bL \text{, and} \\
\bNorm_\bP(\bT) &= \bNorm_\bL(\bT).
\end{align*}
Finally, we point out that by \cite[Corollaire 1.8, Exp. XXVI]{SGA3}, any two Levi subgroups of $\bP$ are conjugate by an element of $\bP(S)$, in fact more precisely they are conjugate by a unique element of $\rad_u(\bP)(S)$.

As an example, consider the standard parabolic subgroup of $\bP \subseteq \GL_d$ from \eqref{standard_parabolic},
\begin{align*}
\bP \colon \Sch_S &\to \Grp \\
T &\mapsto \left\{ \begin{bmatrix} B_1 & * & \hdots & * \\ 0 & B_2 & \hdots & * \\ \vdots & \vdots & \ddots & \vdots \\ 0 & 0 & \hdots & B_k \end{bmatrix} \in \GL_d(T) \mid B_i \in \GL_{m_i}(T) \right\}
\end{align*}
for a fixed ordered partition $m_1 + m_2 + \ldots + m_k = d$. This $\bP$ contains the standard maximal torus of diagonal matrices in $\GL_d$ and the corresponding Levi subgroup is
\begin{align*}
\bL \colon \Sch_S &\to \Grp \\
T &\mapsto \left\{ \begin{bmatrix} B_1 & 0 & \hdots & 0 \\ 0 & B_2 & \hdots & 0 \\ \vdots & \vdots & \ddots & \vdots \\ 0 & 0 & \hdots & B_k \end{bmatrix} \in \GL_d(T) \mid B_i \in \GL_{m_i}(T) \right\}
\end{align*}
and so $\bL \cong \GL_{m_1}\times \GL_{m_2}\times\ldots\times \GL_{m_k}$. All other Levi subgroups of $\bP$ are then conjugates of this one.

\subsubsection{Limit Subgroups}\label{limit_subgroups}
Given a cocharacter $\lambda \colon \GG_m \to \bG$ for a linear algebraic group $\bG$, we can construct an associated parabolic and Levi pair in the following way. For any scheme $T\in \Sch_S$, denote by $T_m$ the scheme which represents $\GG_m|_T$ and denote by $T_a$ the scheme which represents $\GG_a|_T$. Thus, we get affine morphisms of schemes
\[
\begin{tikzcd}
T_m \ar[dr] \ar[rr] & & T_a \ar[dl] \\
& T &
\end{tikzcd}
\]
dual to the diagram of $\cO|_T$--algebras
\[
\begin{tikzcd}
\cO|_T[t,t^{-1}] & & \cO|_T[t] \ar[ll] \\
 & \cO|_T. \ar[ur] \ar[ul] & 
\end{tikzcd}
\]
Since $\GG_m(T_m) = (\cO|_T[t,t^{-1}])^\times$, it contains the section $t$ and so we have a section $\lambda(t) \in \bG(T_m)$. We now define the \emph{limit subgroup} $\bP(\lambda)\colon \Sch_S \to \Grp$ to behave as
\[
T \mapsto \{h\in \bG(T) \mid \lambda(t)h|_{T_m}\lambda(t)^{-1} \in \Img(\bG(T_a)\xrightarrow{\textrm{res}}\bG(T_m))\}.
\]
for $T\in \Sch_S$. This is a rephrasing of the definition of limit subgroup used in \cite[7.1]{Gille}.

The limit subgroup $\bP(\lambda) \subseteq \bG$ is a parabolic subgroup. Setting $\bL(\lambda) = \bCent_\bG(\lambda)$ to be the centralizer we obtain a parabolic--Levi pair $(\bP(\lambda),\bL(\lambda))$.

\begin{example}\label{example_limit_computation}
Consider the cocharacter $\lambda \colon \GG_m \to \GL_3$ defined by
\[
\lambda(a) = \begin{bmatrix} a^{-1} & & \\ & a^{-2} & \\ & & a^{-2} \end{bmatrix}.
\]
For a scheme $T\in \Sch_S$, we simply have that
\begin{align*}
\GL_3(T) &= \nGL(\cO(T)), \\
\GL_3(T_a) &= \nGL(\cO(T)[t]), \text{ and} \\
\GL_3(T_m) &= \nGL(\cO(T)[t,t^{-1}])
\end{align*}
where the restriction maps are the obvious injections. Thus, considering a generic matrix $B= [b_{ij}]_{i,j=1}^3 \in \nGL(\cO(T))$, we can compute that
\[
\lambda(t)B\lambda(t)^{-1} = \begin{bmatrix} b_{11} & t b_{12} & t b_{13} \\ t^{-1} b_{21} & b_{22} & b_{23} \\ t^{-1} b_{31} & b_{32} & b_{33} \end{bmatrix}.
\]
This belongs to the image of $\GL_3(T_a) \to \GL_3(T_m)$ if and only if the terms involving $t^{-1}$ do not occur, i.e., if $b_{21} = b_{31} = 0$. Thus,
\[
\bP(\lambda)(T) = \left\{\begin{bmatrix} b_{11} & b_{12} & b_{13} \\ 0 & b_{22} & b_{23} \\ 0 & b_{32} & b_{33} \end{bmatrix} \in \GL_3(T) \right\}.
\]
\end{example}

A useful fact is that conjugates of limit subgroups correspond to conjugates of the cocharacter.
\begin{lem}\label{conjugating_limits_Levis}
Let $\lambda \colon \GG_m \to \bG$ be a cocharacter and let $(\bP(\lambda),\bL(\lambda))$ be the associated limit parabolic and Levi subgroups. For a section $g\in \bG(S)$, we have that
\[
(\bP(g\lambda g^{-1}),\bL(g\lambda g^{-1})) = (g\bP(\lambda)g^{-1},g\bL(\lambda)g^{-1}).
\]
\end{lem}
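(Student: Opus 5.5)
The plan is to prove both equalities directly from the definitions by checking them on $T$--points for arbitrary $T\in \Sch_S$. We write $g_T = g|_T \in \bG(T)$, $g_{T_m}$ and $g_{T_a}$ for the restrictions of $g$ to $T_m$ and $T_a$. The conjugated cocharacter $g\lambda g^{-1}$ is defined by $(g\lambda g^{-1})(a) = g\lambda(a)g^{-1}$, so on the universal point we get $(g\lambda g^{-1})(t) = g_{T_m}\lambda(t)g_{T_m}^{-1}\in \bG(T_m)$.

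\emph{Parabolic part.} Let $h\in \bG(T)$. Then $h\in \bP(g\lambda g^{-1})(T)$ if and only if
\[
g_{T_m}\lambda(t)g_{T_m}^{-1}\, h|_{T_m}\, g_{T_m}\lambda(t)^{-1}g_{T_m}^{-1} \in \Img\big(\bG(T_a)\to \bG(T_m)\big).
\]
The key observation is that $g_{T_m}$ is the restriction of $g_{T_a}\in \bG(T_a)$, because $T_m\to T$ factors through $T_a\to T$. Since restriction is a group homomorphism, the image $\Img(\bG(T_a)\to\bG(T_m))$ is a subgroup of $\bG(T_m)$ that is stable under conjugation by $g_{T_m}$. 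The displayed condition is therefore equivalent to
\[
\lambda(t)\,(g^{-1}hg)|_{T_m}\,\lambda(t)^{-1}\in \Img\big(\bG(T_a)\to \bG(T_m)\big),
\]
that is, to $g_T^{-1}hg_T\in \bP(\lambda)(T)$, which in turn is equivalent to $h\in (g\bP(\lambda)g^{-1})(T)$. This holds functorially in $T$, so the two subgroup sheaves agree.

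\emph{Levi part.} Here $\bL(\lambda)=\bCent_\bG(\lambda)$, whose $T$--points are the $h\in\bG(T)$ commuting with $\lambda(a)$ for all $a\in \GG_m(T')$ and all $T'\to T$. Then $h$ centralizes $g\lambda g^{-1}$ exactly when $g^{-1}hg$ centralizes $\lambda$, which is a pointwise group-theoretic identity. Hence $\bCent_\bG(g\lambda g^{-1}) = g\,\bCent_\bG(\lambda)\,g^{-1}$.

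The only step that needs care is the subgroup-and-stability observation in the parabolic part: the image of $\bG(T_a)$ in $\bG(T_m)$ is closed under multiplication and under conjugation by elements restricted from $T_a$, and $g$, being a section over $S$, gives such an element. Everything else is formal.
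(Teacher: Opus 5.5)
Your proposal is correct and matches the paper's proof. The Levi part is the formal identity $\bCent_\bG(g\lambda g^{-1}) = g\,\bCent_\bG(\lambda)\,g^{-1}$. The parabolic part uses the same key point, that $g|_{T_m}$ lies in the subgroup $\Img(\bG(T_a)\to\bG(T_m))$ and so conjugating by it preserves that image, which reduces the condition to $g^{-1}hg\in\bP(\lambda)(T)$.
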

\begin{proof}
The claim about Levi subgroups is immediate since
\[
\bCent_\bG(g\lambda g^{-1}) = g\bCent_\bG(\lambda)g^{-1}.
\]
To check that the parabolics are equal, consider $T\in \Sch_S$. The group $\bP(g\lambda g^{-1})(T)$ is
\[
\left\{ h \in \bG(T) \mid (g\lambda g^{-1})(t)h|_{T_m} (g\lambda g^{-1})(t^{-1}) \in \Img(\bG(T_a) \to \bG(T_m))\right\}.
\]
Considering the expression $(g\lambda g^{-1})(t)h|_{T_m} (g\lambda g^{-1})(t^{-1})$ and omitting the restriction notation, this is
\[
g \lambda(t) g^{-1} h g \lambda(t^{-1}) g^{-1} \in \Img(\bG(T_a) \to \bG(T_m)).
\]
Since $g\in \bG(T)$, we have that $g|_{T_m} \in \Img(\bG(T_a) \to \bG(T_m))$ and thus stabilizes this image. This means that
\[
\lambda(t) g^{-1} h g \lambda(t^{-1}) \in \Img(\bG(T_a) \to \bG(T_m)),
\]
i.e., that $g^{-1}hg \in \bP(\lambda)(T)$. Thus,
\begin{align*}
\bP(g\lambda g^{-1})(T) &= \{ h \in \bG(T) \mid g^{-1}hg \in \bP(\lambda)(T)\} \\
&= \{h\in \bG(T) \mid h\in g\bP(\lambda)(T) g^{-1}\} \\
&= (g\bP(\lambda)g^{-1})(T)
\end{align*}
and so $\bP(g\lambda g^{-1}) = g\bP(\lambda)g^{-1}$ as parabolic subgroups of $\bG$.
\end{proof}

\section{Flags, Generalized Severi-Brauer Sheaves, and Parabolic Subgroups: The Inner Case}\label{inner_case}
Flags of subspaces in a vector space over a field are well studied objects with close ties to linear algebraic groups and their homogeneous spaces. However, naive extensions of the definition to submodules of vector bundles over schemes may encounter problems. For example, consider a scheme $S$ and let $\cE$ be a finite locally free $\cO$--module of constant rank $d$. A flag in $\cE$ is an ordered set of submodules
\[
\cV_0 = 0 \subseteq \cV_1 \subseteq \cV_2 \subseteq \ldots \subseteq \cV_\ell \subseteq \cE
\]
for some integer $\ell \geq 0$ where each $\cV_i$ is a finite locally free $\cO$--module which is locally a direct summand of $\cE$, i.e., there is a cover $\{U_i \to S\}_{i\in I}$ over which we can write $\cE|_{U_i}=\cV_j|_{U_i}\oplus \cW_{ij}$ for each $i\in I$ and $1\leq j\leq \ell$. We fix the convention that $\cV_0 = 0$ and thus will omit writing $\cV_0$. We allow the empty flag, $0 \subseteq \cE$, which is the unique flag with $\ell=0$. Ultimately, we aim to show that the scheme of flags in $\cE$ is isomorphic to the scheme of parabolic subgroups in $\GL_\cE$, with a flag being mapped to its stabilizer subgroup. However, this will not produce an isomorphism with our current notion of flag. A trivial example is that the flags
\[
0 \subseteq \cV \subseteq \cE \text{ and } 0 \subseteq \cV \subseteq \cV \subseteq \cE
\]
will of course have the same stabilizer subgroup. So, we need to avoid duplicates in our flags. However, say the scheme $S = S_1 \sqcup S_2$ has two disjoint components and so $\cE = (\cE_1,\cE_2)$ where each $\cE_i$ is a finite locally free $\cO|_{S_i}$--module of constant rank $d$. Then, for $\cV \subsetneq \cV' \subseteq \cE_1$ and $\cW \subseteq \cE_2$, the two flags
\[
0 \subseteq (\cV,\cW) \subseteq (\cV',\cE_2) \subseteq (\cE_1,\cE_2) \text{ and } 0 \subseteq (\cV,\cW) \subseteq (\cV',\cW) \subseteq (\cE_1,\cE_2)
\]
also have the same stabilizer subgroup in $\GL_\cE = (\GL_{\cE_1},\GL_{\cE_2})$, despite neither flag containing any duplicates.

\subsection{Lowered Flags}
To remedy the problems discussed above, we introduce the notion of a \emph{lowered flag}.
\begin{defn}\label{defn_lowered_flag}
Let $\cE$ be a finite locally free $\cO$--module. A flag
\[
0 \subseteq \cV_1 \subseteq \ldots \subseteq \cV_\ell \subseteq \cE
\]
is called a \emph{lowered flag} if, for all $T\in \Sch_S$ and $i\in \{0,\ldots,\ell-1\}$,
\[
\cV_i|_T = \cV_{i+1}|_T \;\Rightarrow\; \cV_i|_T = \cE|_T = \cV_{i+1}|_T. 
\]
\end{defn}
Returning to the examples above, the flag
\[
0 \subseteq (\cV,\cW) \subseteq (\cV',\cW) \subseteq (\cE_1,\cE_2)
\]
is not lowered if $\cW\neq \cE_2$ since restricting this flag to $S_2 \in \Sch_S$ produces $0 \subseteq \cW \subseteq \cW \subseteq \cE_2$, which contains a non-$\cE_2$ duplicate. On the other hand, (with a few extra assumptions on $\cV$ and $\cV'$), the flag
\[
0 \subseteq (\cV,\cW) \subseteq (\cV',\cE_2) \subseteq (\cE_1,\cE_2)
\]
is lowered.

The notion of a lowered flag is stable under base change.
\begin{lem}
Let $\cE$ be a finite locally free $\cO$--module. Let $0 \subseteq \cV_1 \subseteq \ldots \subseteq \cV_\ell \subseteq \cE$ be a lowered flag. Then, for all $T \in \Sch_S$, the flag
\[
0 \subseteq \cV_1|_T \subseteq \ldots \subseteq \cV_\ell|_T \subseteq \cE|_T
\]
is also a lowered flag in $\cE|_T$.
\end{lem}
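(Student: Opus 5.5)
This is essentially a transitivity-of-restriction argument. I will unwind \Cref{defn_lowered_flag} for the restricted flag and reduce the required condition to the lowered condition of the original flag over $S$.

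Fix $T\in \Sch_S$ and consider the restricted flag $0 \subseteq \cV_1|_T \subseteq \ldots \subseteq \cV_\ell|_T \subseteq \cE|_T$. First, I check that this is still a flag in the sense used above. Each $\cV_j|_T$ is finite locally free and locally a direct summand of $\cE|_T$. Indeed, given a cover $\{U_i\to S\}$ with $\cE|_{U_i} = \cV_j|_{U_i}\oplus \cW_{ij}$, the base-changed cover $\{U_i\times_S T \to T\}$ splits $\cE|_T$ in the same way. Also, restriction of sheaves preserves the inclusions, so the terms remain nested.

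Next comes the lowered condition. Here the sheaves on $\Sch_T$ are the restrictions along the forgetful functor $\Sch_T \to \Sch_S$. Hence for any $T'\in \Sch_T$, viewed as an object of $\Sch_S$ via the composite $T'\to T\to S$, we have
\[
(\cV_j|_T)|_{T'} = \cV_j|_{T'} \quad\text{and}\quad (\cE|_T)|_{T'} = \cE|_{T'}.
\]
Now suppose $(\cV_i|_T)|_{T'} = (\cV_{i+1}|_T)|_{T'}$ for some $0\le i\le \ell-1$. Then $\cV_i|_{T'} = \cV_{i+1}|_{T'}$ with $T'$ regarded as an $S$--scheme. The lowered hypothesis on the original flag therefore gives $\cV_i|_{T'} = \cE|_{T'} = \cV_{i+1}|_{T'}$, and rewriting via the identification above gives the required equality for the restricted flag. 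The case $i=0$ with $\cV_0=0$ is included verbatim.

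The only point needing care is the identification $(\cF|_T)|_{T'} = \cF|_{T'}$ on the big fppf site. It holds because the restriction $\cF|_T$ is defined by $\cF|_T(T') = \cF(T')$ for $T'\to T$, which is a tautology. So I do not expect any real obstacle here.
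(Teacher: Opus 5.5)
Your proof is correct and follows the same route as the paper's. Both regard $T'\in\Sch_T$ as an $S$--scheme, identify $(\cV_j|_T)|_{T'}$ with $\cV_j|_{T'}$, and apply the lowered condition of the original flag. Your additional check that each $\cV_j|_T$ remains a locally direct summand of $\cE|_T$ is something the paper leaves implicit, and including it does no harm.
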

\begin{proof}
If we have $Y \in \Sch_T$ and $i$ such that
\[
(\cV_i|_T)|_Y = (\cV_{i+1}|_T)|_Y,
\]
this is equivalent to $\cV_i|_Y = \cV_{i+1}|_Y$ where $Y\in \Sch_S$, and so $\cV_{i-1}|_Y = \cE|_Y = \cV_i|_Y$. Thus, the restricted flag is lowered as claimed.
\end{proof}

We will restrict our attention to lowered flags in modules of constant rank.
\begin{lem}\label{bounding_flag_length}
Let $\cE$ be a finite locally free $\cO$--module of constant rank $d$. If
\[
0 \subseteq \cV_1 \subseteq \ldots \subseteq \cV_\ell \subseteq \cE
\]
is a lowered flag with $\cV_\ell \neq \cE$, then $\ell < d$.
\end{lem}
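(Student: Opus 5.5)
Since $\cV_\ell \neq \cE$, there is some $T\in\Sch_S$ over which $\cV_\ell$ and $\cE$ genuinely differ. The plan is to pass to a point of such a $T$ where they still differ, read off ranks there, and use the lowered condition to show the ranks increase strictly along the flag. Concretely, each $\cV_i$ is finite locally free and locally a direct summand of $\cE$, so locally on $S$ it has a well-defined rank function $r_i$, locally constant with values in $\{0,\ldots,d\}$. The first step is to find a point $s\in S$ with $r_\ell(s) < d$. If instead $r_\ell \equiv d$ everywhere, then locally $\cE = \cV_\ell \oplus \cW$ with $\cW$ locally free of rank $0$, hence $\cW = 0$. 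That gives $\cV_\ell = \cE$, a contradiction.

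Next, replace $S$ by a small open neighbourhood $U$ of $s$ over which all the $r_i$ are constant and $\cE|_U = \cV_i|_U \oplus \cW_i$ for each $i$. Then
\[
0 = r_0 \le r_1 \le \cdots \le r_\ell < d
\]
on $U$, where $r_0 = 0$ corresponds to $\cV_0 = 0$. I then claim $r_i < r_{i+1}$ for every $0\le i\le \ell-1$. Granting this, we get a strictly increasing chain of $\ell+1$ integers in $\{0,\ldots,d-1\}$, so $\ell+1 \le d$, i.e. $\ell < d$.

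For the claim, suppose $r_i = r_{i+1}$ on some connected open $V\ni s$ inside $U$. Since $\cV_i|_V \subseteq \cV_{i+1}|_V$, both are direct summands of $\cE|_V$, and they have equal rank, we have $\cV_i|_V = \cV_{i+1}|_V$. To see this, note that $\cV_i$ is a direct summand of $\cE$ contained in $\cV_{i+1}$, hence a direct summand of $\cV_{i+1}$, with complement locally free of rank $0$ and therefore zero. The lowered condition with $T = V$ then forces $\cV_i|_V = \cE|_V = \cV_{i+1}|_V$. Thus $r_{i+1} = d$ at $s$, and since $\cV_{i+1} \subseteq \cV_\ell$ this gives $r_\ell(s) = d$, contradicting the choice of $s$.

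The main obstacle is the rank bookkeeping: making sure ranks are well defined and locally constant, and that equal ranks of nested local direct summands force equality of the submodules as sheaves on $\Sch_V$, not merely as Zariski sheaves. This is where the direct summand hypothesis is used, and the equality $\cV_i|_V = \cV_{i+1}|_V$ is stable under arbitrary base change, so the lowered condition applies.
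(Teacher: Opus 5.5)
Your proof is correct and is essentially the paper's argument. Both localize until the ranks of the $\cV_j$ are constant, then use the lowered condition to see that the ranks strictly increase until they reach $d$; the only difference is packaging---the paper assumes $\ell \geq d$ and deduces $\cV_\ell|_{U_i} = \cE|_{U_i}$ on every member of a cover, whereas you argue directly at a point where $\rank(\cV_\ell) < d$ (and you make explicit the step, left implicit in the paper, that nested local direct summands of equal rank coincide).
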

\begin{proof}
Since each $\cV_j$ is a finite locally free $\cO$--module, we may find a cover $\{U_i \to S\}_{i\in I}$ over which all $\cV_j|_{U_i}$ are constant rank. Consider the restriction of the flag
\[
0 \subsetneq \cV_1|_{U_i} \subsetneq \ldots \subsetneq \cV_\ell|_{U_i} \subsetneq \cE|_{U_i}
\]
to some $U_i$. If for some $j$, $\rank(\cV_j|_{U_i})=\rank(\cV_{j+1}|_{U_i})$, then we must have that $\cV_j|_{U_i} = \cV_{j+1}|_{U_i} = \cE|_{U_i}$ since the flag is lowered. Thus, the local ranks form a sequence which strictly increases until it reaches $d$. This imposes the condition that
\[
j \leq \rank(\cV_j|_{U_i})
\]
for each $j$. Hence, if $\ell \geq d$, then $d \leq \rank(\cV_\ell|_{U_i}) \leq d$ for all $i\in I$ and so $\cV_\ell|_{U_i}=\cE|_{U_i}$. This means that $\cV_\ell = \cE$ globally, which contradicts the assumption that $\cV_\ell \neq \cE$. Therefore, we conclude that $\ell < d$, finishing the proof.
\end{proof} 

Let $\cE$ be of constant rank $d$. By considering $S\in \Sch_S$ and applying the definition, we see that a lowered flag must be of the form
\[
0 \subsetneq \cV_1 \subsetneq \ldots \subsetneq \cV_\ell \subsetneq \cE \subseteq \cE \subseteq \ldots \subseteq \cE,
\]
with $\ell < d$ by \Cref{bounding_flag_length}. Here $\cV_i \subsetneq \cV_{i+1}$ simply means that $\cV_i\subseteq \cV_{i+1}$ as $\cO$--modules and there exists some $T\in \Sch_S$ where $\cV_i(T) \neq \cV_{i+1}(T)$. Of course, the redundant occurrences of $\cE$ on the right can be truncated so we only consider lowered flags with a single $\cE$. The only cost to be paid when truncating is a bit of care when defining base change.
\begin{defn}\label{defn_flag_sheaf}
Let $\cE$ be a finite locally free $\cO$--module with constant rank $d$. We define the functor
\begin{align*}
\LowFlag_\cE \colon \Sch_S &\to \Sets \\
T &\mapsto \left\{ 0 \subseteq \cV_1 \subseteq \ldots \subseteq \cV_\ell \subseteq \cE|_T \mid \begin{array}{l} \ell\geq 0,\; \cV_\ell \neq \cE|_T, \text{ each } \cV_j \text{ is} \\ \text{locally a direct summand, and} \\ \text{the flag is lowered}\end{array}\right\}.
\end{align*}
whose restriction map with respect to a morphism $Y \to T$ in $\Sch_S$ behaves as
\[
(0 \subseteq \cV_1 \subseteq \ldots \subseteq \cV_\ell \subseteq \cE|_T) \mapsto (0 \subseteq \cV_1|_Y \subseteq \ldots \subseteq \cV_k|_Y \subseteq \cE|_Y)
\]
where $k \in \{1,\ldots,\ell\}$ is the greatest index such that $\cV_k|_Y \neq \cE|_Y$.
\end{defn}

\begin{prop}\label{lowered_flags_sheaf}
Let $\cE$ be a finite locally free $\cO$--module with constant rank $d$. The functor $\LowFlag_\cE\colon \Sch_S \to \Sets$ defined above is a sheaf. Furthermore, for $T\in \Sch_S$ it satisfies
\[
(\LowFlag_\cE)|_T = \LowFlag_{(\cE|_T)}.
\]
\end{prop}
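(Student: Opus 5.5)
Before the sheaf axioms, I will check that $\LowFlag_\cE$ is a functor. The restriction map in \Cref{defn_flag_sheaf} must land in $\LowFlag_\cE(Y)$ and be compatible with composition. By the lemma on base change of lowered flags, the untruncated restricted flag $0\subseteq \cV_1|_Y\subseteq\ldots\subseteq\cV_\ell|_Y\subseteq \cE|_Y$ is lowered. Lowered-ness forces every equality to occur at the right end. So there is a well-defined greatest $k$ (possibly $k=0$) with $\cV_k|_Y\neq\cE|_Y$, and $\cV_j|_Y=\cE|_Y$ for all $j>k$. Truncating therefore only removes copies of $\cE|_Y$, so the result is again lowered, and local direct summands restrict to local direct summands. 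For $Z\to Y\to T$, truncating twice agrees with truncating once, since in both cases one removes exactly the terms equal to $\cE|_Z$. The identity $(\LowFlag_\cE)|_T=\LowFlag_{(\cE|_T)}$ will then be immediate from the definitions: for $Y\in\Sch_T$, both sides are lowered flags in $\cE|_Y$ with the same truncated restriction maps.

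For separatedness, take a cover $\{T_i\to T\}$ and two sections $(\cV_1,\ldots,\cV_\ell)$ and $(\cW_1,\ldots,\cW_m)$ with equal restrictions to each $T_i$. The key observation is that the untruncated restrictions then also agree after padding the shorter flag with copies of $\cE$. Without loss of generality $\ell\le m$. Set $\cV_j=\cE|_T$ for $\ell<j\le m$; the padded flag is still lowered. Over each $T_i$ the truncated flags coincide, so for every $j$ we have $\cV_j|_{T_i}=\cW_j|_{T_i}$: both equal the same term if $j\le k_i$, and both equal $\cE|_{T_i}$ otherwise, by lowered-ness. Since submodules of $\cE|_T$ form a sheaf, $\cV_j=\cW_j$ for all $j$. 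In particular $\cW_m=\cV_m$, and $\cW_m\ne\cE|_T$ forces $m=\ell$.

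Gluing is where I expect the main work. Take compatible sections $F_i=(\cV^i_1,\ldots,\cV^i_{\ell_i})$ over $T_i$. By \Cref{bounding_flag_length}, every $\ell_i<d$, so I pad all of them with copies of $\cE$ to a common length $d-1$. As above, compatibility of the truncated restrictions on $T_i\times_T T_j$ gives equality of the padded untruncated restrictions, term by term. Hence for each $j$ the submodules $\cV^i_j$ glue by descent to a submodule $\cV_j\subseteq\cE|_T$. The properties we need all hold because they can be checked locally: each $\cV_j$ is locally a direct summand, the inclusions $\cV_j\subseteq\cV_{j+1}$ hold, and the glued flag is lowered. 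Lowered-ness is local because if $\cV_j|_Y=\cV_{j+1}|_Y$, pulling back along the cover $Y\times_T T_i\to Y$ gives the equality $\cV_j|_Y=\cE|_Y$ there, and so on $Y$. Finally, I truncate to the greatest $k$ with $\cV_k\ne\cE|_T$. This is a section of $\LowFlag_\cE(T)$, and by construction it restricts to each $F_i$.

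The subtle point is exactly this padding argument: restriction is not termwise, and one must use lowered-ness to see that truncated equality implies padded termwise equality.
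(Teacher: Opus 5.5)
Your proof is correct and follows essentially the same route as the paper. Separatedness goes by termwise comparison of the restricted flags, using $\cW_m\neq\cE$ to force equal lengths. Gluing pads the local flags with copies of $\cE$, which is justified by \Cref{bounding_flag_length} (you pad to length $d-1$, the paper pads to $\max_i \ell_i$), then glues termwise and checks lowered-ness locally. Your explicit check that the truncating restriction maps are well defined and compose correctly is a detail the paper leaves implicit.
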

\begin{proof}
The fact that $(\LowFlag_\cE)|_T = \LowFlag_{(\cE|_T)}$ is clear from \Cref{defn_flag_sheaf} (and does not rely on $\cE$ being constant rank). Thus, it is sufficient to show that $\LowFlag_\cE$ satisfies the sheaf condition for a cover $\{U_i \to S\}_{i\in I}$ of $S$.

First, we argue that it is separated. Assume that
\[
0 \subseteq \cV_1 \subseteq \ldots \subseteq \cV_\ell \subseteq \cE \text{ and } 0 \subseteq \cW_1 \subseteq \ldots \subseteq \cW_m \subseteq \cE
\]
are two sections of $\LowFlag_\cE(S)$ which agree over every $U_i$. Without loss of generality $\ell \leq m$, and so this means that
\[
(0 \subseteq \cV_1|_{U_i} \subseteq \ldots \subseteq \cV_{p_i}|_{U_i} \subseteq \cE|_{U_i}) = (0 \subseteq \cW_1|_{U_i} \subseteq \ldots \subseteq \cW_{q_i}|_{U_i} \subseteq \cE|_{U_i})
\]
where $p_i$ and $q_i$ are the appropriate largest indices. In particular $p_i=q_i$, and so if $\ell < m$, then $q_i \leq \ell < m$ which means that all restrictions of $\cW_m$ are being truncated, i.e., that $\cW_m|_{U_i}=\cV|_{U_i}$ for all $i\in I$. However, this means that $\cW_m=\cV$ in which case the second section was not a lowered flag. This contradiction means that $\ell = m$. Then, for each $i\in I$ the local equality gives us that
\[
\cV_k|_{U_i} = \cW_k|_{U_i}
\]
if $k\leq p_i$ or $\cV_k|_{U_i}=\cE|_{U_i}=\cW_k|_{U_i}$ if $k>p_i$, and thus $\cV_k = \cW_k$ globally for all $k \in \{1,\ldots,\ell\}$. Hence, the two sections of $\LowFlag_\cE(S)$ are in fact equal.

Now, we argue that $\LowFlag_\cE$ allows gluing. Let
\[
(0 \subseteq \cV_{1,i} \subseteq \ldots \subseteq \cV_{\ell_i,i} \subseteq \cE|_{U_i}) \in \LowFlag_\cE(U_i)
\]
be sections which agree on overlaps. By \Cref{bounding_flag_length}, each $\ell_i < d$ and so we can set $m = \max(\{\ell_i \mid i\in I\})$. Now, for $k\in \{1,\ldots,m\}$ we set
\[
\cW_{k,i} = \begin{cases} \cV_{k,i} & \text{if } k\leq \ell_i \\ \cE|_{U_i} & \text{if } k > \ell_i. \end{cases}
\]
From the assumption that our sections agree on overlaps, we see that $\cW_{k,i}|_{U_{ij}} = \cW_{k,j}|_{U_{ij}}$ as well, as so there exists a glued submodule $\cW_k \subseteq \cE$ such that $\cW_k|_{U_i} = \cW_{k,i}$ for each $i\in I$. It is clear that each $\cW_k$ produced this way is locally a direct summand of $\cE$ since the $\cV_{k,i}$ are locally direct summands of $\cE|_{U_i}$ by assumption. Additionally, $\cW_m \neq \cE$ since we have $\ell_i=m$ for at least one $i\in I$. Now, our candidate flag is
\[
0 \subseteq \cW_1 \subseteq \ldots \subseteq \cW_m \subseteq \cE,
\]
but we must show that it is lowered. Let $T\in \Sch_S$ and $k$ be such that $\cW_k|_T = \cW_{k+1}|_T$. Restricting the cover of $S$, we get the cover $\{T_i = T\times_S U_i \to T\}_{i\in I}$ of $T$. Restricting to each $T_i$, we have
\[
(\cW_k|_T)|_{T_i} = (\cW_k|_{U_i})|_{T_i} = \begin{cases} \cV_{k,i}|_{T_i} & \text{if } k\leq \ell_i \\ \cE|_{T_i} & \text{if } k > \ell_i. \end{cases}
\]
and likewise for $\cW_{k+1}|_{T_i}$. If $k\geq \ell_i$, we have that $\cW_k|_{T_i} = \cE|_{T_i} = \cW_{k+1}|_{T_i}$ immediately, and when $k< \ell_i$ we obtain that
\[
\cV_{k,i}|_{T_i} = \cW_k|_{T_i} = \cW_{k+1}|_{T_i} = \cV_{k+1,i}|_{T_i}
\]
which forces everything to be $\cE|_{T_i}$ in this case as well since $0 \subseteq \cV_{1,i} \subseteq \ldots \subseteq \cV_{\ell_i,i} \subseteq \cE|_{U_i}$ is lowered. Therefore, we conclude that
\[
\cW_k|_T = \cE|_T = \cW_{k+1}|_T
\]
also over $T$, meaning that $0 \subseteq \cW_1 \subseteq \ldots \subseteq \cW_m \subseteq \cE$ is lowered. Hence,
\[
(0 \subseteq \cW_1 \subseteq \ldots \subseteq \cW_m \subseteq \cE) \in \LowFlag_\cE(S)
\]
is a section which glues the compatible sections we started with. This shows $\LowFlag_\cE$ is a sheaf and finishes the proof.
\end{proof}

If one wishes to work with flags of constant rank, they may consider the presheaf of such objects $\ConFlag_\cE \colon \Sch_S \to \Sets$ which is defined on $T\in \Sch_S$ by
\begin{equation}\label{eqn_con_flags}
T \mapsto \left\{ 0 \subseteq \cV_1 \subseteq \ldots \subseteq \cV_\ell \subseteq \cE|_T \mid \begin{array}{l}\ell\geq 0,\; \cV_\ell \neq \cE|_T, \text{ each } \cV_i \text{ is locally\ } \\ \text{a direct summand, is constant rank,} \\ \text{and }\rank(\cV_i)<\rank(\cV_{i+1}) \end{array}\right\}
\end{equation}
with naturally defined restrictions since all the restrictions of any such flag of length $\ell$ will also have length $\ell$. However, this also means that $\ConFlag$ is not a sheaf, since for example on a disconnected scheme $S=S_1 \sqcup S_2$ sections of the form
\[
(0 \subseteq \cV \subseteq \cV' \subseteq \cE|_{S_1}) \in \ConFlag_\cE(S_1) \text{ and } (0 \subseteq \cW \subseteq \cE|_{S_2}) \in \ConFlag_\cE(S_2)
\]
will trivially agree on the empty overlap, but there cannot exist a section of $\ConFlag_\cV(S)$ which restricts to the above two since they have differing lengths. Thus, we consider the sheafification of $\ConFlag_\cE$.

\begin{prop}\label{sheafification_of_ConFlag}
Let $\cE$ be a finite locally free $\cO$--module of constant rank $d$. Consider the presheaf $\ConFlag_\cE$ defined above and the sheaf $\LowFlag_\cE$ of \Cref{defn_lowered_flag}. There is a natural injection of presheaves
\begin{align*}
\ConFlag_\cE &\inj \LowFlag_\cE \\
(0 \subseteq \cV_1 \subseteq \ldots \subseteq \cV_\ell \subseteq \cE) & \mapsto (0 \subseteq \cV_1 \subseteq \ldots \subseteq \cV_\ell \subseteq \cE)
\end{align*}
which satisfies the universal property of sheafification. In particular, it shows that
\[
(\ConFlag_\cE)^\sharp \cong \LowFlag_\cE.
\]
\end{prop}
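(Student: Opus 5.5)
The plan is to use the standard criterion for sheafification. If $\iota\colon \cF \to \cG$ is a morphism from a presheaf to a sheaf that is injective on sections and \emph{locally surjective}, then $\iota$ identifies $\cG$ with $\cF^\sharp$. Locally surjective means every section of $\cG(T)$ is, on some cover of $T$, in the image of $\iota$. Since $\LowFlag_\cE$ is a sheaf by \Cref{lowered_flags_sheaf}, it suffices to check three things: the map is well defined and compatible with restrictions, it is injective, and it is locally surjective.

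\textbf{Well-definedness.} Take a section $0 \subseteq \cV_1 \subseteq \ldots \subseteq \cV_\ell \subseteq \cE|_T$ of $\ConFlag_\cE(T)$.
\begin{itemize}
\item[(a)] It is lowered. Let $Y \in \Sch_T$ be nonempty. Then $\cV_i|_Y$ and $\cV_{i+1}|_Y$ have the distinct constant ranks $\rank(\cV_i)<\rank(\cV_{i+1})$, so they cannot be equal. In particular $\cV_\ell|_Y \neq \cE|_Y$, since $\rank(\cV_\ell) < d$ (because $\cV_\ell \neq \cE|_T$ is a direct summand). For $Y=\emptyset$ all sheaves agree, so the lowered condition holds vacuously.
\item[(b)] Restrictions match. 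By the same rank argument no truncation occurs in the restriction of \Cref{defn_flag_sheaf} for nonempty $Y$. Hence the restriction in $\LowFlag_\cE$ agrees with the naive restriction used in $\ConFlag_\cE$.
\end{itemize}
The empty scheme can be treated by convention, as both functors give a single point there.

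\textbf{Injectivity.} This is immediate, since the map is the identity on the underlying data.

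\textbf{Local surjectivity.} This is the main step. Take a lowered flag $0 \subseteq \cV_1 \subseteq \ldots \subseteq \cV_\ell \subseteq \cE|_T$.
\begin{itemize}
\item[(a)] As in the proof of \Cref{bounding_flag_length}, choose a cover $\{U_i \to T\}$ on which every $\cV_j|_{U_i}$ has constant rank.
\item[(b)] Its restriction to $U_i$ in $\LowFlag_\cE$ is the truncated flag $0 \subseteq \cV_1|_{U_i} \subseteq \ldots \subseteq \cV_{k}|_{U_i} \subseteq \cE|_{U_i}$. Here $k$ is the largest index with $\cV_k|_{U_i} \neq \cE|_{U_i}$.
\item[(c)] Suppose $\rank(\cV_j|_{U_i}) = \rank(\cV_{j+1}|_{U_i})$ for some $j<k$. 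An inclusion of locally direct summands of equal constant rank is an equality, which one checks locally where the summands split. Then the lowered condition gives $\cV_j|_{U_i}=\cE|_{U_i}$, contradicting $j \leq k$.
\item[(d)] Hence the ranks strictly increase, and the truncated flag lies in $\ConFlag_\cE(U_i)$. Its image in $\LowFlag_\cE(U_i)$ is exactly the restriction of the original section.
\end{itemize}

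The only delicate points are the equal-rank-implies-equal step in (c) and the bookkeeping of truncation. I expect both to be routine.
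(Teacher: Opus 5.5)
Your proposal is correct and follows the paper's own argument. The paper shows the map is well defined and injective via the strict rank condition, then gets local surjectivity by passing to a cover on which every $\cV_j$ has constant rank, so that the truncated restriction lies in $\ConFlag_\cE$. You additionally spell out details the paper treats as clear: that restricting a constant-rank flag causes no truncation, and that an inclusion of locally direct summands of equal rank is an equality, so the ranks in the truncated flag strictly increase.
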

\begin{proof}
It is clear that we have the injection given above since the condition on the ranks of a section $(0 \subseteq \cV_1 \subseteq \ldots \subseteq \cV_\ell \subseteq \cE) \in \ConFlag_\cE$ guarantees that
\[
\cV_i|_T \neq \cV_{i+1}|_T
\]
for all $T\in \Sch_S$, and thus the flag is lowered.

To show that this injection satisfies the universal property of sheafification, we must show that any section of $\LowFlag_\cE$ locally belongs to the image of $\ConFlag_\cE \inj \LowFlag_\cE$. Thus, consider a section
\[
(0 \subseteq \cV_1 \subseteq \ldots \subseteq \cV_\ell \subseteq \cE|_T) \in \LowFlag_\cE(T)
\]
for some $T\in \Sch_S$. Since each $\cV_k$ is finite locally free, we can find a cover $\{T_i \to T\}_{i\in I}$ over which all $\cV_k$ simultaneously become of constant rank. However, it is then clear that each restriction
\[
(0 \subseteq \cV_1 \subseteq \ldots \subseteq \cV_\ell \subseteq \cE|_T)|_{T_i} = (0 \subseteq \cV_1|_{T_i} \subseteq \ldots \subseteq \cV_{p_i}|_{T_i} \subseteq \cE|_{T_i})
\]
is a flag of constant rank submodules and thus belongs to the image of $\ConFlag_\cE(T_i)\inj \LowFlag_\cE(T_i)$. Hence, $(\ConFlag_\cE)^\sharp \cong \LowFlag_\cE$ as claimed and we are done.
\end{proof}

\begin{rem}\label{non-truncated_flags}
Instead of choosing to truncate our lowered flags and paying the associated costs, one can instead do the following. Let $\cE$ be a finite locally free $\cO$--module of constant rank $d$. In light of \Cref{bounding_flag_length}, any truncated lowered flag has length $\ell < d$, and so they can all be uniquely extended to a lowered flag of length exactly $d-1$. Thus, defining the functor
\begin{align*}
\cF_\cE \colon \Sch_S &\to \Sets \\
T &\mapsto \{ 0 \subseteq \cV_1 \subseteq \ldots \subseteq \cV_{d-1} \subseteq \cE|_T \mid \text{the flag is lowered} \}
\end{align*}
one obtains a clear isomorphism
\begin{align*}
\LowFlag_\cE &\iso \cF_\cE \\
(0 \subseteq \cV_1 \subseteq \ldots \subseteq \cV_\ell \subseteq \cE) &\mapsto (0 \subseteq \cV_1 \subseteq \ldots \subseteq \cV_\ell \subseteq \cE \subseteq \cE \subseteq \ldots \subseteq \cE).
\end{align*}
Working with $\cF_\cE$, the proof that it is a sheaf is simpler than the proof given for \Cref{lowered_flags_sheaf} since we do not need to track changing lengths of flags. However, the map $\ConFlag_\cE \inj \cF_\cE$ can no longer be seen as a literal subsheaf, and instead also requires the extension of flags on the right. Later on, we will define the type morphisms for lowered flags, which will be a map $\LowFlag_\cE \to \cP_n$ based on the ranks of the components. If one does this naively with $\cF_\cE$, the resulting tuples of the form $(r_1,r_2,\ldots,r_\ell,d,d,\ldots,d)$ do not occur in $\cP_n$, so truncation would be required at this point, or a discussion of the largest component of the flag not equal to $\cE$, or one could replace $\cP_n$ with a sheaf of lowered tuples, etc. For these later applications, we choose to include the truncation in the definition of $\LowFlag_\cE$.
\end{rem}

\begin{rem}\label{raised_flags}
One may also work with the symmetric notion of \emph{raised flags}, where a flag
\[
0 \subseteq \cV_1 \subseteq \ldots \subseteq \cV_\ell \subseteq \cE
\]
is raised if for all $T\in \Sch_S$ and $i$ such that $\cV_{i-1}|_T = \cV_i|_T$, then $\cV_{i-1}|_T = 0 = \cV_i|_T$. In this case, raised flags truncate on the left after restrictions and thus it is more natural to index flags in the reverse direction, i.e., as
\[
0 \subseteq \cV_{\ell} \subseteq \cV_{\ell-1} \subseteq \ldots \subseteq \cV_1 \subseteq \cE.
\]
Raised flags similarly form a sheaf, which we denote $\RaiFlag_\cE$. Flags in $\ConFlag_\cE$ are also raised flags, and so when $\cE$ is of constant rank $d$ there is an analogue of \Cref{sheafification_of_ConFlag} saying that the injection
\[
\ConFlag_\cE \inj \RaiFlag_\cE
\]
has the universal property of sheafification and so $(\ConFlag_\cE)^\sharp \cong \RaiFlag_\cE$ as well. Of course, this means we obtain a commutative diagram
\[
\begin{tikzcd}
\LowFlag_\cE \ar[rr,"\varphi_{\cFlag}"] \ar[dr,hookleftarrow] & & \RaiFlag_\cE \\
 & \ConFlag_\cE \ar[ur,hookrightarrow] & 
\end{tikzcd}
\]
where $\varphi_{\cFlag}$ is a uniquely defined isomorphism of sheaves. Because this (or a similar) raising isomorphism will be of use later, we take a moment here to detail how this isomorphism behaves.

Let $(0\subseteq \cV_1 \subseteq \ldots \subseteq \cV_\ell \subseteq \cE)\in \LowFlag_\cE(S)$ be a global section. Since vector bundles on schemes are Zariski locally of constant rank, we can find a Zariski cover $S=\bigcup_{i\in I} U_i$ over which all $\cV_j|_{U_i}$ are constant rank. Since the flag is lowered, for each $i\in I$ the restriction appears as
\[
0 \subseteq \cV_1|_{U_i} \subseteq \ldots \subseteq \cV_{k_i}|_{U_i} \subseteq \cE|_{U_i}
\]
for some integer $k_i \leq \ell$, and
\[
\rank_{\cO|_{U_i}}(\cV_j|_{U_i}) < \rank_{\cO|_{U_i}}(\cV_{j+1}|_{U_i})
\]
for all $0\leq j \leq k_i$. This means that for any further restriction, i.e. along any $T\to U_i$, the length of the restricted flag will remain $k_i$. Thus, we see that the length of the flag is also locally constant and furthermore is constant over each $U_i$. Therefore, we may group the $U_i$ together by length of the restriction, defining $W_j = \bigcup_{i\in I \atop k_i=j} U_i$, to obtain a finite disjoint Zariski cover
\[
S = W_{j_1} \sqcup W_{j_2} \sqcup \ldots \sqcup W_{j_m} \sqcup W_\ell
\]
for a subset $\{j_1,\ldots,j_m\} \subseteq \{1,\ldots,\ell-1\}$. By construction, the restriction of the flag to $W_{j_i}$ has length $j_i$. Any global submodule $\cV\subset \cE$ can then be written as a tuple of its restrictions to these $W_{j_i}$, i.e.,
\[
\cV = (\cV|_{W_{j_1}},\ldots,\cV|_{W_{j_m}},\cV|_{W_\ell}).
\]
Doing this for the components of the flag (and writing it vertically), we see that we obtain a staircase pattern
\[
\begin{tikzpicture}
\matrix (M)[matrix of math nodes]{
\cE & = & (\cE|_{W_{j_1}} & \cE|_{W_{j_2}} & \ldots & \cE|_{W_{j_m}} & \cE|_{W_\ell}) \\
\cV_\ell & = & (\cE|_{W_{j_1}} & \cE|_{W_{j_2}} & \ldots & \cE|_{W_{j_m}} & \cV_\ell|_{W_\ell}) \\
\vdots & \quad & & & \vdots & & \\
\cV_{j_m} & = & (\cE|_{W_{j_1}} & \cE|_{W_{j_2}} & \ldots & \cV_{j_m}|_{W_{j_m}} & \cV_{j_m}|_{W_\ell}) \\
\vdots & \quad & & & \vdots & & \\
\vdots & \quad & & & \vdots & & \\
\cV_{j_2} & = & (\cE|_{W_{j_1}} & \cV_{j_2}|_{W_{j_2}} & \ldots & \cV_{j_2}|_{W_{j_m}} & \cV_{j_2}|_{W_\ell}) \\
\vdots & \quad & & & \vdots & & \\
\cV_{j_1} & = & (\cV_{j_1}|_{W_{j_1}} & \cV_{j_1}|_{W_{j_2}} & \ldots & \cV_{j_1}|_{W_{j_m}} & \cV_{j_1}|_{W_\ell}) \\
\vdots & \quad & & & \vdots & & \\
\cV_1 & = & (\cV_1|_{W_{j_1}} & \cV_1|_{W_{j_2}} & \ldots & \cV_1|_{W_{j_m}} & \cV_1|_{W_\ell}) \\
0 & = & (0 & 0 & \ldots & 0 & 0) \\
};
\draw[thick,black](M-8-2.south east)--(M-9-4.north west);
\draw[thick,black](M-9-4.north west)--(M-7-4.north west);
\draw[thick,black](M-7-4.north west)--(M-7-4.north east);
\draw[thick,black](M-7-4.north east)--([xshift=-0.9ex,yshift=-1ex]M-6-5.north west);
\draw[thick,black]([xshift=-0.9ex,yshift=-1ex]M-6-5.north west)--([xshift=1ex,yshift=-1ex]M-6-5.north east);
\draw[thick,black]([xshift=1ex,yshift=-1ex]M-6-5.north east)--(M-4-6.north west);
\draw[thick,black](M-4-6.north west)--(M-4-7.north west);
\draw[thick,black](M-4-7.north west)--([xshift=-0.7ex]M-2-7.north west);
\draw[thick,black]([xshift=-0.7ex]M-2-7.north west)--(M-2-7.north east);
\end{tikzpicture}
\]
above which we simply have restrictions of $\cE$ and below which we have proper submodules. The raising isomorphism $\varphi_{\cFlag}$ then slides the non-trivial part of the flag over each component to the top, augmenting with zeros below. That is, it produces the flag
\[
\begin{tikzpicture}
\matrix (M)[matrix of math nodes]{
\cE & = & (\cE|_{W_{j_1}} & \cE|_{W_{j_2}} & \ldots & \cE|_{W_{j_m}} & \cE|_{W_\ell}) \\
\cV'_\ell & = & (\cV_{j_1}|_{W_{j_1}} & \cV_{j_2}|_{W_{j_2}} & \ldots & \cV_{j_m}|_{W_{j_m}} & \cV_\ell|_{W_\ell})\\
\vdots & \quad & & & \vdots & & \\
\cV'_{\ell+1-{j_1}} & = & (\cV_1|_{W_{j_1}} & \cV_{j_2+1-j_1}|_{W_{j_2}} & \ldots & \cV_{j_m+1-j_1}|_{W_{j_m}} & \cV_{\ell+1-j_1}|_{W_\ell})\\
\vdots & \quad & & & \vdots & & \\
\cV'_{\ell+1-{j_2}} & = & (0 & \cV_1|_{W_{j_2}} & \ldots & \cV_{j_m+1-j_2}|_{W_{j_m}} & \cV_{\ell+1-j_2}|_{W_\ell} )\\
\vdots & \quad & & & \vdots & & \\
\vdots & \quad & & & \vdots & & \\
\cV'_{\ell+1-{j_m}} & = & (0 & 0 & \ldots & \cV_1|_{W_{j_m}} & \cV_{\ell+1-j_m}|_{W_\ell})\\
\vdots & \quad & & & \vdots & & \\
\cV'_1 & = & (0 & 0 & \ldots & 0 & \cV_1|_{W_\ell})\\
0 & = & (0 & 0 & \ldots & 0 & 0) \\
};
\draw[thick,black](M-4-3.south west)--(M-4-4.south west);
\draw[thick,black](M-4-4.south west)--([xshift=-3ex]M-6-4.south west);
\draw[thick,black]([xshift=-3ex]M-6-4.south west)--([xshift=3ex]M-6-4.south east);
\draw[thick,black]([xshift=3ex]M-6-4.south east)--([xshift=-1ex,yshift=-0.5ex]M-7-5.south west);
\draw[thick,black]([xshift=-1ex,yshift=-0.5ex]M-7-5.south west)--([xshift=1ex,yshift=-0.5ex]M-7-5.south east);
\draw[thick,black]([xshift=1ex,yshift=-0.5ex]M-7-5.south east)--([xshift=-3.4ex]M-9-6.south west);
\draw[thick,black]([xshift=-3.4ex]M-9-6.south west)--([yshift=-0.2ex]M-9-7.south west);
\draw[thick,black]([yshift=-0.2ex]M-9-7.south west)--([xshift=-3ex]M-11-7.south west);
\draw[thick,black]([xshift=-3ex]M-11-7.south west)--(M-11-7.south east);
\end{tikzpicture}
\]
with the zeroes below the (horizontally reflected) staircase pattern and non-zero submodules above. Note that the $W_\ell$ column is fixed and otherwise the order of the modules in each column is preserved.
\end{rem}

\begin{lem}\label{GL_action_on_LowFlag}
Let $\cE$ be a finite locally free $\cO$--module of constant rank $d$. The natural action of $\GL_\cE$ on $\cE$ induces an action on $\LowFlag_\cE$. For $B \in \GL_\cE$, it behaves as
\[
B\cdot (0 \subseteq \cV_1 \subseteq \ldots \subseteq \cV_\ell \subseteq \cE) = (0 \subseteq B\cV_1 \subseteq \ldots \subseteq B\cV_\ell \subseteq \cE).
\]
\end{lem}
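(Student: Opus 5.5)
To prove \Cref{GL_action_on_LowFlag} we must check three things: the formula defines a flag in $\LowFlag_\cE(T)$ for $B\in\GL_\cE(T)$, it is a group action, and it commutes with the restriction maps of \Cref{defn_flag_sheaf}. By \Cref{lowered_flags_sheaf} we have $(\LowFlag_\cE)|_T=\LowFlag_{(\cE|_T)}$, so it is enough to work with $T=S$ and a global section $B\in\GL_\cE(S)$. The general case is the same argument with $\cE$ replaced by $\cE|_T$.

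First, $B$ is an automorphism of the $\cO$--module $\cE$. Hence each $B\cV_j$ is a submodule, and the inclusions $B\cV_j\subseteq B\cV_{j+1}$ hold. If $\cE|_{U}=\cV_j|_U\oplus\cW$ is a local splitting, then $\cE|_U=B\cV_j|_U\oplus B\cW$, so each $B\cV_j$ is again locally a direct summand. Since $B$ is injective with image $\cE$, we have $B\cV_\ell\neq\cE$.

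For loweredness, fix $Y\in\Sch_S$ and suppose $(B\cV_j)|_Y=(B\cV_{j+1})|_Y$. Restriction commutes with applying $B$, that is, $(B\cV)|_Y=B|_Y(\cV|_Y)$. Applying $B|_Y^{-1}$ therefore gives $\cV_j|_Y=\cV_{j+1}|_Y$. Since the original flag is lowered, both equal $\cE|_Y$, and so both $B\cV_j|_Y$ and $B\cV_{j+1}|_Y$ equal $B|_Y(\cE|_Y)=\cE|_Y$. The same argument shows that the truncation index $k$ of \Cref{defn_flag_sheaf} is preserved: $\cV_k|_Y\neq\cE|_Y$ if and only if $B\cV_k|_Y\neq\cE|_Y$.

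The action axioms are immediate: $1\cdot\cV_j=\cV_j$ and $(BC)\cV_j=B(C\cV_j)$. For naturality, let $Y\to T$. Restricting the flag $B\cdot\overline{\cV}$ truncates at the same index $k$ as restricting $\overline{\cV}$, because the truncation index is preserved. It then yields the terms $B|_Y\cV_j|_Y$ for $j\le k$, which is exactly $B|_Y\cdot(\overline{\cV}|_Y)$. This gives an action of the group sheaf $\GL_\cE$ on $\LowFlag_\cE$.

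The only step needing care is this compatibility with truncation under restriction. It reduces to the identity $(B\cV)|_Y=B|_Y(\cV|_Y)$ for images under an isomorphism, which holds because images under an isomorphism need no sheafification.
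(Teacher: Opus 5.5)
Your proof is correct and follows the paper's argument. You show that local direct summands are preserved via $\cE|_U=B\cV_j|_U\oplus B\cW$, and that loweredness is preserved by applying $B|_Y^{-1}$ and then reapplying $B|_Y$. You also spell out checks the paper leaves implicit: that $B\cV_\ell\neq\cE$, and that the action is compatible with the truncating restriction maps of $\LowFlag_\cE$.
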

\begin{proof}
Whenever we have $\cE|_T = \cV_i|_T \oplus \cW$ for some $T\in \Sch_S$ and $1\leq i \leq \ell$, then we also have $\cE|_T = B|_T\cV_i|_T \oplus B|_T\cW$, so this action preserves the property of being locally a direct summand. Thus, we only need to check that the resulting section is still a lowered flag, then it is obvious that this is the group action induced as claimed. However, since $B$ is invertible, if for some $T\in \Sch_S$ we have
\[
(B\cV_i)|_T = (B\cV_{i+1})|_T,
\]
then by applying $B^{-1}|_T$ we have that $\cV_i|_T = \cV_{i+1}|_T$. This implies that $\cV_i|_T = \cE|_T \cV_{i+1}|_T$, and so reapplying $B|_T$ yields
\[
(B\cV_i)|_T = \cE|_T = (B\cV_{i+1})|_T.
\]
Thus, the resulting flag is also lowered and we are done.
\end{proof}

\subsection{Generalized Severi-Brauer Sheaves}
When working with an Azumaya algebra of degree $d$, the Severi-Brauer scheme can be defined as the scheme representing the sheaf of right ideals of rank $d$, i.e., of reduced rank $1$ (see \Cref{Severi-Brauer_review}). The word generalized means that we instead allow flags of right ideals, and therefore to obtain a sheaf with must consider lowered flags of right ideals.
\begin{defn}\label{generalized_Severi-Brauer}
Let $\cA$ be an Azumaya $\cO$--algebra of constant degree $d=n+1$. Define $\LowGSB_\cA$ to be the subsheaf of $\LowFlag_\cA$ consisting of lowered flags
\[
0 \subseteq \cI_1 \subseteq \cI_2 \subseteq \ldots \subseteq \cI_\ell \subseteq \cA
\]
where each $\cI_j$ is a right ideal of $\cA$. Each $\cI_j$ must also locally be a direct summand of $\cA$, as this condition is imposed by $\LowFlag_\cA$. Symmetrically, define $\RaiGSB_\cA \subseteq \RaiFlag_\cA$ to be the subsheaf of raised flags of right ideals in $\cA$.
\end{defn}
Since the only additional condition imposed on the modules in a flag, namely being a right ideal, can be checked locally, it is clear that $\LowGSB_\cA$ is in fact a sheaf. For a flag of right ideals $(0 \subseteq \cI_1 \subseteq \cI_2 \subseteq \ldots \subseteq \cI_\ell \subseteq \cA) \in \LowGSB_\cA(S)$, each $\cI_j$ is a finite locally free $\cO$--module which is locally a direct summand of $\cA$ since this condition is imposed by $\LowFlag_\cA$. However, the property of being a right ideal of $\cA$ imposes a further restriction on the rank of $\cI_j$. In particular, $\rank(\cI_j)$ is divisible by $d$, i.e., for any $T\in \Sch_S$ for which $\cI|_T$ is constant rank, $\rank(\cI|_T)=dr_j$ for some $0\leq r_j \leq d$. This can be seen by localizing sufficiently far such that $\cA$ becomes a matrix algebra where the description of direct summand right ideals is analogous to the case over a field.

\begin{rem}\label{separable_algebras}
Within a separable $\cO$--algebra, i.e., an algebra which is locally isomorphic to a finite direct product of Azumaya algebras, we can equally well discuss flags of right ideals. This will come up later when we consider a degree $2$ \'etale cover $f\colon L\to S$ and an Azumaya algebra $\cB$ over $L$. Then, the pushforward $f_*(\cB)$ is a separable $\cO$--algebra which is not an Azumaya $\cO$--algebra, but nevertheless we can discuss $\LowGSB_{f_*(\cB)}$ and $\RaiGSB_{f_*(\cB)}$.
\end{rem}

\begin{lem}
Let $\cA$ be an Azumaya $\cO$--algebra of constant degree $d$. For any section
\[
(0 \subseteq \cI_1 \subseteq \cI_2 \subseteq \ldots \subseteq \cI_\ell \subseteq \cA) \in \LowGSB_\cA(S)
\]
we have $\ell < d$.
\end{lem}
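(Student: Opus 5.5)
The plan is to imitate the proof of \Cref{bounding_flag_length}, replacing $\cO$--ranks by reduced ranks. The point is that for right ideals the ranks jump in multiples of $d$, so there is only room for $d-1$ strict jumps below $\cA$, rather than the $d^2-1$ that \Cref{bounding_flag_length} gives when applied directly to $\cA$ as a module of rank $d^2$.

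Take a section $(0\subseteq \cI_1\subseteq\ldots\subseteq\cI_\ell\subseteq\cA)\in\LowGSB_\cA(S)$. By \Cref{defn_flag_sheaf}, $\cI_\ell\neq\cA$. Choose a cover $\{U_i\to S\}_{i\in I}$ fine enough that each $\cI_j|_{U_i}$ has constant rank. By the remark preceding \Cref{separable_algebras}, obtained by localizing until $\cA$ is a matrix algebra and using the field-like description of direct summand right ideals, we may write $\rank(\cI_j|_{U_i})=d\,r_{j,i}$ with $0\leq r_{j,i}\leq d$ an integer. If necessary we refine the cover further; this is harmless since constant rank persists under restriction. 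Moreover, $r_{j,i}=d$ holds exactly when $\cI_j|_{U_i}=\cA|_{U_i}$, because a locally direct summand of full rank must be everything.

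Next we use that the flag is lowered. Fix $i$. Whenever $r_{j,i}=r_{j+1,i}$, the two locally direct summands $\cI_j|_{U_i}\subseteq\cI_{j+1}|_{U_i}$ have equal constant rank and therefore coincide. Lowered-ness then forces both to equal $\cA|_{U_i}$. Hence the sequence $0=r_{0,i}<r_{1,i}<\ldots$ strictly increases until it reaches $d$, after which it stays at $d$. An induction gives $r_{j,i}\geq \min(j,d)$ for every $j$.

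Finally, suppose $\ell\geq d$. Then $r_{\ell,i}\geq d$, so $\cI_\ell|_{U_i}=\cA|_{U_i}$ for every $i$. By the sheaf property this gives $\cI_\ell=\cA$, contradicting $\cI_\ell\neq\cA$. Therefore $\ell<d$.

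The main obstacle is justifying that the ranks are divisible by $d$ and that equal constant rank forces equality of the direct summands. For the second point, the quotient $\cI_{j+1}/\cI_j$ is locally a direct summand of rank $0$, hence zero. Both facts are local and reduce to the split case $\Mat_d(\cO)$, where a direct summand right ideal has the form $\Hom(\cE,\cV)$ for a direct summand $\cV\subseteq\cO^d$ and so has rank $d\cdot\rank\cV$.
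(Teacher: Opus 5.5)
Your proof is correct and follows the paper's argument: localize until the $\cI_j$ have constant rank, use divisibility by $d$ together with lowered-ness to get $\rank(\cI_j|_{U_i})\geq \min(dj,d^2)$, and conclude that $\ell\geq d$ would force $\cI_\ell=\cA$. You also justify a step the paper leaves implicit, namely that nested locally direct summands of equal constant rank coincide.
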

\begin{proof}
Since the given flag of right ideals is also a section of $\LowFlag_\cA(S)$, we know that $\cI_\ell \neq \cA$. Now, assume $\ell \geq d$. Consider a cover $\{U_i \to S\}_{i\in I}$ over which all $\cI_j$ become of constant rank. Each flag
\[
0 \subseteq \cI_1|_{U_i} \subseteq \cI_2|_{U_i} \subseteq \ldots \subseteq \cI_\ell|_{U_i} \subseteq \cA|_{U_i}
\]
is a lowered flag of right ideals of constant rank, and so $\rank(\cI_j|_{U_i}) \geq \min(dj,d^2)$ since any potential equalities between components can only occur on the right. Thus, if $\ell \geq d$, then $\rank(\cI_\ell|_{U_i})=d^2$ which means that $\cI_\ell|_{U_i}=\cA|_{U_i}$ for all $i \in I$. Of course, this means that $\cI_\ell = \cA$ globally, giving a contradiction. Thus, $\ell < d$ as claimed.
\end{proof}

It will also be helpful to view $\LowGSB_\cA$ as the sheafification of a particular presheaf. We define
\[
\ConIdeal_\cA \subset \ConFlag_\cA
\]
to be the subfunctor of flags consisting of constant rank, locally direct summand submodules which are also right ideals.
\begin{lem}\label{sheafification_of_ConIdeal}
There is a commutative diagram of presheaves
\[
\begin{tikzcd}
\ConIdeal_\cA \ar[d,hookrightarrow] \ar[r,hookrightarrow] & \LowGSB_\cA \ar[d,hookrightarrow] \\
\ConFlag_\cA \ar[r,hookrightarrow] & \LowFlag_\cA
\end{tikzcd}
\]
where the injection $\ConIdeal_\cA \inj \LowGSB_\cA$ has the univerisal property of sheafification and so $(\ConIdeal_\cA)^\sharp \cong \LowGSB_\cA$.
\end{lem}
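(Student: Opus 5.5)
The plan is to mirror the proof of \Cref{sheafification_of_ConFlag}, restricting everything to right ideals. First I would check that the square commutes and that all four arrows are injections of presheaves. The vertical arrows are inclusions of subfunctors: by definition $\ConIdeal_\cA\subset\ConFlag_\cA$ and $\LowGSB_\cA\subseteq\LowFlag_\cA$. The bottom arrow is the injection of \Cref{sheafification_of_ConFlag}. For the top arrow, I would take a section of $\ConIdeal_\cA(T)$. Its image in $\LowFlag_\cA(T)$ is a lowered flag all of whose terms are right ideals, so it lies in $\LowGSB_\cA(T)$. Hence the bottom map restricts to a map $\ConIdeal_\cA\inj\LowGSB_\cA$, and the square commutes by construction. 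One point needs checking: restriction in $\ConIdeal_\cA$ must agree with the truncating restriction in $\LowGSB_\cA$. This holds because the strict rank inequalities prevent any term of a constant rank flag from becoming equal to $\cA$ after restriction, so no truncation ever occurs.

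For the universal property of sheafification, I would use the standard criterion. An injection of presheaves $\cF\inj\cG$ with $\cG$ a sheaf exhibits $\cG$ as $\cF^\sharp$ exactly when every section of $\cG$ lies locally in the image of $\cF$. Injectivity gives the separatedness part. That $\LowGSB_\cA$ is a sheaf was already observed after \Cref{generalized_Severi-Brauer}.

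So the remaining step is to take a section $(0\subseteq\cI_1\subseteq\ldots\subseteq\cI_\ell\subseteq\cA|_T)\in\LowGSB_\cA(T)$ and find a cover $\{T_i\to T\}$ over which every $\cI_j$ has constant rank. This is possible because each $\cI_j$ is finite locally free, for instance via a Zariski cover. As in the proof of \Cref{sheafification_of_ConFlag}, the lowered condition then forces each restricted, truncated flag $(0\subseteq\cI_1|_{T_i}\subseteq\ldots\subseteq\cI_{p_i}|_{T_i}\subseteq\cA|_{T_i})$ to have strictly increasing constant ranks. Its terms are still right ideals that are locally direct summands, so it lies in $\ConIdeal_\cA(T_i)$.

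I do not expect a genuine obstacle here. The only care needed is to confirm that truncation and the strict rank increase behave exactly as in the flag case, and that the right ideal property survives restriction.
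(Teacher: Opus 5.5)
Your proposal is correct and follows essentially the same route as the paper. Both arguments check that the square commutes, then show every section of $\LowGSB_\cA$ lies locally in the image of $\ConIdeal_\cA$ by passing to constant ranks and noting that right ideals restrict to right ideals. The difference is minor: the paper cites \Cref{sheafification_of_ConFlag} for local membership in $\ConFlag_\cA$ and concludes via injectivity of the maps in the square, whereas you redo that localization directly and also check explicitly that the restriction maps are compatible, which the paper leaves implicit.
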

\begin{proof}
The commutativity of the diagram is clear from the definitions of the presheaves and sheaves. For any $T\in \Sch_S$ and any section
\[
(0 \subseteq \cI_1 \subseteq \ldots \subseteq \cI_\ell \subseteq \cA|_T) \in \LowGSB|_\cA(T),
\]
we may view it as a section in $\LowFlag_\cA(T)$ where we know that it locally belongs to the image of $\ConFlag_\cA$ by \Cref{sheafification_of_ConFlag}. However, the restrictions of each $\cI_j$ are still right ideals, and therefore the section locally belongs to the image of
\[
\ConIdeal_\cA \inj \ConFlag_\cA \inj \LowFlag_\cA.
\]
Then, by commutativity of the diagram and the fact that all morphisms are injective, the given section in $\LowGSB|_\cA(T)$ locally belongs to the image of
\[
\ConIdeal_\cA \inj \LowGSB_\cA
\]
and so this map has the universal property of sheafification, as claimed.
\end{proof}

\begin{cor}
There is a commutative diagram of presheaves
\[
\begin{tikzcd}
\ConIdeal_\cA \ar[d,hookrightarrow] \ar[r,hookrightarrow] & \RaiGSB_\cA \ar[d,hookrightarrow] \\
\ConFlag_\cA \ar[r,hookrightarrow] & \RaiFlag_\cA
\end{tikzcd}
\]
where the injection $\ConIdeal_\cA \inj \RaiGSB_\cA$ has the univerisal property of sheafification and so $(\ConIdeal_\cA)^\sharp \cong \RaiGSB_\cA$. Therefore, we also have a commutative diagram
\[
\begin{tikzcd}
\LowGSB_\cA \ar[rr,"\varphi_{\cGSB}"] \ar[dr,hookleftarrow] & & \RaiGSB_\cA \\
 & \ConIdeal_\cA \ar[ur,hookrightarrow] & 
\end{tikzcd}
\]
where $\varphi_{\cGSB}$ is a unique sheaf isomorphism.
\end{cor}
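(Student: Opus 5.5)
The plan is to mirror the proof of \Cref{sheafification_of_ConIdeal}, now using the raised analogue of \Cref{sheafification_of_ConFlag} recorded in \Cref{raised_flags}. Commutativity of the square holds by definition. $\RaiGSB_\cA$ is the subsheaf of $\RaiFlag_\cA$ cut out by the local condition of being a right ideal, and $\ConIdeal_\cA$ is the corresponding subfunctor of $\ConFlag_\cA$. All four maps are injective. The horizontal map $\ConIdeal_\cA\inj\RaiGSB_\cA$ is well defined because constant-rank flags with strictly increasing ranks never become equal after restriction, so they are raised.

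For the universal property of sheafification, it suffices, since the map is injective, to show that every section of $\RaiGSB_\cA(T)$ locally lies in the image of $\ConIdeal_\cA$. Take a raised flag of right ideals over $T$ and regard it as a section of $\RaiFlag_\cA(T)$. By the raised version of \Cref{sheafification_of_ConFlag}, there is a cover $\{T_i\to T\}$ on which the restricted flag lies in $\ConFlag_\cA(T_i)$. Concretely, one refines to a cover where all terms have constant rank. The raised condition then forces the ranks to strictly increase once the zero terms are truncated on the left, so that truncation yields a constant-rank flag. Restrictions of right ideals are still right ideals, so each restricted flag lies in $\ConIdeal_\cA(T_i)$. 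Injectivity and commutativity of the square then place it in the image of $\ConIdeal_\cA\inj\RaiGSB_\cA$. This gives $(\ConIdeal_\cA)^\sharp\cong\RaiGSB_\cA$.

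For the second diagram, we now have two maps out of the same presheaf, $\ConIdeal_\cA\inj\LowGSB_\cA$ (from \Cref{sheafification_of_ConIdeal}) and $\ConIdeal_\cA\inj\RaiGSB_\cA$, and both are sheafifications. The universal property applied in each direction gives morphisms of sheaves that are compatible with the inclusions of $\ConIdeal_\cA$. Their composites restrict to the identity on $\ConIdeal_\cA$, so by uniqueness they are mutually inverse. This yields the unique isomorphism $\varphi_{\cGSB}$ making the triangle commute.

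The only point that needs care is the raised analogue of \Cref{sheafification_of_ConFlag}, which the paper only asserts in \Cref{raised_flags}. If it needs justification, I would repeat the lowered argument with the roles of $0$ and $\cE$ exchanged. Equivalently, one can note that $\varphi_{\cFlag}$ preserves the property of every term being a right ideal, because it only reshuffles terms over a disjoint Zariski decomposition and inserts $0$'s, both of which are right ideals. Hence $\varphi_{\cFlag}$ restricts to $\varphi_{\cGSB}$ directly.
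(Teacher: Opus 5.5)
Your proposal is correct and matches the paper's proof: it mirrors the argument of \Cref{sheafification_of_ConIdeal}, using the raised analogue in \Cref{raised_flags} in place of \Cref{sheafification_of_ConFlag}, and then obtains $\varphi_{\cGSB}$ from the universal property of sheafification. Your closing remark, that $\varphi_{\cFlag}$ itself restricts to $\varphi_{\cGSB}$ because it only reassembles right ideals (and zeros) over a disjoint Zariski decomposition, is a valid extra consistency check that the paper does not spell out.
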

\begin{proof}
This follows by symmetry from the proof \Cref{sheafification_of_ConIdeal} by using \Cref{raised_flags} in place of \Cref{sheafification_of_ConFlag}, and then the universal property of sheafification produces the isomorphism $\varphi_{\cGSB}$.
\end{proof}

The generalized Severi-Brauer sheaf also comes with a natural action of $\GL_{1,\cA}$.
\begin{lem}\label{GL_action_on_GSB}
Let $\cA$ be an Azumaya $\cO$--algebra of constant degree $d$. The natural action of $\GL_{1,\cA}$ on $\cA$ by conjugation induces an action on $\LowGSB_\cA$. For $a \in \GL_{1,\cA}$, it behaves as
\[
a\cdot (0 \subseteq \cI_1 \subseteq \ldots \subseteq \cI_\ell \subseteq \cA) = (0 \subseteq a\cI_1 a^{-1} \subseteq \ldots \subseteq a\cI_\ell a^{-1} \subseteq \cA).
\]
Equivalently, since the $\cI_k$ are right ideals and so $a \cI_k a^{-1} = a \cI_k$, this is the left action of $a$ by multiplication.
\end{lem}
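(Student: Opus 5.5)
The plan is to reduce to \Cref{GL_action_on_LowFlag}, and to handle separately the two features that are specific to ideals. The first is that $a\cI_k a^{-1}$ is again a right ideal which is locally a direct summand. The second is that conjugation and left multiplication agree on right ideals.

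I would begin with the identity $a\cI a^{-1} = a\cI$ for a right ideal $\cI$ and $a\in \GL_{1,\cA}(T)$. Any section of $a\cI a^{-1}$ has the form $axa^{-1}$ with $x\in \cI$. Since $\cI$ is a right ideal, $xa^{-1}\in \cI$, so $a\cI a^{-1}\subseteq a\cI$. Conversely, $ax = a(xa)a^{-1}$ with $xa\in \cI$, which gives the reverse inclusion. This is checked on sections over each $T'\in \Sch_T$, and therefore holds as an equality of subsheaves. It follows that $a\cI$ is a right ideal, since $(a\cI)\cA = a(\cI\cA)\subseteq a\cI$.

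Next, left multiplication by $a$ is an $\cO$--linear automorphism $L_a$ of the underlying module of $\cA$. Using the embedding $\GL_{1,\cA}\to \GL_\cA$, $a\mapsto L_a$, \Cref{GL_action_on_LowFlag} shows that $(0\subseteq a\cI_1\subseteq\ldots\subseteq a\cI_\ell\subseteq \cA)$ is a section of $\LowFlag_\cA$. Concretely, each $a\cI_k$ is locally a direct summand, the flag remains lowered, and $a\cI_\ell\neq \cA$ because $L_a$ is invertible. Combined with the first step, every term is a right ideal, so the flag lies in the subsheaf $\LowGSB_\cA$.

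It then remains to check that this is a group action compatible with restriction. For the action axioms, $1\cdot\cI=\cI$ and $a(b\cI)=(ab)\cI$ are immediate. For compatibility with restriction along $Y\to T$, observe that $(a\cI_k)|_Y = a|_Y\,\cI_k|_Y$. Moreover $a\cI_k|_Y=\cA|_Y$ if and only if $\cI_k|_Y=\cA|_Y$, so the truncation index $k$ in \Cref{defn_flag_sheaf} is the same before and after acting. Hence the action commutes with the truncating restriction maps and is a morphism of sheaves $\GL_{1,\cA}\times\LowGSB_\cA\to\LowGSB_\cA$.

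The only step with real content is the sheaf-level identity $a\cI a^{-1}=a\cI$; everything else is inherited from \Cref{GL_action_on_LowFlag}. The one point to watch in the restriction check is that the truncation index is preserved, and invertibility of $a$ handles this.
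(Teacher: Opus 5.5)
Your proposal is correct and follows the paper's own proof. Both identify conjugation with left multiplication on right ideals, view $\GL_{1,\cA}\inj\GL_\cA$ via left multiplication operators, and invoke \Cref{GL_action_on_LowFlag}. Your additional checks are details the paper leaves implicit: that $a\cI_k$ is again a right ideal, and that the truncation index is unchanged under restriction.
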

\begin{proof}
Since the conjugation action on right ideals is equivalent to the left multiplication action, we can view the action of $\GL_{1,\cA}$ on $\LowGSB_\cA$ as the restriction of the action of $\GL_\cA = \cEnd(\cA)^\times$ on $\LowFlag_\cA$ where $\GL_{1,\cA} \inj \cEnd(\cA)^\times$ as left multiplication operators. From this point of view, the fact that this action is well defined follows immediately from \Cref{GL_action_on_LowFlag}.
\end{proof}

Now, let $\cE$ be a finite locally free $\cO$--module of constant rank $d$ and consider the Azumaya $\cO$--algebra $\cEnd(\cE)$. For a submodule $\cV \subseteq \cE$ which is locally a direct summand, we define a right ideal of $\cEnd(\cE)$ by setting
\[
\cI_\cV = \{\varphi \in \cEnd(\cE) \mid \Img(\varphi)\subseteq \cV \},
\]
which is isomorphic to $\cHom_\cO(\cE,\cV)$ as an $\cO$--module. Note that here, $\Img(\varphi)$ is the sheafified image sheaf. Conversely, given a right ideal $\cI \subseteq \cEnd(\cE)$ which is locally a direct summand, we define a submodule of $\cE$ via an intersection of submodules
\[
\cV_\cI = \bigcap\{ \cW \subseteq \cE \mid \Img(\varphi)\subseteq \cW \text{ for all } \varphi \in \cI\},
\]
i.e., this is the minimal submodule of $\cE$ which contains all images of map in $\cI$. Equivalently, we could define a subpresheaf $\bigcup_{\varphi \in \cI} \Img(\varphi) \colon \Sch_S \to \Sets$ of $\cE$ by
\[
T \mapsto \left(\bigcup_{\varphi \in \cI(T)} \Img(\varphi(T))\right) \subseteq \cE(T)
\]
which contains all the presheaf images of $\varphi \in \cI$, and then
\[
\cV_\cI = \left( \bigcup_{\varphi \in \cI} \Img(\varphi) \right)^\sharp
\]
is the sheafification.

\begin{lem}\label{submodule_ideal_inverse}
Let $\cE$ be a finite locally free $\cO$--module of constant rank d. Let $\cV \subseteq \cE$ be a submodule which is locally a direct summand and let $\cI \subseteq \cEnd(\cE)$ be a right ideal which is locally a direct summand. First,
\begin{enumerate}
\item \label{submodule_ideal_inverse_i} the right ideal $\cI_\cV$ is locally a direct summand of $\cEnd(\cE)$, and
\item \label{submodule_ideal_inverse_ii} the submodule $\cV_\cI$ is locally a direct summand of $\cE$.
\end{enumerate}
Secondly, the two constructions are mutually inverse. In particular
\begin{enumerate}\setcounter{enumi}{2}
\item \label{submodule_ideal_inverse_iii} $\cV_{\cI_\cV} = \cV$, and
\item \label{submodule_ideal_inverse_iv} $\cI_{\cV_\cI} = \cI$.
\end{enumerate}
Finally, the constructions are order preserving with respect to inclusion.
\begin{enumerate}\setcounter{enumi}{4}
\item \label{submodule_ideal_inverse_v} If $\cV_1\subseteq \cV_2 \subseteq \cE$ are locally direct summands, then $\cI_{\cV_1} \subseteq \cI_{\cV_2}$, and
\item \label{submodule_ideal_inverse_vi} If $\cI_1 \subseteq \cI_2 \subseteq \cEnd(\cE)$ are locally direct summand right ideals, then $\cV_{\cI_1} \subseteq \cV_{\cI_2}$.
\end{enumerate}
\end{lem}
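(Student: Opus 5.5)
Every assertion in the statement is an equality or inclusion of subsheaves, or the local existence of a complement. All of these can be checked on a cover of $S$, and both constructions $\cV\mapsto\cI_\cV$ and $\cI\mapsto\cV_\cI$ commute with restriction to $T\in\Sch_S$. For $\cI_\cV$ this is immediate from the definition. For $\cV_\cI$ it holds because it is a sheafification of a union of images, and sheafification commutes with restriction. So the plan is to reduce to a cover $\{U_i\to S\}$ on which $\cE|_{U_i}=\cV|_{U_i}\oplus\cW$, and, for the ideal side, to Zariski-local affine pieces where $\cE\cong\cO^d$ and $\cI$ is a direct summand.

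For \ref{submodule_ideal_inverse_i} we work locally with $\cE=\cV\oplus\cW$. Then $\cEnd(\cE)\cong\cHom(\cE,\cV)\oplus\cHom(\cE,\cW)$ by composing with the two projections, and $\cI_\cV=\cHom(\cE,\cV)$ is the first summand. This is also the key fact needed for \ref{submodule_ideal_inverse_iii}: the identity projection $\pi_\cV\in\cI_\cV$ has image $\cV$, so $\cV\subseteq\cV_{\cI_\cV}$. Conversely, every $\varphi\in\cI_\cV$ has image inside $\cV$, and $\cV$ is a subsheaf, so the sheafified union is contained in $\cV$. Parts \ref{submodule_ideal_inverse_v} and \ref{submodule_ideal_inverse_vi} follow directly from the definitions: an enlarged target submodule, or an enlarged set of maps, only enlarges the resulting object. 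Note that \ref{submodule_ideal_inverse_vi} holds as an inclusion of sheafified unions.

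The real work is in \ref{submodule_ideal_inverse_ii} and \ref{submodule_ideal_inverse_iv}, and I expect this to be the main obstacle, since it requires classifying direct-summand right ideals. I would localize to an affine $U=\operatorname{Spec}R$ with $\cE=R^d$, $\cI$ corresponding to a direct summand right ideal $I\subseteq \Mat_d(R)$, and $\cI$ of constant rank (necessarily $dr$). The standard fact I would prove is that a direct summand right ideal of $\Mat_d(R)=\operatorname{End}(R^d)$ is generated by an idempotent, and has the form $I=e\Mat_d(R)$. The argument: $I$ is a direct summand as an $R$-module and is projective over $\Mat_d(R)$. Under Morita equivalence, right ideals that are $R$-direct summands correspond to submodules $I\cdot R^d$ of $R^d$ that are direct summands, and one checks $I=\operatorname{Hom}(R^d,I R^d)$. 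A shorter route is to reduce to residue fields of local rings via Nakayama. Over a local ring $R$ with residue field $k$, $I\otimes k$ is a right ideal of $\Mat_d(k)$, so it equals $\bar e\Mat_d(k)$ for an idempotent $\bar e$. Lift $\bar e$ to an idempotent of $\Mat_d(R)$ chosen inside $I$, which is possible because $I$ is a summand and idempotents lift modulo the radical within the ring $\operatorname{End}_{\Mat_d(R)}(I)$. Nakayama then gives $I=e\Mat_d(R)$. Once $\cI=e\,\cEnd(\cE)$ locally, $\cV_\cI=\Img(e)$, which is a direct summand with complement $\Img(1-e)$, proving \ref{submodule_ideal_inverse_ii}. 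Moreover $\cI_{\cV_\cI}=\{\varphi:\Img\varphi\subseteq\Img e\}=\{\varphi: e\varphi=\varphi\}=e\,\cEnd(\cE)=\cI$, proving \ref{submodule_ideal_inverse_iv}. Both identities are local and therefore glue.
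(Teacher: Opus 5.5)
Your proposal is correct. Parts (i), (v) and (vi) match the paper, and (iii) differs only cosmetically. For (ii) and (iv) you take a different route: you first show that a summand right ideal is locally of the form $e\,\cEnd(\cE)$ for an idempotent $e$, and read both claims off from that.

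The paper never uses idempotents or local rings. Working over $T$ with $\cE=\cO^d$, it uses only that $\cI$ is a right ideal:
\begin{itemize}
\item if $\begin{bmatrix} c_1 & \cdots & c_d\end{bmatrix}\in\cI(T)$, then $\begin{bmatrix} c_i & 0 & \cdots & 0\end{bmatrix}\in\cI(T)$ (right multiply by $E_{i1}$);
\item conversely, $\begin{bmatrix} c_1 & \cdots & c_d\end{bmatrix}=\sum_i \begin{bmatrix} c_i & 0 & \cdots & 0\end{bmatrix}E_{1i}$.
\end{itemize}
Hence $\cV_\cI$ is the sheaf of first columns of $\cI$, and $\cI=\cI_{\cV_\cI}$ holds for every right ideal, with no summand hypothesis. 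The hypothesis $\Mat_d(\cO)=\cI\oplus\cN$ enters only in (ii). There, sending $v$ to the first column $c_v$ of the $\cI$--component of $\begin{bmatrix} v & 0 & \cdots & 0\end{bmatrix}$ gives a projection of $\cO^d$ onto $\cV_\cI$.

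This computation is exactly what your step ``one checks $I=\operatorname{Hom}(R^d,IR^d)$'' stands for, together with your unproved claim that $IR^d$ is a summand of $R^d$. So the paper's route is more elementary: it needs no Morita theory, no Nakayama, and no spreading out from $R_{\mathfrak p}$ to a neighbourhood. What your route gives is the description $\cI=e\,\cEnd(\cE)$, which the paper only needs later, in \Cref{LowIdemp_to_LowStiefel_iso}. It also follows at once from the column identity, taking $e$ to be a projection of $\cO^d$ onto $\cV_\cI$.

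One caution about your ``shorter route''. Lifting idempotents in $\operatorname{End}_{\Mat_d(R)}(I)$ produces an endomorphism of $I$, not an element of $I$. Moreover, an arbitrary idempotent lift of $\bar e$ to $\Mat_d(R)$ need not lie in $I$: for $0\neq m\in\mathfrak m$, the idempotent $\begin{bmatrix}1&0\\ m&0\end{bmatrix}$ lifts $E_{11}$ but is not in $E_{11}\Mat_2(R)$.

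A correct version goes as follows. Take any $x\in I$ lifting $\bar e$. Left multiplication $\ell_x$ is an endomorphism of the finite free $R$--module $I$. It reduces to the identity on $I\otimes k=\bar e\Mat_d(k)$, so it is an automorphism. Put $e=\ell_x^{-1}(x)\in I$, so that $xe=x$. Then injectivity of $\ell_x$ gives $e^2=e$ (since $xe^2=xe$) and $ey=y$ for all $y\in I$ (since $xey=xy$). Therefore $I=e\Mat_d(R)$.
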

\begin{proof}
Since we only care if the modules are direct sums locally, we may assume we have sufficiently localized and simply consider the case when they are already direct sums.

\noindent \ref{submodule_ideal_inverse_i}: Assume $\cE = \cV \oplus \cW$ for some other submodule $\cW\subseteq \cE$. Then,
\[
\cEnd(\cE) = \cEnd(\cV) \oplus \cHom(\cV,\cW) \oplus \cHom(\cW,\cV) \oplus \cEnd(\cW)
\]
and $\cI_\cV = \cHom(\cE,\cV) = \cEnd(\cV) \oplus \cHom(\cW,\cV)$ is clearly a direct summand of the above expression.

\noindent \ref{submodule_ideal_inverse_ii}: Here, we may further localize until $\cE \cong \cO^d$ and thus $\cEnd(\cE) \cong \Mat_d(\cO)$. In this case, if over some $T\in \Sch_S$ we have a matrix
\[
\begin{bmatrix} c_1 & c_2 & \hdots & c_d \end{bmatrix} \in \cI(T)
\]
where $c_i \in \cO^d(T)$ are column vectors, then because $\cI$ is a right ideal we also have that
\[
\begin{bmatrix} c_i & 0 & \hdots & 0 \end{bmatrix} \in \cI(T)
\]
for any $i$. Thus, one sees that
\[
\cV_\cI(T) = \{ c \in \cO^d(T) \mid  \begin{bmatrix} c & 0 & \hdots & 0 \end{bmatrix} \in \cI(T) \}.
\]
Now, by assumption $\Mat_d(\cO) = \cI \oplus \cN$ for some $\cO$--module $\cN$, which is not necessarily also an ideal. So, for any $v\in \cO^d(T)$ we obtain a unique decomposition
\[
\begin{bmatrix} v & 0 & \hdots 0 \end{bmatrix} = \begin{bmatrix} c_v & c_2 & \hdots & c_d \end{bmatrix} + \begin{bmatrix} w_v & -c_2 & \hdots & -c_d \end{bmatrix}
\]
with $\begin{bmatrix} c_v & c_2 & \hdots & c_d \end{bmatrix} \in \cI(T)$ and $\begin{bmatrix} w_v & -c_2 & \hdots & -c_d \end{bmatrix} \in \cN(T)$. In particular, this means that $c_v \in \cV_\cI(T)$ and $w_v \notin \cV_\cI(T)$ if $w_v \neq 0$. Now, we can set
\[
\cW(T) = \{ w_v \in \cO^d(T) \mid v\in \cV(T)\}
\]
to obtain a submodule $\cW \subseteq \cO^d$ for which it is clear that $\cO^d = \cV_\cI \oplus \cW$. Thus, $\cV_\cI$ is a direct summand as desired.

\noindent \ref{submodule_ideal_inverse_iii} and \ref{submodule_ideal_inverse_iv}: Since all objects are sheaves, it again suffices to show that the constructions are mutually inverse sufficiently locally. Thus, in the case when $\cE = \cO^d$, the considerations in \ref{submodule_ideal_inverse_ii} above show that for $T\in \Sch_S$,
\begin{align*}
\cI_\cV(T) &= \{ \begin{bmatrix} c_1 & c_2 & \hdots & c_d \end{bmatrix} \mid c_i \in \cV(T) \} \\
\cV_\cI(T) &= \{ c\in \cO^d(T) \mid \begin{bmatrix} c & 0 & \hdots & 0 \end{bmatrix} \in \cI(T) \}.
\end{align*}
Then it is clear that $\cV_{\cI_\cV} = \cV$ and $\cI_{\cV_\cI} = \cI$.

\noindent \ref{submodule_ideal_inverse_v}: If $\cV_1 \subseteq \cV_2$, then endomorphisms of $\cE$ which have image contained in $\cV_1$ also have image contained in $\cV_2$, so $\cI_{\cV_1} \subseteq \cI_{\cV_2}$.

\noindent \ref{submodule_ideal_inverse_vi}: If $\cI_1 \subseteq \cI_2$, then
\[
\{\cW \subseteq \cE \mid \Img(\varphi) \subseteq \cW \text{ for all } \varphi \in \cI_2\} \subseteq \{\cW \subseteq \cE \mid \Img(\varphi) \subseteq \cW \text{ for all } \varphi \in \cI_1\}
\]
and so taking intersections will reverse this inclusion, thus $\cV_1 \subseteq \cV_2$.
\end{proof}

\begin{cor}\label{flags_to_SB}
We have an isomorphism of functors $\LowFlag_\cE \iso \LowGSB_{\cEnd(\cE)}$ defined by
\begin{align*}
(0 \subseteq \cV_1 \subseteq \ldots \subseteq \cV_\ell \subseteq \cE) \mapsto &(0 \subseteq \cI_{\cV_1} \subseteq \ldots \subseteq \cI_{\cV_\ell} \subseteq \cEnd(\cE)) \\
(0 \subseteq \cV_{\cI_1} \subseteq \ldots \subseteq \cV_{\cI_\ell} \subseteq \cE) \reflectbox{$\mapsto$} &(0 \subseteq \cI_1 \subseteq \ldots \subseteq \cI_\ell \subseteq \cEnd(\cE)).
\end{align*}
Furthermore, this isomorphism is equivariant with respect to the $\GL_\cE = \GL_{1,\cEnd(\cE)}$ actions of \Cref{GL_action_on_LowFlag} and \Cref{GL_action_on_GSB}.
\end{cor}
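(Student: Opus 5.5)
The plan is to apply \Cref{submodule_ideal_inverse} componentwise and check that the resulting maps respect the structure of the two sheaves. For each $T\in\Sch_S$, send a section $(0\subseteq \cV_1\subseteq\ldots\subseteq\cV_\ell\subseteq\cE|_T)\in\LowFlag_\cE(T)$ to $(0\subseteq \cI_{\cV_1}\subseteq\ldots\subseteq\cI_{\cV_\ell}\subseteq\cEnd(\cE|_T))$. By parts \ref{submodule_ideal_inverse_i} and \ref{submodule_ideal_inverse_v}, each $\cI_{\cV_j}$ is a locally direct summand right ideal and the inclusions are preserved. The inverse candidate uses parts \ref{submodule_ideal_inverse_ii} and \ref{submodule_ideal_inverse_vi}. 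Once we know both assignments land in the correct sets and are natural, parts \ref{submodule_ideal_inverse_iii} and \ref{submodule_ideal_inverse_iv} show they are mutually inverse.

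The key observation is that both constructions are compatible with restriction: $(\cI_\cV)|_Y=\cI_{\cV|_Y}$ and $(\cV_\cI)|_Y=\cV_{\cI|_Y}$. I would check this locally where $\cE\cong\cO^d$. There the explicit descriptions from the proof of \Cref{submodule_ideal_inverse} show that $\cI_\cV$ consists of the matrices whose columns lie in $\cV$, and that $\cV_\cI$ consists of the vectors $c$ with $[c\ 0\ \cdots\ 0]\in\cI$. Both descriptions visibly commute with base change, and since all objects are sheaves this gives the global statement.

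With this in hand, the remaining conditions follow. For the lowered condition, suppose $\cI_{\cV_i}|_Y=\cI_{\cV_{i+1}}|_Y$. Then $\cI_{\cV_i|_Y}=\cI_{\cV_{i+1}|_Y}$, and applying $\cV_{(-)}$ together with part \ref{submodule_ideal_inverse_iii} gives $\cV_i|_Y=\cV_{i+1}|_Y$. Since the flag is lowered, this common value is $\cE|_Y$, and therefore both ideals equal $\cI_{\cE|_Y}=\cEnd(\cE)|_Y$. The converse direction is symmetric. The same bijectivity argument shows $\cV_\ell\neq\cE$ if and only if $\cI_{\cV_\ell}\neq\cEnd(\cE)$. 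It also shows the truncation indices in the restriction maps of \Cref{defn_flag_sheaf} agree on both sides, which gives naturality. I expect this compatibility of restriction with truncation, together with the base change of $\cV_\cI$ (a sheafified image), to be the only delicate point.

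For equivariance, take $B\in\GL_\cE(T)$. We have $\Img(B\varphi)\subseteq B\cV$ if and only if $\Img(\varphi)\subseteq\cV$, so $B\cI_\cV=\cI_{B\cV}$. By \Cref{GL_action_on_GSB}, $B\cI_\cV$ is exactly the action of $B$ on the ideal, which matches the action of \Cref{GL_action_on_LowFlag} on $\cV$. Equivariance of the inverse map then follows automatically.
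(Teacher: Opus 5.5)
Your proposal is correct and follows essentially the same route as the paper: apply \Cref{submodule_ideal_inverse} componentwise, use the bijectivity of $\cV\mapsto\cI_\cV$ to show that lowered flags go to lowered flags, and get equivariance from $\cI_{B\cV}=B\cI_\cV$. You are more careful than the paper, which leaves three points implicit: that both constructions are compatible with restriction, that the truncation indices match, and that $\cV_\ell\neq\cE$ holds exactly when $\cI_{\cV_\ell}\neq\cEnd(\cE)$.
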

\begin{proof}
The passage from submodules to ideals is bijective and order preserving by \Cref{submodule_ideal_inverse}, thus it is clear that our proposed map is an isomorphism so long as lowered flags are sent to lowered flags of ideals. However, this is also clear from bijectivity. Consider a section
\[
(0\subseteq \cV_1 \subseteq \ldots \subseteq \cV_\ell \subseteq \cE) \in \LowFlag_\cE
\]
and assume that there exists $1\leq i \leq \ell-1$ and $T\in \Sch_S$ such that
\[
\cI_{\cV_i}|_T = \cI_{\cV_{i+1}}|_T.
\]
By bijectivity, this implies that $\cV_i|_T = \cV_{i+1}|_T$ and so $\cV_i|_T = \cE|_T = \cV_{i+1}|_T$ since the original flag is lowered. Thus,
\[
\cI_{\cV_i}|_T = \cEnd(\cE)|_T = \cI_{\cV_{i+1}}|_T
\]
which shows that the flag of right ideals is lowered as well.

To see the equivariance, one can compute that for $a\in \GL_\cE$,
\begin{align*}
\cI_{(a\cV_i)} &= \{\varphi \in \cEnd(\cE) \mid \Img(\varphi)\subseteq a\cV_i \} \\
&= \{a\varphi \in \cEnd(\cE) \mid \Img(\varphi)\subseteq \cV_i \} \\
&= a\cI_{\cV_i} \\
&= a\cI_{\cV_i}a^{-1}.
\end{align*}
Then, since the map is bijective, this also shows that $\cV_{(a\cI_ia^{-1})} = a\cV_{\cI_i}$. Hence, the isomorphism is equivariant as claimed, concluding the proof.
\end{proof}

\subsection{Isomorphisms with Parabolic Subgroups}\label{isos_with_parabolics_inner}
We first consider the case when $\bG = \GL_\cE$ for a finite locally free $\cO$--module $\cE$ of constant rank $d$. Given a section
\[
\overline{\cV} = (0 \subseteq \cV_1 \subseteq \ldots \subseteq \cV_\ell \subseteq \cO^d) \in \LowFlag_\cE,
\]
we set $\bStab(\overline{\cV}) \subseteq \bG$ to be the stabilizer of this section under the natural action of \Cref{GL_action_on_LowFlag}.
\begin{lem}\label{flag_stabilizers_are_parabolic}
The stabilizer $\bStab(\overline{\cV})$ is a parabolic subgroup of $\GL_\cE$.
\end{lem}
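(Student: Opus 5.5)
Being parabolic is a property of closed subgroups that can be checked fppf (indeed Zariski) locally on the base, and the stabilizer of a section commutes with base change, since $\bStab(\overline{\cV})|_T = \bStab(\overline{\cV}|_T)$. So I will reduce to a local situation, which I set up in three steps. First, I choose a Zariski cover $\{U_i\to S\}$ over which $\cE|_{U_i}\cong \cO^d$. Second, I refine it so that every $\cV_j|_{U_i}$ has constant rank and is a direct summand. Third, I refine once more so that the complements can be chosen compatibly, that is, so that there is a basis of $\cO^d$ adapted to the whole flag. That the third step is possible follows from the fact that a locally direct summand $\cV_j\subseteq \cV_{j+1}$ is locally a direct summand of $\cV_{j+1}$; one sees this by localizing so that each quotient $\cV_{j+1}/\cV_j$, which is a direct summand of the free module $\cE/\cV_j$, becomes free.

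Over each such $U_i$ the restricted flag is, by the lowered condition and truncation (as in the proof of \Cref{bounding_flag_length}), a flag
\[
0\subsetneq \cV_1\subsetneq\cdots\subsetneq \cV_k\subsetneq \cO^d
\]
of strictly increasing constant ranks $r_1<\cdots<r_k<d$. The restricted stabilizer also ignores the truncated copies of $\cE$, since every $B$ stabilizes $\cE$. After changing basis by some $g\in \GL_d(U_i)$, which by the adapted-basis choice sends each $\cV_j$ to the span of the first $r_j$ standard basis vectors, the stabilizer becomes $g$-conjugate to the subgroup of block upper triangular matrices. This is exactly the standard parabolic \eqref{standard_parabolic} with partition $m_1=r_1$, $m_j=r_j-r_{j-1}$, $m_{k+1}=d-r_k$. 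I will verify this equality functorially, as an equality of subfunctors of $\GL_d|_{U_i}$. An element $B\in\GL_d(T)$ stabilizes the flag if and only if $B\cV_j|_T=\cV_j|_T$ for all $j$, and $B\cV_j\subseteq\cV_j$ already forces equality because both sides are direct summands of the same rank. In matrix terms this says precisely that the lower-left blocks vanish.

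The standard parabolic is a closed, smooth subgroup containing the upper triangular Borel, so its conjugate is parabolic. Hence $\bStab(\overline{\cV})|_{U_i}$ is parabolic for every $i$, and since closedness, smoothness, and being parabolic all descend along fppf covers (SGA3 Exp.~XXVI, where parabolicity is defined fibrewise together with smoothness), $\bStab(\overline{\cV})$ is parabolic. Closedness of the stabilizer can also be seen directly: each condition $B\cV_j\subseteq \cV_j$ is the vanishing of the composite $\cV_j\to\cE\to\cE/\cV_j$, which is a closed condition.

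I expect the main obstacle to be the simultaneous splitting in the third step, namely getting a single local basis adapted to the entire flag and not just to each $\cV_j$ separately. I will handle it inductively by splitting the successive quotients $\cV_{j+1}/\cV_j$, which are locally free because they are locally direct summands of the locally free module $\cE/\cV_j$. Everything after that is a direct matrix computation.
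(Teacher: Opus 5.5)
Your proposal is correct and follows the same route as the paper. You localize until $\cE=\cO^d$ with a basis adapted to the whole flag, then identify the stabilizer with the standard block upper triangular parabolic for the partition given by successive rank differences. Beyond the paper's argument, you justify the simultaneous splitting and check how truncation of the lowered flag affects the stabilizer, both of which the paper simply asserts; the strategy is otherwise the same.
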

\begin{proof}
Since $\cPar_\bG$ is a sheaf, a group which is locally parabolic will also be parabolic globally. Thus, we may localize without loss of generality. In particular, we may assume that $\cE=\cO^d$ and that each $\cV_i$ is a free direct summand of $\cV_{i+1}$, and thus also a direct summand of $\cO^d$. Then, we can choose a basis $\{v_1,\ldots,v_d\}$ of $\cO^d$ such that
\[
\{v_1,\ldots,v_{k_i}\}
\]
is a basis of $\cV_i$ for an increasing sequence of integers $1 \leq k_1 < k_2 < \ldots < k_\ell < d$. By setting $m_1=k_1$, $m_i= k_i-k_{i-1}$ for $2\leq i \leq \ell$, and $m_{\ell+1} = d - k_\ell$, the increasing sequence of integers produces an ordered partition $m_1+m_2+\ldots +m_{\ell+1} =d$. Then, writing matrices in $\Mat_d(\cO)$ with respect to the basis $\{v_1,\ldots,v_d\}$, we see that
\[
\bStab(\overline{\cV}) = \left\{ \begin{bmatrix} B_1 & * & \hdots & * \\ 0 & B_2 & \hdots & * \\ \vdots & \vdots & \ddots & \vdots \\ 0 & 0 & \hdots & B_{\ell+1} \end{bmatrix} \in \GL_d \mid B_i \in \GL_{m_i} \right\}
\]
with zeroes below the block diagonal and arbitrary entries above the block diagonal. We recognize this as a standard parabolic subgroup of $\GL_d$, finishing the proof.
\end{proof}

\begin{cor}\label{type_of_ConFlag}
Let $\overline{\cV} \in \LowFlag_\cE$ be a flag of constant rank modules, i.e., $\overline{\cV} \in \ConFlag_\cE$. Under the type map of \eqref{type_map_parabolics} we have
\[
t(\bStab(\overline{\cV}))=(\rank(\cV_1),\rank(\cV_2),\ldots,\rank(\cV_\ell))^c.
\]
\end{cor}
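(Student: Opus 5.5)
The plan is to reduce to the standard situation handled in the proof of \Cref{flag_stabilizers_are_parabolic}, and then read off the type from the description of the type morphism for standard parabolics given in \Cref{prelim_parabolics_and_levis}. Both sides of the claimed equality are sections of the sheaf $\cP_n$, so it suffices to check equality after restricting to the members of a cover of $S$. The type morphism $t$ is a morphism of sheaves and the stabilizer construction commutes with base change, so $t(\bStab(\overline{\cV}))|_U = t(\bStab(\overline{\cV}|_U))$. Since each $\cV_i$ has constant rank and $\rank(\cV_i)<\rank(\cV_{i+1})<d$, the restricted flag is never truncated and the ranks stay the same. The right-hand side is therefore a constant section that is unchanged by restriction, and we may localize freely.

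First I will localize, exactly as in the proof of \Cref{flag_stabilizers_are_parabolic}, until $\cE=\cO^d$ and there is a basis $\{v_1,\ldots,v_d\}$ with $\{v_1,\ldots,v_{k_i}\}$ a basis of $\cV_i$. Here $k_i=\rank(\cV_i)$, and we set $m_1=k_1$, $m_i=k_i-k_{i-1}$, $m_{\ell+1}=d-k_\ell$. In this basis $\bStab(\overline{\cV})$ is the standard block upper triangular parabolic \eqref{standard_parabolic} attached to the ordered partition $m_1+\ldots+m_{\ell+1}=d$. The one subtlety is that this basis is not the original standard basis. However, changing basis is conjugation by some $g\in\GL_d$, and the type is invariant under inner automorphisms by its definition via conjugates. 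So $t(\bStab(\overline{\cV}))$ equals the type of this standard parabolic.

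Next I will apply the computation recalled in \Cref{prelim_parabolics_and_levis}. A standard parabolic with partition $(m_1,\ldots,m_{\ell+1})$ contains $\bU_{-\alpha_i}$ exactly when $i$ is not a partial sum $m_1+\ldots+m_j$ with $j\le \ell$. Its type is therefore the complement of the tuple of partial sums $(m_1,m_1+m_2,\ldots,m_1+\ldots+m_\ell)$. By construction these partial sums are $(k_1,\ldots,k_\ell)=(\rank(\cV_1),\ldots,\rank(\cV_\ell))$, which gives the formula.

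The main obstacle is purely bookkeeping: making sure the conjugation invariance of $t$ is applied correctly when passing from the adapted basis to the standard one. I would handle this by noting that the definition of $t$ sets $t(\bP)=t(\bP')$ for the conjugate $\bP'$ containing the upper triangular Borel. Since conjugation by $g^{-1}$ carries our stabilizer onto a standard parabolic, that standard parabolic is exactly such a $\bP'$. This needs the stabilizer to contain the upper triangular Borel in the adapted basis, which is evident from the block form.
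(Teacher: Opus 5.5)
Your proposal is correct and follows essentially the same route as the paper: localize to the adapted basis from the proof of \Cref{flag_stabilizers_are_parabolic}, recognize the stabilizer as the standard parabolic for the partition $m_1+\ldots+m_{\ell+1}=d$ whose partial sums are the $\rank(\cV_i)$, read off the type as the complement, and use constancy of the ranks to pass back to the global statement. Your explicit remarks on the conjugation invariance of $t$ and on the restricted flag never being truncated just spell out steps the paper leaves implicit.
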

\begin{proof}
We localize such that all modules in the flag become free direct summands and then we may refer to the ordered partition $m_1+m_2+\ldots +m_{\ell+1} =d$ occurring in the proof of \Cref{flag_stabilizers_are_parabolic}. Since the partial sums of this partition are the $k_i = \rank(\cV_i)$ by construction, we see that
\[
t(\bStab(\overline{\cV}))=(\rank(\cV_1),\rank(\cV_2),\ldots,\rank(\cV_\ell))^c.
\]
Since the modules in the flag are constant rank by assumption, this local description of the type also applies globally, hence the result.
\end{proof}

\begin{cor}\label{stabilizers_are_parabolic}
Let $\bG = \GL_{1,\cA}$ be the general linear group of an Azumaya $\cO$--algebra $\cA$ of constant degree $d$. Given a section
\[
\overline{\cI} = (0\subseteq \cI_1 \subseteq \ldots \subseteq \cI_\ell \subseteq \cA) \in \LowGSB_\cA,
\]
we set $\bStab(\overline{\cI}) \subseteq \bG$ to be the stabilizer of this section under the natural action of \Cref{GL_action_on_GSB}. This stabilizer is a parabolic subgroup of $\bG$.
\end{cor}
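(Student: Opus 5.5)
The plan is to reduce to the split case already handled in \Cref{flag_stabilizers_are_parabolic}, using the fact that being parabolic is local. Since $\cPar_\bG$ is a sheaf, and the stabilizer of a section is compatible with base change, it suffices to show that $\bStab(\overline{\cI})|_{U_i}$ is parabolic in $\bG|_{U_i}$ for some fppf (even \'etale) cover $\{U_i\to S\}_{i\in I}$. Being a closed subgroup that is smooth and contains a Borel subgroup can all be checked fppf locally. The restriction of the stabilizer is the stabilizer of the restricted flag $\overline{\cI}|_{U_i}$, which may be truncated, but truncation only removes terms equal to $\cA|_{U_i}$. Such terms are stabilized by everything, so the stabilizer does not change.

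First I choose a cover over which $\cA|_{U_i}\cong \cEnd(\cE_i)$ with $\cE_i=\cO|_{U_i}^d$. This exists because $\cA$ is Azumaya of degree $d$. Over such $U_i$ we have $\GL_{1,\cA}|_{U_i}=\GL_{\cE_i}$. By \Cref{flags_to_SB}, the flag $\overline{\cI}|_{U_i}$ corresponds to a lowered flag $\overline{\cV}=(0\subseteq \cV_{\cI_1}\subseteq\ldots\subseteq \cE_i)\in\LowFlag_{\cE_i}(U_i)$.

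The key point is that the correspondence of \Cref{flags_to_SB} is $\GL_{\cE_i}$--equivariant. It sends $a\overline{\cV}$ to $a\cdot\overline{\cI}$ on all $T\in\Sch_{U_i}$, and it is bijective. Hence for every $T$, an element $a\in\GL_{\cE_i}(T)$ fixes $\overline{\cI}|_T$ if and only if it fixes $\overline{\cV}|_T$. So $\bStab(\overline{\cI})|_{U_i}=\bStab(\overline{\cV})$ as subgroup sheaves, and the latter is parabolic by \Cref{flag_stabilizers_are_parabolic}.

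The only mildly delicate step is making sure the equivariance and bijectivity hold functorially in $T$, so that the stabilizer subfunctors, and not just their global sections, coincide. This is also where the truncation convention in the restriction maps must be checked against the action, since both flags truncate at the same index because the correspondence is bijective. Once that is done, the stabilizers are parabolic locally, and gluing in the sheaf $\cPar_\bG$ finishes the argument.
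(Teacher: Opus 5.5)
Your proposal is correct and follows essentially the same route as the paper. The paper's proof likewise localizes until $\cA$ is a matrix algebra, uses the $\GL_\cE$--equivariant isomorphism of \Cref{flags_to_SB} to get $\bStab(\overline{\cI})=\bStab(\overline{\cV})$, and concludes by \Cref{flag_stabilizers_are_parabolic} together with the sheaf property of $\cPar_\bG$; your remarks on truncation and on functoriality in $T$ only make explicit what the paper leaves implicit.
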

\begin{proof}
Again, we may work sufficiently locally and assume that $\cA = \Mat_d(\cO)$. Then, using the isomorphsim of \Cref{flags_to_SB} the flag of ideals comes from a flag of submodules $\overline{\cV} \in \LowFlag_{\cO^d}$. Furthermore, since this isomorphism is equivariant with respect to the actions of $\bG$, we have that $\bStab(\overline{\cI})=\bStab(\overline{\cV})$. We then know this is a parabolic subgroup by \Cref{flag_stabilizers_are_parabolic}.
\end{proof}

Based on the results of \Cref{flag_stabilizers_are_parabolic} and \Cref{stabilizers_are_parabolic}, we now have maps
\begin{align*}
\LowFlag_\cE &\to \cPar_{\GL_\cE} & \LowGSB_\cA &\to \cPar_{\GL_{1,\cA}} \\
\overline{\cV}&\mapsto \bStab(\overline{\cV}) & \overline{\cI} &\mapsto \bStab(\overline{\cI}). 
\end{align*}
which turn out to be isomorphisms.

\begin{prop}\label{flag_isomorphism_with_Par}
Let $\cE$ be a finite locally free $\cO$--module of constant rank $d$ and set $\bG = \GL_\cE = \GL_{1,\cEnd(\cE)}$. The two maps $\LowFlag_\cE \to \cPar_\bG$ and $\LowGSB_{\cEnd(\cE)} \to \cPar_\bG$, which send a flag to its stabilizer, are both isomorphisms. Furthermore, they fit into the commutative diagram of isomorphisms
\[
\begin{tikzcd}
\LowFlag_\cE \ar[rr] \ar[dr] & & \LowGSB_{\cEnd(\cE)} \ar[dl] \\
 & \cPar_\bG & 
\end{tikzcd}
\]
where the top horizontal map is the isomorphism of \Cref{flags_to_SB}.
\end{prop}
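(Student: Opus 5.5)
The first step is commutativity of the triangle. By the equivariance in \Cref{flags_to_SB} (as used in the proof of \Cref{stabilizers_are_parabolic}), $\bStab(\overline{\cV}) = \bStab(\overline{\cI}_{\overline{\cV}})$. The top map is an isomorphism, so it then suffices to prove that $\LowFlag_\cE \to \cPar_\bG$ is an isomorphism of sheaves. Both sides are fppf sheaves: the source by \Cref{lowered_flags_sheaf}, the target by \cite{SGA3}. The map is $\bG$--equivariant, since $\bStab(B\overline{\cV}) = B\,\bStab(\overline{\cV})B^{-1}$. Being an isomorphism is local, so we may localize and assume $\cE=\cO^d$.

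For \emph{injectivity}, take two lowered flags $\overline{\cV},\overline{\cW}\in\LowFlag_\cE(T)$ with the same stabilizer $\bP$. We may pass to a cover where both flags are constant rank with free direct summand steps, so both lie in $\ConFlag_\cE$. The type is then determined by \Cref{type_of_ConFlag}: $t(\bP)^c$ equals both $(\rank\cV_i)_i$ and $(\rank\cW_i)_i$. Hence the two flags have the same length $\ell$ and the same ranks.

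Next we recover the flag from $\bP$. The key claim is that a locally direct summand $\cV\subseteq\cO^d$ is $\bP$-stable if and only if it is one of the $\cV_i$ (or $0$, $\cE$). To see this, choose an adapted basis as in the proof of \Cref{flag_stabilizers_are_parabolic}, so that $\bP$ is standard block upper triangular. Then $\bP$ contains the diagonal torus, so a $\bP$-stable direct summand is a sum of weight spaces, i.e. spanned by basis vectors after refining the cover. Invariance under the block unipotent and Levi parts then forces the summand to be an initial segment $\langle v_1,\ldots,v_{k_i}\rangle$. This statement needs care over a non-field base. I would argue it fibrewise on points to pin down the ranks, and then use that both $\cV$ and $\cV_i$ are direct summands of the same constant rank with $\cV\subseteq\cV_i$ or $\cV_i\subseteq\cV$ to conclude equality. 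Since each $\cW_i$ is $\bP$-stable of rank $\rank\cV_i$, we get $\cW_i=\cV_i$ locally. Separatedness of $\LowFlag_\cE$ then gives $\overline{\cV}=\overline{\cW}$.

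For \emph{surjectivity}, it suffices to show that every parabolic $\bP\in\cPar_\bG(T)$ is locally a stabilizer. By \cite[Exp.~XXVI]{SGA3}, parabolics are locally conjugate under $\bG$ to standard parabolics: locally $\bP$ contains a Borel, Borels are locally conjugate, and parabolics containing the upper triangular Borel are standard of a locally constant type. A standard parabolic with partition $m_1+\ldots+m_k=d$ is the stabilizer of the flag spanned by the first partial-sum numbers of standard basis vectors. Equivariance then shows $\bP$ is locally a stabilizer, and together with injectivity this gives surjectivity of sheaves.

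The main obstacle is the injectivity step, namely proving over a general base that the $\bP$-stable direct summands are exactly the flag members. I would first try to reduce it to a geometric-fibre statement combined with the rank/inclusion argument above. As an alternative route, one could characterize $\cV_i$ intrinsically, e.g. as the image of $\cE$ under the $\bP$-stable idempotent structure, or identify $\rad_u(\bP)$ and read off the flag as iterated images and kernels of $\mathrm{Lie}(\rad_u\bP)$ acting on $\cE$.
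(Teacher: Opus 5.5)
Your proof is correct. It agrees with the paper on commutativity (via the equivariance in \Cref{flags_to_SB}), on the reduction to showing that $\LowFlag_\cE\to\cPar_\bG$ is an isomorphism, and on surjectivity (local conjugacy to a standard parabolic, which is the stabilizer of the standard flag). The difference is in injectivity.

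The paper first uses \Cref{type_of_ConFlag} to get equal lengths and ranks. It then picks, locally, a $g\in\GL_d$ with $g\overline{\cV}=\overline{\cW}$, so that $g\in\bNorm_\bG(\bP)$. Finally it concludes $g\in\bP$ from the self-normalizing property of parabolics \cite[Exp.~XXII, 5.8.5]{SGA3}. You instead recover the flag from $\bP$ directly, as the chain of $\bP$-stable direct summands. This avoids the normalizer theorem and gives an explicit description of the inverse map. It also makes the appeal to the type morphism almost unnecessary: each $\cW_i$ is locally some $\cV_j$, and the strict rank increase pins down $j$. The price is a short computation, which you treat as the main obstacle.

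That computation is less delicate than you fear: the torus weight-space step already works over an arbitrary base. Take $x=\sum_i a_iv_i\in\cV(T)$ with $T$ affine. Stability under the universal point $\mathrm{diag}(t_1,\dots,t_d)$ of the diagonal torus, over $T\times\GG_m^d$, gives $\sum_i t_ia_iv_i\in\cV(T)\otimes\cO(T)[t_1^{\pm1},\dots,t_d^{\pm1}]$. The inclusion of this into $\cO(T)^d[t^{\pm1}]$ respects the monomial grading, because $\cO(T)[t^{\pm1}]$ is free. Comparing coefficients therefore yields $a_iv_i\in\cV(T)$, so $\cV=\bigoplus_i(\cV\cap\cO v_i)$. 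Since $\cO^d/\cV$ is locally free, each $\cV\cap\cO v_i$ is locally either $0$ or $\cO v_i$. The elementary matrices $I+E_{ab}$ lying in the standard parabolic then force the index set to be an initial union of blocks, exactly as you say.

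You should drop the fibrewise fallback. Over a non-reduced base, agreement on fibres does not determine a direct summand. Nothing in that sketch produces the inclusion $\cV\subseteq\cV_i$ or $\cV_i\subseteq\cV$ that your rank argument would need.
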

\begin{proof}
It is clear that the diagram is commutative since the isomorphism of \Cref{flags_to_SB} is $\bG$--equivariant. Thus, it is sufficient to show that $\LowFlag_\cE \to \cPar_\bG$ is an isomorphism.

Consider two sections of $\LowFlag_\cE$,
\begin{align*}
\overline{\cV} &= (0\subseteq \cV_1 \subseteq \ldots \subseteq \cV_\ell \subseteq \cE), \text{ and} \\
\overline{\cW} &= (0\subseteq \cW_1 \subseteq \ldots \subseteq \cW_k \subseteq \cE)
\end{align*}
and assume $\bStab(\cV)=\bStab(\cW)$. Since the flags will be equal if and only if they are equal locally, we may localize until all of the occurring modules are free direct summands of $\cE=\cO^d$. Then by \Cref{type_of_ConFlag}, we have that
\begin{align*}
t(\bStab(\cV)) &= (\rank(\cV_1),\rank(\cV_2),\ldots,\rank(\cV_\ell))^c, \text{ and} \\
t(\bStab(\cW)) &= (\rank(\cW_1),\rank(\cW_2),\ldots,\rank(\cW_k))^c
\end{align*}
which of course must be equal. This implies that $k=\ell$ and that $\rank(\cV_i)=\rank(\cW_i)$ for all $1\leq i \leq \ell$. Therefore, since all modules are free direct summands of $\cO^d$, we can then find some $g\in \Mat_d(\cO)$ such that $g\overline{\cV} = \overline{\cW}$. Hence,
\[
\bStab(\overline{\cV})= \bStab(\overline{\cW}) = \bStab(g\overline{\cV}) = g\bStab(\overline{\cV})g^{-1}
\]
showing that $g$ belongs to the normalizer of $\bStab(\overline{\cV})$. However, $\bStab(\overline{\cV})$ is a parabolic subgroup and by \cite[Exp. XXII, 5.8.5]{SGA3} parabolic subgroups are their own normalizer, thus $g\in \bStab(\overline{\cV})$. This means that $\overline{\cW} = g\overline{\cV} = \overline{\cV}$, so the stabilizer construction is injective.

Finally, any parabolic is locally and up to conjugation a standard parabolic \eqref{standard_parabolic} corresponding to some ordered partition $m_1+\ldots m_{\ell+1} =d$. Let $\{e_1,e_2,\ldots,e_d\}$ be the standard basis of $\cO^d$ and let $\cV_i = \Span(e_1,\ldots,e_{m_1+\ldots+m_i})$. From the description occurring in the proof of \Cref{flag_stabilizers_are_parabolic}, one sees that such a standard parabolic is the stabilizer of the flag
\[
0\subseteq \cV_1 \subseteq \ldots \subseteq \cV_\ell \subseteq \cO^d.
\]
Then, applying the inverse conjugation to this flag will produce a flag whose stabilizer is locally the initial parabolic. Thus, the map $\LowFlag_\cE \to \cPar_\bG$ is also surjective as a map of sheaves. Therefore it is an isomorphism as claimed, finishing the proof.
\end{proof}

\begin{cor}\label{SB_isomorphism_with_Par}
Let $\cA$ be an Azumaya $\cO$--algebra of constant degree $d$. The map
\begin{align*}
\LowGSB_\cA &\to \cPar_{\GL_{1,\cA}} \\
\overline{\cI} &\mapsto \bStab(\overline{\cI}). 
\end{align*}
is an isomorphism of sheaves.
\end{cor}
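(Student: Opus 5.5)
The map $\LowGSB_\cA \to \cPar_{\GL_{1,\cA}}$, $\overline{\cI}\mapsto \bStab(\overline{\cI})$, is already well defined as a morphism of sheaves by \Cref{stabilizers_are_parabolic}. I will check that it is a morphism of sheaves, meaning compatible with restriction, and then show it is an isomorphism locally.

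For compatibility with restriction, note that stabilizers commute with base change. When a lowered flag is restricted and truncated, only terms equal to $\cA|_Y$ are dropped, and the full algebra is stabilized by every element. So $\bStab(\overline{\cI})|_Y = \bStab(\overline{\cI}|_Y)$, and the stabilizer construction is a natural transformation between sheaves on $\Sch_S$.

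Being an isomorphism of sheaves is a local property. It suffices to check it after restricting to the members of an fppf (or étale) cover $\{U_i\to S\}$, using $(\LowGSB_\cA)|_{U_i} = \LowGSB_{\cA|_{U_i}}$ (the analogue of \Cref{lowered_flags_sheaf} for the subsheaf) and $(\cPar_{\bG})|_{U_i} = \cPar_{\bG|_{U_i}}$. Choose the cover so that $\cA|_{U_i}\cong \cEnd(\cO^d) = \Mat_d(\cO)$, which is possible since $\cA$ is Azumaya of constant degree $d$.

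Over each $U_i$, the isomorphism $\cA|_{U_i}\cong \cEnd(\cE)$ with $\cE=\cO^d$ induces an identification $\GL_{1,\cA|_{U_i}}\cong \GL_\cE$. It also induces a bijection of right ideal flags $\LowGSB_{\cA|_{U_i}}\cong \LowGSB_{\cEnd(\cE)}$, because algebra isomorphisms preserve right ideals, local direct summands and loweredness. Stabilizers correspond under this identification since the actions match. By \Cref{flag_isomorphism_with_Par}, the stabilizer map $\LowGSB_{\cEnd(\cE)}\to\cPar_{\GL_\cE}$ is an isomorphism. Hence the map is an isomorphism over each $U_i$, so it is injective and locally surjective as a map of sheaves, and therefore an isomorphism.

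The only point needing care is the independence of the local identification. The chosen isomorphism $\cA|_{U_i}\cong\Mat_d(\cO)$ is not canonical, but this does not matter. We never glue inverse maps; we only use that the globally defined morphism becomes an isomorphism after restriction, and that property is independent of the chosen trivialization.
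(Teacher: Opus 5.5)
Your proposal is correct and follows the same route as the paper: restrict to a cover on which $\cA\cong\cEnd(\cO^d)$, identify the stabilizer map there with the isomorphism $\LowGSB_{\cEnd(\cE)}\to\cPar_{\GL_\cE}$ of \Cref{flag_isomorphism_with_Par}, and use that being an isomorphism of sheaves is a local property. The only difference is that you spell out details the paper's one-line proof leaves implicit, namely that stabilizers commute with restriction and that the structure transports along the chosen trivialization.
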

\begin{proof}
Locally, this map becomes the map $\LowGSB_{\cEnd(\cE)} \to \cPar_{\GL_\cE}$ of \Cref{flag_isomorphism_with_Par} which is an isomorphism. Thus, it is also an isomorphism globally.
\end{proof}

\subsection{Type Morphisms for the Inner Case}\label{type_morphisms_inner}
Let $\cE$ be a finite locally free $\cO$--module of constant rank $d=n+1$. We want to define a type morphism for lowered flags which encodes the sequence of ranks of submodules occurring in a given flag as an increasing tuple. However, the submodules participating in a flag need not be of constant rank, and so this type morphism must instead land in the locally constant sheaf $\cP_n$ of \eqref{eq_sheaf_cP_n}. Nevertheless, in the case that a flag is comprised of constant rank modules it is clear what we desire the type morphism to be and therefore it is straightforward to define the type morphism for the presheaf $\ConFlag_\cE$ of \eqref{eqn_con_flags}. Namely, we consider the map of presheaves
\begin{align}
\ConFlag_\cE &\to \cP_n \label{eq_ConFlag_type} \\
(0\subseteq \cV_1 \subseteq \cV_2 \subseteq \ldots \subseteq \cV_\ell \subseteq \cE) &\mapsto (\rank(\cV_1),\rank(\cV_2),\ldots,\rank(\cV_\ell)) \nonumber
\end{align}
with the trivial flag $0\subseteq \cE$ being mapped to the empty tuple $()$. Since $\cV_\ell$ is constant rank and not equal to $\cE$ by assumption, $\rank(\cV_\ell)\leq n$ and so this map indeed lands in $\cP_n$. Then, since $\cP_n$ is itself a sheaf, this map of presheaves will induce a unique map from the sheafification of $\ConFlag_\cE$ into $\cP_n$, which by \Cref{sheafification_of_ConFlag} is $\LowFlag_\cE$.
\begin{defn}\label{type_for_LowFlag}
Let $\cE$ be a finite locally free $\cO$--module of constant rank $d=n+1$. The \emph{type morphism} for the sheaf of lowered flags $\LowFlag_\cE$ is the sheaf morphism
\[
t_{\cFlag} \colon \LowFlag_\cE \to \cP_n
\]
induced from the presheaf morphism \eqref{eq_ConFlag_type} above. For a section $\vr \in \cP_n(S)$ we define the sheaf of \emph{flags of type $\vr$ in $\cE$} to be the fiber of $t_{\cFlag}$ over $\vr$, i.e.,
\[
\LowFlag_{\vr,\cE} = t_{\cFlag}^{-1}(\vr) \subseteq \LowFlag_\cE.
\]
\end{defn}

Let $\cA$ be an Azumaya $\cO$--algebra of constant degree $d$. It is similarly straightforward to define a type morphism from $\ConIdeal_\cA$ to $\cP_n$. However, since the rank of any constant rank right ideal in $\cA$ is a multiple of $d$, we define the map of presheaves
\begin{align}
\ConIdeal_\cA &\to \cP_n \label{eq_ConIdeal_type} \\
(0\subseteq \cI_1 \subseteq \cI_2 \subseteq \ldots \subseteq \cI_\ell \subseteq \cE) &\mapsto \left(\frac{\rank(\cI_1)}{d},\frac{\rank(\cI_2)}{d},\ldots,\frac{\rank(\cI_\ell)}{d}\right). \nonumber
\end{align}
\begin{defn}\label{defn_inner_type_GSB}
Let $\cA$ be an Azumaya $\cO$--algebra of constant degree $d$. The \emph{type morphism} for the generalized Severi-Brauer sheaf $\LowGSB_\cA$ is the sheaf morphism
\[
t_{\cGSB} \colon \LowGSB_\cA \to \cP_n
\]
induced from the presheaf morphism \eqref{eq_ConIdeal_type} above. For a section $\vr \in \cP_n(S)$ we define the \emph{Severi-Brauer sheaf of type $\vr$} to be the fiber of $t_{\cGSB}$ over $\vr$, i.e.,
\[
\LowGSB_{\vr,\cA} = t_{\cGSB}^{-1}(\vr) \subseteq \LowGSB_\cA.
\]
\end{defn}

\begin{lem}\label{type_commutativity_inner}
Let $\cE$ be a finite locally free $\cO$--module of constant rank $d$ and set $\bG = \GL_\cE = \GL_{1,\cEnd(\cE)}$. The commutative diagram of \Cref{flag_isomorphism_with_Par} extends to include type morphisms in the following way.
\[
\begin{tikzcd}
\LowFlag_\cE \ar[rr] \ar[dr] \ar[dd,swap,"t_{\cFlag}"] & & \LowGSB_{\cEnd(\cE)} \ar[dl] \ar[dd,"t_{\cGSB}"] \\[-5ex]
 & \cPar_\bG \ar[d,"t"] & \\
\cP_n \ar[r,"\und^c"] & \cP_n \ar[r,"\und^c"] & \cP_n.
\end{tikzcd}
\]
In particular, since the map $(\und^c)$ is order $2$, the diagram
\[
\begin{tikzcd}
\LowFlag_\cE \ar[rr] \ar[dr,swap,"t_{\cFlag}"] & & \LowGSB_{\cEnd(\cE)} \ar[dl,"t_{\cGSB}"] \\
 & \cP_n & 
\end{tikzcd}
\]
commutes.
\end{lem}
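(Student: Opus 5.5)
The plan is to prove that two triangles commute: the left one, $t\circ\bStab = \und^c\circ t_{\cFlag}$ on $\LowFlag_\cE$, and the right one, $t\circ\bStab = \und^c\circ t_{\cGSB}$ on $\LowGSB_{\cEnd(\cE)}$. The second diagram then follows formally. Both sides of each triangle are morphisms of sheaves into the sheaf $\cP_n$, so equality can be checked on the presheaves $\ConFlag_\cE$ and $\ConIdeal_{\cEnd(\cE)}$. Indeed, by \Cref{sheafification_of_ConFlag} and \Cref{sheafification_of_ConIdeal}, these inclusions have the universal property of sheafification. Hence two sheaf morphisms out of $\LowFlag_\cE$ (resp.\ $\LowGSB_{\cEnd(\cE)}$) that agree after precomposing with the inclusion are equal. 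Equivalently, any section is locally in the image of the constant-rank presheaf, and $\cP_n$ is separated.

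For the left triangle, take $\overline{\cV}\in\ConFlag_\cE(T)$. By definition \eqref{eq_ConFlag_type}, $t_{\cFlag}(\overline{\cV})=(\rank\cV_1,\ldots,\rank\cV_\ell)$. By \Cref{type_of_ConFlag}, $t(\bStab(\overline{\cV}))=(\rank\cV_1,\ldots,\rank\cV_\ell)^c$. So $t\circ\bStab=\und^c\circ t_{\cFlag}$ on $\ConFlag_\cE$, and hence on $\LowFlag_\cE$.

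For the right triangle, the key computation is ranks. Let $\overline{\cI}\in\ConIdeal_{\cEnd(\cE)}(T)$ and let $\overline{\cV}$ be its preimage under \Cref{flags_to_SB}. Locally, where $\cE=\cO^d$ and $\cV_i$ is free, the description in \Cref{submodule_ideal_inverse} gives $\cI_{\cV_i}\cong\cHom(\cE,\cV_i)$, which has rank $d\cdot\rank\cV_i$. Therefore $t_{\cGSB}(\overline{\cI})=(\rank\cV_1,\ldots,\rank\cV_\ell)=t_{\cFlag}(\overline{\cV})$. The one subtlety is that $\overline{\cV}$ need only be locally of constant rank. This is harmless: the computation is done after passing to a cover, where both the ideals and the submodules are constant rank, and $\cP_n$ is a sheaf. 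This shows that the outer square, $t_{\cGSB}\circ(\text{iso})=t_{\cFlag}$, commutes, which is precisely the second diagram.

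Combining this with the left triangle and the $\bG$-equivariance (hence commutativity) of the triangle from \Cref{flag_isomorphism_with_Par}, we get $t\circ\bStab_{\cGSB}=t\circ\bStab_{\cFlag}\circ(\text{iso})^{-1}=\und^c\circ t_{\cFlag}\circ(\text{iso})^{-1}=\und^c\circ t_{\cGSB}$. The main obstacle, though mild, is bookkeeping about locality. One must make sure that comparing on constant-rank presheaves legitimately determines the sheaf maps; this rests on the sheafification statements already proved. One must also make sure that the passage $\cV\mapsto\cI_\cV$ preserves constant rank, so that $\ConFlag$ maps into $\ConIdeal$. The latter follows from $\rank\cI_\cV=d\rank\cV$.
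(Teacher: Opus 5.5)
Your proposal is correct and follows the paper's proof: you get the left square from \Cref{type_of_ConFlag} on $\ConFlag_\cE$ and the outer triangle from $\rank(\cI_\cV)=d\cdot\rank(\cV)$ on the constant-rank presheaves, extend both by the universal property of sheafification, and combine with the triangle of isomorphisms from \Cref{flag_isomorphism_with_Par}, exactly as the paper does. The ``subtlety'' you flag about $\overline{\cV}$ being only locally of constant rank does not actually arise: $\rank(\cI_{\cV_i})=d\cdot\rank(\cV_i)$ forces each $\cV_i$ to have constant rank whenever $\cI_{\cV_i}$ does.
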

\begin{proof}
Given a constant rank submodule $\cV \subseteq \cE$, we know that $\cI_\cV = \cEnd(\cE,\cV)$ and that
\[
\rank(\cI_\cV) = d\cdot \rank(\cV).
\]
Thus, we have a commutative diagram of presheaves
\[
\begin{tikzcd}
\ConFlag_\cE \ar[rr] \ar[dr,swap,"\eqref{eq_ConFlag_type}"] & & \ConIdeal_{\cEnd(\cE)} \ar[dl,"\eqref{eq_ConIdeal_type}"] \\
 & \cP_n & 
\end{tikzcd}
\]
where the horizontal map sends
\[
(0\subseteq \cV_1 \subseteq \ldots \subseteq \cV_\ell \subseteq \cE) \mapsto (0\subseteq \cI_{\cV_1} \subseteq \ldots \subseteq \cI_{\cV_\ell} \subseteq \cEnd(\cE)).
\]
Thus, the induced diagram of maps of sheaves also commutes, showing that the isomorphism $\LowFlag_\cE \to \LowGSB_\cE$ respects the type morphisms and the second diagram commutes as claimed.

Next, the map
\begin{align*}
\ConFlag_\cE &\to \cPar_\bG \\
\overline{\cV} &\mapsto \bStab(\overline{\cV})
\end{align*}
fits into the diagram
\[
\begin{tikzcd}
\ConFlag_\cE \ar[r] \ar[d,"\eqref{eq_ConFlag_type}"] & \cPar_\bG \ar[d,"t"] \\
\cP_n \ar[r,"\und^c"] & \cP_n
\end{tikzcd}
\]
which commutes by \Cref{type_of_ConFlag}. Thus, the analogues diagram of sheaves with $\LowFlag_\cE$ in place of $\ConFlag_\cE$ also commutes. Finally, considering that the triangle in the first diagram consists of isomorphisms and combining the above two facts shows the overall commutativity of the first diagram as well, finishing the proof.
\end{proof}

\begin{cor}\label{inner_type_GSB_Par}
Let $\cA$ be an Azumaya $\cO$--algebra of constant degree $d$. We have a commutative diagram
\[
\begin{tikzcd}
\LowGSB_\cA \ar[r] \ar[d,"t_{\cGSB}"] & \cPar_{\GL_{1,\cA}} \ar[d,"t"] \\
\cP_n \ar[r,"\und^c"] & \cP_n
\end{tikzcd}
\]
\end{cor}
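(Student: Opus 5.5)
The plan is to reduce to the split case of \Cref{type_commutativity_inner} by descent. The key observation is that equality of two morphisms of sheaves $\LowGSB_\cA \to \cP_n$ can be checked locally. We will compare $t\circ \bStab$ and $\und^c\circ t_{\cGSB}$, where $\bStab$ is the isomorphism of \Cref{SB_isomorphism_with_Par}.

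\textbf{Step 1: reduce to $\cA|_{U_i}\cong\cEnd(\cE_i)$.} Choose an fppf (or étale) cover $\{U_i\to S\}_{i\in I}$ such that $\cA|_{U_i}\cong \cEnd_{\cO|_{U_i}}(\cE_i)$ for finite locally free $\cE_i$ of constant rank $d$ (e.g.\ $\cE_i=\cO^d$). It suffices to show that the two composites agree on every section over every $T\in\Sch_S$. Since $\cP_n$ is a sheaf, we may further pull back to $T\times_S U_i$. So we must check that each of the three maps involved is compatible with restriction to $U_i$ and with the chosen isomorphism $\phi_i\colon\cA|_{U_i}\iso\cEnd(\cE_i)$.

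\textbf{Step 2: compatibility of each map with $\phi_i$.}
\begin{enumerate}
\item For $\bStab$: $\phi_i$ induces $\GL_{1,\cA}|_{U_i}\cong\GL_{\cE_i}$ and carries flags of right ideals to flags of right ideals equivariantly. Hence it carries stabilizers to stabilizers, and $\bStab$ is identified with the map of \Cref{flag_isomorphism_with_Par}.
\item For $t$: the type morphism on $\cPar$ is intrinsic to the reductive group (defined in \cite[Exp.\ XXVI]{SGA3}). It is therefore compatible with base change and with group isomorphisms. In particular it is unchanged by the choice of $\phi_i$, since inner automorphisms do not change types.
\item For $t_{\cGSB}$: it is induced from \eqref{eq_ConIdeal_type}, which only uses ranks of the ideals. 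Ranks are preserved by the $\cO$--module isomorphism $\phi_i$ and by restriction. So $t_{\cGSB}$ commutes with $\phi_i$ at the presheaf level on $\ConIdeal$, and hence after sheafification via \Cref{sheafification_of_ConIdeal}.
\end{enumerate}

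\textbf{Step 3: conclude from the split case.} Over each $U_i$ the square becomes the outer square of the first diagram of \Cref{type_commutativity_inner}: we have $t\circ\bStab=\und^c\circ t_{\cFlag}$, and $t_{\cFlag}$ agrees with $t_{\cGSB}$ under \Cref{flags_to_SB}. This gives the desired commutativity locally, hence globally.

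An alternative that avoids most of Step 2 is to check the identity on the presheaf $\ConIdeal_\cA$ directly and then invoke the universal property of sheafification. For a constant-rank flag of ideals, both $t(\bStab(\overline{\cI}))$ and $t_{\cGSB}(\overline{\cI})^c$ can be computed after localizing to matrix algebras, where \Cref{type_of_ConFlag} together with $\rank(\cI_\cV)=d\rank(\cV)$ gives equality.

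The main obstacle is point (ii) of Step 2: justifying that the type morphism $t$ is compatible with restriction and does not depend on the chosen trivialization. I would handle it by citing the SGA3 definition of $t$ as a morphism of sheaves. Its twist-invariance under $\PGL_d$ was already noted in \Cref{prelim_parabolics_and_levis}.
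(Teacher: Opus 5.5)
Your proposal is correct and matches the paper's proof, which just observes that the square becomes, locally on $S$, the split square of \Cref{type_commutativity_inner} and therefore commutes globally. Your Step 2 spells out the compatibilities of $\bStab$, $t$ and $t_{\cGSB}$ with the local trivializations that the paper leaves implicit.
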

\begin{proof}
This diagram locally becomes of the form
\[
\begin{tikzcd}
\LowGSB_{\cEnd(\cE)} \ar[r] \ar[d,"t_{\cGSB}"] & \cPar_{\GL_\cE} \ar[d,"t"] \\
\cP_n \ar[r,"\und^c"] & \cP_n
\end{tikzcd}
\]
which commutes by \Cref{type_commutativity_inner}, thus is also commutes globally as claimed.
\end{proof}

\section{Hermitian Flags and Outer Severi-Brauer Sheaves}\label{outer_case}
We now consider a group $\bG$ which is an arbitrary form of $\GL_d$, possibly an outer form, and we define outer analogues of flags and Severi-Brauer sheaves which will once again be isomorphic to the sheaf of parabolic subgroups $\cPar_\bG$. Since $\bG$ is a form of $\GL_d$, we know that $\bG = \bU_{(\cB,\tau)}$ for some Azumaya algebra with involution of the second kind $(f\colon L\to S,\cB,\tau)$ of constant degree $d$.

\subsection{Hermitian Flags}
In the case that $\cB = \cEnd_{\cO|_L}(\cH)$ for a $\cO|_L$--module $\cH$ of constant rank $d$, then $\tau$ will be the adjoint involution of some hermitian regular form $h\colon f_*(\cH) \times f_*(\cH) \to \cL$. We use this hermitian form to define orthogonality. For an $\cL$--submodule $\cV \subseteq f_*(\cH)$, we set $\cV^{\perp_h} \subseteq f_*(\cH)$ to be the $\cL$--submodule which behaves on $T\in \Sch_S$ as
\[
T \mapsto \{x \in f_*(\cH)(T) \mid h(x|_{T'},v)=0 \text{ for all } T'\in \Sch_T, v\in \cV(T')\}.
\]
\begin{example}\label{split_hermitian_perp}
Recall the split hermitian form of \eqref{eq_split_hermitian_form} which is
\begin{align*}
h_d\colon (\cO^d\times\cO^d) \times (\cO^d\times\cO^d) &\to \cO_S\times\cO_S \\
((x,y),(z,w)) &\mapsto (y^t z, w^t x).
\end{align*}
For an $\cO$--submodule $\cV\subset \cO^d$, we set $\cV^\perp \subseteq \cO^d$ to be the usual perpendicular submodule, i.e., the subsheaf of sections $x\in \cO^d$ such that $x^tv=0$ for all $v\in \cV$. If $\cV$ is of constant rank $r$, then $\cV^\perp$ is of constant rank $d-r$. Furthermore, $(\cV^\perp)^\perp =\cV$.

Any $(\cO\times\cO)$--submodule of $\cO^d\times\cO^d$ is of the form $\cV_1\times\cV_2 \subseteq \cO^d\times\cO^d$ for $\cO$--submodules $\cV_1,\cV_2\subseteq \cO^d$. Its perpendicular with respect to $h_d$ is simply
\[
(\cV_1\times\cV_2)^{\perp_{h_d}} = \cV_2^\perp\times\cV_1^\perp.
\]
\end{example}

Ultimately, given a regular hermitian form $(f\colon L \to S,\cH,h)$, we want to define a sheaf of flags within $f_*(\cH)$ such that when we consider the case of the split hermitian form of \eqref{eq_split_hermitian_form} our new sheaf is isomorphic with $\LowFlag_{\cO^d}$. In this way, these new sheaves will be outer twists of $\LowFlag_{\cO^d}$. The idea will be to look at flags in $f_*(\cH)$ which are symmetric with respect to orthogonality, i.e., those such that
\[
(0\subseteq \cV_1\subseteq \ldots \subseteq \cV_\ell \subseteq f_*(\cH)) = (0\subseteq \cV_\ell^{\perp_h} \subseteq \ldots \subseteq \cV_1^{\perp_h} \subseteq f_*(\cH)),
\]
meaning that $\cV_i^{\perp_h} = \cV_{\ell+1-i}$ since the perpendicular operation is order reversing on submodules. However, there is a pitfall which we must avoid. Keeping \Cref{split_hermitian_perp} in mind, symmetric flags of $\cO\times\cO$--submodules in the split hermitian form will appear as
\[
0 \subseteq \cV_1\times\cV_\ell^\perp \subseteq \cV_2 \times \cV_{\ell-1}^\perp \subseteq \ldots \subseteq \cV_\ell \times\cV_1^\perp \subseteq \cO^d\times\cO^d
\]
and we would like to simply map this to the flag $0\subseteq \cV_1 \subseteq \cV_2 \subseteq \ldots \subseteq \cV_\ell \subseteq \cO^d$ to achieve an isomorphism with $\LowFlag_{\cO^d}$. Unfortunately, it may occur that $\cV_1=\cV_2 \neq \cO^d$, and thus this is not a valid section of $\LowFlag_{\cO^d}$, even if the starting flag is valid, meaning we have $\cV_1\times \cV_\ell^\perp \neq \cV_2\times \cV_{\ell-1}^{\perp}$. For example, consider a flag
\[
0 \subseteq \cV_1 \subseteq \cV_2 \subseteq \cV_3 \subseteq \cO^4
\] 
of constant rank submodules with $\rank(\cV_i)=i$. Then, the flag
\[
0 \subseteq \cV_1\times\cV_3^\perp \subseteq \cV_1\times\cV_2^\perp \subseteq \cV_2\times\cV_1^\perp \subseteq \cV_3\times \cV_1^\perp \subseteq \cO^4\times \cO^4
\]
has $\cO$--ranks $(2,3,5,6)$ and is a valid section of $\LowFlag_{f_*(\cH)}$ which is symmetric with respect to the perpendicular operation. However, its naive projection onto the first factor produces $(0 \subseteq \cV_1 \subseteq \cV_1 \subseteq \cV_2 \subseteq \cV_3 \subseteq \cO^4)$ which is not a valid lowered flag. To forbid such flags from occuring, we introduce the notion of the $\cL$--gap of an inclusion of submodules. It will be a locally constant integer value, so we define it via sheafification.
\begin{defn}
Let $L\to S$ be a finite \'etale cover of schemes of degree $k$ with corresponding commutative $\cO$--algebra $\cL\colon \Sch_S \to \Rings$. Let $\cM \colon \Sch_S \to \Ab$ be a finite locally free $\cL$--module of constant rank $d$. We define the sheaf of length $2$ flags of $\cL$--submodules of $\cM$ which are locally direct summands,
\begin{align*}
\cFlag_{\cL,\cM}^2 \colon \Sch_S &\to \Sets \\
T &\mapsto \left\{\cV\subseteq \cW \mid \begin{array}{l} \cV,\cW \subseteq \cM|_T \text{ are }\cL|_T\text{--submodules, and} \\ \text{they are locally direct summands}\end{array}\right\}.
\end{align*}
Note that here we allow sections where $\cV=\cW$. Over any $T\in \Sch_S$ where $\cL|_T \cong (\cO|_T)^k$, which happens locally, we will have that $\cM|_T \cong \cM_{1,T}\times\ldots\times \cM_{k,T}$ for finite locally free $\cO|_T$--modules $\cM_{i,T}$ all of constant rank $d$. Based on this, we define a presheaf
\[
\SplitFlag_{\cL,\cM}^2 \colon \Sch_S \to \Sets
\]
of length $2$ flags consisting of split constant rank $\cL$--submodules which are direct summands. It behaves on $T\in \Sch_S$ simply as
\[
T \mapsto \O
\]
if $\cL|_T \not\cong (\cO|_T)^k$, and it behaves as
\[
T \mapsto \left\{ \cV_1\times\ldots\times\cV_k \subseteq \cW_1\times\ldots\times\cW_k \mid \begin{array}{l} \cV_i,\cW_i\subseteq \cM_{i,T} \text{ are direct summands,} \\ \text{and they have constant } \cO|_T\text{--rank} \end{array}\right\}
\]
if $\cL|_T \cong (\cO|_T)^k$. The restriction maps come from submodule restriction within $\cM$ and therefore may to involve permutations of the factors of $\cO^k$ depending on the local isomorphisms $\cL|_T \cong (\cO|_T)^k$ chosen.
\end{defn}

\begin{lem}\label{Flag2_sheafification}
The canonical injection $\SplitFlag_{\cL,\cM}^2 \inj \cFlag^2_{\cL,\cM}$ has the universal property of sheafification and thus shows that
\[
(\SplitFlag_{\cL,\cM}^2)^\sharp \cong \cFlag^2_{\cL,\cM}.
\]
\end{lem}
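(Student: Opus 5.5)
The plan follows the pattern of \Cref{sheafification_of_ConFlag}. The presheaf $\SplitFlag_{\cL,\cM}^2$ is by definition a subpresheaf of $\cFlag^2_{\cL,\cM}$, and the target $\cFlag^2_{\cL,\cM}$ is a sheaf. So the injection has the universal property of sheafification as soon as every section of $\cFlag^2_{\cL,\cM}$ lies, locally, in the image of $\SplitFlag_{\cL,\cM}^2$. For monomorphisms into a sheaf this local surjectivity is exactly what is needed.

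First I will check that $\cFlag^2_{\cL,\cM}$ is a sheaf. Submodules of the sheaf $\cM$ glue uniquely. Being an $\cL$--submodule, satisfying $\cV\subseteq\cW$, and being locally a direct summand are all local conditions. Hence compatible local pairs $(\cV_i\subseteq\cW_i)$ glue uniquely to a global pair with the same properties.

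For local surjectivity, fix $T$ and a section $\cV\subseteq\cW$ in $\cFlag^2_{\cL,\cM}(T)$. The cover $L\to S$ is finite étale of degree $k$, so there is a cover $\{T_j\to T\}$ with $\cL|_{T_j}\cong(\cO|_{T_j})^k$. Over such $T_j$ we have the orthogonal idempotents $\epsilon_1,\ldots,\epsilon_k$ of $\cO^k$, with $\cM|_{T_j}=\prod_i\epsilon_i\cM$ and $\cM_{i,T_j}=\epsilon_i\cM|_{T_j}$. The key observation is that any $\cL$--submodule $\cV$ is stable under multiplication by the $\epsilon_i$, and therefore splits as $\cV|_{T_j}=\prod_i\epsilon_i\cV$ with $\epsilon_i\cV\subseteq\cM_{i,T_j}$. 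The same holds for $\cW$, and inclusions are preserved factorwise, giving $\epsilon_i\cV\subseteq\epsilon_i\cW$.

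Next I need each factor to be a direct summand of constant rank. Since $\cV$ is locally a direct summand of $\cM$ as an $\cO$--module, I refine the cover so that $\cM|_{T_j}=\cV\oplus\cN$. Projecting by $\epsilon_i$ does not preserve an arbitrary complement $\cN$, which is not assumed to be an $\cL$--module. Instead I will argue that $\epsilon_i\cV=\cV\cap\cM_{i}$ is a direct summand of $\cV$, being the image of the idempotent $\epsilon_i$ acting on $\cV$. Hence $\epsilon_i\cV$ is finite locally free. It is also a direct summand of $\cM_i$: it is locally a direct summand of $\cM$, since a direct summand of a direct summand is one, and it lies inside the summand $\cM_i$. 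Finally, I refine the cover once more so that all the finite locally free modules $\epsilon_i\cV$ and $\epsilon_i\cW$ have constant $\cO$--rank. On these $T_j$ the restricted section lies in $\SplitFlag_{\cL,\cM}^2(T_j)$.

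The main obstacle I expect is the direct-summand claim for the factors $\epsilon_i\cV$ inside $\cM_{i,T_j}$. The cleanest route is to note that for finite locally free modules, being a local direct summand is equivalent to the quotient being finite locally free. Then $\cM_i/\epsilon_i\cV$ is a direct summand of $\cM/\cV$, via the $\epsilon_i$ decomposition of the $\cL$--module $\cM/\cV$, and is therefore locally free. The remaining point is the bookkeeping that restriction maps in $\SplitFlag^2$ may permute factors. This is harmless, because the comparison is made inside $\cM$ itself.
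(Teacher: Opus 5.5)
Your proposal is correct and takes the same route as the paper. You split $\cV\subseteq\cW$ factorwise over a cover trivializing $\cL$, show each factor is a finite locally free direct summand of $\cM_{i,T}$, and refine until the ranks are constant, which gives local surjectivity of the injection. Your justification that $\epsilon_i\cV$ is a direct summand of $\cM_{i,T}$ (a summand of the summand $\cV$, and contained in $\cM_{i,T}$) fills in a step that the paper states without proof.
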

\begin{proof}
Any $\cL$--submodule $\cV \subseteq \cM$ locally takes the form
\[
\cV_1\times\ldots\cV_k \subseteq \cM_{1,T}\times\ldots\times\cM_{k,T} = \cM|_T
\]
over any $T\in \Sch_S$ where $\cL|_T \cong (\cO|_T)^k$ since this is the form of any $(\cO|_T)^k$--submodule. If the submodule $\cV$ is locally a direct summand then each $\cV_i$ must be a locally direct summand of $\cM_{i,T}$. Because $\cM$ was assumed to be finite locally free as an $\cL$--module, each $\cM_{i,T}$ is finite locally free as an $\cO$--module and this $\cV_i$ is also finite locally free as an $\cO$--module. Thus, localizing further if necessary, we may assume that each $\cV_i$ is of constant $\cO|_T$--rank. Thus, any section $\cV \subset \cW \in \cFlag^2_{\cL,\cM}(S)$ locally belongs to the image of $\SplitFlag^2_{\cL,\cM}$. The same hold for any section of $\cFlag^2_{\cL,\cM}$, which shows that we have the universal property of sheafification as desired.
\end{proof}

\begin{defn}\label{defn_L_gap}
Let $L\to S$ be an \'etale cover of schemes of degree $k$ with corresponding commutative $\cO$--algebra $\cL\colon \Sch_S \to \Rings$. Let $\cM \colon \Sch_S \to \Ab$ be a finite locally free $\cL$--module of constant rank $d$. We define a map of presheaves
\[
\SplitFlag^2_{\cL,\cM} \to \underline{\{0,\ldots,d\}}
\]
by requiring that for each $T\in \Sch_S$ for which $\cL|_T\cong (\cO|_T)^k$, and so $\SplitFlag^2_{\cL,\cM}(T)$ is nonempty, it acts as
\[
\cV_1\times\ldots\times\cV_k \subseteq \cW_1\times\ldots\times\cW_k \mapsto \min(\{ \rank|_{\cO|_T}(\cW_i)-\rank_{\cO|_T}(\cV_i)\mid 1\leq i \leq k\})
\]
and of course it is simply the inclusion $\O \inj \underline{\{0,\ldots,d\}}$ when $\SplitFlag^2_{\cL,\cM}(T)=\O$.

Since $\underline{\{0,\ldots,d\}}$ is a sheaf, the above map induces a unique map of sheaves
\[
\gap_\cL \colon \cFlag^2_{\cL,\cM} \to \underline{\{0,\ldots,d\}}
\]
which we call the $\cL$--gap.
\end{defn}
As a first comment, we point out that the presheaf map defined above is actually well defined (despite possible permutations appearing in the restriction maps of $\SplitFlag^2_{\cL,\cM}$) since the image is invariant under permutations of the factors. Intuitively, the $\cL$--gap aims to capture the maximal $n$ such that we have a commutative diagram of injections of $\cL$--modules
\[
\begin{tikzcd}
\cV\oplus \cL^n \ar[dr,hookrightarrow,"\exists"] & \\
\cV \ar[r,hookrightarrow] \ar[u,hookrightarrow] & \cW.
\end{tikzcd}
\]
Stated naively like this we do not get a notion which is stable under base change, so the $\cL$--gap locally captures this idea and then combines the result into a locally constant section. In the problematic example given above,
\[
0 \subseteq \cV_1\times\cV_3^\perp \subseteq \cV_1\times\cV_2^\perp \subseteq \cV_2\times\cV_1^\perp \subseteq \cV_3\times \cV_1^\perp \subseteq \cO^4\times \cO^4
\]
it is clear that
\[
\gap_\cL(\cV_1\times\cV_3^\perp \subseteq \cV_1\times\cV_2^\perp) = 0
\]
and so we will avoid such flags by disallowing subsequent components from having an $\cL$--gap of zero.

Using the $\cL$--gap, we define the sheaf of \emph{$\cL$--lowered} flags.
\begin{defn}\label{defn_Flags_LH}
We define the presheaf $\ConFlag_{\cL,\cH} \colon \Sch_S \to \Sets$ to be the subpresheaf of $\ConFlag_{f_*(\cH)} \colon \Sch_S \to \Sets$ of those flags of  constant $\cO$--rank submodules which are locally direct sumamnds
\[
0 \subseteq \cV_1 \subseteq \ldots \subseteq \cV_\ell \subseteq f_*(\cH)
\]
where each $\cV_i\subseteq f_*(\cH)$ is also an $\cL$--submodule and where $\gap_\cL(\cV_i\subseteq \cV_{i+1})>0$, i.e., sufficiently locally whenever the $\cL$--gap is constant it must be non-zero.

We say that a flag
\[
(0 \subseteq \cV_1 \subseteq \ldots \subseteq \cV_\ell \subseteq f_*(\cH)) \in \LowFlag_{f_*(\cH)}
\]
is $\cL$--lowered if it consists of $\cL$--submodules of $f_*(\cH)$ which are locally direct summands and if for every $T\in \Sch_S$ we have
\[
\gap_{\cL|_T}(\cV_i|_T\subseteq \cV_{i+1}|_T)=0 \Rightarrow \cV_i|_T = f_*(\cH)|_T = \cV_{i+1}|_T.
\]

We define the sheaf $\LowFlag_{\cL,\cH} \colon \Sch_S \to \Sets$ to be the subsheaf of $\LowFlag_{f_*(\cH)}$ consisting of $\cL$--lowered flags. Symmetrically, we also have the sheaf $\RaiFlag_{\cL,\cH}$ of $\cL$--raised flags. 
\end{defn}

\begin{lem}\label{sheafification_of_ConFlag_LH}
The sheaf $\LowFlag_{\cL,\cH}$ defined above is a sheaf. Furthermore, the canonical inclusion $\ConFlag_{\cL,\cH} \inj \LowFlag_{\cL,\cH}$ satisfies the universal property of sheafification, showing that
\[
(\ConFlag_{\cL,\cH})^\sharp \cong \LowFlag_{\cL,\cH}.
\]
Similarly, the inclusion $\ConFlag_{\cL,\cH} \inj \RaiFlag_{\cL,\cH}$ shows that $(\ConFlag_{\cL,\cH})^\sharp \cong \RaiFlag_{\cL,\cH}$.
\end{lem}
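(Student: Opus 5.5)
The plan is to mirror the proofs of \Cref{lowered_flags_sheaf} and \Cref{sheafification_of_ConFlag}, treating the extra conditions (being $\cL$--submodules and the $\cL$--gap condition) as local conditions checked inside the ambient sheaf $\LowFlag_{f_*(\cH)}$.

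\textbf{Step 1: sheaf property.} Since $\LowFlag_{f_*(\cH)}$ is already a sheaf by \Cref{lowered_flags_sheaf}, and $\LowFlag_{\cL,\cH}$ is a subpresheaf, separatedness is inherited. For gluing, take compatible local sections over a cover $\{U_i\to S\}$ and glue them in $\LowFlag_{f_*(\cH)}$ to $0\subseteq \cW_1\subseteq\ldots\subseteq\cW_m\subseteq f_*(\cH)$, where $\cW_k|_{U_i}$ is $\cV_{k,i}$ or $f_*(\cH)|_{U_i}$. Being an $\cL$--submodule is local, so each $\cW_k$ is an $\cL$--submodule. For the $\cL$--lowered condition, suppose $\gap_{\cL|_T}(\cW_k|_T\subseteq\cW_{k+1}|_T)=0$. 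Since $\gap_\cL$ is a morphism of sheaves, it commutes with restriction, so its restriction to $T_i=T\times_S U_i$ is also $0$. Now argue exactly as in the proof of \Cref{lowered_flags_sheaf}. If $k\geq\ell_i$, then both modules equal $f_*(\cH)|_{T_i}$. Otherwise the local flag is $\cL$--lowered, which forces both modules to equal $f_*(\cH)|_{T_i}$. Equality with $f_*(\cH)$ is local, so it holds over $T$.

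\textbf{Step 2: the inclusion.} Check that $\ConFlag_{\cL,\cH}\subseteq\LowFlag_{\cL,\cH}$. A section of $\ConFlag_{\cL,\cH}$ has strictly increasing constant $\cO$--ranks, so it is lowered. The gap is positive by definition, and the gap is preserved under restriction because restriction of a locally constant section to a nonempty $T$ keeps its values. Hence the hypothesis $\gap=0$ never occurs, and the implication holds vacuously. Restriction in $\ConFlag_{\cL,\cH}$ never truncates, so the inclusion is compatible with restriction maps.

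\textbf{Step 3: local surjectivity (the main point).} Given a section of $\LowFlag_{\cL,\cH}(T)$, refine the cover so that three things hold at once:
\begin{itemize}
\item $\cL$ splits as $\cO^2$;
\item every $\cV_k$ becomes a product of direct summands of constant $\cO$--rank, using \Cref{Flag2_sheafification} for each consecutive pair;
\item all gaps $\gap_\cL(\cV_k\subseteq\cV_{k+1})$ become constant.
\end{itemize}
On such a piece $T_i$, the restricted (truncated) flag $0\subseteq\cV_1|_{T_i}\subseteq\ldots\subseteq\cV_{p_i}|_{T_i}\subseteq f_*(\cH)|_{T_i}$ has all terms of constant rank and distinct from $f_*(\cH)$. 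By $\cL$--loweredness, each consecutive gap is nonzero, since a zero gap would force equality with $f_*(\cH)$. Nonzero gap implies strictly increasing $\cO$--rank. The step I expect to need care is the last gap, $\cV_{p_i}\subseteq f_*(\cH)$. It is not required to be positive in $\ConFlag_{\cL,\cH}$, since only consecutive $\cV_i$ are compared, so I would confirm that the definition indeed does not constrain it.

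So the restricted flag lies in $\ConFlag_{\cL,\cH}(T_i)$, and the inclusion has the universal property of sheafification. The raised statement is proven identically, with truncation on the left and $0$ in place of $f_*(\cH)$.
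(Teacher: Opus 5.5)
Your route is the same as the paper's: separatedness and gluing of the underlying flag come from $\LowFlag_{f_*(\cH)}$, the $\cL$--lowered condition is checked on a cover, and local surjectivity comes from passing to pieces where all ranks and gaps are constant. The gap is at the boundary pair $\cV_\ell\subseteq f_*(\cH)$.

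In Step 1 you claim that for $k\geq\ell_i$ both $\cW_k|_{T_i}$ and $\cW_{k+1}|_{T_i}$ equal $f_*(\cH)|_{T_i}$. For $k=\ell_i$ this is false: $\cW_{\ell_i}|_{T_i}=\cV_{\ell_i,i}|_{T_i}$ is the last proper term of the local flag. In \Cref{lowered_flags_sheaf} this case is harmless, because there the hypothesis is an equality, which immediately gives $\cW_{\ell_i}|_{T_i}=\cW_{\ell_i+1}|_{T_i}=f_*(\cH)|_{T_i}$. A vanishing $\cL$--gap gives no such equality. For example, $\gap_\cL(\cO^d\times\cV\subseteq\cO^d\times\cO^d)=0$ for every proper summand $\cV\subsetneq\cO^d$. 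So at $k=\ell_i$ you must invoke the local flag's $\cL$--lowered condition for the pair $\cV_{\ell_i,i}\subseteq f_*(\cH)|_{U_i}$. That is, the definition has to be read with the pair $(\cV_\ell,\cV_{\ell+1})$, $\cV_{\ell+1}:=f_*(\cH)$, included. The paper's proof is terse at exactly this spot: its ``by truncation if $j\geq k_i$'' should read $j>k_i$, with $j=k_i$ handled by the local $\cL$--lowered condition.

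Your remark at the end of Step 3 settles this the other way, and with that reading the lemma is false. Let $S=S_1\sqcup S_2$ with everything split, so $f_*(\cH)=\cO^d\times\cO^d$. Over $S_1$ take the flag $0\subseteq\cO^d\times\cV\subseteq f_*(\cH)|_{S_1}$ with $0<\rank\cV<d$; its only non-boundary pair has gap $\rank\cV>0$. Over $S_2$ take any length-two flag whose gaps are all positive. The unique gluing in $\LowFlag_{f_*(\cH)}(S)$ has $\cW_1|_{S_1}=\cO^d\times\cV$ and $\cW_2|_{S_1}=f_*(\cH)|_{S_1}$. Hence $\gap_{\cL|_{S_1}}(\cW_1|_{S_1}\subseteq\cW_2|_{S_1})=0$ while $\cW_1|_{S_1}\neq f_*(\cH)|_{S_1}$, so the glued flag is not $\cL$--lowered and gluing fails.

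The fix is to include the pair $\cV_\ell\subseteq f_*(\cH)$ both in the $\cL$--lowered condition and in the positivity requirement of $\ConFlag_{\cL,\cH}$. The paper's closing rank argument applies verbatim to this pair, since $\rank_\cO(\cV_\ell)<2d$. With this reading:
\begin{enumerate}
\item Step 1 holds at $k=\ell_i$.
\item Step 2 is unchanged.
\item In Step 3 the last gap is positive by your own argument: a zero gap on a piece would force $\cV_{p_i}|_{T_i}=f_*(\cH)|_{T_i}$, contradicting that it has constant $\cO$--rank less than $2d$.
\end{enumerate}
The raised statement needs the same care at the $0$ end, which is where truncation happens there.
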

\begin{proof}
Since $\cL$ itself is a sheaf, the property of being an $\cL$--submodule is stable under base change and gluing. Being $\cL$--lowered is stable under base change by definition and therefore we only need to verify that being $\cL$--lowered respects gluing.

Without loss of generality, we may work with a section $(0\subseteq \cV_1 \subseteq \ldots \subseteq \cV_\ell \subseteq f_*(\cH)) \in \LowFlag_{f_*(\cH)}(S)$ over $S$ consisting of $\cL$--submodules. Assume we have a cover $\{U_i \to S\}_{i\in I}$ over which each of the restrictions
\[
(0 \subseteq \cV_1|_{U_i} \subseteq \ldots \subseteq \cV_{k_i}|_{U_i} \subseteq f_*(\cH)|_{U_i})
\]
is $\cL|_{U_i}$--lowered. Let $T\in \Sch_S$ and assume that for some index $j$ we have
\[
\gap_{\cL|_T}(\cV_j|_T \subseteq \cV_{j+1}|_T)=0.
\]
Restricting our cover to $T$ produces a cover $\{T_i=T\times_S U_i \to T\}_{i\in I}$, and for each $i\in I$ we have
\[
\gap_{\cL|_{T_i}}(\cV_j|_{T_i} \subseteq \cV_{j+1}|_{T_i})=0.
\]
Hence, either by truncation within $\LowFlag_{f_*(\cH)}$ if $j\geq k_i$ or by the assumption that the restriction to $U_i$ is $\cL|_{U_i}$--lowered, we know that
\[
\cV_j|_{T_i} = f_*(\cH)|_{T_i} = \cV_{j+1}|_{T_i}.
\]
Since this holds over the cover of $T$, we conclude that $\cV_j|_T = f_*(\cH)|_T = \cV_{j+1}|_T$ which shows that our original flag is $\cL$--lowered. Thus, we have shown that $\LowFlag_{\cL,\cH}$ is a sheaf.

Any flag in $\LowFlag_{\cL,\cH}$ of course also belongs to $\LowFlag_{f_*(\cH)}$, and therefore locally lies in the image of $\ConFlag_{f_*(\cH)}$ by \Cref{sheafification_of_ConFlag}. The property that the components are $\cL$--submodules which are locally direct summands is preserved by restriction, and so the local sections in $\ConFlag_{f_*(\cH)}$ will also have this property. Let
\[
0 \subseteq \cV_1 \subseteq \ldots \subseteq \cV_\ell \subseteq f_*(\cH)
\]
be a section of $\ConFlag_{f_*(\cH)}$ which is the restriction of a section from $\LowFlag_{\cL,\cH}$, in particular assume that it is $\cL$--lowered. Since the $\cO$--ranks of $\cV_i$ are constant and we have that $\rank_\cO(\cV_i)<\rank_\cO(\cV_{i+1})$, we know that for all $T\in \Sch_S$ we have
\[
\cV_i|_T \neq \cV_{i+1}|_T.
\]
Hence, by contrapositive, $\gap_{\cL|_T}(\cV_i|_T\subseteq \cV_{i+1}|_T)\neq 0$ which globally means that
\[
\gap_{\cL|_T}(\cV_i|_T\subseteq \cV_{i+1}|_T) > 0.
\]
Therefore, we have shown that sections of $\LowFlag_{\cL,\cH}$ locally belong to $\ConFlag_{\cL,\cH}$, and so we are done.
\end{proof}

The sheaf of hermitian flags we are working towards will be the fixed points in $\LowFlag_{\cL,\cH}$ under an order two automorphism related to the perpendicular operation. The perpendicular operation is order reversing on submodule inclusion, therefore if we are given a flag
\[
\overline{\cV} = (0 \subseteq \cV_1 \subseteq \cV_2 \subseteq \ldots \subseteq \cV_{\ell-1} \subseteq \cV_\ell \subseteq f_*(\cH)) \in \ConFlag_{\cL,\cH}
\]
we can apply the perpendicular operation on the flag as a whole and produce a second flag
\[
\overline{\cV}^{\perp_h} = (0 \subseteq \cV_\ell^{\perp_h} \subseteq \cV_{\ell-1}^{\perp_h} \subseteq \ldots \subseteq \cV_2^{\perp_h} \subseteq \cV_1^{\perp_h} \subseteq f_*(\cH)).
\]
We verify now that the resulting flag remains in $\ConFlag_{\cL,\cH}$ with an easy lemma.
\begin{lem}\label{hermitian_orthogonal_rank}
Let $(\cH,h)$ be a regular hermitian form of constant rank $d$ and let $\cV\subseteq f_*(\cH)$ be an $\cL$--submodule which is locally a direct summand and which is finite locally free as an $\cO$--module. Then,
\begin{enumerate}
\item \label{hermitian_orthogonal_rank_i} $\cV^{\perp_h}$ is also locally a direct summand,
\item \label{hermitian_orthogonal_rank_ii} $\rank_\cO(\cV^{\perp_h}) = 2d-\rank_\cO(\cV)$ as a locally constant function, and
\item \label{hermitian_orthogonal_rank_iii} $(\cV^{\perp_h})^{\perp_h} = \cV$.
\end{enumerate}
Additionally, if $\cV\subseteq \cW$ are two such $\cL$--submodules, then
\begin{enumerate}
\setcounter{enumi}{3}
\item \label{hermitian_orthogonal_rank_iv} $\gap_\cL(\cV\subseteq \cW) = \gap_\cL(\cW^{\perp_h} \subseteq \cV^{\perp_h})$.
\end{enumerate}
\end{lem}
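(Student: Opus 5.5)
The plan is to reduce everything to the split hermitian form of \eqref{eq_split_hermitian_form} and then apply \Cref{split_hermitian_perp}. All four claims are local, because of how each object is defined. Being locally a direct summand is a local property. The rank is a locally constant function. Equality of two subsheaves can be checked on a cover. The $\cL$--gap is defined by sheafifying a presheaf map, so two gaps agree as soon as they agree on a cover. The construction $\cV\mapsto\cV^{\perp_h}$ also commutes with restriction to any $T\in\Sch_S$, since its definition already quantifies over all $T'\in\Sch_T$. We may therefore pass to a cover $\{T_i\to S\}$ over which three things hold: $L$ splits; $(\cH,h)$ becomes isomorphic to $(\cO^d\times\cO^d,h_d)$, which is possible because every regular $1$--hermitian form of constant rank $d$ is locally split; and the two factors $\cV_1,\cV_2$ of $\cV=\cV_1\times\cV_2$ are free direct summands of $\cO^d$ of constant ranks $r_1,r_2$. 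To arrange the last point we use that $\cV$ is locally a direct summand, as in the proof of \Cref{Flag2_sheafification}.

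In this split situation \Cref{split_hermitian_perp} gives $\cV^{\perp_{h_d}}=\cV_2^\perp\times\cV_1^\perp$. The claims then reduce to facts about the ordinary perpendicular in $\cO^d$ with respect to $x^tv$. After a further localization we may choose a basis $v_1,\dots,v_d$ of $\cO^d$ whose first $r$ vectors span the free summand $\cV_j$. The dual basis $v_1^*,\dots,v_d^*$ with respect to the pairing $x^tv$ then identifies $\cV_j^\perp$ with the span of $v_{r+1}^*,\dots,v_d^*$. From this, $\cV_j^\perp$ is a free direct summand of rank $d-r$ and $(\cV_j^\perp)^\perp=\cV_j$. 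This is the content asserted in \Cref{split_hermitian_perp}, and we would verify it by this basis argument. Claim \ref{hermitian_orthogonal_rank_i} follows immediately. For \ref{hermitian_orthogonal_rank_ii}, the $\cO$--rank of $\cV^{\perp_h}$ is $(d-r_2)+(d-r_1)=2d-\rank_\cO(\cV)$. For \ref{hermitian_orthogonal_rank_iii}, applying the perpendicular twice gives $(\cV_1^{\perp\perp},\cV_2^{\perp\perp})=(\cV_1,\cV_2)$.

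For \ref{hermitian_orthogonal_rank_iv}, localize as above so that $\cV=\cV_1\times\cV_2\subseteq\cW=\cW_1\times\cW_2$ is a section of $\SplitFlag^2_{\cL,f_*(\cH)}$. Then $\cW^{\perp_h}=\cW_2^\perp\times\cW_1^\perp\subseteq\cV_2^\perp\times\cV_1^\perp=\cV^{\perp_h}$, and the rank differences on the two factors are $(d-r_2(\cV))-(d-r_2(\cW))=r_2(\cW)-r_2(\cV)$ and, symmetrically, $r_1(\cW)-r_1(\cV)$. These are the same two numbers as for $\cV\subseteq\cW$, only swapped, so their minimums coincide. The two gaps therefore agree locally, and hence globally because the gap is a map of sheaves.

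I expect the main obstacle to be bookkeeping rather than mathematics. We must justify that $\perp_h$ commutes with the local isomorphism to the split form, namely that an isometry $\varphi$ satisfies $\varphi(\cV)^{\perp}=\varphi(\cV^{\perp_h})$. We must also check that the rank and gap computed in a chosen local splitting do not depend on which factor ordering the splitting picks. The first follows from the definitions, since $\varphi$ preserves $h$. The second holds because both the rank sum and the minimum are symmetric in the two factors.
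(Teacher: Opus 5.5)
Your proposal is correct. For (ii)--(iv) it is the paper's own argument: localize to the split form, use $(\cV_1\times\cV_2)^{\perp_{h_d}}=\cV_2^\perp\times\cV_1^\perp$ from \Cref{split_hermitian_perp}, and compare ranks and minima factor by factor.

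The real difference is in (i), and to a lesser extent in (iii). The paper proves (i) without splitting the form. It localizes only until $f_*(\cH)=\cV\oplus\cW$. It then uses the regularity isomorphism $\tilde h\colon f_*(\cH)\to\cHom_\cL(f_*(\cH),\cL)$ and $\cL$--duality to identify $\cV^{\perp_h}$ with a direct summand coming from $\cW$. For (iii), the paper's main argument is the inclusion $\cV\subseteq(\cV^{\perp_h})^{\perp_h}$ plus the rank count from (ii). It gives the split computation only as an alternative.

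You instead send all four claims through the local isometry with $(\cO^d\times\cO^d,h_d)$ and a dual-basis computation. This buys two things:
\begin{itemize}
\item The argument is uniform, and it actually proves the facts $\rank(\cV^\perp)=d-r$ and $(\cV^\perp)^\perp=\cV$, which the paper only asserts in \Cref{split_hermitian_perp}.
\item It avoids the implicit step in the paper's rank argument for (iii), namely that an inclusion of equal-rank locally free modules is an equality. That step needs the direct-summand property.
\end{itemize}
The cost is that your route depends on regular hermitian forms being locally split and on $\perp$ commuting with isometries. You flag both and both hold. The paper's proof of (i), by contrast, uses only the regularity of $h$.
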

\begin{proof}
\noindent \ref{hermitian_orthogonal_rank_i}: Since $h\colon f_*(\cH) \times f_*(\cH) \to \cL$ is regular, it induces an isomorphism of $\cO$--modules
\begin{align*}
\tilde{h} \colon f_*(\cH) &\iso \cHom_\cL(f_*(\cH),\cL) \\
v &\mapsto h(v,\und)
\end{align*}
which is $i^*$--semilinear as a map of $\cL$--modules. Now, we assume we are working sufficiently locally such that $f_*(\cH) = \cV \oplus \cW$ for another $\cL$--submodule $\cW$. In turn, this means
\[
\cHom_\cL(f_*(\cH),\cL) = \tilde{h}(\cV)\oplus \tilde{h}(\cW)
\]
where both $\tilde{h}(\cV)$ and $\tilde{h}(\cW)$ are still $\cL$--submodules. Then, since $f_*(\cH)$ is a finite locally free $\cL$--modules of rank $d$, we have the canonical double dual isomorphism
\[
f_*(\cH) \cong \cHom_{\cL}(\cHom_{\cL}(f_*(\cH),\cL),\cL)
\]
sending $v\mapsto \textrm{ev}_v$. Thus also
\[
f_*(\cH) \cong \cHom_{\cL}(\tilde{h}(\cV),\cL) \oplus \cHom_{\cL}(\tilde{h}(\cW),\cL)
\]
and tracing the various isomorphisms, one sees that $\cV^{\perp_h} = \cHom_{\cL}(\tilde{h}(\cW),\cL)$ and so it is a direct summand of $f_*(\cH)$. 

\noindent \ref{hermitian_orthogonal_rank_ii}: Since $\cV \subseteq f_*(\cH)$ is an $\cL$--submodule, after sufficient localization we can assume we are in the setting of \Cref{split_hermitian_perp} where we have a submodule of the form
\[
\cV_1\times \cV_2 \subseteq \cO^d\times\cO^d.
\]
Additionally, since we started with a module which is finite locally free as a $\cO$--module, the modules $\cV_1,\cV_2\subseteq \cO^d$ can be assumed to be finite locally free of constant rank. Then, $\rank_\cO(\cV_1\times\cV_2) = \rank_\cO(\cV_1)+\rank_\cO(\cV_2)$ and
\begin{align*}
\rank_\cO\big((\cV_1\times\cV_2)^{\perp_{h_d}}\big) &= \rank_\cO(\cV_2^\perp\times\cV_1^\perp) \\
&= \rank_\cO(\cV_2^\perp)+ \rank_\cO(\cV_1^\perp) \\
&= d-\rank_\cO(\cV_2) + d-\rank_\cO(\cV_1) \\
&= 2d - \rank_\cO(\cV_1\times\cV_2).
\end{align*}
Thus, globally we have $\rank_\cO(\cV^{\perp_h})=2d-\rank_\cO(\cV)$ as claimed.

\noindent \ref{hermitian_orthogonal_rank_iii}: From the definition one sees that $\cV \subseteq (\cV^{\perp_h})^{\perp_h}$, and then we must have equality since they are both finite locally free modules of the same rank by applying \ref{hermitian_orthogonal_rank_ii} twice. Alternatively, one could once again work sufficiently locally to be in the setting of \Cref{split_hermitian_perp} where it is obvious to calculate that
\[
\big((\cV_1\times\cV_2)^\perp\big)^\perp = (\cV_2^\perp\times\cV_1^\perp)^\perp = \cV_1\times\cV_2.
\]

\noindent \ref{hermitian_orthogonal_rank_iv}: We again work sufficiently locally where $\cV = \cV_1\times\cV_2$ and $\cW=\cW_1\times\cW_2$ for constant rank $\cO$--modules $\cV_i$ and $\cW_i$. Then, abbreviating $\rank_\cO(\cV_i)=r(\cV_i)$, we simply compute that
\begin{align*}
& \gap_\cL(\cW_2^\perp\times\cW_1^\perp \subseteq \cV_2^\perp\times \cV_1^\perp) \\
=& \min(\{d-r(\cV_2)-(d-r(\cW_2)), d-r(\cV_1)-(d-r(\cW_1))\}) \\
=& \min(\{r(\cW_2)-r(\cV_2),r(\cW_1)-r(\cV_1)\}) \\
=& \gap_\cL(\cV_1\times\cV_2\subseteq \cW_1\times\cW_2).
\end{align*}
Hence, since the equality holds locally we also have
\[
\gap_\cL(\cV\subseteq \cW) = \gap_\cL(\cW^{\perp_h} \subseteq \cV^{\perp_h})
\]
globally as claimed.
\end{proof}

\Cref{hermitian_orthogonal_rank} shows that we obtain an order two automorphism
\begin{align}
\ConFlag_{\cL,\cH} &\iso \ConFlag_{\cL,\cH} \label{eq_ConFlag_perp} \\
\overline{\cV} &\mapsto \overline{\cV}^{\perp_h}. \nonumber
\end{align}

However, since the perpendicular operation is order reversing it sends lowered flags to raised flags and vice versa
\begin{cor}\label{perp_lowered_to_raised}
Let $(f\colon L \to S,\cH,h)$ be a regular hermitian form. There are mutually inverse sheaf isomorphism
\begin{align*}
\LowFlag_{\cL,\cH} &\iso \RaiFlag_{\cL,\cH} & \RaiFlag_{\cL,\cH} &\iso \LowFlag_{\cL,\cH} \\
\overline{\cV}&\mapsto \overline{\cV}^{\perp_h} & \overline{\cW}&\mapsto \overline{\cW}^{\perp_h}
\end{align*}
which fit into the commutative diagram
\[
\begin{tikzcd}
\LowFlag_{\cL,\cH} \ar[r,"\sim"] & \RaiFlag_{\cL,\cH} \ar[r,"\sim"] & \LowFlag_{\cL,\cH} \\
\ConFlag_{\cL,\cH} \ar[r,"\eqref{eq_ConFlag_perp}"] \ar[u] & \ConFlag_{\cL,\cH} \ar[r,"\eqref{eq_ConFlag_perp}"] \ar[u] & \ConFlag_{\cL,\cH}. \ar[u]
\end{tikzcd}
\]
\end{cor}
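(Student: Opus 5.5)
The plan is to deduce both sheaf maps from the presheaf automorphism \eqref{eq_ConFlag_perp} through the universal property of sheafification, and to avoid defining $\overline{\cV}^{\perp_h}$ directly on lowered flags by hand. By \Cref{sheafification_of_ConFlag_LH}, the inclusions $\ConFlag_{\cL,\cH}\inj\LowFlag_{\cL,\cH}$ and $\ConFlag_{\cL,\cH}\inj\RaiFlag_{\cL,\cH}$ both exhibit sheafifications. The composite
\[
\ConFlag_{\cL,\cH}\xrightarrow{\eqref{eq_ConFlag_perp}}\ConFlag_{\cL,\cH}\inj\RaiFlag_{\cL,\cH}
\]
then extends uniquely to a sheaf map $\LowFlag_{\cL,\cH}\to\RaiFlag_{\cL,\cH}$. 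Likewise, the composite $\ConFlag_{\cL,\cH}\to\ConFlag_{\cL,\cH}\inj\LowFlag_{\cL,\cH}$ extends uniquely to a map $\RaiFlag_{\cL,\cH}\to\LowFlag_{\cL,\cH}$. By construction, both squares of the diagram commute.

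First, check that \eqref{eq_ConFlag_perp} is an order two automorphism of presheaves. Applying \Cref{hermitian_orthogonal_rank} componentwise gives the following facts:
\begin{itemize}
\item each $\cV_i^{\perp_h}$ is an $\cL$--submodule and locally a direct summand;
\item its rank is $2d-\rank_\cO(\cV_i)$, so it is constant, the ranks strictly increase in the reversed order, and $\cV_1^{\perp_h}\neq f_*(\cH)$ because $\rank(\cV_1)>0$;
\item the $\cL$--gaps are preserved, so they stay positive.
\end{itemize}
Compatibility with restriction holds because $\perp_h$ is defined sectionwise over all $T'\in\Sch_T$, so $(\cV^{\perp_h})|_T=(\cV|_T)^{\perp_{h|_T}}$. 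Involutivity is \Cref{hermitian_orthogonal_rank}\ref{hermitian_orthogonal_rank_iii}.

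Next, show that the two extended maps are mutually inverse. Their composite $\LowFlag_{\cL,\cH}\to\LowFlag_{\cL,\cH}$ restricts on $\ConFlag_{\cL,\cH}$ to the square of \eqref{eq_ConFlag_perp}, which is the identity followed by the inclusion. Uniqueness in the universal property of sheafification therefore forces the composite to be the identity, and the same argument works in the other order. So both maps are isomorphisms.

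The remaining point, which I expect to be the main obstacle, is to justify the formula $\overline{\cV}\mapsto\overline{\cV}^{\perp_h}$ on arbitrary lowered flags, rather than only on constant-rank ones. Given $\overline{\cV}\in\LowFlag_{\cL,\cH}(S)$, take a cover on which it restricts to constant-rank flags; their images under $\perp_h$ are then flags whose truncated tails are all $0$. I would pad the lowered flag with copies of $f_*(\cH)$ on the right up to its global length $\ell$. Since $f_*(\cH)^{\perp_h}=0$, the naive global flag $(0\subseteq\cV_\ell^{\perp_h}\subseteq\ldots\subseteq\cV_1^{\perp_h})$ restricts on each piece to the local image followed by leading zeros. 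That is exactly the left truncation of raised flags. Hence this naive global flag is the glued section, and it is $\cL$--raised because it is locally so. This identifies the extended map with the stated formula, and the symmetric argument handles the reverse direction.
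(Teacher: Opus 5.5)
Your argument is correct, but it is organized differently from the paper's.

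The paper takes $\overline{\cV}\mapsto\overline{\cV}^{\perp_h}$ as the definition and checks well-definedness directly on an arbitrary $\cL$--lowered flag, using the gap identity \Cref{hermitian_orthogonal_rank}\ref{hermitian_orthogonal_rank_iv}. If $\gap_{\cL|_T}(\cV_{i+1}^{\perp_h}|_T\subseteq\cV_i^{\perp_h}|_T)=0$, then $\gap_{\cL|_T}(\cV_i|_T\subseteq\cV_{i+1}|_T)=0$. Hence $\cV_i|_T=f_*(\cH)|_T=\cV_{i+1}|_T$, and both perpendiculars vanish. Mutual inverseness and the commuting diagram then follow from \ref{hermitian_orthogonal_rank_iii}, and \Cref{sheafification_of_ConFlag_LH} is never used.

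You instead build the maps abstractly from the universal property of sheafification. This makes three things automatic: compatibility with the truncating restriction maps, the inverse property, and the commuting squares. The paper leaves the first of these implicit; it holds because $f_*(\cH)^{\perp_h}=0$ turns right truncation of $\overline{\cV}$ into left truncation of $\overline{\cV}^{\perp_h}$. The price is your final identification step, which is where the real content sits. There you show that the naive flag $\overline{\cV}^{\perp_h}$ is $\cL$--raised by reducing to constant-rank pieces and using that $\cL$--raisedness glues, rather than by the gap identity.

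Once that step is established, together with its mirror for raised flags, it combines with \ref{hermitian_orthogonal_rank_iii} to prove the corollary on its own. So your sheafification construction mainly certifies naturality and is not logically needed. The paper's route is shorter and rests only on \ref{hermitian_orthogonal_rank_iii} and \ref{hermitian_orthogonal_rank_iv}.
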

\begin{proof}
It is immediate from \Cref{hermitian_orthogonal_rank}\ref{hermitian_orthogonal_rank_iii} that if these maps are well defined, then they are mutually inverse sheaf isomorphisms and fit into the proposed commutative diagrams. So, consider an $\cL$--lowered flag $0\subseteq \cV_1 \subseteq \ldots \cV_\ell \subseteq f_*(\cH)$ in $\LowFlag_{\cL,\cH}$. If for some $T \in \Sch_S$ we have
\[
\gap_{\cL|_T}(\cV_{i+1}^{\perp_h}|_T \subseteq \cV_i^{\perp_h}|_T)=0,
\]
for some index $i$, then this means by \Cref{hermitian_orthogonal_rank}\ref{hermitian_orthogonal_rank_iv} that $\gap_{\cL|_T}(\cV_i|_T \subseteq \cV_{i+1}|_T)=0$, so since we started with an $\cL$--lowered flag we have $\cV_i|_T = f_*(\cH)|_T = \cV_{i+1}|_T$. Then applying the perpendicular operator again, we see
\[
\cV_{i+1}^{\perp_h}|_T = 0 = \cV_i^{\perp_h}|_T
\]
so the resulting flag is $\cL$--raised. Symmetrically, the perpendicular of a raised flag is a lowered flag. Thus, the proof is finished.
\end{proof}

This shows that naively applying the perpendicular operation does not quite give us what we want. Instead, we use the universal property of the sheafification of $\ConFlag_{\cL,\cH}$ to define an order two isomorphism of $\LowFlag_{\cL,\cH}$.
\begin{lem}\label{pi_h_automorphism}
There is a unique order two automorphism $\pi_h \colon \LowFlag_{\cL,\cH} \iso \LowFlag_{\cL,\cH}$ which restricts to \eqref{eq_ConFlag_perp} on $\ConFlag_{\cL,\cH}$. It is equivalently defined by the commutativity of the diagram
\[
\begin{tikzcd}
\LowFlag_{\cL,\cH} \ar[r,"\pi_h"] & \LowFlag_{\cL,\cH} \\
\ConFlag_{\cL,\cH} \ar[r,"\eqref{eq_ConFlag_perp}"] \ar[u] & \ConFlag_{\cL,\cH} \ar[u]
\end{tikzcd}
\]
or as the composition
\[
\pi_h \colon \LowFlag_{\cL,\cH} \iso \RaiFlag_{\cL,\cH} \xrightarrow{\varphi} \LowFlag_{\cL,\cH}
\]
where the first isomorphism is from \Cref{perp_lowered_to_raised} and $\varphi$ is uniquely induced by both $\RaiFlag_{\cL,\cH}$ and $\LowFlag_{\cL,\cH}$ being sheafifications of $\ConFlag_{\cL,\cH}$ (it is analogous to $\varphi_{\cFlag}$ as in \Cref{raised_flags}).
\end{lem}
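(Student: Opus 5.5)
The plan is to obtain $\pi_h$ from the universal property of sheafification, which \Cref{sheafification_of_ConFlag_LH} provides for $\ConFlag_{\cL,\cH} \inj \LowFlag_{\cL,\cH}$. The first step is to check that \eqref{eq_ConFlag_perp} really is a morphism of presheaves $\ConFlag_{\cL,\cH} \to \ConFlag_{\cL,\cH}$. By \Cref{hermitian_orthogonal_rank}, $\cV_i^{\perp_h}$ is again an $\cL$--submodule that is locally a direct summand, with constant $\cO$--rank $2d-\rank_\cO(\cV_i)$. The ranks therefore strictly increase in the reversed order, the $\cL$--gaps stay positive by part \ref{hermitian_orthogonal_rank_iv}, and $\cV_1^{\perp_h}\neq f_*(\cH)$ because $\rank_\cO(\cV_1)>0$. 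Compatibility with restriction holds because the perpendicular is defined pointwise over all $T'\in\Sch_T$, so $(\cV^{\perp_h})|_T = (\cV|_T)^{\perp_{h|_T}}$. I would verify this equality locally, in the split setting of \Cref{split_hermitian_perp}, where it is clear for the usual perpendicular of direct summands. Part \ref{hermitian_orthogonal_rank_iii} then shows that \eqref{eq_ConFlag_perp} is an involution of presheaves.

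Next, I compose with the inclusion into the sheaf $\LowFlag_{\cL,\cH}$ to get a presheaf map $\ConFlag_{\cL,\cH}\to\LowFlag_{\cL,\cH}$. The universal property gives a unique sheaf morphism $\pi_h$ making the square commute. Applying the same universal property to $\pi_h\circ\pi_h$ and to $\Id$: both restrict to the inclusion on $\ConFlag_{\cL,\cH}$, since \eqref{eq_ConFlag_perp} squares to the identity. Uniqueness then forces $\pi_h^2=\Id$, so $\pi_h$ is an order two automorphism. The same uniqueness argument gives the uniqueness claimed in the statement.

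For the alternative description, I let $\varphi\colon\RaiFlag_{\cL,\cH}\iso\LowFlag_{\cL,\cH}$ be the unique isomorphism compatible with both inclusions of $\ConFlag_{\cL,\cH}$. By \Cref{sheafification_of_ConFlag_LH}, both sheaves are sheafifications of $\ConFlag_{\cL,\cH}$, so such a $\varphi$ exists. By \Cref{perp_lowered_to_raised}, the first map $\LowFlag_{\cL,\cH}\iso\RaiFlag_{\cL,\cH}$ restricts on $\ConFlag_{\cL,\cH}$ to \eqref{eq_ConFlag_perp} followed by the inclusion into $\RaiFlag_{\cL,\cH}$. Composing with $\varphi$, the composite $\varphi\circ(\und)^{\perp_h}$ restricts to \eqref{eq_ConFlag_perp} followed by the inclusion into $\LowFlag_{\cL,\cH}$. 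Uniqueness in the universal property then identifies this composite with $\pi_h$.

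The main obstacle is the routine but fiddly check that \eqref{eq_ConFlag_perp} commutes with restriction, that is, that the perpendicular commutes with base change. I would handle it by localizing to the split hermitian form, where every object is a product of free direct summands of $\cO^d$. Everything else is formal from sheafification.
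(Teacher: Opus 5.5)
Your proposal is correct and follows the paper's proof, which just invokes the universal property of sheafification from \Cref{sheafification_of_ConFlag_LH}. You spell out the routine checks the paper leaves implicit: that \eqref{eq_ConFlag_perp} is a well-defined involution of presheaves, that uniqueness forces $\pi_h^2=\Id$, and that $\varphi\circ(\und)^{\perp_h}$ agrees with $\pi_h$ on $\ConFlag_{\cL,\cH}$. The compatibility with restriction that you flag as the main obstacle is actually immediate from the definition of $\cV^{\perp_h}$, which already quantifies over all $T'\in\Sch_T$, so no localization is needed there.
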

\begin{proof}
This is immediate since $\LowFlag_{\cL,\cH} = (\ConFlag_{\cL,\cH})^\sharp$ by \Cref{sheafification_of_ConFlag_LH}.
\end{proof}

\begin{example}\label{pi_h_behaviour_example}
By combining the description of $\varphi_{\cFlag}$ appearing in \Cref{raised_flags} with the perpendicular operation, we can give an example of how the map $\pi_h$ behaves. Say we have a section of $\LowFlag_{\cL,\cH}(S)$ which restricts to flags of constant length over a decomposition $S=X\sqcup Y \sqcup W$. Writing the flag vertically as in \Cref{raised_flags}, $\pi_h$ will appear as
\[
\begin{array}{ccc}
(f_*(\cH)|_X & f_*(\cH)|_Y & f_*(\cH)|_W) \\
(f_*(\cH)|_X & f_*(\cH)|_Y & \cV_\ell|_W) \\
 & \vdots & \\
(f_*(\cH)|_X & \cV_{j_2}|_Y & \cV_{j_2}|_W) \\
 & \vdots & \\
(\cV_{j_1}|_X & \cV_{j_1}|_Y & \cV_{j_1}|_W) \\
 & \vdots & \\
(\cV_1|_X & \cV_1|_Y & \cV_1|_W)\\ 
(0 & 0 & 0)
\end{array} \mapsto
\begin{array}{ccc}
(f_*(\cH)|_X & f_*(\cH)|_Y & f_*(\cH)|_W) \\
(f_*(\cH)|_X & f_*(\cH)|_Y & \cV_1^{\perp_h}|_W) \\
 & \vdots & \\
(f_*(\cH)|_X & \cV_1^{\perp_h}|_Y & \cV_{\ell+1-j_2}^{\perp_h}|_W) \\
 & \vdots & \\
(\cV_1^{\perp_h}|_X & \cV_{j_2+1-j_1}^{\perp_h}|_Y & \cV_{\ell+1-j_1}^{\perp_h}|_W) \\
 & \vdots & \\
(\cV_{j_1}^{\perp_h}|_X & \cV_{j_2}^{\perp_h}|_Y & \cV_\ell^{\perp_h}|_W)\\ 
(0 & 0 & 0),
\end{array}
\]
simply applying the perpendicular operation to the non-zero proper submodule portion within each column.
\end{example}

The sheaf we actually want to work with is the subsheaf of fixed points of $\pi_h$.
\begin{defn}
Let $(f\colon L \to S,\cH,h)$ be a regular hermitian form of constant rank $d$. The presheaf of \emph{constant rank hermitian flags} of $\cH$ is $\ConFlag_{(\cH,h)} \colon \Sch_S \to \Sets$ which sends $T\in \Sch_S$ to
\[
\left\{(0\subseteq \cV_1 \subseteq \ldots \subseteq \cV_\ell \subseteq f_*(\cH)) \in \ConFlag_{\cL,\cH}(T) \mid \overline{\cV}^{\perp_h} = \overline{\cV} \right\}.
\]

The sheaf of \emph{hermitian lowered flags} of $\cH$ is
\begin{align*}
\LowFlag_{(\cH,h)} \colon \Sch_S &\to \Sets \\
T &\mapsto \{\overline{\cV} \in \LowFlag_{\cL,\cH}(T) \mid \pi_h(\overline{\cV}) = \overline{\cV} \}.
\end{align*}
\end{defn}

\begin{lem}\label{sheafification_of_hermitian_ConFlag}
The canonical inclusion $\ConFlag_{(\cH,h)} \inj \LowFlag_{(\cH,h)}$ satisfies the universal property of sheafification, showing that
\[
(\ConFlag_{(\cH,h)})^\sharp = \LowFlag_{(\cH,h)}.
\]
\end{lem}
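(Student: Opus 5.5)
First I check that the canonical inclusion makes sense. A section $\overline{\cV}\in\ConFlag_{(\cH,h)}(T)$ lies in $\ConFlag_{\cL,\cH}(T)$ and satisfies $\overline{\cV}^{\perp_h}=\overline{\cV}$. By \Cref{pi_h_automorphism}, $\pi_h$ restricts to \eqref{eq_ConFlag_perp} on the image of $\ConFlag_{\cL,\cH}$. Hence $\pi_h(\overline{\cV})=\overline{\cV}^{\perp_h}=\overline{\cV}$, so $\overline{\cV}\in\LowFlag_{(\cH,h)}(T)$.

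Next I note that $\LowFlag_{(\cH,h)}$ is a sheaf. It is the equalizer of $\Id$ and $\pi_h$, both sheaf endomorphisms of the sheaf $\LowFlag_{\cL,\cH}$ (\Cref{sheafification_of_ConFlag_LH}). The inclusion into $\LowFlag_{(\cH,h)}$ is injective because it is the restriction of the injection $\ConFlag_{\cL,\cH}\inj\LowFlag_{\cL,\cH}$. So, exactly as in the proofs of \Cref{sheafification_of_ConFlag} and \Cref{sheafification_of_ConIdeal}, it remains to show that every section of $\LowFlag_{(\cH,h)}$ locally lies in the image.

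For local surjectivity, let $\overline{\cV}\in\LowFlag_{(\cH,h)}(T)$. By \Cref{sheafification_of_ConFlag_LH} there is a cover $\{T_i\to T\}$ such that each restriction $\overline{\cV}|_{T_i}$ is the image of some $\overline{\cW}_i\in\ConFlag_{\cL,\cH}(T_i)$. Since $\pi_h$ is a morphism of sheaves, $\pi_h(\overline{\cV}|_{T_i})=\pi_h(\overline{\cV})|_{T_i}=\overline{\cV}|_{T_i}$. Combining this with the commutative square of \Cref{pi_h_automorphism}, the image of $\overline{\cW}_i^{\perp_h}$ in $\LowFlag_{\cL,\cH}(T_i)$ equals the image of $\overline{\cW}_i$. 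Injectivity of $\ConFlag_{\cL,\cH}\inj\LowFlag_{\cL,\cH}$ then gives $\overline{\cW}_i^{\perp_h}=\overline{\cW}_i$ as constant rank flags, so $\overline{\cW}_i\in\ConFlag_{(\cH,h)}(T_i)$. This proves the universal property, and therefore $(\ConFlag_{(\cH,h)})^\sharp=\LowFlag_{(\cH,h)}$.

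The step needing the most care is the injectivity of $\ConFlag_{\cL,\cH}\inj\LowFlag_{\cL,\cH}$ on sections over each $T_i$. The point is that a constant rank flag is never truncated and is literally sent to itself, so two constant rank flags with the same image are equal as flags of submodules. This is immediate from the definitions, but it is exactly what lets $\pi_h$-invariance descend to $\perp_h$-invariance of the local constant rank representatives.
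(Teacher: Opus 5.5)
Your proof is correct and follows the same route as the paper, whose proof is the one-line observation that the claim follows from \Cref{sheafification_of_ConFlag_LH} because $\pi_h$ restricts to the perpendicular operation on $\ConFlag_{\cL,\cH}$ (\Cref{pi_h_automorphism}). You spell out the local-surjectivity step that the paper leaves implicit: a $\pi_h$-fixed section is locally the image of a constant rank flag, and by injectivity of $\ConFlag_{\cL,\cH}\inj\LowFlag_{\cL,\cH}$ that flag must itself be $\perp_h$-symmetric.
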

\begin{proof}
This follows immediately from \Cref{sheafification_of_ConFlag_LH} since $\pi_h$ restricts to the perpendicular operation on $\ConFlag_{\cL,\cH}$ by definition in \Cref{pi_h_automorphism}.
\end{proof}

\begin{example}\label{example_split_ConFlag_Hh}
Here we show that our $\cL$--gap based definitions have solved the problems discussed after \Cref{split_hermitian_perp}. Recall from \Cref{split_hermitian_perp} that for the split hermitian form $(f\colon L \to S,\cH,h_d)= (f\colon S\sqcup S \to S,(\cO^d,\cO^d),h_d)$ of \eqref{eq_split_hermitian_form} we have that
\[
(\cV_1\times\cV_2)^{\perp_{h_d}} = \cV_2^\perp \times \cV_1^\perp
\]
for $\cV_1,\cV_2 \subseteq \cO^d$ submodules. Therefore, for $T\in \Sch_S$, a section of $\ConFlag_{(\cH,h_d)}(T)$ is of the form
\[
0 \subseteq \cV_1\times\cV_\ell^\perp \subseteq \cV_2\times \cV_{\ell-1}^\perp \subseteq \ldots \subseteq \cV_{\ell-1}\times\cV_2^\perp \subseteq \cV_\ell\times \cV_1^\perp \subseteq \cO^d|_T\times\cO^d|_T
\]
where each $\cV_j\times \cV_{\ell+1-j}^\perp$ is of constant $\cO|_T$--rank and the $\cL|_T$--gaps between subsequent components are strictly larger than zero. The submodules $\cV_j$ themselves may not be of constant $\cO|_T$--rank, but we may restrict further until they are. Then, it is immediate that the assumption on $\cL$--gaps implies that $\rank_\cO(\cV_i) < \rank_\cO(\cV_{i+1})$ for all $i$, and so projecting onto the first factor produces a section
\[
(0\subseteq \cV_1\subseteq \ldots \subseteq \cV_\ell \subseteq \cO^d|_T) \in \ConFlag_{\cO^d}(T).
\]
However, some localization was required to make the ranks of the first factors constant. 

In the converse direction, it is clear that if we start with a section $(0\subseteq \cV_1\subseteq \ldots \subseteq \cV_\ell \subseteq \cO^d|_T) \in \ConFlag_{\cO^d}(T)$ then
\[
(0\subseteq \cV_1\times\cV_\ell^\perp \subseteq \ldots \subseteq \cV_\ell\times\cV_1^\perp \subseteq \cO^d\times\cO^d) \in \ConFlag_{(\cH,h_d)}(T)
\]
is a valid symmetric flag with strictly positive $\cL|_T$--gaps. Hence we have an injective map of presheaves
\begin{align*}
\ConFlag_{\cO^d} &\inj \ConFlag_{((\cO^d,\cO^d),h_d)} \\
(0\subseteq \cV_1\subseteq \ldots \subseteq \cV_\ell \subseteq \cO^d) &\mapsto (0\subseteq \cV_1\times\cV_\ell^\perp \subseteq \ldots \subseteq \cV_\ell\times\cV_1^\perp \subseteq \cO^d\times\cO^d)
\end{align*}
which is not presheaf surjective, but is locally surjective. It therefore induces an isomorphism of the respective sheafifications
\begin{equation}\label{eq_ConFlag_iso}
\LowFlag_{\cO^d} \iso \LowFlag_{((\cO^d,\cO^d),h_d)}.
\end{equation}
In fact, this isomorphism can be written a bit more explicitly. For a section in $\LowFlag_{\cO^d}$, denote
\[
\varphi_{\cFlag}(0\subseteq \cV_1 \subseteq \ldots \subseteq \cV_\ell \subseteq \cO^d) = (0\subseteq \cW_1\subseteq \ldots \subseteq \cW_\ell \subseteq \cO^d)
\]
to be its image in $\RaiFlag_{\cO^d}$. Then, the isomorphism \eqref{eq_ConFlag_iso} acts as
\[
(0\subseteq \cV_1 \subseteq \ldots \subseteq \cV_\ell \subseteq \cO^d) \mapsto (0\subseteq \cV_1\times\cW_\ell^\perp \subseteq \ldots \subseteq \cV_\ell\times\cW_1^\perp \subseteq \cO^d\times\cO^d).
\]
However, checking that this is indeed how the isomorphism appears is a bit cumbersome, so we omit those details.
\end{example}

The action of $\GL_d\rtimes\ZZ/2\ZZ$ on $\cH_d$ induces an action on $\ConFlag_{((\cO^d,\cO^d),h_d)}$ in which $a\in \GL_d$ acts by
\begin{align}
&a\cdot (0\subseteq \cV_1\times\cV_\ell^\perp \subseteq \ldots \subseteq \cV_\ell\times\cV_1^\perp \subseteq \cO^d\times\cO^d) \label{GL_d_action_on_ConFlag_Hd} \\
= &(0\subseteq a\cV_1\times(a^{-1})^t\cV_\ell^\perp \subseteq \ldots \subseteq a\cV_\ell\times(a^{-1})^t\cV_1^\perp \subseteq \cO^d\times\cO^d) \nonumber
\end{align}
and $\overline{1}\in \ZZ/2\ZZ$ acts by
\begin{align*}
&\overline{1}\cdot (0\subseteq \cV_1\times\cV_\ell^\perp \subseteq \ldots \subseteq \cV_\ell\times\cV_1^\perp \subseteq \cO^d\times\cO^d)\\
= &(0\subseteq \cV_\ell^\perp\times\cV_1 \subseteq \ldots \subseteq \cV_1^\perp\times\cV_\ell \subseteq \cO^d\times\cO^d).
\end{align*}
By \Cref{sheafification_of_hermitian_ConFlag}, this extends uniquely to an action on $\LowFlag_{((\cO^d,\cO^d),h_d)}$

\begin{lem}\label{hermitian_flag_twist}
Consider a regular hermitian form $(L\to S,\cH,h)$ of constant rank $d$ and denote its corresponding $(\GL_d\rtimes\ZZ/2\ZZ)$--torsor by $\cK$. Then,
\[
\LowFlag_{(\cH,h)} \cong \cK \wedge^{\GL_d\rtimes\ZZ/2\ZZ} \LowFlag_{((\cO^d,\cO^d),h_d)}.
\]
\end{lem}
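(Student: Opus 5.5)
We will prove the isomorphism by making the construction of $\LowFlag_{(\cH,h)}$ functorial in isomorphisms of regular hermitian forms, and then invoking the standard descent principle for twisted forms. The key point is that $(\cH,h)$ is obtained from the split form: $(L\to S,\cH,h)\cong \cK\wedge^{\GL_d\rtimes\ZZ/2\ZZ}(S\sqcup S\to S,\cH_d,h_d)$, with $\cK$ the sheaf of isomorphisms from the split form to $(\cH,h)$.

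\textbf{Step 1: functoriality.} Let $(g,\psi)\colon (L_1\to S,\cH_1,h_1)\iso (L_2\to S,\cH_2,h_2)$ be an isomorphism of regular hermitian forms over some $T\in\Sch_S$. Here $\psi\colon f_{1*}(\cH_1)\iso f_{2*}(\cH_2)$ is an $\cO$--linear isomorphism, semilinear with respect to $g^*$, with $h_2(\psi x,\psi y)=g^*$--twist of $h_1(x,y)$. Define $\psi$ on flags componentwise. The following properties are preserved by $\psi$:
\begin{enumerate}
\item being an $\cL$--submodule that is locally a direct summand;
\item constant $\cO$--rank;
\item the $\cL$--gap, since $\gap_\cL$ is computed locally as a minimum over the factors of a splitting, and $\psi$ only permutes those factors;
\item perpendiculars: $\psi(\cV^{\perp_{h_1}})=\psi(\cV)^{\perp_{h_2}}$.
\end{enumerate}
Hence $\psi$ induces an isomorphism $\ConFlag_{(\cH_1,h_1)}|_T\iso\ConFlag_{(\cH_2,h_2)}|_T$. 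By \Cref{sheafification_of_hermitian_ConFlag} and the universal property of sheafification, it extends uniquely to $\LowFlag_{(\cH_1,h_1)}|_T\iso\LowFlag_{(\cH_2,h_2)}|_T$. Uniqueness gives compatibility with composition and with base change. Applied to automorphisms of the split form, this recovers exactly the $\GL_d\rtimes\ZZ/2\ZZ$--action defined just before the lemma. One should check on the split form that $a\in\GL_d$ and $\overline{1}$ act via $\psi$ in the stated way. This follows from \Cref{split_hermitian_perp} and the formula $a\cdot(x,y)=(ax,(a^{-1})^ty)$, using $(a\cV)^\perp=(a^{-1})^t\cV^\perp$.

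\textbf{Step 2: the comparison map.} Define
\[
\cK\times\LowFlag_{((\cO^d,\cO^d),h_d)}\to\LowFlag_{(\cH,h)},\qquad (\psi,\overline{\cV})\mapsto\psi(\overline{\cV}).
\]
By Step 1 this map is $\GL_d\rtimes\ZZ/2\ZZ$--balanced: $(\psi\circ\sigma,\overline{\cV})$ and $(\psi,\sigma\overline{\cV})$ have the same image. It therefore descends to a morphism from the contracted product $\cK\wedge^{\GL_d\rtimes\ZZ/2\ZZ}\LowFlag_{((\cO^d,\cO^d),h_d)}$ to $\LowFlag_{(\cH,h)}$.

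\textbf{Step 3: it is an isomorphism.} Being an isomorphism of sheaves is local. Over a cover on which $\cK$ has a section $\psi_0$, the contracted product is identified with $\LowFlag_{((\cO^d,\cO^d),h_d)}$, and the map becomes $\overline{\cV}\mapsto\psi_0(\overline{\cV})$. This is an isomorphism with inverse induced by $\psi_0^{-1}$. We also need that $\LowFlag_{(\cH,h)}$ is compatible with restriction to $T$, that is, $(\LowFlag_{(\cH,h)})|_T=\LowFlag_{(\cH|_T,h|_T)}$. This follows from the analogous statement for $\LowFlag_{f_*(\cH)}$ in \Cref{lowered_flags_sheaf}, together with the base-change stability of the $\cL$--gap and of $\pi_h$.

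\textbf{Main obstacle.} The delicate point is Step 1 at the level of the sheaves $\LowFlag$, rather than $\ConFlag$. A semilinear $\psi$ covering a nontrivial $g$, such as the switch $\overline{1}$, exchanges the two factors of $\cL\cong\cO\times\cO$. We must make sure the induced maps really commute with $\pi_h$ and with the truncating restriction maps. I would handle this entirely through sheafification. Since $\psi$ preserves $\ConFlag$ and commutes with $\perp$ there, the uniqueness in \Cref{pi_h_automorphism} forces $\psi\circ\pi_{h_1}=\pi_{h_2}\circ\psi$. This avoids any explicit manipulation of the staircase description.
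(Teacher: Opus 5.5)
Your proposal is correct and follows the same route as the paper: the paper also argues that $(\cH,h)$ is locally isomorphic to the split form, that isomorphisms of hermitian forms induce isomorphisms of the corresponding flag sheaves, and hence that $\LowFlag_{(\cH,h)}$ is the twist by the associated $\GL_d\rtimes\ZZ/2\ZZ$--torsor. Your write-up adds the details the paper leaves implicit: that isomorphisms preserve $\cL$--gaps and perpendiculars on $\ConFlag$, that these maps extend via sheafification, and that the balanced comparison map from the contracted product is a local isomorphism.
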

\begin{proof}
The hermitian forms $(L\to S,\cH,h)$ and $(S\sqcup S \to S,(\cO^d,\cO^d),h_d)$ are locally isomorphic and any isomorphism between them extends to an isomorphism between $\LowFlag_{(\cH,h)}$ and $\LowFlag_{((\cO^d,\cO^d),h_d)}$. Thus, $\LowFlag_{(\cH,h)}$ will be the twist of $\LowFlag_{((\cO^d,\cO^d),h_d)}$ by the $\bAut(S\sqcup S \to S,(\cO^d,\cO^d),h_d) = \GL_d\rtimes \ZZ/2\ZZ$--torsor corresponding to $(L\to S,\cH,h)$, i.e., exactly
\[
\LowFlag_{(\cH,h)} \cong \cK \wedge^{\GL_d\rtimes\ZZ/2\ZZ} \LowFlag_{((\cO^d,\cO^d),h_d)}
\]
as claimed.
\end{proof}

\begin{lem}\label{GL_action_on_LowFlag_Hh}
Let $(f\colon L\to S,\cH,h)$ be a regular hermitian form of constant rank $d$. The natural action of $\GL_{(\cH,h)}$ on $f_*(\cH)$ induces an action on $\LowFlag_{(\cH,h)}$. For $B\in \GL_{(\cH,h)}$, it behaves as
\[
B\cdot (0 \subseteq \cV_1 \subseteq \ldots \subseteq \cV_\ell \subseteq f_*(\cH)) = (0 \subseteq B\cV_1 \subseteq \ldots \subseteq B\cV_\ell \subseteq f_*(\cH)).
\]
\end{lem}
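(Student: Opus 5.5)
We must show that $B\cdot\overline{\cV}$ again lies in $\LowFlag_{(\cH,h)}$. Concretely, for $B\in\GL_{(\cH,h)}(T)$ the flag $B\cdot\overline{\cV}$ must consist of locally direct summand $\cL$--submodules, be lowered, be $\cL$--lowered, and be fixed by $\pi_h$. Once this is known, it is clear that the formula defines a group action compatible with restriction, so the action is one of sheaves.

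\textbf{Locally direct summand $\cL$--submodules, and loweredness.} View $B$ as an element of $f_*(\GL_\cH)(T)\subseteq \GL_{f_*(\cH)}(T)$, i.e.\ an $\cL$--linear automorphism of $f_*(\cH)|_T$. By \Cref{GL_action_on_LowFlag}, $B$ acts on $\LowFlag_{f_*(\cH)}$ by the stated formula. This action preserves the property of being locally a direct summand, and it preserves loweredness. Because $B$ is $\cL$--linear, each $B\cV_i$ is again an $\cL$--submodule.

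\textbf{$\cL$--loweredness.} It suffices to check that $\gap_\cL(B\cV_i\subseteq B\cV_{i+1})=\gap_\cL(\cV_i\subseteq\cV_{i+1})$ after any base change. This can be verified locally where $\cL\cong\cO\times\cO$. There $B=(B_1,B_2)$ acts factorwise by $\cO$--linear automorphisms, and these preserve the $\cO$--ranks of the factors $\cV_{i,j}$. Hence the minimum of the rank differences is unchanged. If the gap vanishes over $T$, the $\cL$--loweredness of $\overline{\cV}$ gives $\cV_i|_T=f_*(\cH)|_T=\cV_{i+1}|_T$, and applying $B$ yields the same equalities for $B\cV_i$ and $B\cV_{i+1}$. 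The same computation shows that $B$ maps $\ConFlag_{\cL,\cH}$ to itself.

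\textbf{$\pi_h$--invariance.} This is the main step. Since $B$ preserves $h$, for $x\in(B\cV)^{\perp_h}$ and $v\in\cV$ we have $h(B^{-1}x,v)=h(x,Bv)=0$, and conversely. Therefore $(B\cV)^{\perp_h}=B(\cV^{\perp_h})$, and so on $\ConFlag_{\cL,\cH}$ the action of $B$ commutes with the involution \eqref{eq_ConFlag_perp}. Both $B\cdot(-)$ and $\pi_h$ are sheaf morphisms of $\LowFlag_{\cL,\cH}|_T$, and by \Cref{sheafification_of_ConFlag_LH} this sheaf is the sheafification of $\ConFlag_{\cL,\cH}$, into which both maps restrict compatibly. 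The two composites $\pi_h\circ B$ and $B\circ\pi_h$ therefore agree on the presheaf, and hence on the sheafification by its universal property (cf.\ \Cref{pi_h_automorphism}). Consequently, if $\pi_h(\overline{\cV})=\overline{\cV}$, then $\pi_h(B\overline{\cV})=B\pi_h(\overline{\cV})=B\overline{\cV}$.

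The one point needing care is that the action of $B$ on $\LowFlag_{f_*(\cH)}$ is compatible with the truncating restriction maps, so that it really is a sheaf morphism extending the presheaf action. This follows from \Cref{GL_action_on_LowFlag}, because $B\cV_k|_Y=f_*(\cH)|_Y$ if and only if $\cV_k|_Y=f_*(\cH)|_Y$.
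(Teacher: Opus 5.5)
Your proof is correct and rests on the same two facts as the paper's: $(B\cV)^{\perp_h}=B(\cV^{\perp_h})$ because $B$ preserves $h$, and $B$ locally acts factorwise by automorphisms, so it preserves the $\cO$--ranks of the factors and hence the $\cL$--gaps. The difference is only in packaging. The paper checks symmetry and positive gaps on the presheaf $\ConFlag_{(\cH,h)}$ and extends the action by sheafification. You instead verify the conditions directly on $\LowFlag_{\cL,\cH}$ and use the universal property only to obtain $B\circ\pi_h=\pi_h\circ B$, which has the small benefit of confirming the explicit formula on sections that are not of constant rank.
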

\begin{proof}
We argue that the action of $\GL_{(\cH,h)}$ on $f_*(\cH)$ induces an action on $\ConFlag_{(\cH,h)}$, which will then induce the action claimed above via sheafification. To see that we obtain a well-defined action on $\ConFlag_{(\cH,h)}$, we need to check that the action of $\GL_{(\cH,h)}$ preserves the properties of a flag being symmetric and having strictly positive $\cL$--gaps. 

For symmetry, $\GL_{(\cH,h)}$ is defined by the property that for $B\in \GL_{(\cH,h)}$ we have $h(Bx,By)=h(x,y)$, and therefore
\[
(B\cV)^{\perp_h} = B(\cV^{\perp_h}).
\]
Thus, for a section $(0 \subseteq \cV_1 \subseteq \ldots \subseteq \cV_\ell \subseteq f_*(\cH)) \in \ConFlag_{(\cH,h)}$ we will have that
\[
(B\cV_j)^{\perp_h} = B(\cV_j^{\perp_h}) = B\cV_{\ell+1-j},
\]
and so the resulting flag is also symmetric.

Regarding $\cL$--gaps, for any two subsequent components in a section of $\ConFlag_{(\cH,h)}$, they are locally of the form
\[
\cV_1\times\cV_2 \subseteq \cW_1\times\cW_2 \subseteq \cO^d\times \cO^d
\]
and likewise a section $B\in \GL_{(\cH,h)}$ locally assumes the form $(B_1,(B_1^{-1})^t)$ for some $B_1 \in \GL_d$. Then, applying $B_1$ preserves the $\cO$--ranks of $\cV_1$ and $\cW_1$ while applying $(B_1^{-1})^t$ preserves the ranks of $\cV_2$ and $\cW_2$. Therefore, it is clear that
\begin{align*}
& \gap_\cL((B_1,(B_1^{-1})^t)(\cV_1\times\cV_2)\subseteq (B_1,(B_1^{-1})^t)(\cW_1\times\cW_2)) \\
=& \gap_\cL(\cV_1\times\cV_2\subseteq \cW_1\times\cW_2).
\end{align*}
Since the action of $\GL_{(\cH,h)}$ locally fixes the $\cL$--gap, it fixes it globally as well. It therefore also preserves the property of $\cL$--gap being strictly positive, and so we are done.
\end{proof}

\subsection{Outer Severi-Brauer Sheaves}
We now return to considering an Azumaya algebra $(f\colon L \to S,\cB,\tau)$ of constant degree $d$ with involution of the second kind. Similarly to how we considered $\cL = f_*(\cO|_L)$--submodules which are of constant $\cO$--rank in the previous section, here will we be concerned with $\cL$--ideals of $f_*(\cB)$ which are locally direct summands and which are finite locally free as $\cO$--modules. Given a right ideal $\cI\subseteq f_*(\cB)$, its left annihilator
\begin{align*}
\lann\cI \colon \Sch_S &\to \Ab \\
T &\mapsto \{b\in f_*(\cB)(T) \mid b\cI|_T =0 \}
\end{align*}
is a left ideal of $f_*(\cB)$. Using the involution $\tau$ we can obtain another right ideal $\tau(\lann\cI) \subseteq \cB$. This operation is also order reversing and will therefore send lowered flags to raised flags. Thus, we work with presheaves of constant rank objects here as well after a preparatory lemma.

\begin{lem}\label{left_ann_facts}
Let $(f\colon L \to S,\cB,\tau)$ be an Azumaya algebra with involution of the second kind of degree rank $d$. Thus, $f_*(\cB)$ is of constant rank $2d^2$. Let $\cI\subseteq f_*(\cB)$ be a right $\cL$--ideal which is locally a direct summand and which is finite locally free as a $\cO$--module. Then,
\begin{enumerate}
\item \label{left_ann_facts_i} $\tau(\lann\cI)$ is a right $\cL$--ideal which is also locally a direct summand,
\item \label{left_ann_facts_v} $\tau\big(\lann(\tau(\lann\cI))\big) = \cI$, i.e., this is an order two operation,
\item \label{left_ann_facts_ii} $\tau(\lann\cI)$ is also finite locally free as an $\cO$--module,
\item \label{left_ann_facts_iii} $\rank_\cO(\cI)$ as a locally constant function is always divisible by $d$, and
\item \label{left_ann_facts_iv}$\rank_\cO(\tau(\lann\cI)) = 2d^2-\rank_\cO(\cI)$ which is then also divisible by $d$.
\end{enumerate}
Additionally, for two such right $\cL$--ideals $\cI_1\subseteq \cJ$,
\begin{enumerate}
\setcounter{enumi}{5}
\item \label{left_ann_facts_vi} $\gap_\cL(\cI\subseteq \cJ)=\gap_\cL(\tau(\lann\cJ)\subseteq \tau(\lann\cI))$.
\end{enumerate}
\end{lem}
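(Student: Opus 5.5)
All six claims are local, and every object involved ($\lann\cI$, $\tau$, ranks, local direct summands, the $\cL$--gap) is compatible with base change and gluing. So I will pass to a cover over which the data is split and compute there. Fppf locally, $L=S\sqcup S$ and $\cL=\cO\times\cO$. Further localizing (\cite[2.7.0.35]{CF}), the triple $(f\colon L\to S,\cB,\tau)$ becomes isomorphic to the split example $(\Mat_d(\cO)\times\Mat_d(\cO),\tau_d)$ of \eqref{split_second_involution}, with $\tau_d(B_1,B_2)=(B_2^t,B_1^t)$. A right $\cL$--ideal is then a product $\cI_1\times\cI_2$ of right ideals of $\Mat_d(\cO)$, and it is a local direct summand exactly when each factor is. The property of being finite locally free over $\cO$ lets me shrink further so that each $\cI_j$ has constant rank.

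By \Cref{submodule_ideal_inverse}, localizing once more, $\cI_j=\cI_{\cV_j}$ for a direct summand $\cV_j\subseteq\cO^d$ of constant rank $r_j$. Everything then reduces to linear algebra over $\cO^d$, in four steps.

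First, $\rank_\cO(\cI_{\cV_j})=d\,r_j$, which gives \ref{left_ann_facts_iii}.

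Second, I compute the left annihilator componentwise. Since $\cI_{\cV}=\cHom(\cE,\cV)$, a matrix $b$ satisfies $b\cI_\cV=0$ if and only if $b|_{\cV}=0$. Hence $\lann(\cI_{\cV_1}\times\cI_{\cV_2})=\cJ_1\times\cJ_2$ with $\cJ_j=\{b\mid b\cV_j=0\}$. Transposing, $b\cV=0$ holds if and only if the columns of $b^t$ lie in $\cV^\perp$, so $\cJ_j^t=\cI_{\cV_j^\perp}$. Applying $\tau_d$, which swaps the factors and transposes, gives
\[
\tau_d\big(\lann(\cI_{\cV_1}\times\cI_{\cV_2})\big)=\cI_{\cV_2^\perp}\times\cI_{\cV_1^\perp}.
\]
This is the exact analogue of \Cref{split_hermitian_perp}.

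Third, I read off the remaining claims from this formula.
\begin{itemize}
\item For \ref{left_ann_facts_i} and \ref{left_ann_facts_ii}: $\cV^\perp$ is again a constant rank direct summand, so the image is a right $\cL$--ideal, a direct summand, and finite locally free.
\item For \ref{left_ann_facts_v}: this follows from $(\cV^\perp)^\perp=\cV$ together with the second swap.
\item For \ref{left_ann_facts_iv}: the rank is $d(d-r_2)+d(d-r_1)=2d^2-\rank_\cO(\cI)$, which is divisible by $d$.
\item For \ref{left_ann_facts_vi}: with $\cI=\cI_{\cV_1}\times\cI_{\cV_2}\subseteq\cJ=\cI_{\cW_1}\times\cI_{\cW_2}$, the local $\cL$--gaps are
\[
\min\big(d(s_1-r_1),\,d(s_2-r_2)\big)
\]
on one side, and
\[
\min\big(d((d-r_2)-(d-s_2)),\,d((d-r_1)-(d-s_1))\big)
\]
on the other. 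These are equal, by the same computation as in \Cref{hermitian_orthogonal_rank}\ref{hermitian_orthogonal_rank_iv}.
\end{itemize}
The gap is defined via sheafification (\Cref{defn_L_gap}), so local equality gives global equality.

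The main obstacle is justifying the reduction. I need $\lann$ and the image under $\tau$ to commute with restriction to $T\in\Sch_S$, so that computing on a cover actually computes the global sheaf. Since $\lann\cI$ is defined sectionwise by a closed linear condition, this amounts to checking that annihilation is local. It is fine for the local direct summand $\cI$: the condition $b\cI|_{T'}=0$ can be tested after refining $T'$. Once that is in place, the remaining computations above are routine.
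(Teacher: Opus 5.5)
Your proposal is correct and is essentially the paper's own argument: reduce to the split case $(\Mat_d(\cO)\times\Mat_d(\cO),\tau_d)$, write $\cI=\cI_{\cV_1}\times\cI_{\cV_2}$ via \Cref{submodule_ideal_inverse}, compute $\tau_d(\lann\cI)=\cI_{\cV_2^\perp}\times\cI_{\cV_1^\perp}$, and read off all six claims from this formula together with $\rank_\cO(\cI_\cV)=d\cdot\rank_\cO(\cV)$. Your extra remark that $\lann$ commutes with restriction spells out what the paper compresses into ``all the claims are local in nature.''
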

\begin{proof}
Since all the claims are local in nature, we may assume that we are working with $(f\colon S\sqcup S \to S, (\Mat_d(\cO),\Mat_d(\cO),\tau_d)$ as in \eqref{split_second_involution} and that $\cI\subseteq f_*(\Mat_d(\cO),\Mat_d(\cO)) = \Mat_d(\cO)\times\Mat_d(\cO)$ is of constant $\cO$--rank and a direct summand. Therefore, it is of the form
\[
\cI = \cI_1\times\cI_2
\]
where $\cI_1,\cI_2 \subseteq \Mat_d(\cO)$ are two $\cO$--right ideals of constant rank which are direct summands of $\Mat_d(\cO)$.

\noindent\ref{left_ann_facts_i}: By \Cref{submodule_ideal_inverse}, each $\cI_i = \cI_{\cV_i} = \cHom_\cO(\cO^d,\cV_i)$ for direct summands $\cV_i \subseteq \cO^d$. Equivalently,
\[
\cI_i = \left\{ \begin{bmatrix} c_1 & \cdots & c_d \end{bmatrix} \mid c_j \in \cV_i \right\}
\]
writing the $c_j$ as columns. Thus, it is clear that
\[
\lann\cI_i = \left\{ \begin{bmatrix} r_1 \\ \vdots \\ r_d \end{bmatrix} \mid r_j \in \cV_i^\perp \right\}
\]
writing the $r_j$ as rows. Thus, $(\lann\cI_i)^t = \cHom_\cO(\cO^d,\cV_i^\perp)$ which is a direct summand of $\Mat_d(\cO)$ since $\cV_i^\perp \subseteq \cO^d$ is a direct summand. Overall, we have that
\[
\tau_d(\lann\cI) = \cHom_\cO(\cO^d,\cV_2^\perp)\times \cHom_\cO(\cO^d,\cV_1^\perp)
\]
which is an $\cL$--ideal and a direct summand of $\Mat_d(\cO)\times\Mat_d(\cO)$.

\noindent\ref{left_ann_facts_v}: This follows from \ref{left_ann_facts_i} since $(\cV_i^\perp)^\perp = \cV_i$.

\noindent\ref{left_ann_facts_ii}: This is clear from \ref{left_ann_facts_i} since each $\cV_i^\perp$ are finite locally free $\cO$--modules.

\noindent\ref{left_ann_facts_iii}: We know that $\rank(\cI_i)= r_id$ for some integers $0\leq r_i \leq d$, and therefore
\[
\rank_\cO(\cI) = (r_1+r_2)d
\]
which is divisible by $d$.

\noindent\ref{left_ann_facts_iv}: Once again using the description in \ref{left_ann_facts_i}, we know that $\rank_\cO(\cV_i^\perp)=d-\rank_\cO(\cV_i)$ and therefore
\[
\rank_\cO((\lann\cI_i)^t) = d^2-r_id.
\]
Together, this means that
\begin{align*}
\rank_\cO(\tau(\lann\cI)) &= d^2-r_1d + d^2-r_2d \\
&= 2d^2 - \rank_\cO(\cI)
\end{align*}
as claimed.

\noindent \ref{left_ann_facts_vi}: We may work with $\cI_1\times\cI_2 \subseteq \cJ_1\times\cJ_2$ and set $\rank_\cO(\cJ_i)=s_i$. Then,
\begin{align*}
& \gap_\cL(\tau(\lann(\cJ_1\times\cJ_2))\subseteq \tau(\lann(\cI_1\times\cI_2))) \\
=& \gap_\cL((\lann\cJ_2)^t\times(\lann\cJ_1)^t \subseteq (\lann\cI_2)^t\times(\lann\cI_1)^t) \\
=& \min(\{d^2-r_2-(d^2-s_2),d^2-r_1-(d^2-s_1)\}) \\
=& \min(\{s_2-r_2,s_1-r_1\}) \\
=& \gap_\cL(\cI_1\times\cI_2\subseteq \cJ_1\times\cJ_2)
\end{align*}
as claimed.
\end{proof} 

\begin{defn}
Let $(f\colon L \to S,\cB,\tau)$ be an Azumaya algebra with involution of the second kind of constant degree $d$. We define
\[
\ConIdeal_{\cL,\cB} \colon \Sch_S \to \Sets
\]
to be the subsheaf of $\ConFlag_{\cL,\cH}$ where the flags additionally consist of right $\cL$--ideals. They will still be locally direct summands with constant $\cO$--rank and with strictly positive $\cL$--gap between subsequent components of the flags.
\end{defn}

By \Cref{left_ann_facts}, we have an order two automorphism of $\ConIdeal_{\cL,\cB}$ given by
\begin{align}
\ConIdeal_{\cL,\cB} &\iso \ConIdeal_{\cL,\cB} \label{ConIdeal_LB_involution} \\
(0\subseteq \cI_1 \subseteq \ldots \subseteq \cI_\ell \subseteq f_*(\cB)) &\mapsto (0\subseteq \tau(\lann\cI_1) \subseteq \ldots \subseteq \tau(\lann\cI_\ell) \subseteq f_*(\cB)). \nonumber
\end{align}

The next step is to define a sheaf which is the sheafification of $\ConIdeal_{\cL,\cB}$.
\begin{defn}
Let $(f\colon L \to S,\cB,\tau)$ be an Azumaya algebra with involution of the second kind of constant degree $d$. We define the sheaf
\[
\LowGSB_{\cL,\cB} \colon \Sch_S \to \Sets
\]
to be the subsheaf of $\LowGSB_{f_*(\cB)}$ where each component of the flag is a right $\cL$--ideal and the flag is $\cL$--lowered. In particular, each component will be locally a direct summand and which is finite locally free as an $\cO$--module.

Symmetrically, we define $\RaiGSB_{\cL,\cB}$ to be the sheaf of $\cL$--raised flags of right $\cL$--ideals in $\RaiGSB_{f_*(\cB)}$.
\end{defn}

\begin{lem}\label{sheafification_ConIdeal_LB}
The canonical inclusion $\ConIdeal_{\cL,\cB} \inj \LowGSB_{\cL,\cB}$ has the universal property of sheafification and so $(\ConIdeal_{\cL,\cB})^\sharp \cong \LowGSB_{\cL,\cB}$. Thus, the order two automorphism \eqref{ConIdeal_LB_involution} induces an order two automorphism
\[
\pi_\cB \colon \LowGSB_{\cL,\cB} \iso \LowGSB_{\cL,\cB}.
\]
\end{lem}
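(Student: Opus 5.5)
The plan is to mirror the proofs of \Cref{sheafification_of_ConIdeal} and \Cref{sheafification_of_ConFlag_LH}, viewing everything inside the flag sheaves already studied. First I would check that $\LowGSB_{\cL,\cB}$ is indeed a sheaf. It is the subfunctor of the sheaf $\LowGSB_{f_*(\cB)}$, which is a sheaf by \Cref{separable_algebras} together with the argument following \Cref{generalized_Severi-Brauer}, cut out by two conditions: each component is a right $\cL$--ideal, and the flag is $\cL$--lowered. Being an $\cL$--submodule and a right ideal is a local condition. Being $\cL$--lowered is stable under base change and gluing by exactly the argument in the proof of \Cref{sheafification_of_ConFlag_LH}. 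That argument only used the local nature of $\gap_\cL$ and the truncation behaviour of restriction, so it applies verbatim. Hence $\LowGSB_{\cL,\cB}$ is a subsheaf.

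Next I would show that the inclusion $\ConIdeal_{\cL,\cB}\inj \LowGSB_{\cL,\cB}$ is well defined and locally surjective.

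\textbf{Well defined.} A section of $\ConIdeal_{\cL,\cB}$ has constant $\cO$--ranks that strictly increase, so it is lowered. Its gaps are strictly positive, so no gap ever becomes zero after base change, and the flag is $\cL$--lowered.

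\textbf{Locally surjective.} Take $T$ and a section of $\LowGSB_{\cL,\cB}(T)$. View it as a section of $\LowFlag_{\cL,\cH}$ with $\cH=\cB$ regarded as an $\cO|_L$--module. The proof of \Cref{sheafification_of_ConFlag_LH} then provides a cover $\{T_i\to T\}$ over which the restriction lies in $\ConFlag_{\cL,\cB}$, with constant $\cO$--ranks and positive gaps. Restrictions of right $\cL$--ideals remain right $\cL$--ideals, so these local sections lie in $\ConIdeal_{\cL,\cB}$. An injective map of presheaves into a sheaf that is locally surjective has the universal property of sheafification, which gives $(\ConIdeal_{\cL,\cB})^\sharp\cong\LowGSB_{\cL,\cB}$.

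Finally, \Cref{left_ann_facts} shows that \eqref{ConIdeal_LB_involution} is a well-defined automorphism of the presheaf $\ConIdeal_{\cL,\cB}$: parts (i)--(iv) preserve the constant rank direct summand right $\cL$--ideals, part (vi) preserves positivity of the gaps, and part (ii) makes it an involution. Here the map should reverse the order of the components, since $\tau(\lann\und)$ is order reversing. Composing with the inclusion into the sheaf $\LowGSB_{\cL,\cB}$ and applying the universal property gives a unique extension $\pi_\cB$. The extension of the inverse is again an extension of the same map, so uniqueness forces $\pi_\cB^2=\Id$, and $\pi_\cB$ is an order two automorphism.

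The only delicate point is the local surjectivity step. One must make sure that localizing until the $\cO$--ranks are constant also gives positive $\cL$--gaps; this follows because strict rank increase rules out equal restrictions, which with $\cL$--loweredness forbids a zero gap. That is the same contrapositive argument as before.
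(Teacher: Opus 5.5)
Your proposal is correct and follows essentially the same route as the paper: view a section of $\LowGSB_{\cL,\cB}$ as a section of $\LowFlag_{\cL,\cB}$, use \Cref{sheafification_of_ConFlag_LH} to localize it into $\ConFlag_{\cL,\cB}$, and note that the components stay right $\cL$--ideals, so the section locally lies in $\ConIdeal_{\cL,\cB}$. Your extra checks are routine details the paper leaves implicit: that $\LowGSB_{\cL,\cB}$ is a sheaf, that the inclusion is well defined, and that $\pi_\cB^2=\Id$ by uniqueness of the extension. Your remark that \eqref{ConIdeal_LB_involution} must reverse the order of the components is also correct.
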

\begin{proof}
It is sufficient to show the claim about sheafification and thus it suffices to see that any section of $\LowGSB_{\cL,\cB}$ locally belongs to the image of $\ConIdeal_{\cL,\cB}$. However, the sections of $\LowGSB_{f_*(\cH)}$ which consist of right $\cL$--ideals will belong to $\LowFlag_{\cL,\cB}$, and therefore locally belong to $\ConFlag_{\cL,\cB}$. Then, because they consist of right ideals they will locally belong to $\ConIdeal_{\cL,\cB}$, so we are done.
\end{proof}
This map $\pi_\cB$ has a similar description as $\pi_h$ in \Cref{pi_h_behaviour_example} except with ideals $\cI$ in place of submodules $\cV$ and $\tau(\lann\cI)$ in place of $\cV^{\perp_h}$.

\begin{defn}\label{defn_ConIdeal_Bt}
Let $(L \to S,\cB,\tau)$ be an Azumaya algebra with involution of the second kind of constant degree $d$. We define
\[
\ConIdeal_{(\cB,\tau)} \colon \Sch_S \to \Sets
\]
to be the subpresheaf of $\ConIdeal_{\cL,\cB}$ consisting of flags fixed by the order two automorphism of \eqref{ConIdeal_LB_involution}. Similarly, we define
\[
\LowGSB_{(\cB,\tau)} \colon \Sch_S \to \Sets
\]
to be the subsheaf of $\LowGSB_{\cL,\cB}$ consisting of flags fixed by the automorphism $\pi_\cB$ of \Cref{sheafification_ConIdeal_LB}. This is the \emph{generalized Severi-Brauer sheaf} of $(L\to S,\cB,\tau)$.
\end{defn}

\begin{lem}
The sheaf $\LowGSB_{(\cB,\tau)}$ defined above is in fact a sheaf. Furthermore, the inclusion $\ConIdeal_{(\cB,\tau)} \inj \LowGSB_{(\cB,\tau)}$ has the universal property of sheafification, thus showing that
\[
(\ConIdeal_{(\cB,\tau)})^\sharp \cong \LowGSB_{(\cB,\tau)}.
\]
\end{lem}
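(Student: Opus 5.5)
The plan is to mirror the proof of \Cref{sheafification_of_hermitian_ConFlag}, using \Cref{sheafification_ConIdeal_LB} in place of \Cref{sheafification_of_ConFlag_LH}. First, $\LowGSB_{(\cB,\tau)}$ is a sheaf because it is the equalizer of the two sheaf morphisms $\Id$ and $\pi_\cB$ from the sheaf $\LowGSB_{\cL,\cB}$ to itself. Concretely, $\pi_\cB$ commutes with restriction, so restrictions of fixed points are fixed points. For gluing, suppose sections $s_i\in\LowGSB_{(\cB,\tau)}(U_i)$ agree on overlaps. Then they glue to a unique $s\in\LowGSB_{\cL,\cB}(S)$. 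Both $\pi_\cB(s)$ and $s$ restrict to $s_i$ on each $U_i$, so separatedness of $\LowGSB_{\cL,\cB}$ gives $\pi_\cB(s)=s$.

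Second, I will check that the inclusion $\ConIdeal_{(\cB,\tau)}\inj\LowGSB_{(\cB,\tau)}$ is well defined. By \Cref{sheafification_ConIdeal_LB}, $\pi_\cB$ is the unique morphism extending \eqref{ConIdeal_LB_involution} along $\ConIdeal_{\cL,\cB}\inj\LowGSB_{\cL,\cB}$. So a flag in $\ConIdeal_{\cL,\cB}$ that is fixed by \eqref{ConIdeal_LB_involution} has image fixed by $\pi_\cB$. The map is injective because it is a restriction of the injection $\ConIdeal_{\cL,\cB}\inj\LowGSB_{\cL,\cB}$.

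Third, for the universal property it suffices to show local surjectivity. Take $\overline{\cI}\in\LowGSB_{(\cB,\tau)}(T)$. By \Cref{sheafification_ConIdeal_LB}, there is a cover $\{T_i\to T\}$ with $\overline{\cI}|_{T_i}$ equal to the image of some $\overline{\cJ}_i\in\ConIdeal_{\cL,\cB}(T_i)$. Because $\pi_\cB$ commutes with restriction and extends \eqref{ConIdeal_LB_involution}, the image of $\overline{\cJ}_i$ under \eqref{ConIdeal_LB_involution} maps to $\pi_\cB(\overline{\cI})|_{T_i}=\overline{\cI}|_{T_i}$. Injectivity of $\ConIdeal_{\cL,\cB}(T_i)\inj\LowGSB_{\cL,\cB}(T_i)$ then forces $\overline{\cJ}_i$ to be fixed by \eqref{ConIdeal_LB_involution}, so $\overline{\cJ}_i\in\ConIdeal_{(\cB,\tau)}(T_i)$. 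Hence the inclusion is a locally surjective monomorphism into a sheaf, so it has the universal property of sheafification.

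The only real subtlety is this last step. The point needing care is that the truncation in restriction maps does not interfere. This is handled entirely by working with sections of the presheaf $\ConIdeal_{\cL,\cB}$, whose restriction maps never truncate, together with injectivity of the comparison map.
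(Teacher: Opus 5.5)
Your proof is correct and follows the paper's route. The paper only says the statement is analogous to \Cref{sheafification_of_hermitian_ConFlag}: take fixed points of $\pi_\cB$ inside the sheafification $\LowGSB_{\cL,\cB}$ of \Cref{sheafification_ConIdeal_LB}, using that $\pi_\cB$ restricts to \eqref{ConIdeal_LB_involution} on $\ConIdeal_{\cL,\cB}$; you have spelled out the equalizer argument and the local-surjectivity argument (injectivity forcing the local lifts to be fixed) that the paper leaves implicit.
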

\begin{proof}
The is the analogous statement to \Cref{sheafification_of_hermitian_ConFlag} and follows from a similar argument.
\end{proof}

\begin{example}\label{example_split_outer_ConIdeal}
If we look at the split Azumaya algebra with with involution of the second kind as in \eqref{split_second_involution}, for $T\in \Sch_S$ the sections of $\ConIdeal_{\cL,(\Mat_d(\cO),\Mat_d(\cO))}(T)$ are $\cL|_T$--lowered flags of the form
\[
0 \subseteq \cI_1\times\cJ_1 \subseteq \ldots \subseteq \cI_\ell\times\cJ_\ell \subseteq \Mat_d(\cO)|_T\times\Mat_d(\cO)|_T
\]
where each $\cI_j$ and $\cJ_j$ are right ideals of constant rank in $\Mat_d(\cO)|_T$. We also know from the calculation in \Cref{left_ann_facts}\ref{left_ann_facts_i} that
\[
\tau_d(\lann(\cI_j\times\cJ_j)) = (\lann\cJ_j)^t \times (\lann\cI_j)^t.
\]
Thus, the sections of $\ConIdeal_{(\Mat_d(\cO),\Mat_d(\cO)),\tau_d)}(T)$ appear as
\[
0 \subseteq \cI_1\times (\lann\cI_\ell)^t \subseteq \ldots \subseteq \cI_\ell\times (\lann\cI_1)^t \subseteq \Mat_d(\cO)|_T\times\Mat_d(\cO)|_T.
\]
Since these flags have strictly positive $\cL$--gaps, the map of presheaves
\begin{align*}
\ConIdeal_{\Mat_d(\cO)} &\to \ConIdeal_{((\Mat_d(\cO),\Mat_d(\cO)),\tau_d)} \\
(0 \subseteq \cI_1 \subseteq \ldots \subseteq \cI_\ell \subseteq \Mat_d(\cO)) &\mapsto (0 \subseteq \cI_1\times (\lann\cI_\ell)^t \subseteq \ldots \subseteq \cI_\ell\times (\lann\cI_1)^t \subseteq (\Mat_d(\cO)|_T)^2)
\end{align*}
is injective and locally surjective (though not presheaf surjective). Thus, it induces an isomorphism of the sheafifications
\begin{equation}\label{eq_ConIdeal_iso}
\LowGSB_{\Mat_d(\cO)} \iso \LowGSB_{((\Mat_d(\cO),\Mat_d(\cO)),\tau_d)}.
\end{equation}
Similar to the description given for \eqref{eq_ConFlag_iso}, denoting
\[
\varphi_{\cFlag}(0\subseteq \cI_1 \subseteq \ldots \subseteq \cI_\ell \subseteq \Mat_d(\cO)) = (0\subseteq \cJ_1 \subseteq \ldots \subseteq \cJ_\ell \subseteq \Mat_d(\cO)) \in \RaiGSB_{\Mat_d(\cO)},
\]
the map \eqref{eq_ConIdeal_iso} appears as
\begin{align*}
&(0\subseteq \cI_1 \subseteq \ldots \subseteq \cI_\ell \subseteq \Mat_d(\cO)) \\
\mapsto &(0\subseteq \cI_1\times(\lann\cJ_\ell)^t \subseteq \ldots \subseteq \cI_\ell\times(\lann\cJ_1)^t \subseteq \Mat_d(\cO)\times\Mat_d(\cO)).
\end{align*}

There is a natural action of $\PGL_d\rtimes\ZZ/2\ZZ$ on $\ConIdeal_{((\Mat_d(\cO),\Mat_d(\cO)),\tau_d)}$ where $\varphi \in \PGL_d$ acts by
\begin{align*}
&\varphi \cdot (0 \subseteq \cI_1\times (\lann\cI_\ell)^t \subseteq \ldots \subseteq \cI_\ell\times (\lann\cI_1)^t \subseteq \Mat_d(\cO)|_T\times\Mat_d(\cO)|_T) \\
= &(0 \subseteq \varphi\cI_1\times \theta'(\varphi)(\lann\cI_\ell)^t \subseteq \ldots \subseteq \varphi\cI_\ell\times \theta'(\varphi)(\lann\cI_1)^t \subseteq \Mat_d(\cO)|_T\times\Mat_d(\cO)|_T),
\end{align*}
using the outer automorphism $\theta'$ of $\PGL_d$ defined in \Cref{reductive_groups}, and $\overline{1}\in \ZZ/2\ZZ$ acts by
\begin{align*}
&\overline{1} \cdot (0 \subseteq \cI_1\times (\lann\cI_\ell)^t \subseteq \ldots \subseteq \cI_\ell\times (\lann\cI_1)^t \subseteq \Mat_d(\cO)|_T\times\Mat_d(\cO)|_T) \\
= &(0 \subseteq (\lann\cI_\ell)^t\times \cI_1 \subseteq \ldots \subseteq (\lann\cI_1)^t\times \cI_\ell \subseteq \Mat_d(\cO)|_T\times\Mat_d(\cO)|_T).
\end{align*}
This action extends to a $\PGL_d\rtimes \ZZ/2\ZZ$--action on $\LowGSB_{((\Mat_d(\cO),\Mat_d(\cO)),\tau_d)}$.
\end{example}

\begin{lem}\label{severi_brauer_twist}
Let $(L\to S,\cB,\tau)$ be an Azumaya algebra with involution of the second kind of constant degree $d$ and let $\cK'$ be the corresponding $\PGL_d\rtimes \ZZ/2\ZZ$--torsor. Then,
\[
\LowGSB_{(\cB,\tau)} \cong \cK' \wedge^{\PGL_d\rtimes \ZZ/2\ZZ} \LowGSB_{((\Mat_d(\cO),\Mat_d(\cO)),\tau_d)}.
\]
\end{lem}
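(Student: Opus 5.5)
The plan is to mirror the proof of \Cref{hermitian_flag_twist}: show that $\LowGSB_{(\cB,\tau)}$ is functorial in isomorphisms of Azumaya algebras with involution of the second kind. It then follows formally that it is the twist of the split model by the torsor of isomorphisms. Concretely, let $\cK' = \cIsom\big((S\sqcup S\to S,(\Mat_d(\cO),\Mat_d(\cO)),\tau_d),(L\to S,\cB,\tau)\big)$. This is a torsor under $\bAut$ of the split object, which is $\PGL_d\rtimes\ZZ/2\ZZ$ as recalled in \Cref{reductive_groups}.

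\textbf{Step 1: transport of structure.} An isomorphism $(g,\varphi)\colon (L_1\to S,\cB_1,\tau_1)\iso(L_2\to S,\cB_2,\tau_2)$ has $\varphi\colon f_{2*}(\cB_2)\iso f_{1*}(\cB_1)$ $g^*$-semilinear. First I check that $\varphi$ sends right $\cL_2$-ideals to right $\cL_1$-ideals, and that it preserves being locally a direct summand and $\cO$-ranks. Next I check that it preserves $\cL$-gaps. Locally, $g$ is either the identity or the switch on $S\sqcup S$, and the gap is a minimum over the factors, so it is invariant under permuting them; this is the same remark made after \Cref{defn_L_gap}. Finally, since $\varphi$ is an algebra isomorphism with $\varphi\tau_2=\tau_1\varphi$, we get $\varphi(\tau_2(\lann\cI))=\tau_1(\lann\varphi(\cI))$. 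Together these show that $\varphi$ induces a bijection $\ConIdeal_{(\cB_2,\tau_2)}\iso\ConIdeal_{(\cB_1,\tau_1)}$ of presheaves. This bijection is compatible with restriction and with composition of isomorphisms.

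\textbf{Step 2: pass to sheaves.} By the universal property of sheafification, established in the lemma preceding \Cref{example_split_outer_ConIdeal}, the presheaf bijection extends uniquely to an isomorphism $\LowGSB_{(\cB_2,\tau_2)}\iso\LowGSB_{(\cB_1,\tau_1)}$. By uniqueness, this assignment is functorial. In the split case, applied to the automorphism group, it recovers the $\PGL_d\rtimes\ZZ/2\ZZ$-action described in \Cref{example_split_outer_ConIdeal}. The point to verify there is that for $\varphi\in\PGL_d$ acting by $(\varphi,\theta'(\varphi))$, the formula matches. This holds because $\theta'(\varphi)(\lann\cI)^t=(\lann\varphi(\cI))^t$, which is the compatibility of $\theta'$ with transpose and annihilators.

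\textbf{Step 3: the torsor map.} I define the morphism
\[
\cK'\wedge^{\PGL_d\rtimes\ZZ/2\ZZ}\LowGSB_{((\Mat_d(\cO),\Mat_d(\cO)),\tau_d)}\to\LowGSB_{(\cB,\tau)}
\]
by sending $(k,\overline{\cI})\mapsto k_*(\overline{\cI})$. Functoriality from Step 2 makes it well defined on the contracted product. It is an isomorphism because this can be checked locally, and over a cover trivializing $\cK'$ it becomes the identification induced by a chosen isomorphism. Here I use that by \cite[2.7.0.35]{CF} every such algebra is locally split, and that both sides are sheaves compatible with restriction (as in \Cref{lowered_flags_sheaf}).

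\textbf{Main obstacle.} I expect the hardest step to be the semilinearity bookkeeping in Step 1. Specifically, I need that a $g^*$-semilinear $\varphi$ intertwines the left annihilator and $\tau$, and that the $\cL$-gap and the $\cL$-lowered condition survive when $g$ permutes the factors of $\cL$. I will verify this locally in the split model, where $g$ is either the identity or $\sw$ and everything is an explicit computation with products $\cI_1\times\cI_2$.
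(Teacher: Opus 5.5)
Your proposal is correct and takes the same route as the paper. The paper's proof only says the lemma is analogous to \Cref{hermitian_flag_twist}: isomorphisms of algebras with involution of the second kind induce isomorphisms of the flag sheaves, so local isomorphism with the split model makes $\LowGSB_{(\cB,\tau)}$ the twist by the $\PGL_d\rtimes\ZZ/2\ZZ$--torsor. Your Steps 1--3 supply the checks the paper leaves implicit: preservation of ranks, of $\cL$--gaps and of the symmetry $\varphi(\tau_2(\lann\cI))=\tau_1(\lann\varphi(\cI))$ on $\ConIdeal$, followed by sheafification and the contracted-product map.
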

\begin{proof}
This is the analogous fact to \Cref{hermitian_flag_twist} and follows by a similar argument.
\end{proof}

\begin{lem}\label{U_action_on_GSB_Btau}
Let $(f\colon L \to S,\cB,\tau)$ be an Azumaya algebra with involution of the second kind of constant degree $d$. The natural action of $\bU_{(\cB,\tau)}$ on $f_*(\cB)$ by conjugation extends to an action on $\LowGSB_{(\cB,t)}$. This extended action is also by conjugation, or equivalently by left multiplication since the flags are comprised of right ideals. That is, for $b\in \bU_{(\cB,\tau)}$ it acts by
\[
b\cdot (0\subseteq \cI_1 \subseteq \ldots \subseteq \cI_\ell \subseteq f_*(\cB)) = (0\subseteq b\cI_1 \subseteq \ldots \subseteq b\cI_\ell \subseteq f_*(\cB)).
\]
\end{lem}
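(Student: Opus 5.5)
The plan follows the proof of \Cref{GL_action_on_LowFlag_Hh}. I first produce an action of $\bU_{(\cB,\tau)}$ on the presheaf $\ConIdeal_{(\cB,\tau)}$ by left multiplication, and then pass to $\LowGSB_{(\cB,\tau)}=(\ConIdeal_{(\cB,\tau)})^\sharp$ through the universal property of sheafification. A sheaf action that restricts to left multiplication on the dense presheaf is then left multiplication everywhere. The reason is that left multiplication by a unit visibly commutes with restriction and with the truncation in $\LowFlag_{f_*(\cB)}$: an invertible operator sends $\cI|_T=f_*(\cB)|_T$ to itself and preserves proper inclusions. Before anything else I record that for a right ideal $\cI$ and a unit $b$ we have $b\cI b^{-1}=b\cI$, so conjugation and left multiplication agree on right ideals.

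Next, let $b\in\bU_{(\cB,\tau)}(T)$ and take a flag in $\ConIdeal_{(\cB,\tau)}(T)$. Several properties must survive left multiplication by $b$, and they come in two groups.

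The first group is elementary. Each $b\cI_j$ is again a right $\cL$--ideal, because $b$ commutes with $\cL$, which is central in $f_*(\cB)$. Being a local direct summand and having constant $\cO$--rank are preserved because $b$ is an invertible $\cO$--linear map. Inclusions are preserved for the same reason.

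The second group is the $\cL$--gap and the symmetry condition. For the $\cL$--gap, I work locally in the split case $f_*(\cB)=\Mat_d(\cO)\times\Mat_d(\cO)$. There $b=(B,(B^t)^{-1})$, and each factor acts invertibly, so it preserves the $\cO$--ranks of both factors of $\cI_j\times\cJ_j$. Hence $\gap_\cL$ is unchanged, exactly as in \Cref{GL_action_on_LowFlag_Hh}.

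The key step is symmetry, namely $\tau(\lann(b\cI))=b\,\tau(\lann\cI)$. I compute $\lann(b\cI)=(\lann\cI)b^{-1}$: indeed $xb\cI=0$ holds if and only if $xb\in\lann\cI$. Applying $\tau$ gives $\tau(\lann(b\cI))=\tau(b^{-1})\tau(\lann\cI)=b\,\tau(\lann\cI)$, using $\tau(xy)=\tau(y)\tau(x)$ and $\tau(b^{-1})=b$, which holds since $\tau(b)=b^{-1}$. So the automorphism \eqref{ConIdeal_LB_involution} commutes with the action, and fixed flags are sent to fixed flags. I need to make sure the index reversal in the symmetric condition is handled correctly. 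It is, because left multiplication preserves order, so $b\cI_j$ and $b\cI_{\ell+1-j}$ pair up just as $\cI_j$ and $\cI_{\ell+1-j}$ do.

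Finally, the action on presheaves is compatible with restriction, so it induces an action on the sheafification. Equivalently, the action commutes with the inclusions $\ConIdeal_{(\cB,\tau)}\subseteq\LowGSB_{(\cB,\tau)}$ and commutes with $\pi_\cB$. I expect the main obstacle to be justifying that the induced action is literally left multiplication on all sections, including the truncated non-constant-rank ones. I would settle this by checking directly that left multiplication on $\LowGSB_{\cL,\cB}$ preserves $\cL$--loweredness; the argument is the invertibility argument of \Cref{GL_action_on_LowFlag} together with gap invariance. I would then note that it agrees with the sheafified action on the dense subpresheaf, and two sheaf maps that agree there are equal.
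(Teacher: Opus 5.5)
Your proposal is correct and follows essentially the same route as the paper's proof. You define the action on $\ConIdeal_{(\cB,\tau)}$, verify symmetry via $\tau(\lann(b\cI_j))=\tau(b^{-1})\tau(\lann\cI_j)=b\cI_{\ell+1-j}$ and invariance of $\cL$--gaps locally in the split case, then extend through sheafification; your extra checks (that each $b\cI_j$ is again a constant-rank right $\cL$--ideal summand, and that the sheafified action is literally left multiplication on truncated sections) are details the paper leaves implicit.
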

\begin{proof}
We show that this action restricts to a well-defined action on $\ConIdeal_{(\cB,\tau)}$ which in turn means that it is well-defined on $\LowGSB_{(\cB,\tau)}$ through sheafification. Thus, we argue that the action on a section of $\ConIdeal_{(\cB,\tau)}$ preserves the properties of being a symmetric flag of ideals and of having strictly positive $\cL$--gaps. However, $\bU_{(\cB,\tau)}$ is defined by the property that for $b\in \bU_{(\cB,\tau)}$ we have $b=\tau(b^{-1})$. Therefore, if $(0\subseteq \cI_1 \subseteq \ldots \subseteq \cI_\ell \subseteq f_*(\cB)) \in \ConIdeal_{(\cB,\tau)}$ is a section, then
\[
\tau(\lann(b\cI_j)) = \tau(\lann(\cI_j)b^{-1}) = \tau(b^{-1})\cdot \tau(\lann\cI_j) = b\cI_{\ell+1-j}
\]
and so the resulting section is also symmetric. Regarding $\cL$--gaps, the action of $\bU_{(\cB,\tau)}$ sufficiently locally is by module isomorphism on each factor of $\Mat_d(\cO)\times\Mat_d(\cO)$ and so the ranks of submodules are preserved, which implies that the action of $\bU_{(\cB,\tau)}$ preserves $\cL$--gaps. Hence, the action is well-defined on $\ConIdeal_{(\cB,\tau)}$ and we are done.
\end{proof}

When the Azumaya algebra is the endomorphism algebra of a regular hermitian form $(f\colon L \to S,\cH,h)$ of constant rank $d$, the sheaves $\LowFlag_{(\cH,h)}$ and $\LowGSB_{(\cEnd_{\cO|_L}(\cH),\tau_h)}$ will coincide as expected by sending each subspace occurring in a flag to a suitable right ideal. In particular, let $\cV \subseteq f_*(\cH)$ be an $\cL$--subspace and define the right $\cL$--ideal
\[
\cI_{\cL,\cV} = \cHom_{\cL}(f_*(\cH),\cV) \subseteq \cEnd_{\cL}(f_*(\cH)) \cong f_*(\cEnd_{\cO|_L}(\cH)),
\]
where the isomorphism $\cEnd_{\cL}(f_*(\cH)) \cong f_*(\cEnd_{\cO|_L}(\cH))$ is due to \cite[Tag 01SB]{Stacks}.
\begin{lem}\label{V_to_I_outer}
With notation as above,
\begin{enumerate}
\item \label{V_to_I_outer_i} if $\cV$ is locally a direct summand of $f_*(\cH)$ then $\cI_{\cL,\cV}$ is locally a direct summand of $f_*(\cEnd_{\cO|_L}(\cH))$,
\item \label{V_to_I_outer_ii} if $\cV$ is finite locally free as an $\cO$--module then so is $\cI_{\cL,\cV}$, and
\item \label{V_to_I_outer_iii} $\tau_h(\lann\cI_{\cL,\cV}) = \cI_{\cL,(\cV^{\perp_h})}$.
\end{enumerate}
Additionally, if $\cV \subseteq \cW$ are two such $\cL$--submodules, then
\begin{enumerate}
\setcounter{enumi}{3}
\item \label{V_to_I_outer_iv} $\gap_\cL(\cI_{\cL,\cV}\subseteq \cI_{\cL,\cW}) = d\cdot \gap_\cL(\cV\subseteq \cW)$. In particular, if $\gap_\cL(\cV\subseteq \cW)$ is strictly positive then so is $\gap_\cL(\cI_{\cL,\cV}\subseteq \cI_{\cL,\cW})$.
\end{enumerate}
\end{lem}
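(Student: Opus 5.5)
All four claims are local in nature (direct summand, local freeness, an equality of subsheaves, and an equality of locally constant functions). I would therefore reduce to the split case at the outset. We may localize so that $L = S\sqcup S$, $\cH = (\cO^d,\cO^d)$ and $h = h_d$, using that any regular hermitian form is locally isomorphic to $(S\sqcup S\to S,\cH_d,h_d)$. We may also assume $\cV = \cV_1\times\cV_2$ and $\cW=\cW_1\times\cW_2$ with $\cV_i,\cW_i\subseteq \cO^d$ direct summands of constant $\cO$--rank. Then
\[
\cI_{\cL,\cV} = \cHom_\cO(\cO^d,\cV_1)\times\cHom_\cO(\cO^d,\cV_2) = \cI_{\cV_1}\times\cI_{\cV_2}\subseteq \Mat_d(\cO)\times\Mat_d(\cO),
\]
since an $\cO\times\cO$--linear map into $\cV_1\times\cV_2$ splits componentwise.

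For \ref{V_to_I_outer_i} and \ref{V_to_I_outer_ii}, I would apply \Cref{submodule_ideal_inverse}\ref{submodule_ideal_inverse_i} to each factor, which shows that each $\cI_{\cV_i}$ is a direct summand of $\Mat_d(\cO)$. Each factor is also free of rank $d\cdot\rank(\cV_i)$, since $\cI_{\cV_i}\cong \cHom_\cO(\cO^d,\cV_i)\cong \cV_i^d$.

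For \ref{V_to_I_outer_iv}, this rank formula gives
\[
\rank(\cI_{\cW_i})-\rank(\cI_{\cV_i}) = d\big(\rank(\cW_i)-\rank(\cV_i)\big),
\]
so taking the minimum over $i$ shows that the $\cL$--gap is multiplied by $d$. Since both sides are locally constant and agree locally, they agree globally.

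The main obstacle is \ref{V_to_I_outer_iii}, because one must check that the identification of $\tau_h$ with $\tau_d$ in the split case is compatible with the conventions used. First, I would compute the adjoint involution of $h_d$ directly. For $\varphi=(B_1,B_2)$ we need $h_d(x,\varphi y) = h_d(\tau_{h_d}(\varphi)x,y)$. With $x=(x_1,x_2)$ and $y=(y_1,y_2)$, the left side is $(x_2^tB_1y_1,\,y_2^tB_2^t\cdot\ldots)$; matching components yields $\tau_{h_d}(B_1,B_2)=(B_2^t,B_1^t)=\tau_d(B_1,B_2)$. Given this, the calculation in the proof of \Cref{left_ann_facts}\ref{left_ann_facts_i} gives
\[
\tau_d(\lann(\cI_{\cV_1}\times\cI_{\cV_2})) = \cHom_\cO(\cO^d,\cV_2^\perp)\times\cHom_\cO(\cO^d,\cV_1^\perp).
\]
By \Cref{split_hermitian_perp}, $\cV^{\perp_{h_d}}=\cV_2^\perp\times\cV_1^\perp$, so the right-hand side is exactly $\cI_{\cL,(\cV^{\perp_h})}$. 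Since both sides are subsheaves of $f_*(\cEnd(\cH))$ agreeing on a cover, they are equal globally. If the local isomorphism only identifies $\tau_h$ with $\tau_d$ up to rescaling $h$ by a unit of $\cL$, this does not affect $\tau_h$ or $\perp_h$, so the argument still goes through.
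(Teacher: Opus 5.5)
Your proof is correct and follows essentially the paper's argument: localize to the split form $h_d$, write $\cI_{\cL,\cV}=\cI_{\cV_1}\times\cI_{\cV_2}$, read off the ranks for (ii) and (iv), and for (iii) combine the computation in the proof of \Cref{left_ann_facts}(i) with $\cV^{\perp_{h_d}}=\cV_2^\perp\times\cV_1^\perp$. The differences are cosmetic: the paper proves (i) without splitting $\cL$, directly from $\cEnd_\cL(f_*(\cH))=\cHom_\cL(f_*(\cH),\cV)\oplus\cHom_\cL(f_*(\cH),\cV')$ for an $\cL$--complement $\cV'$, and it takes $\tau_{h_d}=\tau_d$ for granted, which your explicit adjoint computation confirms.
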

\begin{proof}
Working locally, we may assume that $\cV\subseteq f_*(\cH)$ is a direct summand which is of constant $\cO$--rank and show that $\cI_{\cL,\cV}$ has similar properties.

\noindent\ref{V_to_I_outer_i}: If $f_*(\cH) = \cV \oplus \cW$ for another $\cL$--submodule $\cW$, then
\[
\cEnd_{\cL}(f_*(\cH)) = \cHom_{\cL}(f_*(\cH),\cV) \oplus \cHom_{\cL}(f_*(\cH),\cW)
\]
and we see that $\cHom_{\cL}(f_*(\cH),\cV)$ is a direct summand as required.

\noindent\ref{V_to_I_outer_ii}: Localizing further, we may assume that we are working with the split regular hermitian form of \eqref{eq_split_hermitian_form}. Then $\cH_d = (\cO^d,\cO^d)$ and $\cV$ takes the form
\[
\cV = \cV_1\times\cV_2 \subseteq \cO^d\times\cO^d
\]
for constant rank $\cO$--submodules $\cV_i\subseteq \cO^d$. Then, since $\cI_{\cL,\cV}$ consists of $\cO\times\cO$--linear maps, we have that
\[
\cI_{\cL,\cV} = \cHom_\cO(\cO^d,\cV_1)\times\cHom_\cO(\cO^d,\cV_2) = \cI_{\cV_1}\times\cI_{\cV_2}
\]
which is locally free of constant rank $d\rank_\cO(\cV_1)+d\rank_\cO(\cV_2)$.

\noindent\ref{V_to_I_outer_iii}: It also suffices to show that this holds for the split hermitian form. For $\cV= \cV_1\times\cV_2 \subseteq \cO^d\times\cO^d$ we have that $\cI_{\cL,\cV} = \cI_{\cV_1}\times\cI_{\cV_2}$ and the calculations in \Cref{left_ann_facts}\ref{left_ann_facts_i} show that
\begin{align*}
\tau_d(\lann\cI_{\cL,\cV}) &= \cHom_\cO(\cO^d,\cV_2^\perp)\times\cHom_\cO(\cO^d,\cV_1^\perp) \\
&= \cHom_{\cO\times\cO}(\cO^d\times\cO^d,\cV_2^\perp\times\cV_1^\perp) \\
&= \cHom_{\cO\times\cO}(\cO^d\times\cO^d,(\cV_1\times\cV_2)^{\perp_{h_d}}) \\
&= \cI_{\cL,(\cV^{\perp_{h_d}})}
\end{align*}
as desired.

\noindent\ref{V_to_I_outer_iv}: Localizing as in \ref{V_to_I_outer_ii} our $\cL$--submodules appear as $\cV=\cV_1\times\cV_2\subseteq \cW_1\times\cW_2=\cW$ and our ideals appear as
\[
\cI_{\cL,\cV} = \cI_{\cV_1}\times\cI_{\cV_2} \subseteq \cI_{\cW_1}\times\cI_{\cW_2} = \cI_{\cL,\cW}.
\]
Using the calculations above we compute
\begin{align*}
&\subrank_\cL(\cI_{\cL,\cV}\subseteq \cI_{\cL,\cW}) \\
=& \min(\{d\rank_\cO(\cW_1)-d\rank_\cO(\cV_1),d\rank_\cO(\cW_2)-d\rank_\cO(\cV_2)\}) \\
&= d\cdot \min(\{\rank_\cO(\cW_1)-\rank_\cO(\cV_1),\rank_\cO(\cW_2)-\rank_\cO(\cV_2)\}) \\
&= d\cdot \gap_\cL(\cV\subseteq \cW)
\end{align*}
and therefore $\gap_\cL(\cI_{\cL,\cV}\subseteq \cI_{\cL,\cW}) = d\cdot \gap_\cL(\cV\subseteq \cW)$ holds globally as well, as claimed.
\end{proof}

\begin{prop}\label{iso_LowFlag_Hh_LowGSB}
Let $(f\colon L \to S, \cH, h)$ be a regular hermitian form of constant rank $d$ and consider the associated Azumaya algebra with involution of the second kind $(f\colon L \to S, \cEnd_{\cO|_L}(\cH),\tau_h)$. Then, the map
\begin{align*}
\LowFlag_{(\cH,h)} &\to \LowGSB_{(\cEnd_{\cO|_L}(\cH),\tau_h)} \\
(0 \subseteq \cV_1 \subseteq \ldots \subseteq \cV_\ell \subseteq \cH) &\mapsto (0 \subseteq \cI_{\cL,\cV_1} \subseteq \ldots \subseteq \cI_{\cL,\cV_\ell} \subseteq \cH),
\end{align*}
is well defined, i.e. it sends $\cL$--lowered symmetric flags to $\cL$--lowered symmetric flags of ideals, and it is an isomorphism of sheaves. Additionally, this map is equivariant under the actions of $\GL_{(\cH,h)}\cong \bU_{(\cEnd_{\cO|_L}(\cH),\tau_h)}$ given in \Cref{GL_action_on_LowFlag_Hh} and \Cref{U_action_on_GSB_Btau}.
\end{prop}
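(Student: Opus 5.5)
The natural strategy is to work at the level of the constant rank presheaves and then pass to sheafifications, exactly as was done for the inner case in \Cref{flags_to_SB}. Using \Cref{sheafification_of_hermitian_ConFlag}, $\LowFlag_{(\cH,h)} = (\ConFlag_{(\cH,h)})^\sharp$, and by the lemma following \Cref{defn_ConIdeal_Bt}, $\LowGSB_{(\cEnd_{\cO|_L}(\cH),\tau_h)} = (\ConIdeal_{(\cEnd_{\cO|_L}(\cH),\tau_h)})^\sharp$. So I will first define the presheaf map
\[
\ConFlag_{(\cH,h)} \to \ConIdeal_{(\cEnd_{\cO|_L}(\cH),\tau_h)}, \qquad \overline{\cV} \mapsto (0\subseteq \cI_{\cL,\cV_1}\subseteq \ldots \subseteq \cI_{\cL,\cV_\ell} \subseteq f_*(\cEnd_{\cO|_L}(\cH))),
\]
show it is a bijection, and conclude that the induced map on sheafifications is an isomorphism. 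The induced map must then agree with the stated formula on all of $\LowFlag_{(\cH,h)}$. This holds because the stated formula commutes with restriction: truncation is respected since $\cI_{\cL,\cV}|_T = f_*(\cEnd)|_T$ iff $\cV|_T = f_*(\cH)|_T$. It is also compatible with the presheaf map on local constant rank pieces.

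For well-definedness on constant rank flags, I will proceed as follows.
\begin{enumerate}
\item \Cref{V_to_I_outer}\ref{V_to_I_outer_i} and \ref{V_to_I_outer_ii} show that each $\cI_{\cL,\cV_j}$ is a right $\cL$--ideal which is locally a direct summand and is finite locally free over $\cO$. Its rank is $d\cdot\rank_\cO(\cV_j)$, which is constant, and strictly increasing along the flag.
\item Inclusions are preserved, as in \Cref{submodule_ideal_inverse}\ref{submodule_ideal_inverse_v}.
\item The $\cL$--gaps remain strictly positive by \Cref{V_to_I_outer}\ref{V_to_I_outer_iv}.
\item Symmetry is preserved: if $\cV_j^{\perp_h} = \cV_{\ell+1-j}$, then \Cref{V_to_I_outer}\ref{V_to_I_outer_iii} gives $\tau_h(\lann\cI_{\cL,\cV_j}) = \cI_{\cL,\cV_{\ell+1-j}}$.
\end{enumerate}

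For bijectivity, the inverse sends an ideal $\cI$ to $\cV_\cI$, the sheafified union of images. I will check, after localizing to the split form where $\cI = \cI_1\times\cI_2$ and $\cV=\cV_1\times\cV_2$, that $\cI_{\cL,\cV}=\cI_{\cV_1}\times\cI_{\cV_2}$. Componentwise application of \Cref{submodule_ideal_inverse}\ref{submodule_ideal_inverse_iii}, \ref{submodule_ideal_inverse_iv} and \ref{submodule_ideal_inverse_vi} then gives mutual inverses, order preservation, and the direct summand property. The rank, gap and symmetry conditions transfer back by the same four facts read in reverse, using that $\cV\mapsto\cI_{\cL,\cV}$ is injective.

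Since both constructions are local, the presheaf bijection gives an isomorphism of sheafifications. Equivariance is the computation from \Cref{flags_to_SB}: for $B\in\GL_{(\cH,h)}$, viewed in $\bU_{(\cEnd(\cH),\tau_h)}$, one has $\cI_{\cL,B\cV} = B\,\cI_{\cL,\cV}$. This equality holds on the presheaf level and so persists after sheafification.

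The main obstacle I expect is the bookkeeping that the inverse map lands in $\ConFlag_{(\cH,h)}$, i.e.\ that $\cV_\cI$ is an $\cL$--submodule and that constant rank is preserved, given that the local splitting $\cL\cong\cO\times\cO$ may permute factors. This should be handled by noting that all conditions are invariant under the switch.
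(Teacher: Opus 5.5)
Your proposal is correct. The well-definedness and equivariance parts coincide with the paper's: both define the presheaf map $\ConFlag_{(\cH,h)}\to\ConIdeal_{(\cEnd_{\cO|_L}(\cH),\tau_h)}$, check it via \Cref{V_to_I_outer}, and then sheafify.

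You differ on bijectivity. You invert the presheaf map directly by $\cI\mapsto\cV_\cI$. To do so you apply \Cref{submodule_ideal_inverse} componentwise in the split form, and you pull the rank, gap and symmetry conditions back through the injectivity of $\cV\mapsto\cI_{\cL,\cV}$. The paper never writes down an inverse. It checks instead that the sheaf map is an isomorphism locally, by placing it over the split form in a square. The bottom row of that square is the inner isomorphism of \Cref{flags_to_SB}, and the vertical arrows are the split identifications \eqref{eq_ConFlag_iso} and \eqref{eq_ConIdeal_iso}.

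Your route is self-contained and does not depend on those split identifications, whose explicit description the paper partly omits. You also justify that the sheafified map is given by the displayed formula on sections that are not of constant rank, which the paper leaves implicit.

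The paper's route is shorter. Its square is also what the later local reduction of \Cref{type_commutativity_outer} to the inner case tacitly relies on. However, the paper only asserts that the square commutes. On constant rank pieces, commutativity amounts to the identity $(\lann\cI_{\cV})^t=\cI_{\cV^\perp}$ from \Cref{left_ann_facts}.
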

\begin{proof}
To see that the map is well-defined, we instead argue that the analogous map on presheaves
\begin{align*}
\ConFlag_{(\cH,h)} &\to \ConIdeal_{(\cEnd_{\cO|_L}(\cH),\tau_h)} \\
(0 \subseteq \cV_1 \subseteq \ldots \subseteq \cV_\ell \subseteq \cH) &\mapsto (0 \subseteq \cI_{\cL,\cV_1} \subseteq \ldots \subseteq \cI_{\cL,\cV_\ell} \subseteq \cH),
\end{align*}
is a well-defined presheaf morphism. If this is the case, it will induce the claimed map by sheafification. The fact that the map between presheaves is well-defined follows directly from \Cref{V_to_I_outer} and the fact that symmetric flags are send to symmetric flags follows because the condition that $\cV_i = \cV_{\ell+1-i}^{\perp_h}$ implies that
\[
\cI_{\cL,\cV_i} = \cI_{\cL,(\cV_{\ell+1-i}^{\perp_h})} = \tau_h(\lann\cI_{\cL,\cV_{\ell+1-i}}).
\]

Now, to see that the map is an isomorphism, we may show that it is an isomorphism locally when the hermitian form becomes the split hermitian form. In that case, we have a commutative diagram
\[
\begin{tikzcd}
\LowFlag_{((\cO^d,\cO^d),h_d)} \ar[r] & \LowGSB_{((\Mat_d(\cO),\Mat_d(\cO)),\tau_d)}  \\
\LowFlag_{\cO^d} \ar[r,"\sim"] \ar[u,swap,"\eqref{eq_ConFlag_iso}"] & \LowGSB_{\Mat_d(\cO)} \ar[u,swap,"\eqref{eq_ConIdeal_iso}"]
\end{tikzcd}
\]
where the vertical maps are isomorphisms and the bottom map is the isomorphism of \Cref{flags_to_SB}. Thus, our map in question, the top map, is an isomorphism as well. 

To see that the map is equivariant as claimed, it is immediate to check that for $b\in \GL_{(\cH,h)}$ and $\cV\subset f_*(\cH)$ an appropriate submodule, we have
\[
\cI_{\cL,(b\cV)} = \cHom_{\cL}(f_*(\cH),b\cV) = b\cHom_{\cL}(f_*(\cH),\cV) = b\cdot \cI_{\cL,\cV}.
\]
This concludes the proof.
\end{proof}

\subsection{Isomorphisms with Parabolic Subgroups}
Mirroring the steps of \Cref{isos_with_parabolics_inner}, we first consider the case when $\bG = \GL_{(\cH,h)} \cong \bU_{(\cEnd(\cH),\tau_h)}$ for a regular hermitian form $(L\to S,\cH,h)$. Given a section
\[
\overline{\cV} = (0\subseteq \cV_1 \subseteq \ldots \subseteq \cV_\ell \subseteq f_*(\cH)) \in \LowFlag_{(\cH,h)},
\]
we set $\bStab(\overline{\cV}) \subseteq \bG$ to be the stabilizer of this section under the action of \Cref{GL_action_on_LowFlag_Hh}. Likewise, given a section
\[
\overline{\cI} = (0\subseteq \cI_1 \subseteq \ldots \subseteq \cI_\ell \subseteq f_*(\cEnd_{\cO|_L}(\cH))) \in \LowGSB_{(\cEnd(\cH),\tau_h)},
\]
we set $\bStab(\overline{\cI})\subseteq \bG$ to be the stabilizer under the action of \Cref{U_action_on_GSB_Btau}.

\begin{lem}
The stabilizers $\bStab(\overline{\cV})$ and $\bStab(\overline{\cI})$ are parabolic subgroups of $\bG = \GL_{(\cH,h)} \cong \bU_{(\cEnd(\cH),\tau_h)}$.
\end{lem}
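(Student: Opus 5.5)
Being parabolic is local on $S$ (the sheaf $\cPar_\bG$ is a sheaf, and a subgroup that is locally parabolic is parabolic), and stabilizers commute with base change. So I will localize until the hermitian form is the split one $(S\sqcup S\to S,(\cO^d,\cO^d),h_d)$ of \eqref{eq_split_hermitian_form}. By \Cref{iso_LowFlag_Hh_LowGSB} the map $\overline{\cV}\mapsto\overline{\cI}$ is a $\bG$--equivariant isomorphism, so $\bStab(\overline{\cV})=\bStab(\overline{\cI'})$ when $\overline{\cI'}$ is the image of $\overline{\cV}$. Since that map is bijective, it therefore suffices to treat $\bStab(\overline{\cV})$ for $\overline{\cV}\in\LowFlag_{(\cH_d,h_d)}$.

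Next I transport the problem to the inner case. Under \eqref{eq_ConFlag_iso}, $\overline{\cV}$ comes from a unique lowered flag $\overline{\cW}=(0\subseteq \cW_1\subseteq\ldots\subseteq \cW_\ell\subseteq\cO^d)\in\LowFlag_{\cO^d}$. After localizing further I may assume $\overline{\cW}\in\ConFlag_{\cO^d}$. Then $\overline{\cV}$ is the constant rank symmetric flag
\[
0\subseteq \cW_1\times\cW_\ell^\perp\subseteq\ldots\subseteq\cW_\ell\times\cW_1^\perp\subseteq\cO^d\times\cO^d .
\]
The group $\GL_{(\cH_d,h_d)}$ is identified with $\GL_d$ via $a\mapsto(a,(a^{-1})^t)$, acting as in \eqref{GL_d_action_on_ConFlag_Hd}. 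The key step is to check that the map $\ConFlag_{\cO^d}\inj\ConFlag_{((\cO^d,\cO^d),h_d)}$ of \Cref{example_split_ConFlag_Hh} is $\GL_d$--equivariant. This comes down to the identity $((a^{-1})^t\cW)^\perp = a(\cW^\perp)$, which holds because $x^t w=0$ if and only if $(ax)^t((a^{-1})^tw)=0$. Since the map is also injective, for every $T$ an element $a\in\GL_d(T)$ fixes $\overline{\cV}|_T$ if and only if it fixes $\overline{\cW}|_T$. Hence $\bStab(\overline{\cV})=\bStab(\overline{\cW})$ as subgroups of $\GL_d$.

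Finally, \Cref{flag_stabilizers_are_parabolic} shows that $\bStab(\overline{\cW})$ is a (locally standard) parabolic subgroup of $\GL_d$, and the result follows.

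The main obstacle is that equivariance and injectivity have to hold after every base change $T$, not just over $S$. One way to handle this is to replace the constant-rank presheaves by their sheafifications and use the equivariant isomorphism \eqref{eq_ConFlag_iso}: the uniqueness of induced maps implies that the $\GL_d$--action on $\LowFlag_{((\cO^d,\cO^d),h_d)}$, which was itself defined through sheafification, corresponds to the action on $\LowFlag_{\cO^d}$. Alternatively, one can argue pointwise on $T$ with the truncation conventions, noting that fixing a flag in either sheaf is a local condition on $T$ and locally both flags have constant rank.
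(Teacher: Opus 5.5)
Your proposal is correct and follows the paper's proof: localize to the split form, identify $\bStab(\overline{\cV})$ with the stabilizer in $\GL_d$ of the underlying inner flag, apply \Cref{flag_stabilizers_are_parabolic}, and pass to $\bStab(\overline{\cI})$ via the equivariant isomorphism of \Cref{iso_LowFlag_Hh_LowGSB} (you do this step first, the paper does it last). Two things you add fill in what the paper asserts in one line: the explicit equivariance check, where your perpendicular identity with $(a^{-1})^t$ substituted for $a$ is exactly what \eqref{GL_d_action_on_ConFlag_Hd} requires, and the remarks on base change.
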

\begin{proof}
We may show that the subgroup is parabolic locally to conclude that it is a parabolic subgroup. Sufficiently locally, we are in the case of \Cref{example_split_ConFlag_Hh} and $\GL_{((\cO^d,\cO^d),h_d)} \cong \GL_d$ where a section $a\in \GL_d$ acts as in \eqref{GL_d_action_on_ConFlag_Hd} on flags of submodules. Thus,
\begin{align*}
&\bStab(0\subseteq \cV_1\times\cV_\ell^\perp \subseteq \ldots \subseteq \cV_\ell\times\cV_1^\perp \subseteq \cO^d\times\cO^d) \\
= &\bStab(0\subseteq \cV_1\subseteq \ldots \subseteq \cV_\ell \subseteq \cO^d)
\end{align*}
which we know is a parabolic subgroup by \Cref{flag_stabilizers_are_parabolic}. The claim for $\bStab(\overline{\cI})$ then follows from \Cref{iso_LowFlag_Hh_LowGSB}.
\end{proof}

\begin{lem}\label{iso_hermitian_flags_parabolics}
Let $(L\to S,\cH,h)$ be a regular hermitian form of constant rank $d$ and set $\bG = \GL_{(\cH,h)} = \bU_{(\cEnd(\cH),\tau_h)}$. The two maps $\LowFlag_{(\cH,h)} \to \cPar_\bG$ and $\LowGSB_{(\cEnd(\cH),\tau_h)} \to \cPar_\bG$ which send a flag to its stabilizer are both isomorphisms. Furthermore, they fit into the commutative diagram of isomorphisms
\[
\begin{tikzcd}
\LowFlag_{(\cH,h)} \ar[rr] \ar[dr] & & \LowGSB_{(\cEnd(\cH),\tau_h)} \ar[dl] \\
& \cPar_\bG & 
\end{tikzcd}
\]
where the horizontal map is the isomorphism of \Cref{iso_LowFlag_Hh_LowGSB}.
\end{lem}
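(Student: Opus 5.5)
First I will dispose of the commutativity of the triangle, which reduces everything to the single map $\LowFlag_{(\cH,h)} \to \cPar_\bG$. By \Cref{iso_LowFlag_Hh_LowGSB}, the horizontal map $\overline{\cV}\mapsto \overline{\cI}_{\cL,\overline{\cV}}$ is a $\bG$--equivariant isomorphism, where $\GL_{(\cH,h)}\cong\bU_{(\cEnd(\cH),\tau_h)}$. Equivariance gives $\bStab(\overline{\cV}) = \bStab(\overline{\cI}_{\cL,\overline{\cV}})$, so the triangle commutes. Consequently, if $\LowFlag_{(\cH,h)}\to\cPar_\bG$ is an isomorphism, then so is $\LowGSB_{(\cEnd(\cH),\tau_h)}\to\cPar_\bG$.

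Both sides are sheaves and the stabilizer map is a morphism of sheaves, so being an isomorphism is local on $S$. Hence I may pass to a cover on which $(L\to S,\cH,h)$ becomes the split form $(S\sqcup S\to S,(\cO^d,\cO^d),h_d)$ and $\bG\cong\GL_d$. There I will use the isomorphism \eqref{eq_ConFlag_iso}, $\LowFlag_{\cO^d}\iso\LowFlag_{((\cO^d,\cO^d),h_d)}$. The plan is to show that the triangle
\[
\LowFlag_{\cO^d}\to\LowFlag_{((\cO^d,\cO^d),h_d)}\to\cPar_{\GL_d}
\]
commutes with the stabilizer map $\LowFlag_{\cO^d}\to\cPar_{\GL_d}$, which is an isomorphism by \Cref{flag_isomorphism_with_Par}. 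It then follows that the hermitian stabilizer map is an isomorphism.

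To check commutativity it suffices to work on the presheaf $\ConFlag_{\cO^d}$, since both composites are sheaf maps out of its sheafification and agree there only if they agree after sheafification. For a constant rank flag $\overline{\cV}$, the image is $(0\subseteq\cV_1\times\cV_\ell^\perp\subseteq\cdots\subseteq\cV_\ell\times\cV_1^\perp\subseteq\cO^d\times\cO^d)$. By \eqref{GL_d_action_on_ConFlag_Hd}, $a\in\GL_d$ acts by $(a,(a^{-1})^t)$, so $a$ fixes this flag if and only if $a\cV_i=\cV_i$ and $(a^{-1})^t\cV_j^\perp=\cV_j^\perp$ for all $i,j$. The second condition follows from the first, because $((a^{-1})^t x)^t(av)=x^tv$ gives $(a\cV)^\perp=(a^{-1})^t\cV^\perp$. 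Hence both stabilizers equal $\bStab(\overline{\cV})$ in $\GL_d$, which is exactly the computation already used in the preceding lemma.

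The only point I expect to need care is making sure the identification $\GL_{((\cO^d,\cO^d),h_d)}\cong\GL_d$ is compatible with the identification used in \Cref{flag_isomorphism_with_Par}. That is, the first-factor projection must match the embedding $a\mapsto(a,(a^{-1})^t)$, and the stabilizer maps must be compared as subgroup sheaves of the same group. This holds by construction of the action, so no further obstacle is expected.
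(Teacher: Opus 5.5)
Your proposal is correct and follows the paper's route: the paper's proof simply says that, sufficiently locally, the statement becomes \Cref{flag_isomorphism_with_Par}. You make explicit the details the paper leaves implicit, namely the stabilizer comparison through \eqref{eq_ConFlag_iso} on constant rank flags (the same computation as the lemma just before this one) and the commutativity of the triangle via the equivariance in \Cref{iso_LowFlag_Hh_LowGSB}.
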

\begin{proof}
Working sufficiently locally, this becomes exactly the situation of \Cref{flag_isomorphism_with_Par}. Thus, it follows globally as well.
\end{proof}

In general, for an Azumaya algebra with involution of the second kind $(L\to S,\cB,\tau)$ which is not an endomorphism algebra and a section $\overline{\cI} \in \LowGSB_{(\cB,\tau)}$ we also set $\bStab(\overline{\cI}) \subseteq \bU_{(\cB,\tau)}$ to be the stabilizer under the action of \Cref{U_action_on_GSB_Btau}.
\begin{cor}\label{iso_outer_GSB_to_parabolics}
Let $(L\to S,\cB,\tau)$ be an Azumaya algebra with involution of the second kind of constant degree $d$. Then, the map
\[
\LowGSB_{(\cB,\tau)} \to \cPar_{\bU_{(\cB,\tau)}},
\]
which sends a flag to its stabilizer, is an isomorphism of sheaves.
\end{cor}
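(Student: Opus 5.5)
The plan is to reduce to the endomorphism-algebra case of \Cref{iso_hermitian_flags_parabolics} by working locally, exactly as \Cref{SB_isomorphism_with_Par} was deduced from \Cref{flag_isomorphism_with_Par}.

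First I would check that the stabilizer map $\LowGSB_{(\cB,\tau)} \to \cPar_{\bU_{(\cB,\tau)}}$ is a well-defined morphism of sheaves. The action of \Cref{U_action_on_GSB_Btau} is compatible with base change, so $\bStab(\overline{\cI})|_T = \bStab(\overline{\cI}|_T)$ for all $T\in \Sch_S$. Being parabolic is local on the base, and $\cPar$ is a sheaf, so it is enough to see that $\bStab(\overline{\cI})$ is parabolic after passing to a cover.

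Next I would choose an fppf (indeed étale) cover $\{U_i\to S\}_{i\in I}$ over which $(L\to S,\cB,\tau)$ becomes isomorphic to $(U_i\sqcup U_i\to U_i,(\Mat_d(\cO),\Mat_d(\cO)),\tau_d)$. This split algebra is $\cEnd$ of the split hermitian form $(\cH_d,h_d)$, with $\tau_d=\tau_{h_d}$. Any isomorphism of algebras with involution $(g,\varphi)$ induces compatible isomorphisms on three objects:
\begin{itemize}
\item the unitary groups, $\varphi$ restricting to $\bU_{(\cB_1,\tau_1)}\cong\bU_{(\cB_2,\tau_2)}$;
\item the flag sheaves $\LowGSB_{(\cB,\tau)}$, since $\varphi$ preserves right $\cL$-ideals, left annihilators, $\tau$, ranks and $\cL$-gaps, hence commutes with $\pi_\cB$;
\item the parabolic sheaves.
\end{itemize}
These isomorphisms intertwine the two stabilizer maps, because $\varphi(b\cI)=\varphi(b)\varphi(\cI)$. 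So over each $U_i$ the map is identified with $\LowGSB_{(\cEnd(\cH_d),\tau_{h_d})}\to \cPar_{\GL_{(\cH_d,h_d)}}$, which is an isomorphism by \Cref{iso_hermitian_flags_parabolics}. This also settles the parabolicity needed above.

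Finally, a morphism of sheaves which becomes an isomorphism after restriction to every member of a cover is an isomorphism. Injectivity and surjectivity of sheaf maps are local, and $(\LowGSB_{(\cB,\tau)})|_{U_i}=\LowGSB_{(\cB|_{U_i},\tau|_{U_i})}$, $(\cPar_\bG)|_{U_i}=\cPar_{\bG|_{U_i}}$. Hence the global map is an isomorphism.

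The main obstacle is the functoriality bookkeeping in the second step. One must confirm that the restriction of $\LowGSB_{(\cB,\tau)}$ to $U_i$ is the sheaf built from the restricted data, and that transport along a semilinear $\varphi$ (which may involve the switch on $\cL$) respects $\cL$-lowered flags and $\pi_\cB$. I would handle this on the presheaf $\ConIdeal_{(\cB,\tau)}$, where it is a direct check using $\varphi(\lann\cI)=\lann\varphi(\cI)$ and $\varphi\tau_2=\tau_1\varphi$, and then pass to sheafifications.
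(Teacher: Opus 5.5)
Your proposal is correct and takes the same route as the paper. The paper's proof just observes that locally the map becomes $\LowGSB_{(\cEnd(\cH),\tau_h)} \to \cPar_{\GL_{(\cH,h)}}$, which is an isomorphism by \Cref{iso_hermitian_flags_parabolics}, and then concludes globally. Your additional bookkeeping (that the stabilizer map commutes with restriction and with transport along an isomorphism $(g,\varphi)$ of algebras with involution, and that $\tau_d=\tau_{h_d}$) fills in details the paper leaves implicit.
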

\begin{proof}
Locally, this becomes an isomorphism of the form $\LowGSB_{(\cEnd(\cH),\tau_h)} \to \cPar_{\GL_{(\cH,h)}}$ as in \Cref{iso_hermitian_flags_parabolics} and thus it is also an isomorphism globally.
\end{proof}

\subsection{Type Morphisms for the Outer Case}
Here we define type morphisms for hermitian flags and outer Severi-Brauer sheaves which are compatible with the type morphism \eqref{type_map_parabolics} of \cite{SGA3} and thus land in $\cOD{\bG}$ for an appropriate group $\bG$. We obtain these maps by twisting the maps defined in \Cref{type_morphisms_inner}.

We start with the split hermitian form of constant rank $d=n+1$ of \eqref{eq_split_hermitian_form} and consider the composition
\[
\ConFlag_{((\cO^d,\cO^d),h_d)} \iso \ConFlag_{\cO^d} \xrightarrow{\eqref{eq_ConFlag_type}} \cP_n
\]
of the isomorphism from \Cref{example_split_ConFlag_Hh} with the type map defined in the inner case. Since $\cP_n$ is a sheaf, this induces a type morphism
\begin{equation}\label{eq_split_hermitian_type}
\LowFlag_{((\cO^d,\cO^d),h_d)} \to \cP_n.
\end{equation}
Recall that $\ZZ/2\ZZ$ acts on $\cP_n$ via the isomorphism $\und^\vee \colon \cP_n \to \cP_n$, and thus by acting trivially with $\GL_d$ we obtain a $\GL_d\rtimes\ZZ/2\ZZ$--action on $\cP_n$.
\begin{lem}
In the setting above, the map
\[
\LowFlag_{((\cO^d,\cO^d),h_d)} \to \cP_n
\]
is $\GL_d\rtimes\ZZ/2\ZZ$--equivariant.
\end{lem}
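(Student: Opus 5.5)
Since $\LowFlag_{((\cO^d,\cO^d),h_d)}$ is the sheafification of $\ConFlag_{((\cO^d,\cO^d),h_d)}$ by \Cref{sheafification_of_hermitian_ConFlag}, and $\cP_n$ is a sheaf, I would reduce to proving equivariance on the presheaf level. Two morphisms of sheaves agreeing after precomposition with the sheafification map are equal. So it suffices to check that for every $T$, every $g\in (\GL_d\rtimes\ZZ/2\ZZ)(T)$ and every section of $\ConFlag_{((\cO^d,\cO^d),h_d)}(T)$, the type of $g\cdot\overline{\cV}$ equals $g$ applied to the type of $\overline{\cV}$. This check may be done locally on $T$, since $\cP_n$ is separated. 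Working locally, I may assume $g$ is either a section $a\in\GL_d(T)$ or the constant section $\overline{1}$, as these generate. I may also assume the flag has the form $0\subseteq \cV_1\times\cV_\ell^\perp\subseteq\ldots\subseteq\cV_\ell\times\cV_1^\perp\subseteq\cO^d\times\cO^d$, with all $\cV_j$ of constant rank and strictly increasing ranks. By \Cref{example_split_ConFlag_Hh}, the type map then sends it to $(\rank\cV_1,\ldots,\rank\cV_\ell)$.

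For $a\in\GL_d$, formula \eqref{GL_d_action_on_ConFlag_Hd} shows that the first factors become $a\cV_j$. These have the same ranks, so the type is unchanged, matching the trivial action of $\GL_d$ on $\cP_n$.

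For $\overline{1}$, the flag becomes $0\subseteq \cV_\ell^\perp\times\cV_1\subseteq\ldots\subseteq\cV_1^\perp\times\cV_\ell$. Its first factors $\cV_\ell^\perp\subseteq\ldots\subseteq\cV_1^\perp$ have ranks $(d-r_\ell,\ldots,d-r_1)=(n+1-r_\ell,\ldots,n+1-r_1)$, where $r_j=\rank\cV_j$. This is exactly $(r_1,\ldots,r_\ell)^\vee$. Here I use that $\rank\cV^\perp=d-\rank\cV$ and $(\cV^\perp)^\perp=\cV$ from \Cref{split_hermitian_perp}, so that the second factors are correctly $(\cV_{\ell+1-j}^\perp)^\perp=\cV_{\ell+1-j}$.

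The main obstacle is bookkeeping rather than mathematics. First, one must justify that the action on the sheaf $\LowFlag$ is indeed the sheafification of the presheaf action, which is how the action was defined after \Cref{example_split_ConFlag_Hh}. Second, one must confirm that the local form of sections, including the possibly non-constant ranks of the individual $\cV_j$, can be arranged after refining the cover, as in \Cref{example_split_ConFlag_Hh}. Once these hold, the uniqueness part of the universal property of sheafification gives equivariance globally.
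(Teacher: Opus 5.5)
Your proposal is correct and follows essentially the same route as the paper: reduce to the constant-rank presheaf $\ConFlag_{((\cO^d,\cO^d),h_d)}$, observe that $\GL_d$ preserves the ranks $r_j=\rank(\cV_j)$, and check that $\overline{1}$ sends the type $(r_1,\ldots,r_\ell)$ to $(d-r_\ell,\ldots,d-r_1)=(r_1,\ldots,r_\ell)^\vee$. You verify the $\GL_d$ case explicitly and handle the sheafification and local-rank bookkeeping more carefully than the paper, which simply asserts these points.
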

\begin{proof}
It is sufficient to show that the composition $\ConFlag_{((\cO^d,\cO^d),h_d)} \to \cP_n$ is equivariant. We know that both the isomorphism $\ConFlag_{((\cO^d,\cO^d),h_d)} \iso \ConFlag_{\cO^d}$ and the map \eqref{eq_ConFlag_type} are equivariant under the action of $\GL_d$, so we only need to check $\ZZ/2\ZZ$--equivariance. Consider a section
\[
0\subseteq \cV_1\times\cV_\ell^\perp \subseteq \ldots \subseteq \cV_\ell\times\cV_1^\perp \subseteq \cO^d\times\cO^d
\]
of $\ConFlag_{((\cO^d,\cO^d),h_d)}$. Under the map in consideration, this is sent to the tuple
\[
(\rank(\cV_1),\rank(\cV_2),\ldots,\rank(\cV_\ell)).
\]
However, after applying the action of $\overline{1}\in \ZZ/2\ZZ$ we have the section
\[
0\subseteq \cV_\ell^\perp\times\cV_1 \subseteq \ldots \subseteq \cV_1^\perp\times\cV_\ell \subseteq \cO^d\times\cO^d
\]
which is sent to the tuple
\begin{align*}
&(\rank(\cV_\ell^\perp),\ldots,\rank(\cV_2^\perp),\rank(\cV_1^\perp)) \\
=&(d-\rank(\cV_\ell),\ldots,d-\rank(\cV_2),d-\rank(\cV_1)) \\
=& (\rank(\cV_1),\rank(\cV_2),\ldots,\rank(\cV_\ell))^\vee.
\end{align*}
Thus, we also have $\ZZ/2\ZZ$--equivariance as desired. 
\end{proof}

\begin{defn}
let $(L\to S,\cH,h)$ be a regular hermitian form of constant rank $d=n+1$ and let $\cK$ be its associated $\GL_d\rtimes \ZZ/2\ZZ$--torsor. Set $\bG = \GL_{(\cH,h)}$. Due to equivariance, we may twist the map \eqref{eq_split_hermitian_type} by the torsor $\cK$. Using \Cref{hermitian_flag_twist} and \eqref{eq_OD_G}, we obtain a map 
\[
\widetilde{t_{\cFlag}}\colon \LowFlag_{(\cH,h)} \to \cOD{\bG}
\]
which we call the \emph{type morphism} for the sheaf $\LowFlag_{(\cH,h)}$.
\end{defn}

We now work with the split Azumaya algebra with involution of the second kind of constant rank $d=n+1$ as in \eqref{split_second_involution} and \Cref{example_split_outer_ConIdeal}. We have a composition
\[
\ConIdeal_{((\Mat_d(\cO),\Mat_d(\cO)),\tau_d)} \iso \ConIdeal_{\Mat_d(\cO)} \xrightarrow{\eqref{eq_ConIdeal_type}} \cP_n
\]
where the first map is the isomorphism of \Cref{example_split_outer_ConIdeal}. This induces a type map
\begin{equation}\label{eq_split_outer_SB_type}
\LowGSB_{((\Mat_d(\cO),\Mat_d(\cO)),\tau_d)} \to \cP_n.
\end{equation}
The sheaf $\LowGSB_{((\Mat_d(\cO),\Mat_d(\cO)),\tau_d)}$ has the $\PGL_d\rtimes\ZZ/2\ZZ$--action described in \Cref{example_split_outer_ConIdeal}, and by acting trivially with $\PGL_d$ we also have a $\PGL_d\rtimes\ZZ/2\ZZ$--action on $\cP_n$.

\begin{lem}
In the setting above, the map
\[
\LowGSB_{((\Mat_d(\cO),\Mat_d(\cO)),\tau_d)} \to \cP_n
\]
is $\PGL_d\rtimes\ZZ/2\ZZ$--equivariant.
\end{lem}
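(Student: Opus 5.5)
The plan mirrors the proof of the preceding lemma for hermitian flags. Since $\cP_n$ is a sheaf and $\LowGSB_{((\Mat_d(\cO),\Mat_d(\cO)),\tau_d)}$ is the sheafification of $\ConIdeal_{((\Mat_d(\cO),\Mat_d(\cO)),\tau_d)}$, and the $\PGL_d\rtimes\ZZ/2\ZZ$--action on the sheaf is the one extended from the presheaf, it suffices to check equivariance of the presheaf composition
\[
\ConIdeal_{((\Mat_d(\cO),\Mat_d(\cO)),\tau_d)} \iso \ConIdeal_{\Mat_d(\cO)} \xrightarrow{\eqref{eq_ConIdeal_type}} \cP_n .
\]
Both sheaf maps are then the unique extensions of equivariant presheaf maps, so they agree after composing with the action. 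Equivariance is local, so it is enough to check it on the generators: sections $\varphi\in\PGL_d$ and the constant section $\overline{1}\in\ZZ/2\ZZ$.

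For $\varphi\in \PGL_d$, take a section $0 \subseteq \cI_1\times (\lann\cI_\ell)^t \subseteq \ldots \subseteq \cI_\ell\times (\lann\cI_1)^t$. By the action in \Cref{example_split_outer_ConIdeal}, it is sent to a flag whose first factors are $\varphi\cI_j$. Its type is therefore $(\rank(\varphi\cI_1)/d,\ldots,\rank(\varphi\cI_\ell)/d)$. This equals the original type because $\varphi$ is an $\cO$--module automorphism of $\Mat_d(\cO)$ and so preserves ranks. Hence the composite is invariant under $\PGL_d$, which acts trivially on $\cP_n$.

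For $\overline{1}$, the section becomes $0\subseteq (\lann\cI_\ell)^t\times\cI_1 \subseteq\ldots\subseteq (\lann\cI_1)^t\times \cI_\ell$. Its first factors are $(\lann\cI_{\ell+1-j})^t$, so its image in $\cP_n$ is
\[
\left(\tfrac{\rank((\lann\cI_\ell)^t)}{d},\ldots,\tfrac{\rank((\lann\cI_1)^t)}{d}\right).
\]
The proof of \Cref{left_ann_facts}\ref{left_ann_facts_iv} gives $\rank((\lann\cI_j)^t)=d^2-r_jd$ when $\rank(\cI_j)=r_jd$, so each entry is $d-r_{\ell+1-j}=n+1-r_{\ell+1-j}$. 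The tuple is therefore $(r_1,\ldots,r_\ell)^\vee$, matching the action of $\overline{1}$ on $\cP_n$ by $\und^\vee$.

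The one point needing care is implicit in the preceding lemma too. The isomorphism with $\ConIdeal_{\Mat_d(\cO)}$ only exists after localizing so that the first-factor ideals have constant rank. Their ranks must also be strictly increasing, which follows from the positive $\cL$--gaps. I would first restrict to a cover where all factors are constant rank and check the identity there, then invoke that $\cP_n$ is a sheaf to conclude globally.
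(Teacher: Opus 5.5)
Your proposal is correct and follows essentially the same route as the paper. Both reduce to the presheaf $\ConIdeal_{((\Mat_d(\cO),\Mat_d(\cO)),\tau_d)}$, note that $\PGL_d$ preserves the ranks of the first-factor ideals, and compute via $\rank(\lann\cI)=d^2-\rank(\cI)$ that $\overline{1}$ replaces the rank tuple by its reflection under $\und^\vee$. Your remark about first localizing so that the first-factor ideals have constant rank makes explicit a point the paper skips: the paper treats the comparison with $\ConIdeal_{\Mat_d(\cO)}$ as a presheaf isomorphism, which it is not.
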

\begin{proof}
It is clear that it is equivariant for the $\PGL_d$--action. To check equivariance for the $\ZZ/2\ZZ$--action, we check that the underlying map
\[
\ConIdeal_{((\Mat_d(\cO),\Mat_d(\cO)),\tau_d)} \to \cP_n
\]
is equivariant. A section
\[
0 \subseteq \cI_1\times (\lann\cI_\ell)^t \subseteq \ldots \subseteq \cI_\ell\times(\lann\cI_1)^t \subseteq \Mat_d(\cO)\times\Mat_d(\cO)
\]
is mapped to the tuple
\[
\left(\frac{\rank(\cI_1)}{d},\frac{\rank(\cI_2)}{d},\ldots,\frac{\rank(\cI_\ell)}{d}\right).
\]
After applying $\overline{1}\in\ZZ/2\ZZ$, we obtain the section
\[
0 \subseteq (\lann\cI_\ell)^t\times\cI_1 \subseteq \ldots \subseteq (\lann\cI_1)^t\times\cI_\ell \subseteq \Mat_d(\cO)\times\Mat_d(\cO)
\]
which is mapped to the tuple
\begin{align*}
&\left(\frac{\rank((\lann\cI_\ell)^t)}{d},\ldots,\frac{\rank((\lann\cI_2)^t)}{d},\frac{\rank((\lann\cI_1)^t)}{d}\right) \\
= &\left(\frac{d^2-\rank(\cI_\ell)}{d},\ldots,\frac{d^2-\rank(\cI_2)}{d},\frac{d^2-\rank(\cI_1)}{d}\right) \\
= &\left(d-\frac{\rank(\cI_\ell)}{d},\ldots,d-\frac{\rank(\cI_2)}{d},d-\frac{\rank(\cI_1)}{d}\right) \\
= &\left(\frac{\rank(\cI_1)}{d},\frac{\rank(\cI_2)}{d},\ldots,\frac{\rank(\cI_\ell)}{d}\right)^\vee.
\end{align*}
This shows $\ZZ/2\ZZ$--equivariance as claimed.
\end{proof}

\begin{defn}\label{defn_outer_GSB_type}
Let $(L\to S, \cB,\tau)$ be an Azumaya algebra with involution of the second type of constant degree $d$ and let $\cK'$ be its associated $\PGL_d\rtimes\ZZ/2\ZZ$--torsor. Set $\bG = \bU_{(\cB,\tau)}$. Due to equivariance, we may twist the map \eqref{eq_split_outer_SB_type} by the torsor $\cK'$. Using \Cref{severi_brauer_twist} and \eqref{eq_OD_G}, we obtain a map
\[
\widetilde{t_{\cGSB}}\colon \LowGSB_{(\cB,\tau)} \to \cOD{\bG}
\]
which we call the \emph{type morphism} for the sheaf $\LowGSB_{(\cB,\tau)}$.
\end{defn}

\begin{lem}\label{type_commutativity_outer}
Let $(L\to S,\cH,h)$ be a regular hermitian form of constant rank $d=n+1$. Set $\bG = \GL_{(\cH,h)} = \bU_{(\cEnd(\cH),\tau_h)}$. The commutative diagram of \Cref{iso_hermitian_flags_parabolics} extends to include type morphisms in the following way. 
\[
\begin{tikzcd}
\LowFlag_{(\cH,h)} \ar[rr] \ar[dr] \ar[dd,swap,"\widetilde{t_{\cFlag}}"] & & \LowGSB_{(\cEnd(\cH),\tau_h)} \ar[dl] \ar[dd,"\widetilde{t_{\cGSB}}"] \\[-5ex]
 & \cPar_\bG \ar[d,"t"] & \\
\cOD{\bG} \ar[r,"\und^c"] & \cOD{\bG} \ar[r,"\und^c"] & \cOD{\bG}.
\end{tikzcd}
\]
In particular, since the map $(\und^c)$ is order $2$, the diagram
\[
\begin{tikzcd}
\LowFlag_{(\cH,h)} \ar[rr] \ar[dr,swap,"\widetilde{t_{\cFlag}}"] & & \LowGSB_{(\cEnd(\cH),\tau_h)} \ar[dl,"\widetilde{t_{\cGSB}}"] \\
 & \cOD{\bG} & 
\end{tikzcd}
\]
commutes.
\end{lem}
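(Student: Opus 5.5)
The plan is to reduce to the split hermitian form and then invoke the inner result, \Cref{type_commutativity_inner}. Every object in the diagram is a sheaf on $\Sch_S$, and every map in it is a morphism of sheaves. Equality of two sheaf morphisms can therefore be checked after restricting to the members of a cover. I will choose an fppf (indeed \'etale) cover $\{U_i\to S\}$ over which $(L\to S,\cH,h)$ becomes isomorphic to $(S\sqcup S\to S,(\cO^d,\cO^d),h_d)$. Over each $U_i$ the following identifications hold:
\begin{itemize}
\item $\cOD{\bG}|_{U_i}=\cP_n$;
\item the type morphism $\widetilde{t_{\cFlag}}$ becomes \eqref{eq_split_hermitian_type}, and $\widetilde{t_{\cGSB}}$ becomes \eqref{eq_split_outer_SB_type};
\item $\bG|_{U_i}\cong\GL_d$, and the SGA3 type map $t$ is the split one;
\item the involution $\und^c$ on $\cOD{\bG}$ restricts to $\und^c$ on $\cP_n$.
\end{itemize}
These identifications come from the fact that all the outer-case maps are defined as twists by $\cK$ or $\cK'$ of equivariant split maps. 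So it suffices to prove commutativity in the split case. One caveat: the chosen local isomorphism must be used compatibly on all vertices, which is automatic because the twisting torsors are related by $\GL_d\rtimes\ZZ/2\ZZ\to\PGL_d\rtimes\ZZ/2\ZZ$.

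In the split case, I will stack the diagram of \Cref{type_commutativity_inner} (for $\cE=\cO^d$) under the vertical isomorphisms \eqref{eq_ConFlag_iso} and \eqref{eq_ConIdeal_iso}, together with $\cPar_{\GL_d}=\cPar_{\GL_{((\cO^d,\cO^d),h_d)}}$. Four faces of this prism need to commute.
\begin{enumerate}
\item The top-horizontal square is the commutative square appearing in the proof of \Cref{iso_LowFlag_Hh_LowGSB}.
\item The two type-morphism faces commute by definition, since \eqref{eq_split_hermitian_type} and \eqref{eq_split_outer_SB_type} were defined as compositions with these isomorphisms.
\item The stabilizer face is the key one. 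It requires that $\bStab$ of $0\subseteq\cV_1\times\cV_\ell^\perp\subseteq\ldots\subseteq\cO^d\times\cO^d$ equals $\bStab$ of $0\subseteq\cV_1\subseteq\ldots\subseteq\cO^d$ inside $\GL_d$. This equality was observed in the proof that hermitian stabilizers are parabolic, using the action \eqref{GL_d_action_on_ConFlag_Hd}.
\end{enumerate}

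The main subtlety is that these identifications are cleanest on the constant-rank presheaves $\ConFlag$ and $\ConIdeal$. I will therefore verify each face on $\ConFlag_{((\cO^d,\cO^d),h_d)}$ and $\ConIdeal_{((\Mat_d(\cO),\Mat_d(\cO)),\tau_d)}$, and then pass to sheafifications by the universal property (\Cref{sheafification_of_hermitian_ConFlag} and its ideal analogue). Since the target sheaves $\cP_n$ and $\cPar_\bG$ are sheaves, agreement on the presheaves forces agreement on the sheafifications.

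With all faces commuting, the first diagram follows from \Cref{type_commutativity_inner}. The second diagram then follows formally because $\und^c$ is an involution.
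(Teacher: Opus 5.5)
Your proposal is correct and takes the same route as the paper. The paper's proof simply observes that, after restricting to a cover splitting $(L\to S,\cH,h)$, the diagram becomes that of \Cref{type_commutativity_inner}. What you add is an explicit account of the identifications the paper leaves implicit: the split isomorphisms \eqref{eq_ConFlag_iso} and \eqref{eq_ConIdeal_iso}, the equality of the hermitian and inner stabilizers under \eqref{GL_d_action_on_ConFlag_Hd}, and the compatible trivializations of $\cK$, $\cK'$ and $\cL$.
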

\begin{proof}
Locally this becomes the statement of \Cref{type_commutativity_inner}, and so the claims hold globally as well.
\end{proof}

\begin{cor}
Let $(L\to S,\cB,\tau)$ be an Azumaya algebra with involution of the second kind of constant degree $d$ and let $\bG = \bU_{(\cB,\tau)}$. We have a commutative diagram
\[
\begin{tikzcd}
\LowGSB_{(\cB,\tau)} \ar[r] \ar[d,"\widetilde{t_{\cGSB}}"] & \cPar_\bG \ar[d,"t"] \\
\cOD{\bG} \ar[r,"\und^c"] & \cOD{\bG}
\end{tikzcd}
\]
\end{cor}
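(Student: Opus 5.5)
The plan is to reduce to the split hermitian case by fppf descent, mirroring the proof of \Cref{inner_type_GSB_Par}. Commutativity of a square of sheaf morphisms can be checked on a cover. All four maps are compatible with base change:
\begin{itemize}
\item the stabilizer map $\LowGSB_{(\cB,\tau)} \to \cPar_\bG$ is defined intrinsically;
\item $t$ is the SGA3 type morphism, which is compatible with base change;
\item $\widetilde{t_{\cGSB}}$ is defined by twisting;
\item $\und^c$ on $\cOD{\bG}$ is the descent of $\und^c$ on $\cP_n$.
\end{itemize}
So I would choose an fppf (or \'etale) cover $\{U_i\to S\}$ over which the torsor $\cK'$ becomes trivial. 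Over each $U_i$ we then have $L|_{U_i}\cong U_i\sqcup U_i$ and $\cB|_{L_{U_i}}\cong \cEnd_{\cO}(\cH)$ for a regular hermitian form $(\cH,h)$ with $\tau=\tau_h$. Refining the cover, we may even take $(\cH,h)$ to be the split form $((\cO^d,\cO^d),h_d)$, so that $\tau=\tau_d$ and $\bG=\GL_{(\cH,h)}$.

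Over such a $U_i$, the square in question is precisely the right-hand part of the first diagram in \Cref{type_commutativity_outer}: the triangle $\LowGSB_{(\cEnd(\cH),\tau_h)}\to\cPar_\bG$, followed by $t$, equals $\und^c\circ\widetilde{t_{\cGSB}}$. To make this precise, I would combine two facts from \Cref{type_commutativity_outer}:
\begin{itemize}
\item the left square gives $t\circ\bStab = \und^c\circ \widetilde{t_{\cFlag}}$ on $\LowFlag_{(\cH,h)}$;
\item the bottom triangle gives $\widetilde{t_{\cFlag}}=\widetilde{t_{\cGSB}}\circ(\text{iso of \Cref{iso_LowFlag_Hh_LowGSB}})$.
\end{itemize}
Together with the fact that the stabilizer triangle of \Cref{iso_hermitian_flags_parabolics} commutes, this yields $t\circ\bStab=\und^c\circ\widetilde{t_{\cGSB}}$ on $\LowGSB_{(\cEnd(\cH),\tau_h)}$.

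The step needing care is checking that the identifications used over $U_i$ really match the global maps restricted to $U_i$. First, the restriction of $\widetilde{t_{\cGSB}}$ along a trivialization of $\cK'$ must be the split type map \eqref{eq_split_outer_SB_type}, with $\cOD{\bG}|_{U_i}$ identified with $\cP_n|_{U_i}$. This holds by construction of the twist, but it depends on the chosen trivialization. Different trivializations differ by a section of $\PGL_d\rtimes\ZZ/2\ZZ$, and both $t$ and the stabilizer map are equivariant for this action (the stabilizer map by \Cref{conjugating_limits_Levis}-type transport of structure, $t$ because the twisted type morphism is the twist of a $\ZZ/2\ZZ$-equivariant map). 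Second, I need that $\und^c$ commutes with $\und^\vee$, which was noted in \Cref{Locally_Constant_Sheaves}; hence $\und^c$ descends and its restriction is the usual complement. With these compatibilities the local squares agree with the global one, and the global square commutes because $\cOD{\bG}$ is a sheaf and the maps agree after restriction to each member of the cover.
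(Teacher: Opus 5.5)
Your proposal is correct and follows the paper's argument: the paper's proof just observes that, locally on $S$, $(\cB,\tau)$ becomes $(\cEnd(\cH),\tau_h)$, where the square commutes by \Cref{type_commutativity_outer}, so it commutes globally. Your extra checks — that the restricted global maps agree with the local ones, independence of the trivialization, and $\und^c$ commuting with $\und^\vee$ — spell out details the paper leaves implicit.
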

\begin{proof}
This holds globally since it holds locally by \Cref{type_commutativity_outer}.
\end{proof}

\section{Idempotents and Levi Subgroups}\label{idempotent_section}
For $\bG$ a linear algebraic group, we denote by $\cPL_\bG \colon \Sch_S \to \Sets$ the sheaf of pairs of parabolic and Levi subgroups. In detail, for $T\in \Sch_S$, we have
\[
\cPL_\bG(T) = \{(\bP,\bL) \mid \bP \subset \bG|_T \text{ is a parabolic, } \bL \subset \bP \text{ is a Levi subgroup}\}.
\]
Of course, there is an obvious surjective map
\begin{align}
\cPL_\bG &\surj \cPar_\bG \label{PL_to_Par_surj} \\
(\bP,\bL) &\mapsto \bP.\nonumber
\end{align}
In \Cref{outer_case} we showed how $\cPar_\bG$ can be realized as $\LowGSB_{(\cB,\tau)}$ for an Azumaya algebra with involution of the second type $(L\to S,\cB,\tau)$, or as $\LowFlag_{(\cH,h)}$ if the algebra is the endomorphism algebra of a regular hermitian form $(L\to S,\cH,h)$. We now develop analogous descriptions of $\cPL_\bG$ in terms of Azumaya algebras and flags.

\subsection{Inner Case}
We begin with the inner case when $\cPar_\bG$ is isomorphic to $\LowGSB_\cA$ or $\LowFlag_\cE$ as in \Cref{inner_case}.
\subsubsection{Direct Summands and Idempotents}
Let $\cE$ be a finite locally free $\cO$--module of constant rank $d = n+1$. We wish to consider a sheaf of direct sum decompositions of $\cE$. Naively, these are tuples of submodules $(\cV_1,\ldots,\cV_{\ell+1})$ such that $\cE = \cV_1\oplus\ldots\oplus\cV_{\ell+1}$. From such a tuple we can construct a flag of partial direct sums
\[
0 \subseteq \cV_1 \subseteq \cV_1\oplus \cV_2 \subseteq \ldots \subseteq \cV_1\oplus\ldots\oplus\cV_\ell \subseteq \cE
\]
and we wish to use this construction to define a map into $\cPar_{\GL_\cE} \cong \LowFlag_\cE$. However, we therefore need to impose suitable conditions on our tuples of submodules in order to ensure that the resulting flags will be lowered flags. This motivates the following definition.

\begin{defn}
Let $\cE$ be a finite locally free $\cO$--module of constant rank $d$. Let $(\cV_1,\ldots,\cV_{\ell+1})$ be a tuple of submodules of $\cE$ such that $\cE = \cV_1\oplus\ldots\oplus \cV_{\ell+1}$. We call such a tuple \emph{lowered} if for all $T\in \Sch_S$ and all $1\leq i \leq \ell$,
\[
\cV_i|_T = 0 \Rightarrow \cV_j|_T = 0 \text{ for all } j\geq i.
\]
We define a functor $\LowStiefel_\cE \colon \Sch_S \to \Sets$, called the \emph{Stiefel functor of $\cE$}, which behaves on $T\in \Sch_S$ by
\[
T\mapsto \{(\cV_1,\ldots,\cV_{\ell+1}) \mid \ell\geq 0, \cV_{\ell+1}\neq 0, \cE|_T=\bigoplus_{j=1}^{\ell+1}\cV_j, \text{and the tuple is lowered}\}
\]
with the restriction along a map $T' \to T$ in $\Sch_S$ being defined by
\[
(\cV_1,\ldots,\cV_{\ell+1}) \mapsto (\cV_1|_{T'},\ldots,\cV_k|_{T'})
\]
where $k \in \{1,\ldots,\ell+1\}$ is the largest integer such that $\cV_k|_{T'}\neq 0$. I.e., since $(\cV_1,\ldots,\cV_{\ell+1})$ is lowered, it naively restricts to a length $\ell+1$ tuple of the form $(\cV_1|_{T'},\ldots,\cV_k|_{T'},0,\ldots,0)$ and we truncate the zeroes.
\end{defn}

The definition of $\LowStiefel_\cE$ using lowered objects and restriction involving truncation is reminiscent of the definition of the sheaf $\LowFlag_\cE$ in \Cref{defn_flag_sheaf}, and we show below that indeed $\LowStiefel_\cE$ is a sheaf as well. It similarly can be viewed as the sheafification of a presheaf of direct sum decompositions involving summands of constant rank.
\begin{defn}
Let $\cE$ be a finite locally free $\cO$--module of constant rank $d$. We define the presheaf $\ConStiefel_\cE \colon \Sch_S \to \Sets$ which behaves as
\[
T \mapsto \{(\cV_1,\ldots,\cV_{\ell+1}) \mid \ell\geq 0, \cV_i\neq 0, \cE|_T = \oplus_{i=1}^{\ell+1} \cV_i \text{ and each } \cV_i \text{ is constant rank}\}.
\]
\end{defn}

\begin{lem}\label{LowStiefel_sheaf}
The functor $\LowStiefel_\cE \colon \Sch_S \to \Sets$ defined above is a sheaf. Furthermore, the inclusion
\[
\ConStiefel_\cE \inj \LowStiefel_\cE
\]
satisfies the universal property of sheafification, showing that $(\ConStiefel_\cE)^\sharp \cong \LowStiefel_\cE$.
\end{lem}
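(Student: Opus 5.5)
The proof mirrors \Cref{lowered_flags_sheaf} and \Cref{sheafification_of_ConFlag}; the dictionary is that truncation of $\cE$'s on the right is replaced by truncation of zeros on the right. First I check that $\LowStiefel_\cE$ is a functor. Being lowered is stable under base change, because a further restriction of $(\cV_1|_{T'},\ldots,\cV_k|_{T'})$ is a restriction of the original tuple. After truncation the last entry is nonzero and the direct sum still equals $\cE|_{T'}$, since only zero summands were dropped. I also record the analogue of \Cref{bounding_flag_length}. Localize so that every $\cV_j$ has constant rank. Loweredness forces the nonzero summands to come first, and each nonzero summand has rank at least one. Hence at most $d$ of the $\cV_j$ are nonzero on any such piece. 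If $\ell+1>d$, then $\cV_{\ell+1}$ vanishes on every piece of the cover, contradicting $\cV_{\ell+1}\neq 0$. So $\ell+1\le d$.

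For separatedness, take two sections $(\cV_1,\ldots,\cV_{\ell+1})$ and $(\cW_1,\ldots,\cW_{m+1})$ over $S$ that agree on a cover $\{U_i\to S\}$, and suppose $\ell\le m$. The truncated restrictions agree, so the truncation indices coincide on each $U_i$. If $\ell<m$, then $\cW_{m+1}|_{U_i}=0$ for all $i$, hence $\cW_{m+1}=0$, which is a contradiction. So $\ell=m$. On each $U_i$, every entry either agrees or both entries vanish, so $\cV_k=\cW_k$ globally. For gluing, let $m+1$ be the maximal length among the local tuples, which exists by the bound above. Pad each local tuple with zeros to length $m+1$. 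The padded tuples agree on overlaps, so each entry glues to a submodule $\cV_k\subseteq\cE$. The direct sum decomposition $\cE=\bigoplus\cV_k$ holds locally, hence globally: the sum map $\bigoplus\cV_k\to\cE$ is a morphism of sheaves that is locally an isomorphism. The last entry $\cV_{m+1}$ is nonzero because some $U_i$ realizes the maximum. Loweredness over an arbitrary $T$ is checked on the induced cover $\{T\times_S U_i\}$ exactly as in \Cref{lowered_flags_sheaf}. If $\cV_i|_T=0$, then on each piece either the local tuple is lowered, or the index lies in the padded zero region. Either way every later entry vanishes on each piece, hence on $T$.

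For the sheafification claim, note first that a constant-rank decomposition with all $\cV_i\neq0$ has every rank positive. Hence no entry ever becomes zero after restriction, so the tuple is lowered and restriction never truncates. Therefore the inclusion $\ConStiefel_\cE\inj\LowStiefel_\cE$ is an injective morphism of presheaves. Conversely, every section of $\LowStiefel_\cE(T)$ becomes constant rank in each entry after passing to a cover of $T$. The truncated restriction then has all entries nonzero of constant rank, so it lies in the image of $\ConStiefel_\cE$. An injective map of presheaves into a sheaf that is locally surjective satisfies the universal property of sheafification, exactly as in \Cref{sheafification_of_ConFlag}. The only delicate point is the gluing step, specifically verifying that the padded, glued tuple is still lowered and still a direct sum. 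Both properties are local, so I expect no real obstacle beyond careful bookkeeping of the truncation indices.
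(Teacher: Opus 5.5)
Your proposal is correct and follows essentially the same route as the paper. The paper also first bounds the length by $\ell<d$ using a cover piece on which $\cV_{\ell+1}$ is nonzero, so that all entries have rank at least one there. It then runs the argument of \Cref{lowered_flags_sheaf}, padding with zeros in place of $\cE$, and finishes by noting that every section is locally of constant rank. You also spell out details the paper leaves implicit: that the glued tuple is still a direct sum decomposition, and that restriction on $\ConStiefel_\cE$ never truncates.
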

\begin{proof}
If we know that any section $(\cV_1,\ldots,\cV_{\ell+1})\in \LowStiefel_\cE(T)$ must have $\ell<d$, then we may mirror the proof of \Cref{lowered_flags_sheaf}. However, $\cV_{\ell+1}\neq 0$ by assumption and therefore when we find a cover $\{U_i\to T\}_{i\in I}$ over which all $\cV_j$ are of constant rank, there must exist $i_0 \in \cI$ with $\cV_{\ell+1}|_{U_{i_0}} \neq 0$. Thus, $\rank(\cV_j|_{U_{i_0}})\geq 1$ for all $1\leq j \leq \ell+1$, so
\[
d = \rank(\cE|_{U_{i_0}}) = \sum_{j=1}^{\ell+1} \rank(\cV_j|_{U_{i_0}}) \geq \ell+1,
\]
meaning that $\ell < d$. Therefore, we may follow the arguments of \Cref{lowered_flags_sheaf} and see that $\LowStiefel_\cE$ is a sheaf.

Once we know that $\LowStiefel_\cE$ is a sheaf, it is clear that because each component in a tuple is locally of constant rank that therefore any section of $\LowStiefel_\cE$ locally belongs to $\ConStiefel_\cE$, which demonstrates that $(\ConStiefel_\cE)^\sharp \cong \LowStiefel_\cE$ and we are done.
\end{proof}

\begin{rem}\label{rhoStiefel}
Symmetrically, a tuple $(\cV_1,\ldots,\cV_{\ell+1})$ with $\cE=\oplus_{i=1}^{\ell+1} \cV_i$ is \emph{raised} if for all $T\in \Sch_S$,
\[
\cV_i|_T = 0 \Rightarrow \cV_j|_T = 0 \text{ for all } j\leq i
\]
and these form a sheaf $\RaiStiefel_\cE \colon \Sch_S \to \Sets$ whose restriction maps involve truncation on the left. We also have that the inclusion map $\ConStiefel_\cE \inj \RaiStiefel_\cE$ realizes $(\ConStiefel_\cE)^\sharp \cong \RaiStiefel_\cE$ and therefore we have a uniquely defined isomorphism of sheaves
\begin{equation}\label{eq_low_to_rai_Stiefel}
\rhoStiefel \colon \LowStiefel_\cE \iso \RaiStiefel_\cE
\end{equation}
which is constant on sections from $\ConStiefel_\cE$. Similar to the raising isomorphism described in \Cref{raised_flags}, the isomorphism $\rhoStiefel$ has a ``sliding up" effect. For example, for a lowered tuple
\[
(\cV_1,\ldots,\cV_{\ell+1}) \in \LowStiefel_\cE(S)
\]
of length $\ell+1$, there will be a disjoint decomposition $X= W_{j_1} \sqcup W_{j_2} \sqcup \ldots \sqcup W_{j_m} \sqcup W_{\ell+1}$ such that the restriction of the tuple to $W_j$ is of constant length $j$. Thus, writing the tuple vertically, it will appear as
\[
\begin{tikzpicture}
\matrix (M)[matrix of math nodes]{
\cV_{\ell+1} & = & (0 & 0 & \ldots & 0 & \cV_{\ell+1}|_{W_{\ell+1}}) \\
\vdots & \quad & & & \vdots & & \\
\cV_{j_m} & = & (0 & 0 & \ldots & \cV_{j_m}|_{W_{j_m}} & \cV_{j_m}|_{W_{\ell+1}}) \\
\vdots & \quad & & & \vdots & & \\
\vdots & \quad & & & \vdots & & \\
\cV_{j_2} & = & (0 & \cV_{j_2}|_{W_{j_2}} & \ldots & \cV_{j_2}|_{W_{j_m}} & \cV_{j_2}|_{W_{\ell+1}}) \\
\vdots & \quad & & & \vdots & & \\
\cV_{j_1} & = & (\cV_{j_1}|_{W_{j_1}} & \cV_{j_1}|_{W_{j_2}} & \ldots & \cV_{j_1}|_{W_{j_m}} & \cV_{j_1}|_{W_{\ell+1}}) \\
\vdots & \quad & & & \vdots & & \\
\cV_1 & = & (\cV_1|_{W_{j_1}} & \cV_1|_{W_{j_2}} & \ldots & \cV_1|_{W_{j_m}} & \cV_1|_{W_{\ell+1}}) \\
};
\draw[thick,black](M-7-2.south east)--(M-8-4.north west);
\draw[thick,black](M-8-4.north west)--(M-6-4.north west);
\draw[thick,black](M-6-4.north west)--(M-6-4.north east);
\draw[thick,black](M-6-4.north east)--([xshift=-0.9ex,yshift=-1ex]M-5-5.north west);
\draw[thick,black]([xshift=-0.9ex,yshift=-1ex]M-5-5.north west)--([xshift=1ex,yshift=-1ex]M-5-5.north east);
\draw[thick,black]([xshift=1ex,yshift=-1ex]M-5-5.north east)--(M-3-6.north west);
\draw[thick,black](M-3-6.north west)--(M-3-6.north east);
\draw[thick,black](M-3-6.north east)--(M-1-7.north west);
\end{tikzpicture}
\]
with zeroes above the staircase pattern. This flag is mapped by $\rhoStiefel$ to the raised flag
\[
\begin{tikzpicture}
\matrix (M)[matrix of math nodes]{
\cV'_{\ell+1} & = & (\cV_{j_1}|_{W_{j_1}} & \cV_{j_2}|_{W_{j_2}} & \ldots & \cV_{j_m}|_{W_{j_m}} & \cV_{\ell+1}|_{W_{\ell+1}})\\
\vdots & \quad & & & \vdots & & \\
\cV'_{\ell+2-{j_1}} & = & (\cV_1|_{W_{j_1}} & \cV_{j_2+1-j_1}|_{W_{j_2}} & \ldots & \cV_{j_m+1-j_1}|_{W_{j_m}} & \cV_{\ell+2-j_1}|_{W_{\ell+1}})\\
\vdots & \quad & & & \vdots & & \\
\cV'_{\ell+2-{j_2}} & = & (0 & \cV_1|_{W_{j_2}} & \ldots & \cV_{j_m+1-j_2}|_{W_{j_m}} & \cV_{\ell+2-j_2}|_{W_{\ell+1}} )\\
\vdots & \quad & & & \vdots & & \\
\vdots & \quad & & & \vdots & & \\
\cV'_{\ell+2-{j_m}} & = & (0 & 0 & \ldots & \cV_1|_{W_{j_m}} & \cV_{\ell+2-j_m}|_{W_{\ell+1}})\\
\vdots & \quad & & & \vdots & & \\
\cV'_1 & = & (0 & 0 & \ldots & 0 & \cV_1|_{W_{\ell+1}})\\
};
\draw[thick,black](M-3-3.south west)--(M-3-4.south west);
\draw[thick,black](M-3-4.south west)--([xshift=-3ex]M-5-4.south west);
\draw[thick,black]([xshift=-3ex]M-5-4.south west)--([xshift=3ex]M-5-4.south east);
\draw[thick,black]([xshift=3ex]M-5-4.south east)--([xshift=-1ex,yshift=-0.5ex]M-6-5.south west);
\draw[thick,black]([xshift=-1ex,yshift=-0.5ex]M-6-5.south west)--([xshift=1ex,yshift=-0.5ex]M-6-5.south east);
\draw[thick,black]([xshift=1ex,yshift=-0.5ex]M-6-5.south east)--([xshift=-3.4ex]M-8-6.south west);
\draw[thick,black]([xshift=-3.4ex]M-8-6.south west)--([yshift=-0.2ex]M-8-7.south west);
\draw[thick,black]([yshift=-0.2ex]M-8-7.south west)--([xshift=-3ex]M-10-7.south west);
\end{tikzpicture}
\]
which now has zeroes below the staircase pattern.
\end{rem}

We will make use of the isomorphism $\rhoStiefel$ from the preceding remark in the next \Cref{outer_case_idempotents} where we will need the following small lemma.
\begin{lem}\label{raising_tuple_preserves_length}
The isomorphism $\rhoStiefel \colon \LowStiefel_\cE \iso \RaiStiefel_\cE$ of \eqref{eq_low_to_rai_Stiefel} preserves the length of a tuple. That is, given a section $(\cV_1,\ldots,\cV_{\ell+1}) \in \LowStiefel_\cE(T)$ for some $T\in \Sch_S$, it has the property that $\cV_j\neq 0$ by definition for all $1\leq j \leq \ell+1$. Then, the section
\[
\rhoStiefel(\cV_1,\ldots,\cV_{\ell+1}) = (\cW_1,\ldots,\cW_{\ell+1})
\]
is also of length $\ell+1$ and $\cW_j\neq 0$ for all $1\leq j \leq \ell+1$.
\end{lem}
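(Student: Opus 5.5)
The plan is to work locally and reduce everything to a statement about the ``length'' of a section. This length should be a global invariant that both $\LowStiefel_\cE$ and $\RaiStiefel_\cE$ can read off in the same way.

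\emph{Step 1: choose a cover.} Start from a section $(\cV_1,\ldots,\cV_{\ell+1})\in\LowStiefel_\cE(T)$. Pick a Zariski cover $\{U_i\to T\}_{i\in I}$ over which every $\cV_j$ has constant rank. As in \Cref{raised_flags} and \Cref{rhoStiefel}, we may group these into a finite disjoint decomposition
\[
T=W_{j_1}\sqcup\ldots\sqcup W_{j_m}\sqcup W_{\ell+1}.
\]
Over $W_k$ the restricted tuple is a section of $\ConStiefel_\cE(W_k)$ of length exactly $k$. The key observation is that $W_{\ell+1}$ is nonempty. Indeed, $\cV_{\ell+1}\neq 0$ by assumption, and by loweredness $\cV_{\ell+1}|_{W_k}=0$ whenever $k<\ell+1$; so $\cV_{\ell+1}$ can only be nonzero if $W_{\ell+1}\neq\emptyset$. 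Thus the length $\ell+1$ of the lowered tuple is exactly the maximum of the local constant lengths.

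\emph{Step 2: use that $\rhoStiefel$ is the identity on $\ConStiefel_\cE$.} Let $(\cW_1,\ldots,\cW_{p})=\rhoStiefel(\cV_1,\ldots,\cV_{\ell+1})$. Since $\rhoStiefel$ is a morphism of sheaves that restricts to the identity on the common subpresheaf $\ConStiefel_\cE$, its restriction to each $W_k$ is the constant-rank tuple $(\cV_1|_{W_k},\ldots,\cV_k|_{W_k})$ of length $k$. Here restriction in $\RaiStiefel_\cE$ means left truncation of zeroes.

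Now apply Step 1 to $\RaiStiefel_\cE$ as well, mirroring the argument of \Cref{LowStiefel_sheaf}. In a raised tuple $(\cW_1,\ldots,\cW_p)$ we have $\cW_1\neq 0$, and raisedness forces $\cW_1|_{W_k}=0$ on any piece where the local length is less than $p$. Hence $p$ is again the maximum of the local lengths. This maximum is $\ell+1$, attained on $W_{\ell+1}$, so $p=\ell+1$.

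\emph{Step 3: nonvanishing of each component.} For each $j$, on $W_{\ell+1}$ we have
\[
\cW_j|_{W_{\ell+1}}=\cV_j|_{W_{\ell+1}},
\]
and this is of constant positive rank because the restriction there has full length $\ell+1$ with no truncation. Therefore $\cW_j\neq 0$ for every $1\le j\le\ell+1$.

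\emph{Expected difficulty.} The main care point is making precise the claim that the length of a section equals the maximum local length, on both the lowered and the raised side. This means correctly tracking the truncation conventions in the restriction maps: right truncation for $\LowStiefel_\cE$ and left truncation for $\RaiStiefel_\cE$. Once that is in place, everything else follows from $\rhoStiefel$ agreeing with the identity on $\ConStiefel_\cE$.
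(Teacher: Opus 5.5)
Your proposal is correct and is essentially the paper's argument. A piece on which the restricted tuple has full length $\ell+1$ exists because $\cV_{\ell+1}\neq 0$; the fact that $\rhoStiefel$ is the identity on $\ConStiefel_\cE$, combined with $\cW_1\neq 0$ and left truncation, forces the image to have length $\ell+1$; and nonvanishing of each $\cW_j$ is read off on that full-length piece. The only difference is cosmetic: you group the cover into the disjoint pieces $W_k$, whereas the paper works with a single index $i_0$ of the cover. Note that ranks on $W_k$ are only locally constant, so the identification with $\ConStiefel_\cE$ should strictly be made on the original $U_i$.
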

\begin{proof}
We consider a cover $\{U_i\to T\}_{i\in I}$ over which the lowered tuple $(\cV_1,\ldots,\cV_{\ell+1})$ restricts to tuples in $\ConStiefel_\cE$. Since $\cV_{\ell+1}\neq 0$ globally and the tuple is lowered, there exists at least one index $i_0 \in I$ for which
\[
(\cV_1|_{U_{i_0}},\ldots,\cV_{\ell+1}|_{U_{i_0}}) \in \ConStiefel_\cE(U_{i_0})
\]
is a length $\ell+1$ tuple of non-zero constant rank direct summands. Furthermore, for all $i\in I$, the restriction has length $\leq \ell+1$. Now, let
\[
\rhoStiefel(\cV_1,\ldots,\cV_{\ell+1}) = (\cW_1,\ldots,\cW_m)
\]
for some integer $m$. Since $\rhoStiefel$ acts as the identity on $\ConStiefel_\cE$, we also have that $(\cW_1,\ldots,\cW_m)|_{U_i}$ has length $\leq \ell+1$ for all $i\in I$ while
\[
(\cW_1,\ldots,\cW_m)|_{U_{i_0}} = (\cV_1|_{U_{i_0}},\ldots,\cV_{\ell+1}|_{U_{i_0}})
\]
has length exactly $\ell+1$. Thus, if $m>\ell+1$ this would imply that $\cW_m = 0$ which in turn implies that $\cW_1=0$ since this tuple is raised, however this contradicts the fact that $\cW_1|_{U_{i_0}} = \cV_1|_{U_{i_0}} \neq 0$. Thus, $m\leq \ell+1$ and since $\cW_{\ell+1} \neq 0$ because $\cW_{\ell+1}|_{U_{i_0}} = \cV_{\ell+1}|_{U_{i_0}} \neq 0$ we conclude that $m=\ell+1$. Lastly, we then likewise have for all $1\leq j \leq \ell+1$ that
\[
\cW_j|_{U_{i_0}} = \cV_j|_{U_{i_0}} \neq 0
\]
showing that $\cW_j\neq 0$ for all $j$. Thus, we are done.
\end{proof}

Now, as was our goal when defining lowered tuples, the construction which takes a lowered tuple of direct summands to a flag produces our desired map of sheaves.
\begin{lem}\label{Stiefel_to_Flag_surjection}
Let $\cE$ be a finite locally free $\cO$--module of constant rank $d$. We have a well-defined map of sheaves
\begin{align*}
\LowStiefel_\cE &\to \LowFlag_\cE \\
(\cV_1,\ldots,\cV_{\ell+1}) &\mapsto (0\subseteq \cV_1 \subseteq \cV_1\oplus\cV_2 \subseteq \ldots \subseteq \cV_1\oplus\ldots\oplus\cV_\ell \subseteq \cE).
\end{align*}
In particular, a lowered tuple produces a lowered flag and this construction is compatible with the restriction maps within the sheaves. Furthermore, this is a surjective maps of sheaves.
\end{lem}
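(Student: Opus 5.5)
The plan is to pass through the presheaves of constant rank objects and then invoke the universal property of sheafification. This also has to be reconciled with a direct description of the map on lowered tuples.

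\textbf{Well-definedness.} Given $(\cV_1,\ldots,\cV_{\ell+1}) \in \LowStiefel_\cE(T)$, set $\cF_k = \cV_1\oplus\cdots\oplus\cV_k$. Each $\cF_k$ is a direct summand of $\cE|_T$, with complement $\cV_{k+1}\oplus\cdots\oplus\cV_{\ell+1}$, and the $\cF_k$ form an increasing chain. The last term satisfies $\cF_\ell\neq\cE|_T$: otherwise $\cV_{\ell+1}=0$, since the sum is direct, contradicting the definition.

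For loweredness, suppose $\cF_i|_{T'}=\cF_{i+1}|_{T'}$ for some $T'$. Directness of the sum then gives $\cV_{i+1}|_{T'}=0$. Loweredness of the tuple gives $\cV_j|_{T'}=0$ for all $j\geq i+1$, hence $\cF_i|_{T'}=\cE|_{T'}$. This is exactly the lowered flag condition; the case $i=0$ is the same argument with $\cV_1$.

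For compatibility with restriction, let $k$ be the largest index with $\cV_k|_{T'}\neq 0$. Then $\cF_j|_{T'}\neq\cE|_{T'}$ exactly for $j<k$, because $\cF_j|_{T'}=\cE|_{T'}$ iff $\cV_{j+1},\ldots,\cV_{\ell+1}$ all vanish over $T'$. So the truncated restricted tuple of length $k$ maps to the truncated restricted flag of length $k-1$, and the square commutes. Equivalently, one could note that $\ConStiefel_\cE$ maps into $\ConFlag_\cE$, since constant ranks are strictly increasing. Then the map on sheafifications is obtained from \Cref{LowStiefel_sheaf} and \Cref{sheafification_of_ConFlag}, and the direct check shows it agrees with the formula stated.

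\textbf{Surjectivity.} It suffices to show every section of $\LowFlag_\cE(T)$ lies locally in the image. By \Cref{sheafification_of_ConFlag} we may localize to a flag in $\ConFlag_\cE$, and further until $\cE$ is free and each $\cV_i$ is a free direct summand of $\cV_{i+1}$. This is the situation in the proof of \Cref{flag_stabilizers_are_parabolic}, where a basis $v_1,\ldots,v_d$ is adapted to the flag.

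Now take $\cV'_j=\Span(v_{k_{j-1}+1},\ldots,v_{k_j})$, with $k_0=0$ and $k_{\ell+1}=d$. This is a tuple of nonzero constant rank summands, so it is a section of $\ConStiefel_\cE\subseteq\LowStiefel_\cE$, and its image is the given flag.

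The main care point is the restriction compatibility with truncation. The identity ``$\cF_j|_{T'}=\cE|_{T'}$ iff all later $\cV$'s vanish'' handles it cleanly, so I expect no real obstacle.
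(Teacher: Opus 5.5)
Your proposal is correct and follows essentially the paper's proof. The loweredness check is the same direct-sum argument, and your identity ``$\cF_j|_{T'}=\cE|_{T'}$ iff all later summands vanish over $T'$'' makes explicit the restriction compatibility that the paper only calls clear. The one difference is in surjectivity: you first reduce to $\ConFlag_\cE$ with an adapted free basis, so the preimage lies in $\ConStiefel_\cE$ and is automatically lowered, whereas the paper splits off successive complements of the (possibly non-constant rank) direct summands and then derives loweredness of the resulting tuple from loweredness of the flag.
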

\begin{proof}
Let $(\cV_1,\ldots,\cV_{\ell+1}) \in \LowStiefel_\cE$ be a lowered tuple and consider the associated flag
\[
0\subseteq \cV_1 \subseteq \cV_1\oplus\cV_2 \subseteq \ldots \subseteq \cV_1\oplus\ldots\oplus\cV_\ell \subseteq \cE.
\]
Let $T\in \Sch_S$ and assume that for some $1\leq i \leq \ell$
\[
(\cV_1\oplus\ldots\oplus\cV_i)|_T = (\cV_1\oplus\ldots\oplus\cV_i\oplus \cV_{i+1})|_T.
\]
In particular, this means that $\cV_{i+1}|_T \subseteq (\cV_1\oplus\ldots\oplus\cV_i)|_T$. However, the direct sum decomposition $\cE = \bigoplus_{j=1}^{\ell+1} \cV_j$ restricts as well, so we have $\cE|_T = \bigoplus_{j=1}^{\ell+1} \cV_j|_T$. Therefore, the above inclusion forces $\cV_{i+1}|_T = 0$, and then since the tuple is lowered, $\cV_j|_T = 0$ for all $j\geq i+1$ as well. This means that
\[
\cE|_T = \bigoplus_{j=1}^{\ell+1} \cV_j|_T = \bigoplus_{j=1}^i \cV_j|_T
\]
and so
\[
(\cV_1\oplus\ldots\oplus\cV_i)|_T = \cE|_T = (\cV_1\oplus\ldots\oplus\cV_i\oplus \cV_{i+1})|_T.
\]
Thus, the resulting flag is lowered as desired.

It is clear from construction that this map is compatible with the restriction maps of the two sheaves.

To see that this is a surjective map of sheaves, consider a flag
\[
0 \subseteq \cW_1 \subseteq \ldots \subseteq \cW_\ell \subseteq \cE
\]
in $\LowFlag_\cE$. By assumption, the $\cW_j$ are locally direct summands of $\cE$, and since surjectivity is a local property we may assume that they are already direct summands. Thus, we may find $\cV_{\ell+1}$ such that $\cE=\cW_\ell \oplus \cV_{\ell+1}$, and then we may find $\cV_\ell$ such that $\cW_\ell = \cW_{\ell-1}\oplus \cV_\ell$, and so on ending with $\cV_1 = \cW_1$. This produces a tuple $(\cV_1,\ldots,\cV_{\ell+1})$ with the property that
\[
\cV_1\oplus\ldots\oplus \cV_j = \cW_j
\]
for all $j$. We claim that this is a lowered tuple. Indeed, if for some $T\in \Sch_S$ and $j$ we have $\cV_j|_T=0$, this means that $\cW_{j-1}|_T = \cW_j|_T$ which implies that $\cW_{j-1}|_T = \cE|_T = \cW_j|_T$ since the flag is lowered. This also implies that $\cW_k|_T = \cE|_T$ for all $k\geq j$, and therefore $\cV_k|_T = 0$ for all such $k$. This shows that the tuple is lowered and so $(\cV_1,\ldots,\cV_{\ell+1}) \in \LowStiefel_\cE$ is a section which maps to $(0 \subseteq \cW_1 \subseteq \ldots \subseteq \cW_\ell \subseteq \cE) \in \LowFlag_\cE$. Thus, we have a surjective map of sheaves.
\end{proof}

Equivalently, a section in $\LowStiefel_\cE$ can be viewed as an ordered tuple of pairwise orthogonal idempotents in $\cEnd_\cO(\cE)$, namely the projections onto each of the direct summands. This allows us to generalize to non-neutral Azumaya algebras.
\begin{defn}\label{defn_IdempA}
Let $\cA$ be an Azumaya $\cO$--algebra of constant degree $d$. A tuple $(e_1,\ldots,e_{\ell+1})$ of sections of $\cA$ such that $e_i^2=e_i$ for all $i$, $e_ie_j=0$ for $i\neq j$, and $\sum_{i=1}^{\ell+1}e_i = 1$ is called a \emph{tuple of pairwise orthogonal idempotents}. Such a tuple is additionally called \emph{lowered} if for all $T \in \Sch_S$ and all $1\leq i \leq \ell$,
\[
e_i|_T = 0 \Rightarrow e_j|_T = 0 \text{ for all } j\geq i.
\]
We define the functor $\LowIdemp_\cA \colon \Sch_S \to \Sets$ to send $T\in \Sch_S$ to
\[
\left\{(e_1,\ldots,e_{\ell+1})\mid \begin{array}{c}\ell\geq 0,\; e_i \in \cA(T), e_{\ell+1}\neq 0, \text{this is a lowered} \\ \text{tuple of pairwise orthogonal idempotents}\end{array}\right\}.
\]
Restriction along a morphism $T' \to T$ in $\Sch_S$ is defined by
\[
(e_1,\ldots,e_{\ell+1}) \mapsto (e_1|_{T'},e_2|_{T'},\ldots,e_k|_{T'})
\]
where $k\in \{1,\ldots,\ell+1\}$ is the largest integer such that $e_k|_{T'}\neq 0$. I.e., since $(e_1,\ldots,e_{\ell+1})$ is lowered, it naively restricts to a length $\ell+1$ tuple of the form $(e_1|_{T'},\ldots,e_k|_{T'},0,\ldots,0)$ and we truncate the zeroes.
\end{defn}

\begin{lem}
The functor $\LowIdemp_\cA\colon \Sch_S \to \Sets$ is a sheaf.
\end{lem}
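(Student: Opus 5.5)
The plan is to mirror the proofs of \Cref{lowered_flags_sheaf} and \Cref{LowStiefel_sheaf}. First I will check that $\LowIdemp_\cA$ is a well-defined presheaf: for $T'\to T$ the naive restriction of a lowered tuple of pairwise orthogonal idempotents is again such a tuple. Truncating the trailing zeros keeps the relations $e_i^2=e_i$, $e_ie_j=0$ and $\sum e_i=1$, since only zero summands are removed. The index $k$ exists because $1\neq 0$ over any nonempty $T'$; for $T'=\O$ I will use the convention that the empty tuple is the unique section. Functoriality for composites $T''\to T'\to T$ holds because truncating twice equals truncating once, using loweredness.

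The key preliminary is a length bound: every section $(e_1,\ldots,e_{\ell+1})\in\LowIdemp_\cA(T)$ has $\ell<d$. To get it, I choose a cover $\{U_i\to T\}$ over which $\cA|_{U_i}\cong \cEnd(\cE_i)$ with $\cE_i$ free of rank $d$, and over which each image $e_j\cE_i$ has constant rank. This is possible since $e_j\cE_i$ is a direct summand of $\cE_i$. The idempotents then give a decomposition $\cE_i=\bigoplus_j e_j\cE_i$. Since $e_{\ell+1}\neq 0$ and the tuple is lowered, some $U_{i_0}$ has all $e_j|_{U_{i_0}}\neq 0$. Every summand then has rank at least $1$, so $d\geq \ell+1$, exactly as in the proof of \Cref{LowStiefel_sheaf}.

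For separatedness, take two sections of lengths $\ell+1\leq m+1$ that agree on a cover $\{U_i\}$. If $\ell<m$, then $f_{m+1}|_{U_i}=0$ for all $i$ (its restriction is always truncated), so $f_{m+1}=0$, which contradicts the definition. Hence $\ell=m$. On each $U_i$ the truncated tuples agree, and the truncated entries are $0$ in both, so $e_j|_{U_i}=f_j|_{U_i}$ for all $j$. Since $\cA$ is a sheaf, $e_j=f_j$.

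For gluing, take local sections of lengths $\ell_i+1$ that agree on overlaps, and set $m+1=\max_i(\ell_i+1)$; this is finite by the bound above. I will pad each local tuple with zeros up to length $m+1$. The padded tuples agree on overlaps: the restrictions agree after truncation, and the discarded entries are zero on both sides. So they glue, entry by entry, to sections $e_1,\ldots,e_{m+1}\in\cA(T)$. The identities $e_j^2=e_j$, $e_je_k=0$ and $\sum e_j=1$ hold locally, hence globally, and $e_{m+1}\neq 0$ because some $\ell_i=m$. Loweredness is checked as in \Cref{lowered_flags_sheaf}. If $e_i|_{T'}=0$, then on each $T'\times_T U_k$ either $i>\ell_k+1$, where everything past $i$ is padding zeros, or the local tuple's loweredness forces $e_j=0$ for $j\geq i$. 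Hence $e_j|_{T'}=0$. By construction, the glued tuple restricts to the given local sections.

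The main obstacle is the length bound, since without it the maximum in the gluing step could be infinite. It requires passing to a splitting cover of $\cA$ and working with ranks of the images of the idempotents, rather than arguing within $\LowIdemp_\cA$ alone.
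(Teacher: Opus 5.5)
Your proposal is correct and follows the paper's route: first establish the length bound $\ell<d$, then rerun the separatedness and gluing argument of \Cref{lowered_flags_sheaf}. The only cosmetic difference is in the bound. You count ranks of the images $e_j\cE_i$ in a local splitting $\cA|_{U_i}\cong\cEnd(\cE_i)$, while the paper counts ranks in $\cA=\bigoplus_j e_j\cA$ using that right ideals have rank divisible by $d$; these are the same computation.
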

\begin{proof}
Ultimately, we mirror the proof of \Cref{lowered_flags_sheaf} here as well. In order to do so, we check that any tuple of pairwise orthogonal idempotents $(e_1,\ldots,e_{\ell+1})$ has $\ell < d$. However, this follows because such a tuple defines a direct sum decomposition
\[
\cA = e_1\cA \oplus \ldots e_{\ell+1}\cA
\] 
by right ideals which each have rank divisible by $d$. Therefore, for a cover over which the ideals $e_i\cA$ become constant rank there will be at least one scheme in the cover over which $\rank(e_i\cA)\geq d$ for all $i$, and therefore
\[
d^2 = \rank(\cA) = \sum_{i=1}^{\ell+1} \rank(e_i\cA) \geq d(\ell+1)
\]
and therefore $\ell < d$. Thus, as was done in \Cref{LowStiefel_sheaf} we may adapt the proof of \Cref{lowered_flags_sheaf} and see that $\LowIdemp_\cA$ is a sheaf.
\end{proof}

In the case when $\cA = \cEnd_\cO(\cE)$ is an endomorphism algebra, we can relate $\LowIdemp_\cA$ to $\LowStiefel_\cE$ in the following way (which we eventually show produces an isomorphism). For an idempotent $e\in \cEnd_\cO(\cE)$, we set $\cI_e = e\cdot \cEnd_\cO(\cE) \subseteq \cEnd_\cO(\cE)$, which is a right ideal which is also a direct summand (since $\cEnd_\cO(\cE) = e\cEnd_\cO(\cE) \oplus (1-e)\cEnd_\cO(\cE)$). 

\begin{lem}\label{LowIdemp_to_LowStiefel_iso}
Let $\cE$ be a finite locally free $\cO$--module of constant rank $d$. There is an isomorphism of sheaves
\begin{align*}
\LowIdemp_{\cEnd_\cO(\cE)} &\iso \LowStiefel_\cE \\
(e_1,\ldots,e_{\ell+1}) &\mapsto (\Img(e_1),\ldots,\Img(e_{\ell+1})) \\
(\pi_{\cV_1},\ldots,\pi_{\cV_{\ell+1}}) &\;\reflectbox{$\mapsto$}\; (\cV_1,\ldots,\cV_{\ell+1})
\end{align*}
where $\pi_{\cV_i}$ is the projection onto $\cV_i$ sending other $\cV_j$ to zero. Furthermore, this isomorphism fits into a diagram
\[
\begin{tikzcd}
\LowIdemp_{\cEnd_\cO(\cE)} \ar[r,"\sim"] \ar[d,two heads] & \LowStiefel_\cE \ar[d,two heads] \\
\LowGSB_{\cEnd_\cO(\cE)} \ar[r,"\sim"] & \LowFlag_\cE
\end{tikzcd}
\]
where the downward map on the right is the surjection of \Cref{Stiefel_to_Flag_surjection}, the bottom map is the isomorphism of \Cref{flags_to_SB}, and so the downward map on the left takes the form
\begin{align*}
\LowIdemp_{\cEnd_\cO(\cE)} &\to \LowGSB_{\cEnd_\cO(\cE)} \\
(e_1,\ldots,e_{\ell+1}) &\mapsto (0 \subseteq \cI_{e_1} \subseteq \ldots \subseteq \cI_{(e_1+\ldots+e_\ell)} \subseteq \cEnd_\cO(\cE)).
\end{align*}
\end{lem}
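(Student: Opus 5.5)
The plan is to check that both proposed maps are well defined on sections over an arbitrary $T\in\Sch_S$ and compatible with the truncating restriction maps, and then that they are mutually inverse. The diagram is handled afterwards by computing images. All checks are local, but the two maps are defined globally on sections, so compatibility with restriction is what upgrades them to sheaf morphisms.

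\textbf{The forward map.} Given a lowered tuple $(e_1,\ldots,e_{\ell+1})$ of pairwise orthogonal idempotents in $\cEnd_\cO(\cE)(T)$, the relation $\sum e_i=1$ together with $e_ie_j=\delta_{ij}e_i$ gives $\cE|_T=\bigoplus_i\Img(e_i)$. Here $\Img(e_i)=\Ker(1-e_i)$ is an honest subsheaf, with no sheafification subtlety, since $e_i$ is idempotent. Next, $\Img(e_i)|_{T'}=0$ if and only if $e_i|_{T'}=0$, because $e_i$ acts as the identity on its image and as zero on the complement $\Img(1-e_i)$. Hence the lowered condition and the condition $e_{\ell+1}\neq 0$ transfer exactly to $\Img(e_{\ell+1})\neq 0$ and to loweredness of the tuple of images. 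By the same equivalence, truncation of trailing zeros commutes with the map, so restrictions are respected.

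\textbf{The inverse map.} Given a lowered decomposition $\cE=\bigoplus\cV_i$, the projections $\pi_{\cV_i}$ are pairwise orthogonal idempotents summing to $1$. We have $\pi_{\cV_i}|_{T'}=0$ if and only if $\cV_i|_{T'}=0$, which again gives loweredness and compatibility with truncation. The identities $\Img(\pi_{\cV_i})=\cV_i$ and $\pi_{\Img(e_i)}=e_i$ hold because an idempotent in a complete orthogonal family is the projection onto its image along the sum of the other images. So the two maps are mutually inverse, giving an isomorphism of sheaves.

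\textbf{The diagram.} Following the right-hand path, $(e_1,\ldots,e_{\ell+1})$ goes to the flag $0\subseteq\Img(e_1)\subseteq\ldots\subseteq\Img(e_1)\oplus\cdots\oplus\Img(e_\ell)\subseteq\cE$. Since $e_1+\cdots+e_j$ is idempotent, this flag equals $0\subseteq\Img(e_1)\subseteq\ldots\subseteq\Img(e_1+\cdots+e_\ell)\subseteq\cE$. By \Cref{flags_to_SB} it is then sent to $\cI_{\Img(e_1+\cdots+e_j)}$ in position $j$. The remaining point is the identity $\cI_{\Img(e)}=e\cdot\cEnd_\cO(\cE)=\cI_e$ for an idempotent $e$.
\begin{itemize}
\item If $\varphi$ has image in $\Img(e)$, then $\varphi=e\varphi$, so $\cI_{\Img(e)}\subseteq e\cdot\cEnd_\cO(\cE)$.
\item Conversely, every $e\psi$ has image inside $\Img(e)$, giving the reverse inclusion.
\end{itemize}
Defining the left vertical map by this formula therefore makes the square commute. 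Since the top and bottom maps are isomorphisms and the right map is surjective by \Cref{Stiefel_to_Flag_surjection}, the left map is well defined, meaning it lands in lowered flags, and is surjective.

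\textbf{Main obstacle.} The only delicate point is the bookkeeping of truncation under restriction. It reduces entirely to the equivalence $e|_{T'}=0\iff\Img(e)|_{T'}=0$, which is where I would concentrate care. Everything else is formal once that equivalence is established.
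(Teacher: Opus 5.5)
Your proposal is correct and follows essentially the same route as the paper. Both arguments get mutual inverseness from the fact that an idempotent in a complete orthogonal family is the projection onto its image, and both get commutativity of the square from the identity $\cHom_\cO(\cE,\Img(e)) = e\cdot\cEnd_\cO(\cE)$, proved by the same two inclusions. Your explicit check that $e|_{T'}=0$ if and only if $\Img(e)|_{T'}=0$, which handles loweredness and compatibility with the truncating restriction maps, is a detail the paper leaves implicit.
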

\begin{proof}
It is immediate that the composition
\[
(\cV_1,\ldots,\cV_{\ell+1}) \mapsto (\pi_{\cV_1},\ldots,\pi_{\cV_{\ell+1}}) \mapsto (\Img(\pi_{\cV_1}),\ldots,\Img(\pi_{\cV_{\ell+1}}))
\]
is the identity.

In the converse direction, let $\pi_i$ be the projection onto $\Img(e_i)$. Since $(e_1,\ldots,e_{\ell+1})$ is a pairwise orthogonal tuple of idempotents, this means that $\pi_i\circ e_j = 0$ if $i\neq j$. Additionally, $\pi_i\circ e_i = e_i$, and therefore
\[
\pi_i = \pi_i \circ \Id_\cE = \pi_i(e_1+\ldots+e_{\ell+1}) = \pi_i\circ e_i = e_i.
\]
Thus the composition
\[
(e_1,\ldots,e_{\ell+1}) \mapsto (\Img(e_1),\ldots,\Img(e_{\ell+1})) \mapsto (\pi_{\Img(e_1)},\ldots,\pi_{\Img(e_{\ell+1})})
\]
is also the identity. Hence, we have an isomorphism as claimed.

To see that the downward map on the left has the claimed form, we follow a tuple $(e_1,\ldots,e_{\ell+1}) \in \LowIdemp_{\cEnd_\cO(\cE)}$ through the diagram. First, going right, it gets sent to the section
\[
(\Img(e_1),\ldots,\Img(e_{\ell+1})) \in \LowStiefel_\cE.
\]
Then, because the $e_i$ are pairwise orthogonal, $\Img(e_i)\oplus \Img(e_j) = \Img(e_i+e_j)$ and so going down we arrive at the flag
\[
0 \subseteq \Img(e_1) \subseteq \Img(e_1+e_2) \subseteq \ldots \subseteq \Img(e_1+\ldots+e_\ell)\subseteq \cE.
\]
Finally, going left we obtain a flag of right ideals whose $i^\text{th}$ component is
\[
\cHom_\cO(\cE,\Img(e_1+\ldots+e_i)).
\]
We argue that this is $\cI_{(e_1+\ldots+e_i)}$. For any $\varphi \in \cEnd_\cO(\cE)$, the composition $(e_1+\ldots+e_i)\circ \varphi$ will have image contained in $\Img(e_1+\ldots+e_i)$, and therefore
\[
\cI_{(e_1+\ldots+e_i)}\subseteq \cHom_\cO(\cE,\Img(e_1+\ldots+e_i)).
\]
Conversely, since $e_1+\ldots+e_i$ is itself an idempotent, any section in $\Img(e_1+\ldots+e_i)$ is fixed by $e_1+\ldots+e_i$. Thus, if $\varphi \in \cHom_\cO(\cE,\Img(e_1+\ldots+e_i))$ then
\[
(e_1+\ldots+e_i)\circ \varphi = \varphi,
\]
showing that $\cHom_\cO(\cE,\Img(e_1+\ldots+e_i)) \subseteq \cI_{(e_1+\ldots+e_i)}$. Thus,
\[
\cHom_\cO(\cE,\Img(e_1+\ldots+e_i)) = \cI_{(e_1+\ldots+e_i)}
\]
and the downward map on the left of the diagram has the claimed form.
\end{proof}

\begin{lem}\label{Idemp_to_GSB_surj}
Let $\cA$ be an Azumaya $\cO$--algebra of constant degree $d$. We have a surjective map of sheaves
\begin{align*}
\LowIdemp_\cA &\to \LowGSB_\cA \\
(e_1,\ldots,e_{\ell+1}) &\mapsto (0\subseteq e_1\cA \subseteq (e_1+e_2)\cA \subseteq \ldots \subseteq (e_1+\ldots+e_\ell)\cA \subseteq \cA).
\end{align*}
\end{lem}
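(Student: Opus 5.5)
We reduce to the split case \Cref{LowIdemp_to_LowStiefel_iso}, since all three required properties (well-definedness, compatibility with restriction, and surjectivity) are local. The proof then has three steps.

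First, we check that the assignment makes sense globally, before any localization. Since the $e_i$ are pairwise orthogonal idempotents, each partial sum $f_j = e_1+\ldots+e_j$ is an idempotent. Hence $f_j\cA$ is a right ideal, and it is a direct summand because $\cA = f_j\cA\oplus(1-f_j)\cA$. We also have $f_j\cA\subseteq f_{j+1}\cA$, because $f_j = f_{j+1}f_j$. Two further properties can be checked after restricting along an fppf cover that splits $\cA$:
\begin{enumerate}
\item the flag is lowered;
\item the last term $f_\ell\cA$ is not all of $\cA$.
\end{enumerate}
Over such a cover, with $\cA|_U\cong\cEnd_\cO(\cE)$, \Cref{LowIdemp_to_LowStiefel_iso} identifies our map with the left vertical map of its diagram. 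That map is the composite of two isomorphisms with the map of \Cref{Stiefel_to_Flag_surjection}, and the latter sends lowered tuples to lowered flags. The definition of lowered is a condition tested over all $T\in\Sch_S$, so it is local in the sense that a flag is lowered once it is lowered over each member of a cover; this is the gluing part of \Cref{lowered_flags_sheaf}. For property (ii), suppose $f_\ell\cA=\cA$. Then $f_\ell$ is a unit idempotent, so $f_\ell = 1$ and hence $e_{\ell+1}=0$, contradicting $e_{\ell+1}\neq 0$.

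Second, we check compatibility with restrictions. If the restricted tuple is truncated at $k$, then $e_j|_{T'}=0$ for all $j>k$. Consequently $f_j|_{T'}=1$ for $j\geq k$, so the corresponding ideals become all of $\cA|_{T'}$. These are exactly the terms that the restriction in $\LowGSB_\cA$ truncates. One small point needs care: $f_k\cA|_{T'}$ is the first term equal to $\cA$, and it is correctly dropped. The earlier terms are not equal to $\cA$, because $f_{k-1}=1$ would force $e_k=0$. This bookkeeping of indices is the only mildly delicate point.

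Third, we prove surjectivity. Surjectivity of a map of sheaves is local, so we choose a cover on which $\cA|_U\cong\cEnd_\cO(\cE)$. Over this cover the map agrees, via the commutative square of \Cref{LowIdemp_to_LowStiefel_iso}, with the composite
\[
\LowIdemp_{\cEnd(\cE)}\iso\LowStiefel_\cE\surj\LowFlag_\cE\iso\LowGSB_{\cEnd(\cE)}.
\]
This composite is surjective by \Cref{Stiefel_to_Flag_surjection} together with the two isomorphisms. The one thing to ensure is that the identification $\cA|_U\cong\cEnd(\cE)$ carries our map to the left vertical map of that diagram. This holds because an algebra isomorphism carries idempotents to idempotents and carries $f\cA$ to $\varphi(f)\cEnd(\cE)$, so both maps are given by the same formula. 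I expect no serious obstacle here; everything reduces to earlier results.
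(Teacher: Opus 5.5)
Your proof is correct, and your surjectivity step is exactly the paper's: restrict to a cover on which $\cA\cong\cEnd_\cO(\cE)$ and invoke the square of \Cref{LowIdemp_to_LowStiefel_iso}.

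The only difference is how well-definedness is established. The paper does not localize at this stage. It observes that $(e_1\cA,\ldots,e_{\ell+1}\cA)$ is already a global section of $\LowStiefel_\cA$, viewing $\cA$ as a locally free $\cO$--module of rank $d^2$ and noting that $e_i|_T=0$ if and only if $e_i\cA|_T=0$. It then composes with the map of \Cref{Stiefel_to_Flag_surjection} applied to $\cE=\cA$, and uses $e_1\cA\oplus\cdots\oplus e_j\cA=(e_1+\cdots+e_j)\cA$ to land in $\LowGSB_\cA$. That single factorization delivers at once loweredness, $f_\ell\cA\neq\cA$, and compatibility with the truncating restrictions.

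You instead obtain these three facts separately: loweredness by passing to the split case and using that loweredness is a local condition, and the other two by direct idempotent arithmetic. Your route is slightly longer but fully explicit, and the localization is even avoidable: if $f_j\cA|_T=f_{j+1}\cA|_T$ then $e_{j+1}|_T=e_{j+1}f_ja$ locally for some $a$, so $e_{j+1}|_T=0$, and the lowered tuple condition finishes the argument.
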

\begin{proof}
Given a lowered tuple of pairwise orthogonal idempotents in $\LowIdemp_\cA$, they induce a direct sum decomposition of $\cA$ corresponding to the section
\[
(e_1\cA,\ldots,e_{\ell+1}\cA) \in \LowStiefel_\cA
\]
and it is clear this defines a map of sheaves $\LowIdemp_\cA \to \LowStiefel_\cA$. We may then compose this with the map of \Cref{Stiefel_to_Flag_surjection} to obtain a map $\LowIdemp_\cA \to \LowFlag_\cA$ which sends $(e_1,\ldots,e_{\ell+1})$ to
\[
0 \subseteq e_1\cA \subseteq e_1\cA\oplus e_2\cA \subseteq \ldots \subseteq e_1\cA \oplus\ldots\oplus e_\ell\cA \subseteq \cA.
\]
Each component of this flag is a right ideal, so this lands in $\LowGSB_\cA$. Finally, since the idempotents are pairwise orthogonal we have that
\[
e_1\cA \oplus\ldots\oplus e_i\cA = (e_1+\ldots+e_i)\cA
\]
for all $i$. Therefore, our proposed map of sheaves is well-defined.

Locally, this becomes the map $\LowIdemp_{\cEnd_\cO(\cE)} \surj \LowGSB_{\cEnd_\cO(\cE)}$ of \Cref{LowIdemp_to_LowStiefel_iso} which is surjective, so the map is surjective globally as well.
\end{proof}

\begin{rem}\label{rhoIdemp}
There is of course also a sheaf of raised tuples of idempotents, $\RaiIdemp_\cA$, and a unique raising isomorphism
\[
\rhoIdemp \colon \LowIdemp_\cA \iso \RaiIdemp_\cA
\]
which appears similarly to $\rhoStiefel$ as described in \Cref{rhoStiefel}, just with idempotents in place of submodules. It also preserves the length of a tuple. In the case of an endomorphism algebra, we have a commutative diagram of isomorphisms
\[
\begin{tikzcd}
\LowIdemp_{\cEnd_\cO(\cE)} \ar[r,"\rhoIdemp"] \ar[d] & \RaiIdemp_{\cEnd_\cO(\cE)} \ar[d] \\
\LowStiefel_\cE \ar[r,"\rhoStiefel"] & \RaiStiefel_\cE
\end{tikzcd}
\]
where the downward map on the left is the isomorphism of \Cref{LowIdemp_to_LowStiefel_iso} and the downward map on the right is its analogue.
\end{rem}

\subsubsection{Parabolics and Levis}\label{inner_parabolics_and_levis}
Consider $\bG = \GL_{1,\cA}$ for an Azumaya $\cO$--algebra of constant degree $d=n+1$. Given a section $\ve=(e_1,\ldots,e_{\ell+1}) \in \LowIdemp_\cA(S)$, we will associate it to a parabolic and Levi pair by first using it to define a cocharacter of $\GL_{1,\cA}$ and then using the limit subgroup construction of \Cref{limit_subgroups}. As an initial remark, we point out that the construction in this section for the inner case works equally well using the straightforward cocharacter
\begin{align*}
\lambda_{\ve} \colon \GG_m &\to \GL_{1,\cA} \\
t &\mapsto \sum_{i=1}^{\ell+1} t^{-i}e_i.
\end{align*}
However, a more complicated construction will be required in \Cref{Outer_parabolics_and_levis} and so for compatibility we will use that construction here as well.

Recall, for example from \cite[Expose XXII, 1.11]{SGA3} and the fact that $\GG_m = D_S(\ZZ)$ in their notation, that as a sheaf on $\Sch_S$, there character group of $\GG_m$
\[
\cHom_{\textrm{Grp}}(\GG_m,\GG_m) = \underline{\ZZ}
\]
is the sheaf of locally constant integers. In particular, for a scheme $T \in \Sch_S$ and a section $\underline{a} \in \underline{\ZZ}(T)$, we write
\begin{align*}
\GG_m|_T &\to \GG_m|_T \\
t &\mapsto t^{\underline{a}}
\end{align*}
for the associated character which is defined by the property that over a cover $\{U_i \to T\}_{i\in I}$ where $\underline{a}|_U=n_i$ is constant, it behaves as $t\mapsto t^{n_i}$ as usual. Thus, for a section $x\in \GG_m(T)$ we have that $x^{\underline{a}}$ is the gluing of the local elements $(x|_{U_i})^{n_i}$. Rephrasing this, these characters have the property that $(x^{\underline{a}})|_U = (x|_U)^{\underline{a}|_U}$ for all $U \in \Sch_T$. 

Likewise, given a section $\ve \in \LowIdemp_\cA(T)$, the length of the tuple is a locally constant positive integer by construction and therefore we have a section $\underline{a}(\ve) \in \underline{\ZZ}(T)$ encoding the length of $\ve$. We use this to define the cocharacter
\begin{align}
\lambda_{\ve} \colon \GG_m &\to \GL_{1,\cA} \label{inner_cocharacter} \\
t &\mapsto \sum_{i=1}^{\ell+1} t^{\underline{a}(\ve)+1-2i}e_i. \nonumber
\end{align}
where the terms $1$ and $2i$ in $\underline{a}(\ve)+1-2i$ are constant sections of $\underline{\ZZ}$. We define our cocharacters this way in order to have the following lemma necessary later on.
\begin{lem}\label{cocharacter_raised_formula}
Let $\cA$ be an Azumaya $\cO$--algebra of constant degree $d$ (or more generally a separable algebra) and consider a section $\ve = (e_1,\ldots,e_{\ell+1}) \in \LowIdemp_\cA$. Let $\rhoIdemp(\ve) = (e_1',\ldots,e_{\ell+1}')$ be the associated raised section. Then, we have an alternative expression for the cocharacter $\lambda_{\ve}$, namely
\[
\sum_{i=1}^{\ell+1} t^{\underline{a}(\ve)+1-2i}e_i = \sum_{i=1}^{\ell+1} t^{-\underline{a}(\ve)-1+2i}e'_{\ell+2-i}.
\] 
\end{lem}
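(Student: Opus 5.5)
Both sides of the claimed identity are sections of the sheaf $\GL_{1,\cA}$, more precisely morphisms $\GG_m \to \GL_{1,\cA}$. Equality can therefore be checked locally on a cover of $S$. My plan is to reduce to the case where the tuple $\ve$ lies in the image of the constant-rank presheaf, on which $\rhoIdemp$ acts trivially.

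First I will choose a cover $\{U_i \to S\}_{i\in I}$ such that each restriction $\ve|_{U_i}$ has constant length. As in \Cref{rhoStiefel}, one can even take a finite disjoint Zariski decomposition $S = W_{j_1}\sqcup\ldots\sqcup W_{j_m}\sqcup W_{\ell+1}$ with $\ve|_{W_k}$ of length exactly $k$. On $W_k$ the locally constant section $\underline{a}(\ve)$ restricts to the constant $k$. Two facts about restriction are needed here. The first is that $\underline{a}(\ve)|_{W_k} = \underline{a}(\ve|_{W_k})$, which holds because $\underline{a}$ is defined as the length. The second is that $(t^{\underline{a}})|_U = t^{\underline{a}|_U}$, which was recorded above.

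Next I compare the two sums over $W_k$. Naively, $\ve|_{W_k}$ is $(e_1,\ldots,e_k,0,\ldots,0)$ with $\ell+1-k$ trailing zeros. The left-hand side therefore restricts to $\sum_{i=1}^{k} t^{k+1-2i}e_i$, since the zero idempotents contribute nothing. For the raised tuple, the description of $\rhoIdemp$ (the analogue of the sliding picture in \Cref{rhoStiefel}) gives that over $W_k$ we have $e'_{\ell+2-i} = e_{k+1-i}$ for $1\leq i\leq k$, and $e'_{\ell+2-i}=0$ for $i>k$. I will justify this by the defining property of $\rhoIdemp$: it is the identity on constant-rank tuples, and restriction in $\RaiIdemp_\cA$ truncates zeros on the left. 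Hence $\rhoIdemp(\ve)|_{W_k}$ equals the length-$k$ tuple $(e_1,\ldots,e_k)$, and padding on the left by zeros gives positions $\ell+2-k,\ldots,\ell+1$.

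Substituting into the right-hand side gives
\[
\sum_{i=1}^{k} t^{-k-1+2i}e_{k+1-i},
\]
and reindexing with $j=k+1-i$ turns the exponent into $-k-1+2(k+1-j)=k+1-2j$. This matches the left-hand side on $W_k$, and the identity then glues to all of $S$.

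I expect the main obstacle to be the bookkeeping in the second step. One has to carefully justify the exact positional form of $\rhoIdemp(\ve)$ over each $W_k$ using only its uniqueness from the universal property of sheafification, together with the fact that it preserves length, as in \Cref{raising_tuple_preserves_length} and \Cref{rhoIdemp}. The identification of where the zeros sit on the raised side deserves care. The separable-algebra generality changes nothing, since the argument uses only the idempotent relations and the local-constancy of length.
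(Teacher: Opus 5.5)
Your proposal is correct and follows essentially the same route as the paper: restrict to a piece where the length section $\underline{a}(\ve)$ is a constant $k$. There the lowered and raised restrictions coincide, so $e'_{\ell+2-i}$ restricts to $e_{k+1-i}$ for $i\leq k$ and to $0$ for $i>k$, and the exponents match after reindexing. One point is left implicit, by you and by the paper alike: the tuple need not have constant ranks on $W_k$, so $\rhoIdemp(\ve)|_{W_k}=\ve|_{W_k}$ has to be obtained by localizing further to constant rank, where $\rhoIdemp$ is the identity and no truncation occurs, and then using that $\RaiIdemp_\cA$ is separated.
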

\begin{proof}
Let $\ve = (e_1,\ldots,e_{\ell+1}) \in \LowIdemp_\cA(T)$. It is sufficient to show the equality holds locally over a scheme $U \in \Sch_T$ where the length section $\underline{a}(\ve)$ is constant since we may choose a cover consisting of such schemes. Say $\underline{a}(\ve)|_U = k$. In particular,
\[
(e_1,\ldots,e_{\ell+1})|_U = (e_1|_U,\ldots,e_k|_U) = (e_1',\ldots,e_{\ell+1}')|_U
\]
which means that $(e_{\ell+2-i}')|_U = e_{k+1-i}|_U$ since the raised tuple truncates on the left under restrictions. Now, we may simply compare the expressions
\begin{align*}
&\left.\left(\sum_{i=1}^{\ell+1} t^{\underline{a}(\ve)+1-2i}e_i\right)\right|_U \\
=& t|_U^{k-1}e_1|_U + t|_U^{k-3}e_2|_U + \ldots + t|_U^{-k+1}e_k|_U + t|_U^{-k-1}0 + \ldots + t|_U^{k+1-2(\ell+1)}0
\end{align*}
and
\begin{align*}
&\left.\left(\sum_{i=1}^{\ell+1} t^{-\underline{a}(\ve)-1+2i}e'_{\ell+2-i}\right)\right|_U \\
=& t|_U^{-k+1}e_k|_U + t|_U^{-k+3}e_{k-1}|_U + \ldots + t|_U^{k-1}e_1|_U + t|_U^{k+1}0 + \ldots + t|_U^{-k-1+2(\ell+1)}0
\end{align*}
and see that they are equal.
\end{proof}

We now assign a parabolic and Levi pair $(\bP(\lambda_{\ve}),\bL(\lambda_{\ve}))$ to $\ve$ as reviewed in \Cref{limit_subgroups}.

\begin{thm}\label{Idemp_PL_iso}
Let $\bG = \GL_{1,\cA}$ for an Azumaya $\cO$--algebra of constant degree $d=n+1$. There is an isomorphism of sheaves
\begin{align*}
\LowIdemp_\cA &\iso \cPL_\bG \\
\ve &\mapsto (\bP(\lambda_{\ve}),\bL(\lambda_{\ve}))
\end{align*}
\end{thm}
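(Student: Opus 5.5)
We first check that the assignment is a well-defined morphism of sheaves. For each $T$ and $\ve\in\LowIdemp_\cA(T)$, the element $\lambda_{\ve}(t)$ is a unit with inverse $\sum_i t^{-(\underline{a}(\ve)+1-2i)}e_i$, because the $e_i$ are pairwise orthogonal idempotents summing to $1$. Hence $\lambda_{\ve}$ is a cocharacter of $\GL_{1,\cA}|_T$, and \Cref{limit_subgroups} gives a parabolic--Levi pair. Compatibility with restriction needs care because restriction truncates zeroes and changes $\underline{a}(\ve)$. However, the restriction of $\underline{a}(\ve)$ to $T'$ is exactly the length of the restricted tuple, and the discarded terms are $t^{(\cdot)}\cdot 0$. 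So $\lambda_{\ve}|_{T'}=\lambda_{\ve|_{T'}}$. The limit and centralizer constructions commute with base change, so the map is a morphism of sheaves. It is also $\bG$--equivariant for conjugation on $\LowIdemp_\cA$ and on $\cPL_\bG$: we have $\lambda_{g\ve g^{-1}}=g\lambda_{\ve}g^{-1}$, and \Cref{conjugating_limits_Levis} applies.

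Since both sides are sheaves and the statement is local, we reduce to $\cA=\Mat_d(\cO)=\cEnd(\cO^d)$, and further to a cover over which $\ve$ lies in the constant-rank part. Via \Cref{LowIdemp_to_LowStiefel_iso}, a section is a decomposition $\cO^d=\cV_1\oplus\cdots\oplus\cV_{k}$ into nonzero constant-rank summands. Choose a basis adapted to this decomposition. Then $\lambda_{\ve}(t)$ is diagonal with blocks $t^{k+1-2i}I_{m_i}$, where the exponents are strictly decreasing in $i$. The computation of \Cref{example_limit_computation} then shows two things. First, $\bP(\lambda_{\ve})$ is the block upper triangular standard parabolic for the partition $m_1+\cdots+m_k$, which is exactly $\bStab$ of the flag $\cV_1\subseteq\cV_1\oplus\cV_2\subseteq\cdots$. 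Second, $\bCent(\lambda_{\ve})$ is the block diagonal Levi $\prod\GL_{m_i}$. This also shows that the square with \Cref{Idemp_to_GSB_surj} and \Cref{SB_isomorphism_with_Par} commutes.

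Surjectivity then follows locally. Every pair $(\bP,\bL)$ is locally conjugate to a standard parabolic with its standard block diagonal Levi, and this pair comes from the coordinate idempotents. Equivariance lets us transport this back along the conjugation.

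The main obstacle is injectivity. Suppose $\ve$ and $\ve'$ give the same pair. Because the bottom map of the square is an isomorphism, the flags $(e_1+\cdots+e_i)\cA$ coincide. In particular the lengths agree locally, and hence globally. We must then recover the individual idempotents from $\bL$. The plan is as follows.

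\begin{itemize}
\item Locally, $\bL=\bCent(\lambda_{\ve})$ is the block diagonal group, and its centre is the torus $\{\sum_i c_ie_i\}$. The idempotents $e_i$ are therefore determined by $\bL$ up to order, as the primitive idempotents of the algebra spanned by $Z(\bL)$.
\item The order is fixed by the flag: $e_i$ is the unique primitive idempotent of $Z(\bL)$ with $e_i\cA\subseteq(e_1+\cdots+e_i)\cA$ and $e_i\notin(e_1+\cdots+e_{i-1})\cA$.
\end{itemize}

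Alternatively, one can use the uniqueness of Levi subgroups up to $\rad_u(\bP)(S)$ from \cite[Corollaire 1.8, Exp. XXVI]{SGA3}. If $\ve'=u\ve u^{-1}$ with $u\in\rad_u(\bP)$ and $u$ normalises $\bL$, then $u\in\bNorm_\bP(\bL)\cap\rad_u(\bP)=\bL\cap\rad_u(\bP)=1$, which gives $\ve=\ve'$. I expect making this step rigorous to be the hardest part, because it has to be carried out over a cover and then glued.
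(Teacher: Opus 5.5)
Your outline is correct in substance, but it reaches injectivity by a different route than the paper, and one step needs rewording to be valid over a disconnected base.

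\textbf{Comparison with the paper.} The paper reduces, via \Cref{conjugating_limits_Levis}, to a standard pair $(\bP,\bL)$ of constant type and proves uniqueness there directly. A cocharacter with centralizer $\bL$ lands in $Z(\bL)\cong\GG_m^p$, so any preimage is a permutation of the block idempotents. The computation $\lambda(t)B\lambda(t)^{-1}=[t^{a_i-a_j}B_{ij}]$ then shows that only the decreasing order yields $\bP$. The paper never uses $\LowGSB_\cA\cong\cPar_\bG$ here; the compatibility square (\Cref{inner_PL_iso_diagram}) is proved afterwards by a separate idempotent computation.

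You instead compute $(\bP(\lambda_{\ve}),\bL(\lambda_{\ve}))$ for an arbitrary constant-rank $\ve$ in an adapted basis, which gives that square for free. You then use \Cref{SB_isomorphism_with_Par} to read the order of the idempotents off the flag, rather than off the signs of the exponents. Both routes rest on the same fact, that $\bL$ determines the idempotents up to order. Yours gets the square and avoids the exponent bookkeeping, at the cost of depending on the flag--parabolic isomorphism; the paper's argument is self-contained. You also verify compatibility with the truncating restrictions, which the paper leaves implicit.

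\textbf{The step that needs repair.} ``Primitive idempotents of the algebra spanned by $Z(\bL)$'' is not the right notion when the base is not connected. On $S_1\sqcup S_2$ each block idempotent decomposes as $e_i=e_i1_{S_1}+e_i1_{S_2}$, so it is not primitive. Your criterion $e_i\notin(e_1+\cdots+e_{i-1})\cA$ then fails to single out $e_i$. Localizing cannot make a general base connected (for example, the spectrum of an infinite product of fields).

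The step can be made rigorous globally, with no gluing:
\begin{enumerate}
\item Since $\GG_m$ is commutative, $\lambda_{\ve'}$ lands in $\bCent(\lambda_{\ve'})=\bL=\bCent(\lambda_{\ve})$. Hence $\lambda_{\ve'}(s)$ commutes with $\lambda_{\ve}(t)$ in $\cA[s^{\pm1},t^{\pm1}]$.
\item On pieces where the length is constant, the exponents in each cocharacter are distinct. Comparing coefficients of $s^bt^a$ gives $e_ie'_j=e'_je_i$ for all $i,j$.
\item The partial sums $f_i=e_1+\cdots+e_i$ and $f'_i=e'_1+\cdots+e'_i$ are therefore commuting idempotents with $f_i\cA=f'_i\cA$.
\item It follows that $f_i=f'_if_i=f_if'_i=f'_i$, and so $e_i=f_i-f_{i-1}=e'_i$.
\end{enumerate}

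\textbf{Your alternative argument.} It presupposes $\ve'=u\ve u^{-1}$ for some $u\in\rad_u(\bP)$. Conjugacy of Levi subgroups (SGA3, Exp.~XXVI, 1.8) does not supply this, because here the two Levis already coincide. You would additionally need that $\rad_u(\bP)$ acts transitively on splittings of the flag. That fact is true and elementary, but it has to be argued.
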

\begin{proof}
It suffices to show that the map is an isomorphism when $\cA = \Mat_d(\cO)$ and so $\bG = \GL_n$ since the general map with be locally of this form, and a map of sheaves which is locally an isomorphism is an isomorphism globally.

Assuming that $\cA = \Mat_d(\cO)$, it is then sufficient to show that the standard parabolic and Levi pairs, i.e., those where the parabolic subgroup contains the Borel subgroup of invertible upper triangular matrices and the Levi subgroup consists of block diagonal matrices, have a unique preimage in $\LowIdemp_{\Mat_d(\cO)}$, which we will denote by $\LowIdemp_d$. This is sufficient since any other pair $(\bP,\bL)$ is conjugate to a pair of the above form. Say that $(g\bP g^{-1},g\bL g^{-1})$ is a standard pair and so there is a unique $\overline{e} = (e_1,\ldots,e_{\ell+1})$ such that
\[
(\bP(\lambda_{\overline{e}}),\bL(\lambda_{\overline{e}}))=(g\bP g^{-1},g\bL g^{-1}).
\]
By \Cref{conjugating_limits_Levis}, this means that
\[
(\bP,\bL) = (\bP(g^{-1}\lambda_{\overline{e}}g),\bL(g^{-1}\lambda_{\overline{e}}g)).
\]
It is straightforward using the definition of the cocharacter to see that
\[
g^{-1}\lambda_{\overline{e}}g = \lambda_{g^{-1}\overline{e}g}
\]
where $g^{-1}\overline{e}g = (g^{-1}e_1 g,\ldots,g^{-1}e_{\ell+1}g)\in \LowIdemp_d$. Of course, if there was another $\overline{e'} \in \cIdemp_d$ also producing $(\bP,\bL)$, then by using \Cref{conjugating_limits_Levis} again we would have that $g\overline{e'}g^{-1}$ produces the same standard pair as $\overline{e}$, and so by uniqueness $g\overline{e'}g^{-1} = \overline{e}$, meaning that $g^{-1}\overline{e}g$ is the unique section which produces $(\bP,\bL)$.

Furthermore, it is sufficient to assume that $(\bP,\bL)$ is a standard pair of constant type, i.e., that $\bP$ has constant type under the type morphism of \eqref{type_map_parabolics}, since we may otherwise localize until it does have constant type. Thus, we may assume that
\[
\bL \cong \GL_{n_1}\times \GL_{n_2}\times \ldots \GL_{n_p}
\]
sitting inside $\GL_d$ as
\[
\bL = \left\{ \left[ \begin{array}{c:c:c:c} B_1 & 0 & 0 & 0 \\ \hdashline 0 & B_2 & 0 & 0 \\ \hdashline 0 & 0 & \ddots & 0 \\ \hdashline 0 & 0 & 0 & B_p  \end{array}\right] \mid B_i \in \GL_{n_i} \right\}
\]
and then $\bP$ sits inside $\GL_d$ as
\[
\bP = \left\{ \left[ \begin{array}{c:c:c:c} B_1 & * & * & * \\ \hdashline 0 & B_2 & * & * \\ \hdashline 0 & 0 & \ddots & * \\ \hdashline 0 & 0 & 0 & B_p  \end{array}\right] \mid B_i \in \GL_{n_i} \right\}
\]
Let $e_i$ be idempotents corresponding to the block decomposition of $\bL$, i.e., let
\[
e_1 = \left[ \begin{array}{c:c:c:c} I_{n_1} & 0 & 0 & 0 \\ \hdashline 0 & 0 & 0 & 0 \\ \hdashline 0 & 0 & \ddots & 0 \\ \hdashline 0 & 0 & 0 & 0  \end{array}\right], e_2 = \left[ \begin{array}{c:c:c:c} 0 & 0 & 0 & 0 \\ \hdashline 0 & I_{n_2} & 0 & 0 \\ \hdashline 0 & 0 & \ddots & 0 \\ \hdashline 0 & 0 & 0 & 0  \end{array}\right], \ldots, e_p = \left[ \begin{array}{c:c:c:c} 0 & 0 & 0 & 0 \\ \hdashline 0 & 0 & 0 & 0 \\ \hdashline 0 & 0 & \ddots & 0 \\ \hdashline 0 & 0 & 0 & I_{n_p}  \end{array}\right].
\]
Let $\overline{e} = (e_1,\ldots,e_p)$. This tuple of idempotents is lowered since $e_i|_T\neq 0$ for any $T \in \Sch_S$, thus $\ve \in \LowIdemp_d(S)$. Additionally, since $\ve$ is constant length, the exponents of $t$ appearing in $\lambda(\ve)$ are all constant sections. A quick calculation then shows that $\bL = \bL(\lambda_{\overline{e}})$. However, this is not the unique cocharacter whose centralizer is $\bL$. Any cocharacter $\lambda \colon \GG_m \to \GL_d$ whose centralizer is $\bL$ must map into the center of $\bL$, which is isomorphic to $\GG_m^p$. Thus, it is of the form
\[
\lambda(t) = \sum_{i=1}^p t^{\underline{a_i}}e_i
\]
for powers $\underline{a_i} \in \underline{\ZZ}$. In order to have $\bCent_{\GL_d}(\lambda) = \bL$ and not a larger subgroup, all $\underline{a_i}$ must be locally distinct, and in fact any such cocharacter will have centralizer $\bL$. However, not all of these cocharacters come from a section in $\LowIdemp_d$, only those with $\{\underline{a_1},\ldots,\underline{a_p}\} = \{p-1,p-3,\ldots,-p+3,-p+1\}$ and these come from the sections in $\LowIdemp_d(S)$ which are permutations of the ordered tuple $(e_1,\ldots,e_p)$. Thus, we must argue that from among these cocharacters, only $\lambda_{\overline{e}}$ has $\bP$ as its limit parabolic subgroup.

Consider such a cocharacter $\lambda(t) = \sum_{i=1}^p t^{a_i}e_i$ with $\{a_1,\ldots,a_p\} = \{p-1,p-3,\ldots,-p+3,-p+1\}$. Write a matrix $B\in \GL_d$ as a $p\times p$ block matrix
\[
B = \left[ \begin{array}{c:c:c:c} B_{11} & B_{12} & \hdots & B_{1p} \\ \hdashline B_{21} & B_{22} & \hdots & B_{2p} \\ \hdashline \vdots & \vdots & \ddots & \vdots \\ \hdashline B_{p1} & B_{p2} & \hdots & B_{pp}  \end{array}\right]
\]
with square blocks on the diagonal of size according to the block decomposition of $\bL$ and thus possibly rectangular off-diagonal blocks. Performing a calculation analogous to the one in \Cref{example_limit_computation}, we see that
\[
\lambda(t)B\lambda(t^{-1}) = \left[ \begin{array}{c:c:c:c} B_{11} & t^{a_1-a_2}B_{12} & \hdots & t^{a_1-a_p} B_{1p} \\ \hdashline t^{a_2-a_1} B_{21} & B_{22} & \hdots & t^{a_2-a_p}B_{2p} \\ \hdashline \vdots & \vdots & \ddots & \vdots \\ \hdashline t^{a_p-a_1}B_{p1} & t^{a_p-a_2}B_{p2} & \hdots & B_{pp}  \end{array}\right] = [t^{a_i-a_j}B_{ij}]_{i,j=1}^p.
\]
The resulting limit parabolic subgroup will agree with $\bP$ if and only if all the powers of $t$ appearing in the lower diagonal blocks are negative (and so those appearing in the upper diagonal blocks will be positive). However, this occurs exactly when the tuple $(a_1,a_2,\ldots,a_p)$ is decreasing, meaning that $a_i = p+1-2i$ and so
\[
\lambda = \lambda_{\overline{e}}.
\]
Therefore, $\overline{e} \in \LowIdemp_\cA(S)$ is the unique section such that
\[
(\bP(\lambda_{\overline{e}}),\bL(\lambda_{\overline{e}})) = (\bP,\bL),
\]
which finishes the proof.
\end{proof}

\begin{rem}
\Cref{limit_subgroups} describes how any cocharacter $\lambda \colon \GG_m \to \bG$ produces a parabolic-Levi pair $(\bP(\lambda),\bL(\lambda))$. Conversely, \Cref{Idemp_PL_iso} says that every parabolic-Levi pair arises uniquely from this construction when one considers cocharacters associated in a particular way to lowered tuples of pairwise orthogonal idempotents. However, in the case when the base scheme $S$ is connected, the fact that every pair $(\bP,\bL)$ arises from at least some cocharacter was already included in \cite[7.3.1(1)]{Gille}.
\end{rem}

\begin{prop}\label{inner_PL_iso_diagram}
Let $\bG = \GL_{1,\cA}$ for an Azumaya $\cO$--algebra of constant degree $d$. The isomorphism of \Cref{Idemp_PL_iso} fits into the commutative diagram
\[
\begin{tikzcd}
\LowIdemp_\cA \ar[r,"\sim"] \ar[d,two heads] & \cPL_\bG \ar[d,two heads] \\
\LowGSB_\cA \ar[r,"\sim"] & \cPar_\bG
\end{tikzcd}
\]
where the downward maps are those of \Cref{Idemp_to_GSB_surj} and \eqref{PL_to_Par_surj} respectively, and the bottom isomorphism is the map of \Cref{SB_isomorphism_with_Par}.
\end{prop}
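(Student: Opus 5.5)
We need to show that for $\ve=(e_1,\ldots,e_{\ell+1})\in\LowIdemp_\cA$, the limit parabolic $\bP(\lambda_{\ve})$ equals the stabilizer of the flag
\[
0\subseteq e_1\cA\subseteq (e_1+e_2)\cA\subseteq\ldots\subseteq(e_1+\ldots+e_\ell)\cA\subseteq\cA .
\]
The surjectivity of the vertical maps is already known from \Cref{Idemp_to_GSB_surj} and \eqref{PL_to_Par_surj}, so only commutativity is at stake. Both composites are maps of sheaves, and equality of two subgroups of $\bG$ can be checked locally. We therefore reduce to a cover over which $\cA\cong\Mat_d(\cO)$ and over which the length $\underline{a}(\ve)$ of the tuple is constant. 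On such pieces $\ve$ restricts to a tuple of nonzero idempotents $(e_1,\ldots,e_p)$, and the flag restricts compatibly: the truncation rules of $\LowIdemp_\cA$ and $\LowGSB_\cA$ match, because $(e_1+\ldots+e_i)\cA=\cA$ exactly when $e_{i+1}=\ldots=0$.

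Next we apply conjugation equivariance. Both constructions are equivariant for conjugation by $g\in\bG$. On the parabolic side this is \Cref{conjugating_limits_Levis} together with $g\lambda_{\ve}g^{-1}=\lambda_{g\ve g^{-1}}$, as observed in the proof of \Cref{Idemp_PL_iso}. On the flag side we use $g e\cA g^{-1}=(geg^{-1})\cA$ together with the action of \Cref{GL_action_on_GSB}, and the fact that stabilizers transform by conjugation. Since the $e_i$ have constant rank locally, we may further localize so that the decomposition $\cO^d=\bigoplus \Img(e_i)$ has free summands. After a conjugation we may then assume the $e_i$ are the standard block-diagonal idempotents of sizes $n_1,\ldots,n_p$.

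In this standard case both sides are computed explicitly. The computation in the proof of \Cref{Idemp_PL_iso}, with $a_i=p+1-2i$ decreasing, shows that $\bP(\lambda_{\ve})$ is the standard block upper triangular parabolic. On the other side, \Cref{LowIdemp_to_LowStiefel_iso} and \Cref{flags_to_SB} identify the flag of ideals with the flag
\[
0\subseteq \Span(e_1,\ldots,e_{n_1})\subseteq\ldots\subseteq \cO^d
\]
of standard coordinate subspaces. The proof of \Cref{flag_stabilizers_are_parabolic} shows its stabilizer is the same block upper triangular group, and \Cref{stabilizers_are_parabolic} guarantees that the stabilizer of the ideal flag agrees with the stabilizer of the submodule flag. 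Hence the two parabolics coincide.

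The main obstacle is bookkeeping rather than mathematics. We must verify that restricting to the pieces where the length is constant is compatible with how each composite is defined, in particular with the exponents $\underline{a}(\ve)+1-2i$ of the cocharacter, which become constant there. We must also check that the identifications of \Cref{flags_to_SB} and \Cref{LowIdemp_to_LowStiefel_iso} intertwine the flag $\big((e_1+\ldots+e_i)\cA\big)_i$ with the flag $\big(\cI_{\Img(e_1+\ldots+e_i)}\big)_i$. That second point is exactly what \Cref{LowIdemp_to_LowStiefel_iso} provides, so it can be cited.
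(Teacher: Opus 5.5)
Your proof is correct, but it takes a different route from the paper. The paper neither localizes nor conjugates. For an arbitrary $T'$ it identifies $\cA(T'_a)\cong\cA(T')[t]$ and $\cA(T'_m)\cong\cA(T')[t,t^{-1}]$, and expands $\lambda_{\ve}(t)\,h\,\lambda_{\ve}(t)^{-1}$ as $\sum_{i,j}t^{a_i-a_j}e_ihe_j$. It then shows by idempotent calculus alone that $h\in\bP(\lambda_{\ve})(T')$ if and only if $e_phe_q=0$ for all $p>q$, if and only if $hf_k=f_khf_k$ for $f_k=e_1+\cdots+e_k$, if and only if $h$ stabilizes every $f_k\cA|_{T'}$. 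That argument works verbatim in any Azumaya algebra, needs no adapted bases, and gives an intrinsic description of $\bP(\lambda_{\ve})$ in terms of the idempotents.

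You instead use that both composites are sheaf maps and are equivariant under conjugation: \Cref{conjugating_limits_Levis} with $g\lambda_{\ve}g^{-1}=\lambda_{g\ve g^{-1}}$ on one side, and $g(e_1+\cdots+e_i)\cA g^{-1}=(g(e_1+\cdots+e_i)g^{-1})\cA$ on the other. This reduces you to standard block idempotents, where both groups were already computed in the proofs of \Cref{Idemp_PL_iso} and \Cref{flag_stabilizers_are_parabolic}. Your route recycles earlier work and handles the actual cocharacter \eqref{inner_cocharacter}, with its locally constant exponents, directly. The paper's computation is written with the simpler $\sum_i t^{-i}e_i$, which gives the same parabolic since only the strict decrease of the exponents matters. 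The price is more infrastructure: compatibility of $\ve\mapsto\bP(\lambda_{\ve})$ with truncating restrictions, a local basis adapted to $\cO^d=\bigoplus_i\Img(e_i)$, and the identification $\GL_d(T_a)=\GL_d(\cO(T)[t])$ behind the explicit matrix computation. The last point is harmless if you only test on affine $T$.
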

\begin{proof}
Let $T\in \Sch_S$ and consider a section $\overline{e}=(e_1,\ldots,e_{\ell+1}) \in \LowIdemp_\cA(T)$. Let $\cI_j = (e_1+\ldots + e_j)\cdot \cA|_T$ so that
\[
\cI_{\overline{e}} = (0 \subseteq \cI_1 \subseteq \ldots \subseteq \cI_\ell \subseteq \cA|_T)
\]
is the image of $\overline{e}$ in $\LowGSB_\cA(T)$. Since $\overline{e}$ maps to $(\bP(\lambda_{\overline{e}}),\bL(\lambda_{\overline{e}}))$ in $\cPL_\bG(T)$, to show that the diagram commutes we need to verify that
\[
\bStab(\cI_{\overline{e}}) = \bP(\lambda_{\overline{e}}).
\] 
Recall from \Cref{prelim_parabolics_and_levis} that for $T'\in \Sch_T$, the group $\bP(\lambda_{\overline{e}})(T')$ is
\[
\left\{ h \in \GL_{1,\cA}(T') \mid \lambda_{\overline{e}}(t)h|_{T_m'}\lambda_{\overline{e}}(t^{-1}) \in \Img(\GL_{1,\cA}(T'_a) \xrightarrow{\textrm{res}} \GL_{1,\cA}(T'_m))\right\}.
\]
where $T'_a \to T'$ and $T'_m \to T'$ are the relatively affine schemes corresponding to the sheaves of commutative $\cO|_{T'}$--algebras $\cO|_{T'}[t]$ and $\cO|_{T'}[t,t^{-1}]$, respectively. Thus, as a first step we will identify $\GL_{1,\cA}(T'_a)$ and $\GL_{1,\cA}(T'_m)$. By definition,
\[
\GL_{1,\cA}(T'_a) = \cA(T'_a)^\times,
\]
and so we must identify $\cA(T'_a)$. Since the morphism $T'_a \to T'$ is affine, for any Zariski cover $\{U_i \to T'\}_{i\in I}$ by affine open subschemes, the pulled back cover $\{T'_a\times_{T'} U_i \to T'_a\}_{i\in I}$ is also a Zariski cover by affine open subschemes. Then, since $\cA$ is quasi-coherent, this means that
\begin{align*}
\cA(T'_a\times_{T'} U_i)&\cong \cA(U_i)\otimes_{\cO(U_i)} \cO(T'_a\times_{T'} U_i) \\
&\cong \cA(U_i)\otimes_{\cO(U_i)} \cO(U_i)[t] \\
&\cong \cA(U_i)[t]
\end{align*}
where the variable $t$ commutes with all elements of $\cA(U_i)$. From here, an easy analysis of the sheaf exact sequence (where one can view $\cA(U_i)[t] \cong \oplus_{i=0}^{\infty} \cA(U_i)$ as a module) shows that
\[
\cA(T'_a) \cong \cA(T')[t].
\]
Likewise, we obtain that
\[
\cA(T'_m) \cong \cA(T')[t,t^{-1}].
\]
Therefore, $\GL_{1,\cA}(T'_a)$ and $\GL_{1,\cA}(T'_m)$ are simply the invertible elements in the the above two algebras, respectively. Additionally,
\[
\lambda_{\overline{e}}(t) = \sum_{i=1}^{\ell+1}t^{-i}e_i \in \GL_{1,\cA}(T'_m).
\]

Now, we will show that  $\bP(\lambda_{\overline{e}}) \subseteq \bStab(\cI_{\overline{e}})$. For a section $h\in \bP(\lambda_{\overline{e}})(T') \subseteq \cA(T')^\times$, we have that
\begin{align*}
\lambda_{\overline{e}}(t)h\lambda_{\overline{e}}(t^{-1}) &= \left(\sum_{i=1}^{\ell+1}t^{-i}e_i\right)h\left(\sum_{i=1}^{\ell+1}t^i e_i\right) \\
&= \sum_{i,j=1}^{\ell+1} t^{-i+j}e_ihe_j \\
&= \sum_{n=-\ell}^\ell t^n \left(\sum_{-i+j=n} e_ihe_j\right)
\end{align*}
must only involve non-negative powers of $t$, and thus
\[
\sum_{-i+j=n} e_ihe_j = 0
\]
for all $-\ell \leq n \leq -1$. Equivalently, by flipping some negative signs and changing indices,
\[
\sum_{i=n+1}^{\ell+1} e_i h e_{i-n} = 0
\]
for all $1\leq n \leq \ell$. Now, for any pair of indices $p>q$, we may take the above expression with $n=p-q$ and multiply on the left and right to obtain
\[
e_p h e_q = e_p \left(\sum_{i=p-q+1}^{\ell+1} e_i h e_{i-p+q} \right) e_q = 0.
\]
In particular, one sees that for any $1\leq k \leq \ell+1$,
\[
(e_{k+1}+\ldots+e_{\ell+1})h(e_1+\ldots+e_k) =0.
\]
Writing $f_k = e_1+\ldots+e_k$, which is also an idempotent, this is $(1-f_k)hf_k = 0$, which can equivalently be written as
\[
hf_k = f_khf_k.
\]
We now claim that the element $h$ stabilizes the right ideal $f_k\cA|_{T'}$. Indeed, any section of $hf_k\cA|_{T'}h^{-1}$ is of the form
\[
(h f_k) a h^{-1} = (f_k h f_k)a h^{-1} = f_k(hf_kah^{-1}) \in f_k\cA|_{T'}.
\]
So $hf_k\cA|_{T'}h^{-1} \subseteq f_k\cA|_{T'}$. The same reasoning applies to $h^{-1} \in \bP(\lambda_{\overline{e}})(T')$ yielding the reverse inclusion, and thus
\[
hf_k\cA|_{T'}h^{-1} = f_k\cA|_{T'}.
\]
This holds for all $1\leq k \leq \ell+1$, showing that in fact
\[
h \in \bStab(\cI_{\overline{e}})(T')
\]
and so we have that $\bP(\lambda_{\overline{e}}) \subseteq \bStab(\cI_{\overline{e}})$ as claimed.

Finally, we argue that $\bStab(\cI_{\overline{e}}) \subseteq \bP(\lambda_{\overline{e}})$. Consider a section $a\in \bStab(\cI_{\overline{e}})(T')$. Writing $f_k = e_1+\ldots e_k$ as above, we have that
\[
a f_k \cA|_{T'} a^{-1} = f_k \cA|_{T'},
\]
which in particular implies that $af_k a^{-1} = f_k b$ for some section $b\in \cA(T')$, or equivalently that $af_k = f_k b a$. However, this means that
\[
f_k a f_k = f_k(f_k b a) = f_k b a = a f_k
\]
since $f_k$ is idempotent. Next, we essentially reverse the calculations from the previous case. For any indices $p>q$ We have that $(1-f_q)a f_q = 0$, and so also $e_p(1-f_q)a f_q e_q = 0$ which expands simply into $e_p a e_q = 0$. Since we have this for all $p>q$, it is immediate that
\[
\lambda_{\overline{e}}(t)a\lambda_{\overline{e}}(t^{-1}) \in \cA(T')[t],
\]
i.e., it does not involve any negative powers of $t$. Likewise, by the same reasoning the same holds for
\[
\lambda_{\overline{e}}(t)a^{-1}\lambda_{\overline{e}}(t^{-1})
\]
which is an inverse for the expression with $a$, and thus
\[
\lambda_{\overline{e}}(t)a\lambda_{\overline{e}}(t^{-1}) \in \GL_{1,\cA}(T'_a)
\]
meaning that $a\in \bP(\lambda_{\overline{e}})(T')$. This shows that we also have $\bStab(\cI_{\overline{e}}) \subseteq \bP(\lambda_{\overline{e}})$ and so
\[
\bStab(\cI_{\overline{e}}) = \bP(\lambda_{\overline{e}}).
\]
Thus, the diagram commutes and the proof is finished.
\end{proof}

\subsection{Outer Case}\label{outer_case_idempotents}
We now consider the general case of $\cPar_\bG$ when $\bG \cong \bU(\cB,\tau)$ for an Azumaya algebra with involution of the second kind. Here, we will need the notion of the \emph{$\cL$--subrank} of an $\cL$--submodule. This will be a locally constant section, i.e., a section of the sheaf $\underline{\{0,\ldots,d\}}$ as in \Cref{Locally_Constant_Sheaves}, which we define through sheafification. In our application, it will be closely related to the $\cL$--gap of \Cref{defn_L_gap}.

\begin{defn}
Let $L\to S$ be a finite \'etale cover of schemes of degree $k$ with corresponding commutative $\cO$--algebra $\cL\colon \Sch_S \to \Rings$. Let $\cM \colon \Sch_S \to \Ab$ be a finite locally free $\cL$--module of constant rank $d$. We define the sheaf of $\cL$--submodules which are locally direct summands,
\begin{align*}
\SubMod_{\cL,\cM} \colon \Sch_S &\to \Sets \\
T &\mapsto \left\{\cV \mid \begin{array}{l} \cV\subseteq \cM|_T \text{ is an }\cL|_T\text{--submodule, and} \\ \text{it is locally a direct summand}\end{array}\right\}.
\end{align*}
Over any $T\in \Sch_S$ where $\cL|_T \cong (\cO|_T)^k$, which happens locally, we will have that $\cM|_T \cong \cM_{1,T}\times\ldots\times \cM_{k,T}$ for finite locally free $\cO|_T$--modules $\cM_{i,T}$ all of constant rank $d$. Based on this, we define a presheaf
\[
\SplitSubMod_{\cL,\cM} \colon \Sch_S \to \Sets
\]
of split constant rank $\cL$--submodules which are direct summands which behaves on $T\in \Sch_S$ as
\[
T \mapsto \begin{cases} \O & \text{if }\cL|_T \not\cong (\cO|_T)^k \\ \left\{ \cV_1\times\ldots\times\cV_k \mid \begin{array}{l} \cV_i\subseteq \cM_{i,T} \text{ is a direct summand,} \\ \text{and it has constant } \cO|_T\text{--rank} \end{array}\right\} & \text{if }\cL|_T \cong (\cO|_T)^k \end{cases}
\]
with restriction maps coming from submodule restriction within $\cM$ (and so they may involve permutations of the factors depending on the isomorphisms $\cL|_T\cong (\cO|_T)^k$ chosen).
\end{defn}
\begin{lem}
The canonical injection $\SplitSubMod_{\cL,\cM} \inj \SubMod_{\cL,\cM}$ has the universal property of sheafification and thus shows that
\[
(\SplitSubMod_{\cL,\cM})^\sharp \cong \SubMod_{\cL,\cM}.
\]
\end{lem}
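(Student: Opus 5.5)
The plan mirrors the proof of \Cref{Flag2_sheafification}, which is the length-one version of this statement. The universal property of sheafification for an injection of presheaves $\SplitSubMod_{\cL,\cM} \inj \SubMod_{\cL,\cM}$ into a sheaf reduces to two checks: that $\SubMod_{\cL,\cM}$ really is a sheaf, and that every section of $\SubMod_{\cL,\cM}$ lies locally in the image of $\SplitSubMod_{\cL,\cM}$. Injectivity is automatic, since the map is literally an inclusion of sets of submodules of $\cM|_T$.

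\textbf{Step 1: $\SubMod_{\cL,\cM}$ is a sheaf.} Subsheaves of $\cM$ glue uniquely. Being an $\cL$--submodule is checked on sections, so it is local. Being locally a direct summand is local by definition. Hence $\SubMod_{\cL,\cM}$ is a sheaf.

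\textbf{Step 2: local surjectivity.} Take $\cV \in \SubMod_{\cL,\cM}(T)$.
\begin{enumerate}
\item Pass to a cover of $T$ over which $\cL \cong (\cO)^k$, using that $L\to S$ is finite \'etale of degree $k$. This splits $\cM \cong \cM_{1}\times\ldots\times\cM_{k}$ via the orthogonal idempotents of $\cO^k$.
\item Since $\cV$ is an $\cO^k$--submodule, these idempotents act on $\cV$, so $\cV = \cV_1\times\ldots\times\cV_k$ with $\cV_i = \epsilon_i \cV \subseteq \cM_i$.
\item Refine the cover so that $\cM|_{U} = \cV|_U \oplus \cW$. Applying the idempotents $\epsilon_i$ to this decomposition gives $\cM_i = \cV_i \oplus \epsilon_i\cW$, so each $\cV_i$ is a direct summand of $\cM_i$.
\item Each $\cM_i$ is finite locally free over $\cO$, so each direct summand $\cV_i$ is finite locally free. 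Refining once more, each $\cV_i$ has constant rank.
\end{enumerate}
Over such a refinement, $\cV$ restricts to a section of $\SplitSubMod_{\cL,\cM}$, which gives local surjectivity.

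The only step needing care is step 3 of Step 2, the passage from ``$\cV$ is locally a direct summand as an $\cL$--module'' to ``each factor $\cV_i$ is a direct summand of $\cM_{i}$.'' It is handled by projecting the complement $\cW$ with the idempotents $\epsilon_i$. Everything else is a direct repetition of \Cref{Flag2_sheafification}.
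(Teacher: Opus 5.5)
Your proposal is correct and follows essentially the same route as the paper: the paper also reduces to local surjectivity. It splits $\cV$ as $\cV_1\times\cdots\times\cV_k$ over a cover where $\cL|_T\cong(\cO|_T)^k$, then observes that after further localization each factor is a constant-rank direct summand of $\cM_{i,T}$. Your extra details only make explicit what the paper leaves implicit: the check that $\SubMod_{\cL,\cM}$ is a sheaf, and projecting the complement $\cW$ with the idempotents $\epsilon_i$ (valid because you take $\cW$ to be an $\cL$--submodule, which is the paper's convention).
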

\begin{proof}
As argued in the proof of \Cref{Flag2_sheafification}, any $\cL$--submodule $\cV \subseteq \cM$ locally takes the form
\[
\cV_1\times\ldots\cV_k \subseteq \cM_{1,T}\times\ldots\times\cM_{k,T} = \cM|_T
\]
over any $T\in \Sch_S$ where $\cL|_T \cong (\cO|_T)^k$. If $\cV$ is also locally a direct summand, then the assumptions on $\cM$ mean that $\cV$ locally belongs to $\SplitSubMod_{\cL,\cM}$. Thus, any section of $\SubMod_{\cL,\cM}$ locally belongs to $\SplitSubMod_{\cL,\cM}$, which justifies the claim.
\end{proof}

\begin{defn}\label{defn_subrank}
Let $L\to S$ be an \'etale cover of schemes of degree $k$ with corresponding commutative $\cO$--algebra $\cL\colon \Sch_S \to \Rings$. Let $\cM \colon \Sch_S \to \Ab$ be a finite locally free $\cL$--module of constant rank $d$. We define a map of presheaves
\[
\SplitSubMod_{\cL,\cM} \to \underline{\{0,\ldots,d\}}
\]
requiring that for each $T\in \Sch_S$ for which $\cL|_T\cong (\cO|_T)^k$, and so $\SplitSubMod_{\cL,\cM}(T)$ is nonempty, it acts as
\[
\cV_1\times\ldots\times\cV_k \mapsto \min(\{\rank_{\cO|_T}(\cV_1),\ldots,\rank_{\cO|_T}(\cV_k)\})
\]
and of course it is simply the inclusion $\O \inj \cN(T)$ when $\SplitSubMod_{\cL,\cM}(T)= \O$.

Since $\underline{\{0,\ldots,d\}}$ is a sheaf, the above map induces a unique map of sheaves
\[
\subrank_\cL \colon \SubMod_{\cL,\cM} \to \underline{\{0,\ldots,d\}}
\]
which we call the \emph{$\cL$--subrank}.
\end{defn}
The $\cL$--subrank is well-defined since the presheaf map is invariant under permutations of the factors and therefore compatible with restrictions. Intuitively, the $\cL$--subrank is the locally constant section which encodes the $\cL$--rank of the largest free $\cL$--module contained in $\cV$. We must use $\cL$--subrank, instead of simply $\cL$--rank, since we must consider $\cL$--submodules of $f_*(\cH)$ which are locally direct summands that are finite locally free as $\cO$--submodules but not necessarily finite locally free as $\cL$--modules. This is to include submodules of the form $\cV_1\times\cV_2 \subseteq \cO^d\times\cO^d$ where $\rank_\cO(\cV_1)\neq \rank_\cO(\cV_2)$.

\subsubsection{Orthogonal Summands of Hermitian Forms}
Let $(f\colon L \to S, \cH,h)$ be a regular Hermitian form of constant rank $d$. We wish to consider certain direct sum decompositions of $f_*(\cH)$ whose partial sums will produce a flag belonging to $\LowFlag_{(\cH,h)}$. For this purpose, we define the following.

\begin{defn}
Let $(f\colon L \to S, \cH,h)$ be a regular Hermitian form of constant rank $d$ and set $\cL = f_*(\cO|_L)$. For $T\in \Sch_S$ we say that a tuple
\[
(\cV_1,\ldots,\cV_{\ell+1}) \in \LowStiefel_{f*(\cH)}(T)
\]
is \emph{$\cL|_T$--lowered} if all $\cV_j$ are $\cL$--submodules and for all $T'\in \Sch_T$ we have that
\[
\subrank_{\cL|_{T'}}(\cV_j|_{T'})=0 \Rightarrow \cV_k|_{T'}=0 \text{ for all } k\geq j.
\]
Additionally, recall the length preserving isomorphism
\[
\rhoStiefel \colon \LowStiefel_{f_*(\cH)} \iso \RaiStiefel_{f_*(\cH)}
\]
of \eqref{eq_low_to_rai_Stiefel}. For $T\in \Sch_S$ we define the subset $\LowStiefel_{(\cH,h)}(T) \subseteq \LowStiefel_{f_*(\cH)}(T)$ to be
\[
\left\{(\cV_1,\ldots,\cV_{\ell+1}) \in \LowStiefel_{f_*(\cH)}(T) \mid \begin{array}{l} \text{the tuple is }\cL\text{--lowered, and} \\ \cV_i = \left(\oplus_{j\neq \ell+2-i}\cV'_j\right)^{\perp_h} \\ \text{where } \rhoStiefel(\cV_1,\ldots,\cV_{\ell+1}) = (\cV'_1,\ldots,\cV'_{\ell+1}) \end{array} \right\}.
\]
\end{defn}
Intuitively, for a tuple $(\cV_1,\ldots,\cV_{\ell+1})$ of constant rank submodules, the condition that $\cV_i = \left(\oplus_{j\neq \ell+2-i}\cV_j\right)^{\perp_h}$ will ensure that the resulting flag of partial sums has the appropriate symmetry property. However, since the length of a tuple may change upon restriction, this naive condition fails to produce a sheaf and therefore we must use the condition with $\rhoStiefel$ in order to obtain a sheaf.
\begin{lem}\label{outer_Stiefel_is_a_sheaf}
Setting $\LowStiefel_{(\cH,h)} \colon \Sch_S \to \Sets$ to send $T\in \Sch_S$ to $\LowStiefel_{(\cH,h)}(T)$ defined above, produces a subsheaf of $\LowStiefel_{f_*(\cH)}$.
\end{lem}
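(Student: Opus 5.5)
Since $\LowStiefel_{f_*(\cH)}$ is already a sheaf by \Cref{LowStiefel_sheaf}, it suffices to show two things: that the defining conditions of $\LowStiefel_{(\cH,h)}$ are stable under the restriction maps of $\LowStiefel_{f_*(\cH)}$ (so that we have a subpresheaf), and that they can be checked locally (so that a section of $\LowStiefel_{f_*(\cH)}$ which lies in $\LowStiefel_{(\cH,h)}$ over a cover lies there globally). Separatedness is then inherited from the ambient sheaf. For gluing, compatible local sections of $\LowStiefel_{(\cH,h)}$ glue in $\LowStiefel_{f_*(\cH)}$ to a unique section, and we check that this glued section satisfies the conditions.

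The $\cL$--lowered condition is handled exactly as in \Cref{sheafification_of_ConFlag_LH}. Being an $\cL$--submodule is local and stable under base change. The implication $\subrank_{\cL|_{T'}}(\cV_j|_{T'})=0 \Rightarrow \cV_k|_{T'}=0$ for $k\geq j$ is quantified over all $T'$, so it passes to restrictions. Truncation of trailing zeros is harmless here, since truncated components are $0$ and the implication holds trivially for them. The condition is also local: given $T'$ with subrank zero, we pull back the cover to $T'$. The subrank is compatible with base change, since it is a sheaf map by \Cref{defn_subrank}. Applying the local hypothesis over each piece shows $\cV_k$ vanishes on a cover of $T'$, hence on $T'$.

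The symmetry condition $\cV_i = (\oplus_{j\neq \ell+2-i}\cV'_j)^{\perp_h}$ is the main obstacle, because both the length $\ell+1$ and the indexing change under restriction. My plan is to reduce to the situation where the tuple lies in $\ConStiefel_{f_*(\cH)}$. Work over a cover $\{U_i\to T\}$ on which the tuple has constant ranks and therefore constant length $k_i$. There $\rhoStiefel$ acts as the identity up to reindexing. By \Cref{rhoStiefel} and the argument of \Cref{cocharacter_raised_formula}, restricting to $U_i$ gives $\cV'_{\ell+2-m}|_{U_i}=\cV_{k_i+1-m}|_{U_i}$, while the $\cV'_j$ with $j\leq \ell+1-k_i$ restrict to $0$.

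Substituting, the condition on $U_i$ becomes the naive condition $\cV_m|_{U_i} = (\oplus_{p\neq k_i+1-m}\cV_p|_{U_i})^{\perp_h}$ for $m\leq k_i$. For $m>k_i$, the left side is $0$, and the right side is the perpendicular of the full sum $f_*(\cH)|_{U_i}$, which is also $0$ by regularity. We also need that $\perp_h$ commutes with base change. This holds locally via \Cref{split_hermitian_perp} and \Cref{hermitian_orthogonal_rank}, because the modules are locally direct summands.

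So the global condition is equivalent to the family of naive constant-length conditions on any cover that makes the ranks constant. This family is visibly stable under further restriction, since the naive condition for a constant-length tuple is preserved by base change and $\rhoStiefel$ is a sheaf map commuting with restriction. It is also local, because two submodules are equal if they are equal on a cover. Together with the $\cL$--lowered part, this shows that $\LowStiefel_{(\cH,h)}$ is a subsheaf.
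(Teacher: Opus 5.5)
Your proposal is correct. Its skeleton matches the paper's proof: first show the conditions define a subpresheaf, then show they are local inside the sheaf $\LowStiefel_{f_*(\cH)}$, with the $\cL$--lowered condition handled identically by pulling the cover back to $T'$. Where you differ is in the symmetry condition.

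The paper works over an arbitrary $U\to T$ directly. It uses three facts: $\rhoStiefel$ commutes with restriction, it preserves length (\Cref{raising_tuple_preserves_length}), and $\RaiStiefel$ truncates on the left. From these it gets $\rhoStiefel(\cV_1|_U,\ldots,\cV_k|_U)=(\cV'_{\ell+2-k}|_U,\ldots,\cV'_{\ell+1}|_U)$, and reindexing shows that the condition over $U$ is the restriction of the global one. In the gluing step it applies the same identity on each member of the given cover.

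You instead first prove an intermediate characterization. On any cover trivializing the ranks, $\rhoStiefel$ is the identity up to an index shift, and the global condition is equivalent to the naive constant-length condition $\cV_m|_{U_i}=(\bigoplus_{p\neq k_i+1-m}\cV_p|_{U_i})^{\perp_h}$. You then read off both restriction-stability and locality from this, at the cost of one refinement.

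Your route makes explicit two steps that the paper's displayed computation leaves implicit:
\begin{enumerate}
\item The left-truncated $\cV'_j|_U$ vanish. This is what allows passing from the partial sum over $j\geq \ell+2-k$ to the full sum.
\item For the indices $m>k_i$ removed by truncation, the condition still holds because both sides are $0$ by regularity of $h$. The paper's gluing step asserts the equality for all $j$, although the local hypothesis only covers $j\leq k_i$.
\end{enumerate}
In exchange, the paper's route never needs constant ranks.

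A minor point: you do not need \Cref{split_hermitian_perp} or \Cref{hermitian_orthogonal_rank} to see that $\perp_h$ commutes with restriction. The definition of $\cV^{\perp_h}$ already quantifies over all $T'\in\Sch_T$, so $(\cV^{\perp_h})|_U=(\cV|_U)^{\perp_h}$ holds on the nose.
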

\begin{proof}
First we show that $\LowStiefel_{(\cH,h)}$ is a subpresheaf of $\LowStiefel_{f_*(\cH)}$ by showing that the imposed conditions are stable with respect to restrictions. First, being $\cL$--lowered is stable under restrictions by definition. Thus, assume that we have a section
\[
(\cV_1,\ldots,\cV_{\ell+1}) \in \LowStiefel_{(\cH,h)}(T)
\]
for some $T\in \Sch_S$ and that we have a map of schemes $U\to T$. The restriction of the tuple within $\LowStiefel_{f_*(\cH)}$ is
\[
(\cV_1|_U,\ldots,\cV_k|_U)
\]
for some $1\leq k \leq \ell+1$. Now, $\rhoStiefel$ is an isomorphism of sheaves and thus also compatible with restrictions, however we must keep in mind that restrictions within $\RaiStiefel_{f_*(\cH)}$ involve truncation on the left. Combining this with the fact that $\rhoStiefel$ is length preserving by \Cref{raising_tuple_preserves_length}, then if
\[
\rhoStiefel(\cV_1,\ldots,\cV_{\ell+1}) = (\cV'_1,\ldots,\cV'_{\ell+1})
\]
then
\[
\rhoStiefel(\cV_1|_U,\ldots,\cV_k|_U) = (\cV'_{\ell+2-k}|_U,\ldots,\cV'_{\ell+1}|_U).
\]
Alternatively, we can write that $(\cV_j|_U)' = (\cV'_{\ell+1-k+j})|_U$. Thus, to be a member of the set $\LowStiefel_{(\cH,h)}(U)$ the restricted section must satisfy the conditions that
\begin{align*}
\cV_i|_U &= \left(\bigoplus_{j=1\atop j\neq k+1-i}^k (\cV_j|_U)'\right)^{\perp_h} = \left(\bigoplus_{j=1\atop j\neq k+1-i}^k (\cV'_{\ell+1-k+j})|_U\right)^{\perp_h} \\
&= \left(\bigoplus_{j=\ell+2-k \atop j\neq \ell+2-i}^{\ell+1} (\cV'_j)|_U\right)^{\perp_h} = \left.\left(\bigoplus_{j\neq \ell+2-i} \cV'_j\right)^{\perp_h}\right|_U
\end{align*}
which is true since it is the restriction of the global condition. Thus, we have that $(\cV_1|_U,\ldots,\cV_k|_U) \in \LowStiefel_{(\cH,h)}(U)$ and so $\LowStiefel_{(\cH,h)}$ is a subpresheaf of $\LowStiefel_{f_*(\cH)}$.

Now, to see that $\LowStiefel_{(\cH,h)}$ is a sheaf it only remains to show that it satisfies gluing, i.e., that for $T\in \Sch_S$, a section of $\LowStiefel_{f_*(\cH)}(T)$ which locally belongs to $\LowStiefel_{(\cH,h)}$ also belongs to $\LowStiefel_{(\cH,h)}(T)$. Consider such a section
\[
(\cV_1,\ldots,\cV_{\ell+1}) \in \LowStiefel_{f_*(\cH)}(T)
\]
and a cover $\{U_i \to T\}_{i\in I}$ over which the section belongs to $\LowStiefel_{(\cH,h)}$. First, this means that for all $\cV_j$, the restrictions $\cV_j|_{U_i}$ are $\cL|_{U_i}$--modules. Therefore, each $\cV_j$ is itself an $\cL$--module. It remains to show that the tuple is $\cL|_T$--lowered. Let $T' \in \Sch_T$ and assume that for some index $j$ we have that $\subrank_{\cL|_{T'}}(\cV_j|_{T'})=0$. By setting $U_i' = T'\times_T U_i$ we obtain the cover $\{U_i' \to T'\}_{i\in I}$ of $T'$. Then, for each $i$ we have $\subrank_{\cL|_{U_i'}}(\cV_j|_{U_i'})=0$ which then implies that
\[
\cV_k|_{U_i'} = 0
\]
for all $i\in I$ and all $k \geq j$ by the assumption that the section belongs to $\LowStiefel_{(\cH,h)}$ over each $U_i$. However, together these imply that $\cV_k|_{T'}=0$ over $T'$ as well for each $k\geq j$, so we see that the tuple is $\cL|_T$--lowered, satisfying the first requirement.

To see the second property, we again use the fact that since
\[
\rhoStiefel(\cV_1,\ldots,\cV_{\ell+1}) = (\cV'_1,\ldots,\cV'_{\ell+1}).
\]
we have that
\[
\rhoStiefel(\cV_1|_{U_i},\ldots,\cV_{k_i}|_{U_i}) =(\cV'_{\ell+2-k_i}|_{U_i},\ldots,\cV'_{\ell+1}|_{U_i})
\]
for each $i\in I$. Now, because $(\cV_1|_{U_i},\ldots,\cV_{k_i}|_{U_i}) \in \LowStiefel_{(\cH,h)}(U_i)$ by assumption, we have that
\[
\cV_j|_{U_i} = \left(\bigoplus_{a\neq \ell+2-j} \cV'_a|_{U_i}\right)^{\perp_h}
\]
Since this holds for all $i\in I$, we conclude that
\[
\cV_j = \left(\bigoplus_{a\neq \ell+2-j}\cV'_a\right)^{\perp_h}
\]
which shows that $(\cV_1,\ldots,\cV_{\ell+1}) \in \LowStiefel_{(\cH,h)}(T)$ as desired, and we are done.
\end{proof}

\begin{example}\label{split_outer_stiefel}
Consider the split hermitian form $(S\sqcup S \to S,(\cO^d,\cO^d),h_d)$ of \eqref{eq_split_hermitian_form}. Then, $\cL=\cO\times\cO$ and $\cL$--submodules which are direct summands of $\cO^d\times\cO^d$ are of the form $\cV\times\cW$ for direct summands $\cV,\cW\subseteq \cO^d$. Given a section
\[
(\cV_1\times\cW_1,\ldots,\cV_{\ell+1}\times\cW_{\ell+1}) \in \LowStiefel_{((\cO^d,\cO^d),h_d)}(S)
\]
we may localize it sufficiently such that each $\cV_j$ and $\cW_j$ are constant rank $\cO$--modules and therefore the tuple is both lowered and raised, meaning that
\[
\rhoStiefel(\cV_1\times\cW_1,\ldots,\cV_{\ell+1}\times\cW_{\ell+1}) = (\cV_1\times\cW_1,\ldots,\cV_{\ell+1}\times\cW_{\ell+1})
\]
(though it technically lands in a different sheaf). Additionally, the condition that the tuple is $\cL$--lowered means that each $\cV_i$ and $\cW_i$ must have positive rank, i.e., they are non-zero. Now, for such a fixed tuple we may analyze the symmetry condition more easily. It states that
\begin{align*}
\cV_i\times\cW_i &= \left(\bigoplus_{j\neq \ell+2-i} \cV_j\times\cW_j\right)^{\perp_h} \\
&= \left(\bigoplus_{j\neq \ell-2-i}\cW_j\right)^\perp \times\left(\bigoplus_{j\neq \ell-2-i}\cV_j\right)^\perp
\end{align*}
and so $\cW_i = (\oplus_{j\neq \ell-2-i}\cV_j)^\perp$. This condition also implies that $(\oplus_{j\neq \ell+2-i}\cW_j)^\perp = \cV_i$, so we only need to consider the first condition. Hence, sufficiently locally sections of $\LowStiefel_{((\cO^d,\cO^d),h_d)}$ take the form
\[
(\cV_1\times (\oplus_{j\neq \ell+1} \cV_j)^\perp, \cV_2 \times (\oplus_{j\neq \ell} \cV_j)^\perp, \ldots,\cV_{\ell+1}\times(\oplus_{j\neq 1} \cV_j)^\perp).
\]
Additionally, if we consider the projection onto the first factor, we obtain a section
\[
(\cV_1,\ldots,\cV_{\ell+1}) \in \ConStiefel_{\cO^d}
\]
since $\rank(\cV_i)\geq 1$ for each $i$. Due to the fact we have been working locally, what we have shown is that we have a map
\begin{align*}
\ConStiefel_{\cO^d} &\to \LowStiefel_{((\cO^d,\cO^d),h_d)} \\
(\cV_1,\ldots,\cV_{\ell+1}) &\mapsto (\cV_1\times (\oplus_{j\neq \ell+1} \cV_j)^\perp,\ldots,\cV_{\ell+1}\times(\oplus_{j\neq 1} \cV_j)^\perp)
\end{align*}
which is clearly injective and which is locally surjective. Thus, it satisfies the universal property of sheafification and induces an isomorphism of sheaves
\[
\LowStiefel_{\cO^d} \iso \LowStiefel_{((\cO^d,\cO^d),h_d)}.
\]
We warn that the same formula appearing above for constant rank direct sum decompositions does not apply to describe this isomorphism of sheaves. For example, over a disconnected scheme $S = X\sqcup Y$ where $\cO^d = (\cO|_X^d,\cO|_Y^d)$, if one naively applies the formula to a section
\[
((\cV_1,\cW_1),(\cV_2,\cW_2),(0,\cW_3)) \in \LowStiefel_{\cO^d}
\]
(where the $\cV_i$ and $\cW_j$ are constant rank) one obtains
\[
((\cV_1\times 0,\cW_1\times(\cW_1\oplus\cW_2)^\perp),(\cV_2\times\cV_1^\perp,\cW_2\times(\cW_1\oplus\cW_3)^\perp),(0\times\cV_2^\perp,\cW_3\times(\cW_2\oplus\cW_3)^\perp)
\]
which is not $\cL$--lowered since $\subrank_{\cL|_X}(\cV_1\times 0) = 0$. The correct image of this section is
\[
((\cV_1\times \cV_1^\perp,\cW_1\times(\cW_1\oplus\cW_2)^\perp),(\cV_2\times\cV_2^\perp,\cW_2\times(\cW_1\oplus\cW_3)^\perp),(0,\cW_3\times(\cW_2\oplus\cW_3)^\perp).
\]
This can be corrected by using $\rhoStiefel$ of \eqref{eq_low_to_rai_Stiefel}. In particular, the isomorphism of sheaves is
\begin{align}
\LowStiefel_{\cO^d} &\iso \LowStiefel_{((\cO^d,\cO^d),h_d)} \label{eq_split_Stiefel_outer} \\
(\cV_1,\ldots,\cV_{\ell+1}) &\mapsto (\cV_1\times (\oplus_{j\neq \ell+1} \cV_j')^\perp,\ldots,\cV_{\ell+1}\times(\oplus_{j\neq 1} \cV_j')^\perp) \nonumber
\end{align}
where $\rhoStiefel(\cV_1,\ldots,\cV_{\ell+1}) = (\cV_1',\ldots,\cV_{\ell+1}')$.
\end{example}

The goal is to use the partial sums of sections in $\LowStiefel_{(\cH,h)}$ to build sections of $\LowFlag_{(\cH,h)}$. When taking these partial sums we see the relationship between $\cL$--subrank and $\cL$--gap in our application.
\begin{lem}\label{subrank_and_gap}
Let $L\to S$ be an \'etale cover of schemes of degree $k$ with corresponding commutative $\cO$--algebra $\cL\colon \Sch_S \to \Rings$. Let $\cM \colon \Sch_S \to \Ab$ be a finite locally free $\cL$--module of constant rank $d$. Consider two $\cL$--submodules which are locally direct summands $\cV,\cW \subseteq \cM$ and such that $\cV\cap \cW=0$, so that $\cV\oplus \cW \subseteq \cM$. Then,
\[
\gap_\cL(\cV,\cV\oplus\cW) = \subrank_\cL(\cW).
\]
\end{lem}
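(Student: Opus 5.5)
Both $\gap_\cL$ and $\subrank_\cL$ are morphisms of sheaves into the sheaf $\underline{\{0,\ldots,d\}}$, so the plan is to reduce to a local statement. The equality of two global sections of $\underline{\{0,\ldots,d\}}$ can be checked over a cover. I first choose a cover $\{T_i\to S\}_{i\in I}$ over which three things hold. First, $\cL|_{T_i}\cong(\cO|_{T_i})^k$, so that $\cM|_{T_i}\cong\cM_{1,T_i}\times\ldots\times\cM_{k,T_i}$. Second, $\cV|_{T_i}=\cV_1\times\ldots\times\cV_k$, $\cW|_{T_i}=\cW_1\times\ldots\times\cW_k$ and $\cV\oplus\cW|_{T_i}$ are products of $\cO|_{T_i}$--direct summands. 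Third, all the factors $\cV_j$ and $\cW_j$ have constant $\cO$--rank. This is possible by the argument in the proof of \Cref{Flag2_sheafification} together with the lemma that $\SplitSubMod_{\cL,\cM}$ sheafifies to $\SubMod_{\cL,\cM}$. Over such $T_i$, the pair $\cV\subseteq\cV\oplus\cW$ is a section of $\SplitFlag^2_{\cL,\cM}$ and $\cW$ is a section of $\SplitSubMod_{\cL,\cM}$, so both invariants are computed by the explicit presheaf formulas of \Cref{defn_L_gap} and \Cref{defn_subrank}.

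The key local observation is that $(\cV\oplus\cW)|_{T_i}$ decomposes factorwise. Since $\cV$ and $\cW$ are $\cL$--submodules, multiplication by the idempotents of $(\cO|_{T_i})^k$ shows that $(\cV+\cW)|_{T_i}=\prod_j(\cV_j+\cW_j)$. The hypothesis $\cV\cap\cW=0$ then gives $\cV_j\cap\cW_j=0$ for each $j$, so the $j$-th factor of $\cV\oplus\cW$ is $\cV_j\oplus\cW_j$. Hence
\[
\rank_\cO\big((\cV\oplus\cW)_j\big)-\rank_\cO(\cV_j)=\rank_\cO(\cW_j)
\]
for each $j$. Taking the minimum over $j$ gives $\gap_\cL(\cV\subseteq\cV\oplus\cW)|_{T_i}=\min_j\rank_\cO(\cW_j)=\subrank_\cL(\cW)|_{T_i}$. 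Since the two sections of $\underline{\{0,\ldots,d\}}$ agree on a cover, they agree globally.

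The only step needing care is the additivity of $\cO$--ranks in the factorwise decomposition. I need $\cV_j\oplus\cW_j$ to be an internal direct sum of locally free modules, so that rank is additive. This holds because each of $\cV_j$ and $\cW_j$ is finite locally free, being a direct summand of the locally free $\cM_{j,T_i}$, and the internal direct sum of such modules is isomorphic to their external sum. I also need the statement to implicitly assume that $\cV\oplus\cW$ is itself locally a direct summand, so that the pair is a section of $\cFlag^2_{\cL,\cM}$ and $\gap_\cL$ is defined; I will note this assumption. Everything else is a direct unwinding of definitions.
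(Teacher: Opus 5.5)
Your proposal is correct and follows essentially the same route as the paper: localize until everything is split with constant-rank factors, note that $\cV\oplus\cW$ decomposes factorwise as $\prod_j(\cV_j\oplus\cW_j)$, and compute $\min_j\big((r_j+s_j)-r_j\big)=\min_j s_j$. Your remark that $\cV\oplus\cW$ must itself be locally a direct summand for $\gap_\cL$ to be defined is a fair point that the paper leaves implicit. It does hold in the paper's application, where the summands come from a direct sum decomposition of $f_*(\cH)$.
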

\begin{proof}
Since both the $\cL$--gap and the $\cL$--subrank are locally constant sections, it is sufficient to show the equality holds locally. Thus, we may assume that $\cL=\cO^k$, $\cM=(\cO^d)^k$, and that
\[
\cV = \cV_1\times\ldots\times\cV_k \text{ and } \cW=\cW_1\times\ldots\times\cW_k
\]
for $\cV_i,\cW_i \subseteq \cO^d$ direct summands. In particular, this means that
\[
\cV\oplus \cW = (\cV_1\oplus\cW_1)\times\ldots\times(\cV_k\oplus\cW_k).
\]
Then, setting $\rank_\cO(\cV_i)=r_i$ and $\rank_\cO(\cW_i)=s_i$ we can simply compute that
\begin{align*}
&\gap_\cL(\cV,\cV\oplus\cW) \\
=& \min(\{(r_i+s_i)-r_i \mid 1\leq i \leq k\}) \\
=& \min(\{s_i \mid 1\leq i \leq k\}) \\
=& \subrank_\cL(\cW)
\end{align*}
to conclude the proof.
\end{proof}

Now we can see that the partial sum construction works as desired.
\begin{prop}\label{outer_Stiefel_to_Flag_surj}
Let $(f\colon L \to S,\cH,h)$ be a regular Hermitian form of constant rank $d$. We have a well-defined map of sheaves
\begin{align*}
\LowStiefel_{(\cH,h)} &\to \LowFlag_{(\cH,h)} \\
(\cV_1,\ldots,\cV_{\ell+1}) &\mapsto (0\subseteq \cV_1 \subseteq \cV_1\oplus\cV_2 \subseteq \ldots \subseteq \cV_1\oplus\ldots\oplus\cV_\ell \subseteq f_*(\cH)).
\end{align*}
In particular, an $\cL$--lowered tuple produces a symmetric $\cL$--lowered flag. Furthermore, this is a surjective map of sheaves.
\end{prop}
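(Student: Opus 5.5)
We will factor the proposed map through the inner map of \Cref{Stiefel_to_Flag_surjection}. Regard $\LowStiefel_{(\cH,h)}$ as a subsheaf of $\LowStiefel_{f_*(\cH)}$; composing with that lemma gives a map of sheaves $\LowStiefel_{(\cH,h)} \to \LowFlag_{f_*(\cH)}$ sending a tuple to its flag of partial sums. It is automatically compatible with restriction. Two things then need checking. First, the image lands in $\LowFlag_{\cL,\cH}$, i.e.\ it consists of $\cL$--submodules and is $\cL$--lowered. Second, the image is fixed by $\pi_h$.

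For $\cL$--loweredness, the partial sums are $\cL$--submodules because each $\cV_j$ is. Suppose that over some $T$ we have $\gap_{\cL|_T}(\cV_1\oplus\ldots\oplus\cV_i \subseteq \cV_1\oplus\ldots\oplus\cV_{i+1})=0$. By \Cref{subrank_and_gap} this gap equals $\subrank_{\cL|_T}(\cV_{i+1}|_T)$, so the subrank is zero. The $\cL$--lowered condition on the tuple then gives $\cV_k|_T=0$ for all $k\geq i+1$. Since $f_*(\cH)|_T=\bigoplus_j \cV_j|_T$, both partial sums equal $f_*(\cH)|_T$.

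For the symmetry, both $\pi_h$ and $\rhoStiefel$ are determined by their values on constant-rank data. It therefore suffices to check $\pi_h$-invariance locally, over a cover on which the tuple lies in $\ConStiefel$ and its length is constant. Fixing a point of the section is a local condition, because $\LowFlag_{(\cH,h)}$ is a subsheaf and $\pi_h$ is a sheaf map. On such a piece the tuple is $(\cV_1,\ldots,\cV_k)$, $\rhoStiefel$ acts as the identity, and the defining condition reads $\cV_i=(\bigoplus_{j\neq k+1-i}\cV_j)^{\perp_h}$. The flag of partial sums has constant ranks and positive $\cL$--gaps, so $\pi_h$ acts on it by the naive perpendicular \eqref{eq_ConFlag_perp}. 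Taking perpendiculars of the defining condition via \Cref{hermitian_orthogonal_rank}\ref{hermitian_orthogonal_rank_iii}, and summing over $i\leq m$, we will show $(\cV_1\oplus\ldots\oplus\cV_m)^{\perp_h}=\cV_1\oplus\ldots\oplus\cV_{k-1-m}$. Concretely, each $\cV_i$ is orthogonal to every $\cV_j$ with $j\neq k+1-i$, so $\cV_1\oplus\ldots\oplus\cV_{k-1-m}$ lies in the perpendicular, and equality follows by comparing $\cO$--ranks using \Cref{hermitian_orthogonal_rank}\ref{hermitian_orthogonal_rank_ii}. This identity is exactly $\overline{\cV}^{\perp_h}=\overline{\cV}$.

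For surjectivity we work locally, where the form becomes split. There \Cref{example_split_ConFlag_Hh} identifies $\LowFlag_{(\cH,h)}$ with $\LowFlag_{\cO^d}$, and \Cref{split_outer_stiefel} identifies $\LowStiefel_{(\cH,h)}$ with $\LowStiefel_{\cO^d}$. After further localizing to constant ranks, a hermitian flag has the form $\cV_1\times\cV_\ell^\perp\subseteq\ldots$. Choose complements $(\cU_1,\ldots,\cU_{\ell+1})\in\ConStiefel_{\cO^d}$ for $0\subseteq\cV_1\subseteq\ldots\subseteq\cV_\ell\subseteq\cO^d$ as in \Cref{Stiefel_to_Flag_surjection}. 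Its image $\cU_i\times(\bigoplus_{j\neq \ell+2-i}\cU_j)^\perp$ has first-factor partial sums $\cV_m$. The second-factor partial sums are $\bigcap_{i\leq m}(\bigoplus_{j\neq\ell+2-i}\cU_j)^\perp$ in sum form, which we expect to equal $(\cU_1\oplus\ldots\oplus\cU_{\ell+1-m})^\perp=\cV_{\ell+1-m}^\perp$.

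The main obstacle is the bookkeeping in this last index identity and in the perpendicular computation, together with making sure that the local reduction is legitimate given the truncation and raising conventions. We will handle the latter by restricting to pieces where the lengths are constant, so that all the sliding maps are identities.
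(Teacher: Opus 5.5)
Your plan follows the paper's proof closely. The $\cL$--loweredness step via \Cref{subrank_and_gap} is exactly the paper's, symmetry is checked locally on constant-rank, constant-length pieces, and surjectivity comes from choosing complements locally in the split case. There are two minor differences. You verify symmetry intrinsically from $\cV_i=(\bigoplus_{j\neq k+1-i}\cV_j)^{\perp_h}$, rather than splitting the form and computing in $\cO^d\times\cO^d$. You also write out the local preimage that the paper obtains from its square with \eqref{eq_split_Stiefel_outer} and \eqref{eq_ConFlag_iso}, which in effect checks that this square commutes on constant-rank sections.

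There is, however, an off-by-one error in your symmetry identity, and as written that step fails. A tuple of length $k$ yields the flag $W_1\subseteq\cdots\subseteq W_{k-1}$ with $W_m=\cV_1\oplus\cdots\oplus\cV_m$, which has $k-1$ proper terms. So $\overline{\cV}^{\perp_h}=\overline{\cV}$ means $W_m^{\perp_h}=W_{k-m}$, not $W_{k-1-m}$. Your claimed $W_m^{\perp_h}=W_{k-1-m}$ is false: for $k=2$, $m=1$ it says $\cV_1^{\perp_h}=0$, while the defining condition says $\cV_1^{\perp_h}=\cV_1$. For the same reason the rank comparison cannot close, since $\rank_\cO(W_m)+\rank_\cO(W_{k-1-m})<2d$.

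With the index corrected, your first idea works and even avoids ranks:
\[
W_m^{\perp_h}=\bigcap_{i\le m}\cV_i^{\perp_h}=\bigcap_{i\le m}\bigoplus_{j\neq k+1-i}\cV_j=\bigoplus_{j\le k-m}\cV_j .
\]
This uses \Cref{hermitian_orthogonal_rank}\ref{hermitian_orthogonal_rank_iii} and the fact that the $\cV_j$ form a direct sum decomposition. If you take the rank route instead, you also need $\rank_\cO(\cV_i)=\rank_\cO(\cV_{k+1-i})$, which follows from the defining condition and \Cref{hermitian_orthogonal_rank}\ref{hermitian_orthogonal_rank_ii}.

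Your indices in the surjectivity step are right. The identity you only ``expect'' there, $\sum_{i\le m}(\bigoplus_{j\neq\ell+2-i}\cU_j)^\perp=(\cU_1\oplus\cdots\oplus\cU_{\ell+1-m})^\perp$, is exactly the one the paper proves in its symmetry step, by the evident inclusion plus a rank count.
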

\begin{proof}
We begin by checking that the resulting flag is $\cL$--lowered. Since each component of a tuple $(\cV_1,\ldots,\cV_{\ell+1})$ is an $\cL$--submodule, each component of the flag will be an $\cL$--submodule. Now, assume that for some $T\in \Sch_S$ and for some index $j$, we have
\[
\gap_{\cL|_T}\big((\cV_1\oplus\ldots\oplus\cV_j)|_T \subseteq  (\cV_1\oplus\ldots\oplus\cV_j\oplus\cV_{j+1})|_T\big) = 0.
\]
Using \Cref{subrank_and_gap}, this means that
\[
\subrank_\cL(\cV_{j+1}|_T) = 0
\]
and therefore the condition that the starting tuple was lowered imposes that $\cV_k|_T = 0$ for all $k\geq j+1$. Then, since $\cE = \cV_1\oplus \ldots \oplus \cV_{\ell+1}$, after restricting we have that
\begin{align*}
\cE|_T &= \cV_1|_T \oplus \ldots \oplus \cV_{\ell+1}|_T \\
&= \cV_1|_T \oplus \ldots \oplus \cV_j|_T
\end{align*}
and so
\[
(\cV_1\oplus\ldots\oplus\cV_j)|_T = \cE|_T = (\cV_1\oplus\ldots\oplus\cV_j\oplus\cV_{j+1})|_T.
\]
Hence, we see that the resulting flag is $\cL$--lowered.

Now, we check that the flag has the necessary symmetry property. We may verify this sufficiently locally where the map takes the form
\begin{equation}\label{outer_Stiefel_to_Flag_surj_i}
\LowStiefel_{((\cO^d,\cO^d),h_d)} \to \LowFlag_{((\cO^d,\cO^d),h_d)}.
\end{equation}
Further, we may assume that we are working with constant rank submodules and so using the description in \Cref{split_outer_stiefel} the map appears as
\[
(\ldots,\cV_i\times (\oplus_{j\neq \ell+2-i} \cV_j)^\perp,\ldots) \mapsto (\ldots,\left(\oplus_{k=1}^i \cV_k\right)\times \left(\oplus_{k=1}^i (\oplus_{j\neq \ell+2-k} \cV_j)^\perp \right),\ldots)
\]
where we have only written the $i^\text{th}$ terms. For each summand within the term
\[
\oplus_{k=1}^i (\oplus_{j\neq \ell+2-k} \cV_j)^\perp,
\]
we have inclusions $(\oplus_{j\neq \ell+2-k} \cV_j)^\perp \subseteq (\cV_1\oplus \ldots \oplus \cV_{\ell+1-i})^\perp$ and so
\[
\oplus_{k=1}^i (\oplus_{j\neq \ell+2-k} \cV_j)^\perp \subseteq (\oplus_{m=1}^{\ell+1-i} \cV_m)^\perp.
\]
Now by considering ranks, namely that $\rank((\oplus_{j\neq \ell+2-k} \cV_j)^\perp)=\rank(\cV_{\ell+2-k})$ and
\[
\rank((\oplus_{m=1}^{\ell+1-i} \cV_m)^\perp) = \sum_{k=1}^i \rank(\cV_{\ell+2-k})
\]
we see that we have an equality
\[
\oplus_{k=1}^i (\oplus_{j\neq \ell+2-k} \cV_j)^\perp = (\oplus_{m=1}^{\ell+1-i} \cV_m)^\perp.
\]
Therefore, when we apply the perpendicular operator $\perp_{h_d}$ to the image flag, the new $i^\text{th}$ term will be the perpendicular of the $(\ell+1-i)^\text{th}$ term which is
\begin{align*}
& \left((\oplus_{k=1}^{\ell+1-i} \cV_k)\times (\oplus_{m=1}^{\ell+1-(\ell+1-i)}\cV_m)^\perp\right)^{\perp_{h_d}} \\
=& ((\oplus_{m=1}^i\cV_m)^\perp)^\perp \times (\oplus_{k=1}^{\ell+1-i} \cV_k)^\perp \\
=& (\oplus_{k=1}^i\cV_k)\times (\oplus_{m=1}^{\ell+1-i} \cV_m)^\perp
\end{align*}
which is the $i^\text{th}$ term of the original flag. Hence, we see that the resulting flag is symmetric locally and thus it is also symmetric globally as required.

Finally, to see surjectivity we note that sufficiently locally when our map takes the form \eqref{outer_Stiefel_to_Flag_surj_i} above, it fits into the commutative diagram
\[
\begin{tikzcd}
\LowStiefel_{\cO^d} \ar[d,two heads] \ar[r,"\eqref{eq_split_Stiefel_outer}"] & \LowStiefel_{((\cO^d,\cO^d),h_d)} \ar[d,"\eqref{outer_Stiefel_to_Flag_surj_i}"]  \\
\LowFlag_{\cO^d} \ar[r,"\eqref{eq_ConFlag_iso}"] & \LowFlag_{((\cO^d,\cO^d),h_d)} 
\end{tikzcd}
\]
where the horizontal maps are isomorphisms and the other downward map is the surjection of \Cref{Stiefel_to_Flag_surjection}. Therefore, our map is surjective as well and we are done.
\end{proof}

\subsubsection{Idempotents in Azumaya Algebras with Involution of the Second Kind}
Just as in the inner case, the sections of $\LowStiefel_{(\cH,h)}$ are related to certain tuples of idempotents in $\cEnd_\cL(f_*(\cH))$ and we may define the appropriate notion for any Azumaya algebra with involution of the second kind. Given an idempotent $e\in f_*(\cB)$, we set $\cI_e = e\cdot f_*(\cB) \subseteq f_*(\cB)$, which is an $\cL$--right ideal which is a direct summand (since $f_*(\cH)=ef_*(\cB) \oplus (1-e)f_*(\cB)$) and therefore a finite locally free $\cO$--module.
\begin{defn}
Let $(f\colon L \to S,\cB,\tau)$ be an Azumaya algebra with involution of the second kind of constant rank $d$. We say a tuple $(e_1,\ldots,e_{\ell+1})$ of idempotents in $f_*(\cB)$ is \emph{$\cL$--lowered} if for every $T\in \Sch_S$ and every index $j$,
\[
\subrank_{\cL|_T}(\cI_{e_j}|_T) = 0 \Rightarrow e_k|_T = 0 \text{ for all } k\geq j.
\]
We define the subpresheaf $\LowIdemp_{(\cB,\tau)} \subseteq \LowIdemp_{f_*(\cB)}$ to act on $T\in \Sch_S$ as
\[
T \mapsto \left\{(e_1,\ldots,e_{\ell+1}) \in \LowIdemp_{f_*(\cB)}(T) \mid \begin{array}{l} \text{the tuple is }\cL\text{--lowered, and} \\ \tau(e_i)= e'_{\ell+2-i} \\ \text{where } \rhoIdemp(e_1,\ldots,e_{\ell+1}) = (e_1',\ldots,e_{\ell+1}') \end{array} \right\}.
\]
\end{defn}

\begin{lem}
The presheaf $\LowIdemp_{(\cB,\tau)}$ defined above is a sheaf.
\end{lem}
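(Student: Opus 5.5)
The plan is to mirror the proof of \Cref{outer_Stiefel_is_a_sheaf} almost verbatim, with idempotents in place of submodules, $\rhoIdemp$ in place of $\rhoStiefel$, and the condition $\tau(e_i)=e'_{\ell+2-i}$ in place of the perpendicular condition. Since $\LowIdemp_{f_*(\cB)}$ is already a sheaf (the argument for $\LowIdemp_\cA$ applies to the separable algebra $f_*(\cB)$, cf.\ \Cref{separable_algebras}, using that $\ell<2d$ by a rank count), it suffices to show two things. First, $\LowIdemp_{(\cB,\tau)}$ is a subpresheaf, i.e.\ both defining conditions are stable under the truncating restriction maps. Second, a section of $\LowIdemp_{f_*(\cB)}(T)$ which lies in $\LowIdemp_{(\cB,\tau)}$ over a cover of $T$ lies in $\LowIdemp_{(\cB,\tau)}(T)$.

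For stability under restriction, $\cL$--loweredness is stable by its very definition, since it quantifies over all $T'\in\Sch_T$. For the symmetry condition, take $U\to T$, along which $\ve$ restricts to $(e_1|_U,\ldots,e_k|_U)$. The key input is that $\rhoIdemp$ is a length preserving sheaf isomorphism (\Cref{rhoIdemp}, the analogue of \Cref{raising_tuple_preserves_length}), and that restriction in $\RaiIdemp$ truncates on the left. Together these give
\[
\rhoIdemp(e_1|_U,\ldots,e_k|_U)=(e'_{\ell+2-k}|_U,\ldots,e'_{\ell+1}|_U).
\]
So the required identity $\tau(e_i|_U)=(e_{k+1-i}|_U)'$ becomes $\tau(e_i)|_U=e'_{\ell+2-i}|_U$, which is the restriction of the global identity, since $\tau$ commutes with restriction. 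There is a subtlety when $i\le k$ but $\ell+2-i<\ell+2-k$, i.e.\ outside the surviving indices. This is excluded because $i\le k$ forces $\ell+2-i\ge \ell+2-k$. The same reindexing computation as in \Cref{outer_Stiefel_is_a_sheaf} handles it.

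For gluing, let $\ve\in\LowIdemp_{f_*(\cB)}(T)$ and let $\{U_i\to T\}$ be a cover with $\ve|_{U_i}\in\LowIdemp_{(\cB,\tau)}(U_i)$. The $\cL$--lowered property then follows exactly as before. Given $T'$ with $\subrank_{\cL|_{T'}}(\cI_{e_j}|_{T'})=0$, we pull back the cover to $T'$ and use that $\subrank$ is compatible with restriction and that $\cI_{e_j}|_{U'_i}=\cI_{e_j|_{U'_i}}$. This gives $e_k|_{U'_i}=0$ for all $k\ge j$, hence $e_k|_{T'}=0$ since $f_*(\cB)$ is a sheaf. One point to check here is that if $e_j|_{U_i}$ has been truncated away, the conclusion holds trivially, because then $e_k|_{U_i}=0$ for $k\ge j$ by ordinary loweredness. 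For symmetry, write $\rhoIdemp(\ve)=(e'_1,\ldots,e'_{\ell+1})$; by the displayed formula the local conditions read $\tau(e_j)|_{U_i}=e'_{\ell+2-j}|_{U_i}$ for $j\le k_i$. For $j>k_i$ we have $e_j|_{U_i}=0$, and we also need $e'_{\ell+2-j}|_{U_i}=0$. This holds because the left-truncated raised tuple over $U_i$ has zero entries exactly in positions below $\ell+2-k_i$. Both sides therefore agree locally for every $j$, and hence globally.

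I expect the main obstacle to be this index bookkeeping relating the truncations of $\ve$ and of $\rhoIdemp(\ve)$, especially the claim that $e'_m|_{U}=0$ for $m<\ell+2-k$. I would justify it by passing to a further cover on which $\ve$ lies in the constant-rank presheaf. There $\rhoIdemp$ is literally the identity, so the raised tuple over such a piece is $(0,\ldots,0,e_1,\ldots,e_k)$ padded on the left, and the claim can be read off directly.
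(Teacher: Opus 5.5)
Your proposal is correct and is essentially the paper's argument. The paper's proof simply says to mirror \Cref{outer_Stiefel_is_a_sheaf}, comparing the right truncation in $\LowIdemp_{f_*(\cB)}$ with the left truncation in $\RaiIdemp_{f_*(\cB)}$, which is exactly what you do, and your explicit handling of the indices $j>k_i$ in the gluing step fills in a point the paper leaves implicit.

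The step you flag as the main obstacle is already settled by your displayed formula. Left truncation in $\RaiIdemp_{f_*(\cB)}$ discards only zero entries, and length preservation of $\rhoIdemp$ forces exactly $\ell+1-k$ of them to be discarded, so $e'_m|_U=0$ for $m<\ell+2-k$ without refining the cover.
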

\begin{proof}
This follows by an analogous argument as in the proof of \Cref{outer_Stiefel_is_a_sheaf} where one compares the right truncation in $\LowIdemp_{f_*(\cB)}$ to the left truncation in $\RaiIdemp_{f_*(\cB)}$.
\end{proof}

\begin{example}\label{split_outer_Idemp}
Consider the split Azumaya algebra with involution $(f\colon S\sqcup S \to S,(\Mat_d(\cO),\Mat_d(\cO)),\tau_d)$ of the second kind as in \Cref{split_second_involution} where the involution $\tau_d$ is the switch transpose. In this case, $f_*((\Mat_d(\cO),\Mat_d(\cO)) = \Mat_d(\cO)\times\Mat_d(\cO)$ and an idempotent in this algebra is of the form
\[
(e,\varepsilon) \in \Mat_d(\cO)\times\Mat_d(\cO)
\]
for idempotents $e,\varepsilon \in \Mat_d(\cO)$. For a section
\[
((e_1,\varepsilon_1),(e_2,\varepsilon_2),\ldots,(e_{\ell+1},\varepsilon_{\ell+1})) \in \LowIdemp_{((\Mat_d(\cO),\Mat_d(\cO)),\tau_d)}
\]
we may localize until all ranks $\rank_\cO(e_j\Mat_d(\cO))$ and $\rank_\cO(\varepsilon_j\Mat_d(\cO))$ are constant and thus
\[
\rhoIdemp((e_1,\varepsilon_1),(e_2,\varepsilon_2),\ldots,(e_{\ell+1},\varepsilon_{\ell+1})) = (e_1,\varepsilon_1),(e_2,\varepsilon_2),\ldots,(e_{\ell+1},\varepsilon_{\ell+1}),
\]
i.e., the section is fixed by $\rhoIdemp$. Now, the symmetry condition says that
\[
(\varepsilon_i^t,e_i^t)=\tau_d(e_i,\varepsilon_i) = (e_{\ell+2-i},\varepsilon_{\ell+2-i})
\]
or equivalently,
\[
\varepsilon_i = (e_{\ell+2-i})^t
\]
for all $i$. Thus, our section takes the form
\[
((e_1,e_{\ell+1}^t),(e_2,e_\ell^t),\ldots,(e_\ell,e_2^t),(e_{\ell+1},e_1^t)).
\]
Hence, when we consider the map of sheaves
\begin{align*}
\LowIdemp_{\Mat_d(\cO)} &\to \LowIdemp_{((\Mat_d(\cO),(\Mat_d(\cO)),\tau_d)} \\
(e_1,\ldots,e_{\ell+1}) &\mapsto ((e_1,(e'_{\ell+1})^t),(e_2,(e'_\ell)^t),\ldots,(e_\ell,(e'_2)^t),(e_{\ell+1},(e'_1)^t))
\end{align*}
where $\rhoIdemp(e_1,\ldots,e_{\ell+1}) = (e'_1,\ldots,e'_{\ell+1})$, we see that it is injective and locally surjective because sufficiently locally any section is fixed by $\rhoIdemp$. Thus, this map is an isomorphism of sheaves.
\end{example}

When we consider the case of $(L\to S,\cB,\tau) = (L\to S,\cEnd_{\cO|_L}(\cH),\tau_h)$ for a regular Hermitian form, the sheaf $\LowIdemp_{(\cEnd_{\cO|_L}(\cH),\tau_h)}$ should be isomorphic to $\LowStiefel_{(\cH,h)}$ in a fact analogous to \Cref{LowIdemp_to_LowStiefel_iso}. To check this, we use the following lemma. Recall that for an $\cL$--subspace $\cV\subseteq f_*(\cB)$ we have the right $\cL$--ideal $\cI_{\cL,\cV} = \Hom_\cL(f_*(\cB),\cV)$.
\begin{lem}\label{idempotent_orthogonal}
Let $(f\colon L\to S,\cH,h)$ be a regular hermitian form of constant rank $d$ and consider the associated Azumaya algebra with adjoint involution of the second kind $(L\to S,\cEnd_{\cO|_L}(\cH),\tau_h)$.

Given an idempotent $e\in f_*(\cEnd_{\cO|_L}(\cH))$ we have an $\cL$--submodule $\cV_e = \Img(e)\subseteq f_*(\cH)$ which is a direct summand. Then,
\begin{enumerate}
\item \label{idempotent_orthogonal_i} $(\cV_e)^{\perp_h} = 
\cV_{\tau_h(1-e)}$, and
\item \label{idempotent_orthogonal_ii} $\cI_e = \cI_{\cL,(\cV_e)}$.
\end{enumerate}
\end{lem}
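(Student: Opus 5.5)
Both claims are equalities of subsheaves of $f_*(\cH)$, respectively of $f_*(\cEnd_{\cO|_L}(\cH))\cong\cEnd_\cL(f_*(\cH))$. I would therefore argue directly from the definitions where possible, and otherwise reduce to the split case. Throughout, $e$ is identified with an $\cL$--linear idempotent endomorphism of $f_*(\cH)$. Then $f_*(\cH)=\Img(e)\oplus\Img(1-e)$, so $\cV_e$ is an $\cL$--direct summand, and $\cV_e=\Ker(1-e)$. Note also that $\tau_h(1-e)=1-\tau_h(e)$ is again an idempotent, since $\tau_h$ is an anti-automorphism.

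For \ref{idempotent_orthogonal_ii}, I would argue exactly as in the last paragraph of the proof of \Cref{LowIdemp_to_LowStiefel_iso}. For any $\varphi$, the composition $e\circ\varphi$ has image in $\Img(e)$, so $\cI_e\subseteq\cHom_\cL(f_*(\cH),\cV_e)=\cI_{\cL,\cV_e}$. Conversely, $e$ fixes every section of $\Img(e)$, so any $\varphi$ with image in $\cV_e$ satisfies $\varphi=e\varphi\in e f_*(\cEnd(\cH))$. Since this only uses idempotency, it holds on all $T$ and the sheaves agree.

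For \ref{idempotent_orthogonal_i}, I would prove $\cV_{\tau_h(1-e)}\subseteq(\cV_e)^{\perp_h}$ using the adjoint property $h(x,\varphi(y))=h(\tau_h(\varphi)(x),y)$. Take $x=\tau_h(1-e)(z)$ and $v=e(y)$ over some $T'$. Then
\[
h(x,v)=h(\tau_h(1-e)z,ey)=h(z,(1-e)ey)=0 .
\]
Here one should check that the adjoint relation applied to $\varphi=1-e$ gives $h(\tau_h(1-e)z,w)=h(z,(1-e)w)$. This holds because $\tau_h$ is an involution, so $\tau_h(\tau_h(\varphi))=\varphi$, and the defining identity can be read in either slot. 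Hence $\Img(\tau_h(1-e))\subseteq\cV_e^{\perp_h}$ on presheaf images, and so also after sheafification, because $\cV_e^{\perp_h}$ is a sheaf.

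For the reverse inclusion, I would compare ranks locally, as in \Cref{hermitian_orthogonal_rank}\ref{hermitian_orthogonal_rank_iii}. Both sides are $\cL$--direct summands that are finite locally free over $\cO$, by \Cref{hermitian_orthogonal_rank}\ref{hermitian_orthogonal_rank_i} and the splitting above. Localizing to the split form of \Cref{split_hermitian_perp}, write $e=(e_1,e_2)$ in $\Mat_d(\cO)\times\Mat_d(\cO)$. Then $\tau_d(1-e)=(1-e_2^t,\,1-e_1^t)$, and $\rank\Img(1-e_i^t)=d-\rank\Img(e_i)$, because transposition preserves the rank of an idempotent (its trace). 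Therefore the $\cO$--rank of $\cV_{\tau_h(1-e)}$ is $2d-\rank_\cO(\cV_e)$, which equals $\rank_\cO(\cV_e^{\perp_h})$ by \Cref{hermitian_orthogonal_rank}\ref{hermitian_orthogonal_rank_ii}.

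This leaves an inclusion of locally direct summands of equal constant rank. The main (mild) obstacle is to justify that such an inclusion is an equality: the quotient is locally free of rank zero, or equivalently the inclusion is a split injection of free modules of the same rank, hence an isomorphism. Alternatively, one can bypass the rank argument in the split case by a direct computation, $(\Img e_2)^\perp=\Ker e_2^t=\Img(1-e_2^t)$, and symmetrically for the other factor. I would present this computation as the cleanest route, keeping the adjoint argument above as motivation.
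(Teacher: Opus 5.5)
Your argument is correct. Part (ii) and the inclusion $\cV_{\tau_h(1-e)}\subseteq(\cV_e)^{\perp_h}$ in (i) are exactly what the paper does. Incidentally, you do not need $\tau_h$ to be an involution there: $h(\tau_h(1-e)(z),w)=h(z,(1-e)(w))$ is literally the defining identity of $\tau_h$ with $\varphi=1-e$.

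You differ from the paper on the reverse inclusion. The paper proves it globally in two lines using regularity. If $x\in(\cV_e)^{\perp_h}$, then $0=h(x,e(w))=h(\tau_h(e)(x),w)$ for all $w$, so $\tau_h(e)(x)=0$ because $\tilde h$ is injective. Hence $x=\tau_h(e)(x)+\tau_h(1-e)(x)=\tau_h(1-e)(x)\in\Img(\tau_h(1-e))$.

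You instead localize to the split form. From there you either count ranks, using \Cref{hermitian_orthogonal_rank}\ref{hermitian_orthogonal_rank_ii} and the fact that nested direct summands of equal rank coincide, or you compute $(\Img e_2)^\perp=\Ker(e_2^t)=\Img(1-e_2^t)$ directly. This is valid: perpendiculars, images of idempotents and $\tau_h$ are all compatible with restriction and with isomorphisms of hermitian forms, and one checks that $\tau_{h_d}=\tau_d$. The split computation has the merit of proving both inclusions at once and making the identity completely explicit.

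The cost of your route is the reduction step and the identification of the adjoint involution in the split case. The rank route also relies on \Cref{hermitian_orthogonal_rank}, whose proof is itself local. The paper's argument avoids all of this and shows that regularity of $h$ is exactly what the reverse inclusion needs. In your split computation, that regularity is hidden in the nondegeneracy of the pairing $x^ty$.
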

\begin{proof}
\noindent\ref{idempotent_orthogonal_i}: We first check that $\cV_{\tau_h(1-e)} \subseteq (\cV_e)^{\perp_h}$. Indeed, for any $v\in \cV_e$ we have $v=e(v)$ and so for any section $x\in f_*(\cH)$ we have
\[
h(\tau_h(1-e)(x),v) = h(x,(1-e)(e(v))) = h(x,0) = 0
\]
and so $\cV_{\tau_h(1-e)} \subseteq (\cV_e)^{\perp_h}$. Conversely, if $x\in (\cV_e)^{\perp_h}$, then for all $w\in f_*(\cH)$ we have
\[
0 = h(x,e(w)) = h(\tau_h(e)(x),w)
\]
and so $\tau_h(e)(x) = 0$ because $h$ is regular. Then,
\[
x = (\tau_h(e)+\tau_h(1-e))(x) = \tau_h(1-e)(x)
\]
which shows that $x\in \cV_{\tau_h(1-e)}$ and so $(\cV_e)^{\perp_h} \subseteq \cV_{\tau_h(1-e)}$. Hence
\[
(\cV_e)^{\perp_h} = \cV_{\tau_h(1-e)}
\]
as claimed.

\noindent\ref{idempotent_orthogonal_ii}: Since $\cV_e = \Img(e)$ by definition, it is clear that $\cI_e \subseteq \cI_{\cL,\cV_e}$. Conversely, since $e$ is idempotent, if $\varphi \in \cI_{\cL,\cV_e}$ then $e\circ \varphi = \varphi$, so $\varphi \in \cI_e$ as well. Therefore,
\[
\cI_e = \cI_{\cL,\cV_e}.
\]
\end{proof}

\begin{lem}\label{outer_Stiefel_to_Idemp}
Let $(f\colon L \to S, \cH,h)$ be a regular Hermitian form of constant rank $d$. We have an isomorphism of sheaves
\begin{align*}
\LowIdemp_{(\cEnd_{\cO|_L}(\cH),\tau_h)} &\to \LowStiefel_{(\cH,h)}  \\
(e_1,\ldots,e_{\ell+1}) &\mapsto (\cV_{e_1},\ldots,\cV_{\ell+1}) \\
(\pi_1,\ldots,\pi_{\ell+1}) &\;\reflectbox{$\mapsto$}\; (\cV_1,\ldots,\cV_{\ell+1})
\end{align*}
where for a section $(\cV_1,\ldots,\cV_{\ell+1}) \in \LowStiefel_{(\cH,h)}$ we set $\pi_i$ to be the projection onto $\cV_i$ which is zero on all other $\cV_j$.
\end{lem}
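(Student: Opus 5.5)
The plan is to restrict the isomorphism of \Cref{LowIdemp_to_LowStiefel_iso}, applied to the $\cO$--module $f_*(\cH)$ and the separable algebra $\cEnd_\cO(f_*(\cH))\supseteq \cEnd_\cL(f_*(\cH)) \cong f_*(\cEnd_{\cO|_L}(\cH))$, to the subsheaves in question. That lemma already gives mutually inverse bijections $\ve \mapsto (\Img(e_i))$ and $(\cV_i)\mapsto(\pi_{\cV_i})$ between lowered tuples of orthogonal idempotents and lowered direct sum decompositions. It also intertwines $\rhoIdemp$ and $\rhoStiefel$ by \Cref{rhoIdemp}. Thus it suffices to check three things: the two defining conditions (being $\cL$--lowered and the symmetry condition) correspond under these maps, and the idempotents land in the $\cL$--linear endomorphisms.

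First I would handle $\cL$--linearity. If each $\cV_i$ is an $\cL$--submodule, then the projection $\pi_i$ is $\cL$--linear, since the decomposition $f_*(\cH)=\oplus\cV_j$ is one of $\cL$--modules. Hence $\pi_i\in\cEnd_\cL(f_*(\cH))$. Conversely, an $\cL$--linear idempotent has $\cL$--submodule image. Second, $\cL$--loweredness: by \Cref{idempotent_orthogonal}\ref{idempotent_orthogonal_ii}, $\cI_{e}=\cI_{\cL,\cV_e}$. Locally in the split case this is $\cI_{\cV_1}\times\cI_{\cV_2}$, so $\subrank_\cL(\cI_e)=d\cdot\subrank_\cL(\cV_e)$, as in the proof of \Cref{V_to_I_outer}\ref{V_to_I_outer_iv}. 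In particular one vanishes iff the other does. Moreover $e_k|_T=0$ iff $\cV_{e_k}|_T=0$, so the two $\cL$--lowered conditions coincide.

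Third, and the main point, is the symmetry condition. Let $\rhoIdemp(\ve)=(e'_1,\ldots,e'_{\ell+1})$; its image under the raised analogue of \Cref{LowIdemp_to_LowStiefel_iso} is $\rhoStiefel(\cV_{e_1},\ldots)=(\cV_{e'_1},\ldots)$. Since the $e'_j$ are orthogonal idempotents summing to $1$, we have $\oplus_{j\neq \ell+2-i}\cV_{e'_j}=\cV_{1-e'_{\ell+2-i}}$. \Cref{idempotent_orthogonal}\ref{idempotent_orthogonal_i} then gives
\[
\Big(\oplus_{j\neq \ell+2-i}\cV_{e'_j}\Big)^{\perp_h}=\cV_{\tau_h(e'_{\ell+2-i})}.
\]
So the Stiefel condition reads $\cV_{e_i}=\cV_{\tau_h(e'_{\ell+2-i})}$, while the idempotent condition reads $e_i=\tau_h(e'_{\ell+2-i})$ (equivalently, after applying $\tau_h$, the form in the definition). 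The implication from idempotents to submodules is immediate.

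The converse is the expected obstacle, because equal images do not force equal idempotents. I would argue that both $(e_i)_i$ and $(\tau_h(e'_{\ell+2-i}))_i$ are complete tuples of orthogonal idempotents; the second is so since $\tau_h$ is an anti-automorphism fixing $1$ and $\rhoIdemp$ preserves length and orthogonality. Two such tuples with the same images $\cV_i$ giving a direct sum decomposition must coincide, as each is the family of projections of that decomposition, by the uniqueness in \Cref{LowIdemp_to_LowStiefel_iso}. Care is needed with truncations. I would check this locally, where the tuples are fixed by $\rho$ (as in \Cref{split_outer_Idemp} and \Cref{split_outer_stiefel}), and then glue, since both sides are sheaves.
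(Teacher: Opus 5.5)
Your proposal is correct and follows essentially the same route as the paper: restrict the isomorphism of \Cref{LowIdemp_to_LowStiefel_iso}, use \Cref{rhoIdemp} to match $\rhoIdemp$ with $\rhoStiefel$, and rewrite the symmetry condition via \Cref{idempotent_orthogonal}\ref{idempotent_orthogonal_i}; for the converse, note that $(\tau_h(\pi'_{\ell+2-i}))_i$ is a complete family of orthogonal idempotents with images $\cV_i$, so it equals $(\pi_i)$. Beyond the paper, you also check $\cL$--linearity and that the two $\cL$--lowered conditions match, which the paper leaves implicit; your planned localization to handle truncations is unnecessary, since both tuples already have length $\ell+1$ globally.
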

\begin{proof}
If this map is well-defined then it is clear it is an isomorphism since it will be the restriction of the isomorphism occuring in \Cref{LowIdemp_to_LowStiefel_iso}. Therefore, we must argue that the additional conditions coming from $h$ and $\tau_h$ are respected by this map.

By the commutative diagram in \Cref{rhoIdemp}, we know that if we denote
\[
\rhoIdemp(e_1,\ldots,e_{\ell+1}) = (e_1',\ldots,e_{\ell+1}')
\]
then we also have that
\[
\rhoStiefel(\cV_{e_1},\ldots,\cV_{\ell+1}) = (\cV_{e_1'},\ldots,\cV_{e_{\ell+1}'}).
\]
Now, starting with a section $(e_1,\ldots,e_{\ell+1}) \in \LowIdemp_{(\cEnd_{\cO|_L}(\cH),\tau_h)}$ we need to show that 
\[
\cV_{e_i} = \left(\bigoplus_{j\neq \ell+2-i} \cV_{e'_j}\right)^{\perp_h}.
\]
Since the tuple $(e_1',\ldots,e_{\ell+1}')$ consists of pairwise orthogonal idempotents,
\[
\bigoplus_{j\neq \ell+2-i} \cV_{e'_j} = \cV_{(\sum_{j\neq \ell+2-i} e'_j)}
\]
and so using \Cref{idempotent_orthogonal}\ref{idempotent_orthogonal_i} we have
\[
\left(\cV_{(\sum_{j\neq \ell+2-i} e'_j)}\right)^{\perp_h} = \cV_{\tau_h(1-\sum_{j\neq \ell+2-i} e'_j)} = \cV_{\tau_h(e'_{\ell+2-i})}
\]
where finally $\cV_{\tau_h(e'_{\ell+2-i})} = \cV_{e_i}$ as required due to the fact that $\tau_h(e'_{\ell+2-i})=e_i$ because we started with a section in $\LowIdemp_{(\cEnd_{\cO|_L}(\cH),\tau_h)}$.

Conversely, starting with a section $(\cV_1,\ldots,\cV_{\ell+1}) \in \LowStiefel_{(\cH,h)}$ we need to show that
\[
\tau_h(\pi'_{\ell+2-i})=\pi_i
\]
where here as well we have that $(\pi'_1,\ldots,\pi'_{\ell+1})$ are the projections associated to
\[
\rhoStiefel(\cV_1,\ldots,\cV_{\ell+1}) = (\cV'_1,\ldots,\cV'_{\ell+1}).
\]
By assumption we have that for all $i$,
\[
\cV_i = \left(\bigoplus_{j\neq \ell+2-i}\cV'_j\right)^{\perp_h} = \left(\cV_{(\sum_{j\neq \ell+2-i} \pi'_j)}\right)^{\perp_h} = \cV_{\tau_h(\pi'_{\ell+2-i})}
\]
once again using \Cref{idempotent_orthogonal}\ref{idempotent_orthogonal_i}. So, $\tau_h(\pi'_{\ell+2-i})$ is another projection onto $\cV_i$. If it is also zero on all other $\cV_j$, then it will agree with $\pi_i$. However,
\[
\tau_h(\pi'_{\ell+2-i})(\cV_j) = \tau_h(\pi'_{\ell+2-i})(\cV_{\tau_h(\pi'_{\ell+2-j})}) = 0
\]
since $\tau_h(\pi'_{\ell+2-i})$ and $\tau_h(\pi'_{\ell+2-j})$ are orthogonal idempotents. Hence, we conclude that
\[
\pi_i = \tau_h(\pi'_{\ell+2-i})
\]
as required. Thus, the inverse map lands in $\LowIdemp_{(\cEnd_{\cO|_L}(\cH),\tau_h)}$ and so the map is well-defined and we are done.
\end{proof}
We may combine the isomorphism in the previous lemma with the surjection of \Cref{outer_Stiefel_to_Flag_surj} and the isomorphism from \Cref{iso_LowFlag_Hh_LowGSB} to form the diagram
\[
\begin{tikzcd}
\LowIdemp_{(\cEnd_{\cO|_L}(\cH),\tau_h)} \ar[r,"\sim"] \ar[d,two heads] & \LowStiefel_{(\cH,h)} \ar[d,two heads] \\
\LowGSB_{(\cEnd_{\cO|_L}(\cH),\tau_h)} \ar[r,"\sim"] & \LowFlag_{(\cH,h)}.
\end{tikzcd}
\]
Using \Cref{idempotent_orthogonal}\ref{idempotent_orthogonal_ii}, we see that the downward surjection on the left takes the form
\begin{align}
&\LowIdemp_{(\cEnd_{\cO|_L}(\cH),\tau_h)} \surj \LowGSB_{(\cEnd_{\cO|_L}(\cH),\tau_h)} \label{Hermitian_Idemp_to_GSB} \\
&(e_1,\ldots,e_{\ell+1}) \mapsto (0\subseteq \cI_{e_1} \subseteq \cI_{(e_1+e_2)} \subseteq \ldots \subseteq \cI_{(e_1+\ldots+e_\ell)} \subseteq f_*(\cH)). \nonumber
\end{align}

In the general case there is also such a surjection.
\begin{cor}\label{outer_Idemp_GSB_surj}
Let $(f\colon L \to S,\cB,\tau)$ be an Azumaya algebra with involution of the second kind of constant rank $d$. There is a surjective map
\begin{align*}
\LowIdemp_{(\cB,\tau)} &\surj \LowGSB_{(\cB,\tau)} \\
(e_1,\ldots,e_{\ell+1}) &\mapsto (0\subseteq \cI_{e_1} \subseteq \ldots \subseteq \cI_{(e_1+\ldots+e_\ell)} \subseteq f_*(\cB))
\end{align*}
\end{cor}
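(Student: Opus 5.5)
The plan is to mirror \Cref{Idemp_to_GSB_surj}. First check that the proposed formula defines a map of sheaves landing in $\LowGSB_{(\cB,\tau)}$. Then deduce surjectivity locally from the hermitian case \eqref{Hermitian_Idemp_to_GSB}.

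For well-definedness, a section $(e_1,\ldots,e_{\ell+1})\in\LowIdemp_{(\cB,\tau)}$ is in particular a section of $\LowIdemp_{f_*(\cB)}$. Since $f_*(\cB)$ is a separable algebra (\Cref{separable_algebras}), the argument of \Cref{Idemp_to_GSB_surj} gives a lowered flag of right ideals $0\subseteq (e_1)f_*(\cB)\subseteq \ldots\subseteq (e_1+\ldots+e_\ell)f_*(\cB)\subseteq f_*(\cB)$ in $\LowGSB_{f_*(\cB)}$. This construction is compatible with the truncating restriction maps on both sides. Each $\cI_{(e_1+\ldots+e_j)}$ is an $\cL$--ideal, since $\cL$ is central in $f_*(\cB)$.

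It remains to check that the flag is $\cL$--lowered and fixed by $\pi_\cB$. Both properties are local, and the map commutes with restriction. We may therefore pass to an étale cover over which $(L\to S,\cB,\tau)\cong(L\to S,\cEnd_{\cO|_L}(\cH),\tau_h)$ for a regular hermitian form $(\cH,h)$; one could even go all the way to the split form \eqref{split_second_involution}. Over such a cover the formula is exactly the map \eqref{Hermitian_Idemp_to_GSB}. That map was obtained as the composite of the isomorphism of \Cref{outer_Stiefel_to_Idemp}, the surjection of \Cref{outer_Stiefel_to_Flag_surj}, and the inverse of the isomorphism of \Cref{iso_LowFlag_Hh_LowGSB}, with \Cref{idempotent_orthogonal}\ref{idempotent_orthogonal_ii} identifying $\cI_e=\cI_{\cL,\cV_e}$. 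Hence it lands in $\LowGSB_{(\cEnd(\cH),\tau_h)}$. Membership in the subsheaf $\LowGSB_{(\cB,\tau)}\subseteq\LowGSB_{f_*(\cB)}$ is local, so the global flag lies in $\LowGSB_{(\cB,\tau)}$.

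For surjectivity, a map of sheaves is surjective if every section of the target locally lifts. Locally our map coincides with \eqref{Hermitian_Idemp_to_GSB}, which is surjective because it is a composite of isomorphisms and the surjection of \Cref{outer_Stiefel_to_Flag_surj}. So every section of $\LowGSB_{(\cB,\tau)}$ lifts locally.

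The main obstacle is the identification step: checking that the local isomorphism $\cB|_U\cong\cEnd(\cH)$ transports $\tau$ to $\tau_h$ compatibly with both $\LowIdemp$ and $\LowGSB$, so that the formula really matches \eqref{Hermitian_Idemp_to_GSB}. This works because both constructions are defined purely in terms of the algebra and its involution, and so are natural under isomorphisms of Azumaya algebras with involution.
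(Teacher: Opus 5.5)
Your proposal is correct and follows the same route as the paper. The paper's proof is a single line: locally the map takes the form \eqref{Hermitian_Idemp_to_GSB}, so it is well defined and surjective. Your preliminary check also has content the paper leaves implicit. It verifies that the formula gives a restriction-compatible map into $\LowGSB_{f_*(\cB)}$ whose terms are $\cL$--ideals, and that membership in the subsheaf $\LowGSB_{(\cB,\tau)}$ is a local condition.
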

\begin{proof}
This map locally takes the form of \eqref{Hermitian_Idemp_to_GSB} and so it is well-defined and surjective.
\end{proof}

\subsubsection{Parabolics and Levis}\label{Outer_parabolics_and_levis}
Consider $\bG = \bU_{(\cB,\tau)}$ for an Azumaya algebra $(f\colon L\to S,\cB,\tau)$ with involution of the second kind. Here we define an isomorphism between $\LowIdemp_{(\cB,\tau)}$ and $\cPL_\bG$ using cocharacters analogous to what was done in \Cref{inner_parabolics_and_levis}. Given a section $\ve = (e_1,\ldots,e_{\ell+1}) \in \LowIdemp_{(\cB,\tau)}$ we utilize the same cocharacter defined in \eqref{inner_cocharacter}, namely
\begin{align*}
\lambda_{\ve} \colon \GG_m &\to \GL_{1,f_*(\cB)} \\
t &\mapsto \sum_{i=1}^{\ell+1} t^{\underline{a}(\ve)+1-2i}e_i.
\end{align*}
where $\underline{a}(\ve) \in \underline{\ZZ}$ is the locally constant length of $\ve$. While it is immediate that this cocharacter lands in $\GL_{1,f_*(\cB)}$, we require that it lands in $\bU_{(\cB,\tau)}$. To see this, we utilize \Cref{cocharacter_raised_formula}.
\begin{lem}
Let $(L\to S,\cB,\tau)$ be an Azumaya algebra with involution of the second kind. For a section $\ve \in \LowIdemp_{(\cB,\tau)}$, the cocharacter $\lambda_{\ve}$ is a cocharacter into $\bU_{(\cB,\tau)}$. That is, 
\[
\tau(\lambda_{\ve}(t)) = \lambda_{\ve}(t)^{-1}
\]
for all $t\in \GG_m$.
\end{lem}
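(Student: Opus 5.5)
We need to show $\tau(\lambda_{\ve}(t)) = \lambda_{\ve}(t)^{-1}$. The plan is to compute both sides explicitly as sums over the idempotents, using the symmetry condition defining $\LowIdemp_{(\cB,\tau)}$ together with \Cref{cocharacter_raised_formula}.

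First, the inverse. Since $(e_1,\ldots,e_{\ell+1})$ is a tuple of pairwise orthogonal idempotents summing to $1$, the element $\sum_i t^{\underline{a}(\ve)+1-2i}e_i$ has inverse
\[
\lambda_{\ve}(t)^{-1} = \sum_{i=1}^{\ell+1} t^{-\underline{a}(\ve)-1+2i}e_i .
\]
This is checked directly, locally where $\underline{a}(\ve)$ is constant, by multiplying out and using $e_ie_j=\delta_{ij}e_i$.

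Second, apply $\tau$. Here $t$ and the locally constant powers $t^{\underline{a}}$ are sections of $\cO$ (pulled back to $T_m$), and $\tau$ is $\cO$--linear because $i^*$ fixes $\cO\subseteq\cL$. So
\[
\tau(\lambda_{\ve}(t))=\sum_i t^{\underline{a}(\ve)+1-2i}\tau(e_i).
\]
Write $\rhoIdemp(\ve)=(e'_1,\ldots,e'_{\ell+1})$. Because $\tau$ is an involution, the defining condition $\tau(e'_{\ell+2-i})=e_i$ of $\LowIdemp_{(\cB,\tau)}$ also gives $\tau(e_i)=e'_{\ell+2-i}$. Hence
\[
\tau(\lambda_{\ve}(t))=\sum_i t^{\underline{a}(\ve)+1-2i}e'_{\ell+2-i}.
\]
Reindexing with $j=\ell+2-i$ turns the exponent into $\underline{a}(\ve)+1-2(\ell+2-j)$, which does not directly match. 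So instead I would invoke \Cref{cocharacter_raised_formula} applied to the raised side in reverse. That lemma says $\sum_i t^{\underline{a}+1-2i}e_i=\sum_i t^{-\underline{a}-1+2i}e'_{\ell+2-i}$. Substituting $t\mapsto t^{-1}$, which is legitimate since the identity holds for all sections of $\GG_m$, gives
\[
\sum_i t^{-\underline{a}-1+2i}e_i=\sum_i t^{\underline{a}+1-2i}e'_{\ell+2-i}.
\]
The left side is $\lambda_{\ve}(t)^{-1}$ and the right side is $\tau(\lambda_{\ve}(t))$, which proves the claim.

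The main subtlety is the bookkeeping with the locally constant exponent $\underline{a}(\ve)$ and the truncations under restriction. I would handle this exactly as in \Cref{cocharacter_raised_formula}, by checking equalities over a cover on which the length is constant, say equal to $k$. On such a piece $e_j=0$ for $j>k$ and $e'_{\ell+2-i}=e_{k+1-i}$, so all identities reduce to finite sums of constant powers, and they glue because both sides are global sections of $f_*(\cB)$ over $T_m$. A secondary point is justifying that $\tau$ commutes with the extension to $\cA(T_m)\cong\cA(T)[t,t^{-1}]$. This follows from the $\cO$--linearity of $\tau$, since the identification used in \Cref{inner_PL_iso_diagram} is by base change along $\cO\to\cO[t,t^{-1}]$.
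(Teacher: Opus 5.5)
Your proof is correct and follows the paper's argument: apply $\tau$ termwise using its $\cO$--linearity, use the defining condition $\tau(e_i)=e'_{\ell+2-i}$ of $\LowIdemp_{(\cB,\tau)}$, and conclude with \Cref{cocharacter_raised_formula} after the substitution $t\mapsto t^{-1}$. The paper states that condition directly in this form, so your detour through $\tau$ being an involution is not needed, and it leaves the substitution $t\mapsto t^{-1}$ implicit where you make it explicit; your added care about pieces of constant length and about $\tau$ commuting with the base change to $\cO[t,t^{-1}]$ is sound but not spelled out in the paper.
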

\begin{proof}
Let $\ve = (e_1,\ldots,e_{\ell+1}) \in \LowIdemp_{(\cB,\tau)}$ and set $\rhoIdemp(\ve) = (e_1',\ldots,e_{\ell+1}')$. By definition, we have that $\tau(e_i) = e'_{\ell+2-i}$ for all $i$. Now, we simply compute that
\[
\tau(\lambda_{\ve}(t)) = \sum_{i=1}^{\ell+1} t^{\underline{a}(\ve)+1-2i}\tau(e_i) =\sum_{i=1}^{\ell+1} t^{\underline{a}(\ve)+1-2i}e'_{\ell+2-1} 
\]
which by \Cref{cocharacter_raised_formula} is
\[
= \sum_{i=1}^{\ell+1} t^{-\underline{a}(\ve)-1+2i}e_i = \lambda_{\ve}(t^{-1}) = \lambda_{\ve}(t)^{-1},
\]
and so we are done.
\end{proof}

Now that we have a cocharacter $\lambda_{\ve} \colon \GG_m \to \bU_{(\cB,\tau)}$, we can consider its parabolic and Levi pair $(\bP(\lambda_{\ve}),\bL(\lambda_{\ve}))$.

\begin{thm}\label{outer_Idemp_PL_iso}
Let $\bG = \bU_{(\cB,\tau)}$ for an Azumaya algebra with involution of the second kind $(L\to S,\cB,\tau)$ of constant degree $d$. There is an isomorphism of sheaves
\begin{align*}
\LowIdemp_{(\cB,\tau)} &\iso \cPL_\bG \\
\ve &\mapsto (\bP(\lambda_{\ve}),\bL(\lambda_{\ve})).
\end{align*}
\end{thm}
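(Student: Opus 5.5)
Since both sides are sheaves and an isomorphism of sheaves may be checked locally, I will reduce to the split case $(S\sqcup S\to S,(\Mat_d(\cO),\Mat_d(\cO)),\tau_d)$, where $\bG=\bU_{(\cB,\tau)}\cong\GL_d$ via $B\mapsto (B,(B^t)^{-1})$. The map $\ve\mapsto(\bP(\lambda_{\ve}),\bL(\lambda_{\ve}))$ is already well defined globally, since the preceding lemma shows $\lambda_{\ve}$ lands in $\bU_{(\cB,\tau)}$. It is compatible with restriction because restriction truncates trailing zeros, and the exponents $\underline{a}(\ve)+1-2i$ are defined through the locally constant length. Compatibility with restriction therefore amounts to \Cref{cocharacter_raised_formula}-type bookkeeping, which I would check directly on a cover where the length is constant.

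In the split case, I would use the isomorphism of \Cref{split_outer_Idemp}, $\LowIdemp_{\Mat_d(\cO)}\iso\LowIdemp_{((\Mat_d(\cO),\Mat_d(\cO)),\tau_d)}$, sending $(e_1,\ldots,e_{\ell+1})\mapsto((e_1,(e'_{\ell+1})^t),\ldots,(e_{\ell+1},(e'_1)^t))$. The key claim is that this identification is compatible with the cocharacters. Under the projection onto the first factor, which is the isomorphism $\bU\cong\GL_d$, the outer cocharacter $\lambda_{\ve}$ becomes exactly the inner cocharacter \eqref{inner_cocharacter} attached to $(e_1,\ldots,e_{\ell+1})$. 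This is immediate because the length sections agree and the first coordinates of the outer tuple are the $e_i$. The second coordinate is then forced to be $(\lambda(t)^t)^{-1}$. One can also verify this through \Cref{cocharacter_raised_formula}, but it is not needed, since the second factor is determined by the first inside $\bU$.

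Since the limit construction $\bP(\lambda),\bL(\lambda)$ is intrinsic to the group and the cocharacter, the isomorphism $\bU\cong\GL_d$ carries $(\bP(\lambda_{\ve}),\bL(\lambda_{\ve}))$ to the inner pair attached to the first-factor tuple. We thus obtain a commutative square
\[
\begin{tikzcd}
\LowIdemp_{\Mat_d(\cO)} \ar[r,"\sim"] \ar[d] & \LowIdemp_{((\Mat_d(\cO),\Mat_d(\cO)),\tau_d)} \ar[d] \\
\cPL_{\GL_d} \ar[r,"\sim"] & \cPL_{\bU}
\end{tikzcd}
\]
whose left vertical map is an isomorphism by \Cref{Idemp_PL_iso}. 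Hence the right vertical map is an isomorphism as well. Gluing over an fppf (or \'etale) cover that splits $(L,\cB,\tau)$ then gives the global isomorphism.

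I expect the main obstacle to be showing that the square commutes on all sections, not just those of constant length. The isomorphism of \Cref{split_outer_Idemp} involves $\rhoIdemp$, and the outer tuple may have a smaller local length on components where the first factor has zero entries. The resolution is to note that both sides are sheaves, so commutativity can be checked after further localizing to where every $e_i$ has constant rank. There the tuple is fixed by $\rhoIdemp$ and the computation above is literal.
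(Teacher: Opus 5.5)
Your proposal is correct and follows essentially the same route as the paper. The paper also reduces to the split case, identifies $\LowIdemp_{(\cB,\tau)}$ with $\LowIdemp_{\Mat_d(\cO)}$ via \Cref{split_outer_Idemp}, and observes that under the projection $\bU\cong\GL_d$ the outer cocharacter $\lambda_{\ve}$ becomes the inner cocharacter \eqref{inner_cocharacter} of the first-factor tuple, so the result follows from \Cref{Idemp_PL_iso}. Your worry about the outer tuple having a smaller local length is unfounded: $\cL$--loweredness forces $(e_i,\varepsilon_i)|_U=0$ whenever $e_i|_U=0$, so the length sections agree, as you already use earlier. Your localization step is harmless in any case.
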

\begin{proof}
We have seen above that this map is well-defined, so it suffices to show that it is an isomorphism locally. Thus, we may assume that
\[
(L\to S,\cB,\tau) = (S\sqcup S \to S, (\Mat_d(\cO),\Mat_d(\cO)),\tau_d).
\]
In this case
\[
\bU_{(\cB,\tau)}(T) = \{(A,(A^{-1})^t) \in \Mat_d(\cO(T))^\times \times\Mat_d(\cO(T))^\times \}
\]
and so we have an isomorphism $\bU_{(\cB,\tau)}\iso \GL_d$ by projecting onto the first factor.

Similarly, given a section $\ve = ((e_1,\varepsilon_1),\ldots,(e_{\ell+1},\varepsilon_{\ell+1})) \in \LowIdemp_{(\cB,\tau)}$, projection onto the first factor yields a section $(e_1,\ldots,e_{\ell+1})\in \LowIdemp_{\Mat_d(\cO)}$ under to the isomorphism of \Cref{split_outer_Idemp}.

Furthermore, the associated cocharacter of $\ve$ can by calculated component wise, i.e.,
\[
\lambda_{\ve}(t) = \left(\sum_{i=1}^{\ell+1} t^{\underline{a}(\ve)+1-2i}e_i,\sum_{i=1}^{\ell+1} t^{\underline{a}(\ve)+1-2i}\varepsilon_i\right),
\]
where the cocharacter associated to $(e_1,\ldots,e_{\ell+1})$ by \eqref{inner_cocharacter} appears in the first factor (justifying our use of that definition in the inner case). Therefore, this construction is also compatible with the projection onto the first factor. In particular, we have a commutative diagram
\[
\begin{tikzcd}
\LowIdemp_{(\cB,\tau)} \ar[r] \ar[d,"\varphi"] & \cHom_{\textrm{Grp}}(\GG_m,\bU_{(\cB,\tau)}) \ar[d,"\rotatebox{90}{$\sim$}"] \ar[r] & \cPL_\bG \ar[d,equals] \\
\LowIdemp_{\Mat_d(\cO)} \ar[r] & \cHom_{\textrm{Grp}}(\GG_m,\GL_d) \ar[r] & \cPL_\bG
\end{tikzcd}
\]
where the $\varphi$ is the isomorphism of \Cref{split_outer_Idemp} and the right side of the diagram sends a cocharacter to its parabolic Levi pair. Then, the composition along the bottom row is the isomorphism of \Cref{Idemp_PL_iso} and so the composition along the top row, which is our proposed map, is an isomorphism as well.
\end{proof}

\begin{cor}
Let $\bG = \bU_{(\cB,\tau)}$ for an Azumaya algebra with involution of the second kind. The isomorphism of \Cref{outer_Idemp_PL_iso} fits into the commutative diagram
\[
\begin{tikzcd}
\LowIdemp_{(\cB,\tau)} \ar[r,"\sim"] \ar[d,two heads] & \cPL_\bG \ar[d,two heads] \\
\LowGSB_{(\cB,\tau)} \ar[r,"\sim"] & \cPar_\bG
\end{tikzcd}
\]
where the downward maps are those of \Cref{outer_Idemp_GSB_surj} and \eqref{PL_to_Par_surj} respectively, and the bottom isomorphism is the map of \Cref{iso_outer_GSB_to_parabolics}.
\end{cor}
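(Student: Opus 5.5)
Commutativity of a square of sheaf maps can be checked locally, so the plan is to reduce to the split case and reuse the inner result, \Cref{inner_PL_iso_diagram}. Both composites $\LowIdemp_{(\cB,\tau)} \to \cPar_\bG$ are morphisms of sheaves, so it suffices to show they agree after restriction to a cover. Over a suitable fppf (indeed \'etale) cover, $(L\to S,\cB,\tau)$ becomes the split $(S\sqcup S\to S,(\Mat_d(\cO),\Mat_d(\cO)),\tau_d)$. Since every map in the square is defined compatibly with restriction (stabilizers, limit subgroups and partial-sum ideals all commute with base change), we may assume we are in the split case.

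In the split case I would use three identifications. First, the projection to the first factor gives $\bU_{(\cB,\tau)}\iso \GL_d$. Second, \Cref{split_outer_Idemp} gives $\LowIdemp_{(\cB,\tau)}\iso \LowIdemp_{\Mat_d(\cO)}$, sending $\ve=((e_i,\varepsilon_i))_i$ to $(e_i)_i$. Third, \eqref{eq_ConIdeal_iso} gives $\LowGSB_{\Mat_d(\cO)}\iso \LowGSB_{(\cB,\tau)}$. The proof of \Cref{outer_Idemp_PL_iso} already shows that, under the first two identifications, the top isomorphism becomes the inner isomorphism of \Cref{Idemp_PL_iso}; in particular $\bP(\lambda_{\ve})$ corresponds to $\bP(\lambda_{(e_i)})$. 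It then remains to show that the left vertical map and the bottom isomorphism are compatible with these identifications. Concretely, I need that $\bStab$ of the flag $(0\subseteq (e_1+\ldots+e_k)f_*(\cB)\subseteq\ldots)$ in $\bU_{(\cB,\tau)}$ corresponds under projection to $\bStab$ of $(0\subseteq (e_1+\ldots+e_k)\Mat_d(\cO)\subseteq \ldots)$ in $\GL_d$. Once that holds, the square reduces to the one in \Cref{inner_PL_iso_diagram}, which commutes.

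For this compatibility I would localize once more, to the open pieces where all $\rank e_i$ and $\rank\varepsilon_i$ are constant. There $\rhoIdemp$ acts trivially and $\varepsilon_i=e_{\ell+2-i}^t$. The partial-sum ideals are then $\cI_k\times(\lann\cI_{\ell+1-k})^t$ with $\cI_k=(e_1+\ldots+e_k)\Mat_d(\cO)$. The required identity is $(e_{\ell+2-k}+\ldots+e_{\ell+1})^t\Mat_d(\cO) = (\lann((e_1+\ldots+e_{\ell+1-k})\Mat_d(\cO)))^t$. This holds because the left annihilator of $f\Mat_d$ is $\Mat_d(1-f)$ for an idempotent $f$, and here $1-f=e_{\ell+2-k}+\ldots+e_{\ell+1}$.

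So the image flag is exactly the image of $(0\subseteq\cI_1\subseteq\ldots)$ under \eqref{eq_ConIdeal_iso}. An element $(A,(A^{-1})^t)$ stabilizes a product ideal $\cI\times\cJ$ if and only if $A$ stabilizes $\cI$ and $(A^{-1})^t$ stabilizes $\cJ$. By the symmetry shown in \Cref{U_action_on_GSB_Btau}, the second condition is automatic from the first, so the two stabilizers agree, compatibly with \Cref{iso_LowFlag_Hh_LowGSB} and \Cref{iso_hermitian_flags_parabolics}.

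The main obstacle is the bookkeeping forced by non-constant lengths: the identifications of \Cref{split_outer_Idemp} and \eqref{eq_ConIdeal_iso} involve $\rhoIdemp$ and $\varphi_{\cFlag}$, which are not given by naive formulas. I would avoid this entirely by working only over the constant-rank localization, where all raising maps are identities, and then invoking separatedness of $\cPar_\bG$ to conclude.
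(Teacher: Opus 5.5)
Your proposal is correct and follows the same route as the paper, which proves this corollary only by noting that, after localizing to the split case, the square becomes the diagram of \Cref{inner_PL_iso_diagram}. Your extra checks on constant-rank pieces make explicit the identification the paper leaves implicit: that the partial-sum flag of $\ve$ is the image of the inner partial-sum flag under \eqref{eq_ConIdeal_iso}, using $\lann(f\Mat_d(\cO))=\Mat_d(\cO)(1-f)$, and that the stabilizers correspond under $\bU_{(\cB,\tau)}\cong\GL_d$.
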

\begin{proof}
Locally this becomes the diagram of \Cref{inner_PL_iso_diagram}.
\end{proof}

\section{Severi-Brauer Schemes and Quillen's Construction}
In this final section we extend the classical categorical equivalence between Azumaya algebras and Sever-Brauer schemes to include objects of outer type $A$. This involves defining the notion of an outer type Severi-Brauer scheme in order to correspond to an Azumaya algebra $(\cB,\tau)$ with involution of the second kind. However, we begin by reviewing the correspondence for inner type objects, continuing to use the convention that $d=n+1$.

\subsection{Inner Type}\label{Severi-Brauer_review}
Over our base scheme $S$, a \emph{Severi-Brauer scheme} of constant relative dimension $n$ is a scheme $f\colon P \to S$ in $\Sch_S$ such that it is \'etale locally isomorphic to relative projective space $\PP_S^n \to S$ for some $n$. Since $\bAut(\PP_S^n) \cong \PGL_d$, there is a formal categorical equivalence between Severi-Brauer schemes of constant relative dimension $n$ and Azumaya $\cO$--algebras of constant degree $d$ due to the fact that both of these categories are equivalent to the category of $\PGL_d$--torsors. However, there is also a more direct and concrete equivalence between these two categories, namely
\begin{align}
\mathfrak{Azu}_d \leftrightarrow &\mathfrak{SB}_n \label{eq_usual_Quillen} \\
\cA \mapsto &\SB(\cA) \nonumber \\
f_*\big(\cEnd_{\cO|_P}(\cF(P))\big) \reflectbox{$\mapsto$} &P. \nonumber
\end{align}
Some explanation of notation is required. The Severi-Brauer scheme of $\cA$, denoted $\SB(\cA)$, is the scheme of finite locally free right ideals in $\cA$ of constant rank $d$ which are locally direct summands. Framing this in our context, since $\cA$ is of inner type $A_n$, we have the commutative diagram of \Cref{inner_type_GSB_Par}
\[
\begin{tikzcd}
\LowGSB_\cA \ar[r] \ar[d,"t_{\cGSB}"] & \cPar_{\GL_{1,\cA}} \ar[d,"t"] \\
\cP_n \ar[r,"\und^c"] & \cP_n
\end{tikzcd}
\]
where $\cP_n = \cOf(\cDyn(\GL_{1,\cA}))$. Since $\cPar_{\GL_{1,\cA}}$ is representable as mentioned in \Cref{prelim_parabolics_and_levis} so is $\LowGSB_{\cA}$ since the top map in the diagram is the isomorphism of \Cref{SB_isomorphism_with_Par}. Say $\LowGSB_{\cA}$ is represented by $X\in \Sch_S$. The sheaf $\cP_n$ is represented by $(\sP_n)_S$ which is $2^n$ disjoint copies of $S$, indexed by subsets of $\{1,\ldots,n\}$ written as increasing tuples. Thus we have a map of schemes
\[
X \to \bigsqcup_{r\in \sP_n} S_r
\]
corresponding to $t_{\cGSB}$ where all $S_r=S$. The type morphism sends a flag of ideals to its tuple of ranks divided by $d$, and so if we consider the fiber of this map over $S_{(1)}$, we obtain a scheme
\[
f\colon X_{(1)} \to S
\]
which represents finite locally free right ideals in $\cA$ of constant rank $d$ which are locally direct summands. We define $\SB(\cA) = X_{(1)}$. It is well known that if we start with $\cA = \Mat_d(\cO)$, then $X_{(1)} \cong \PP_S^n$ and so the schemes produced through this process for general Azumaya $\cO$--algebras are indeed Severi-Brauer schemes.

In the other direction, we have Quillen's construction, which we review from \cite[Theorem 81]{Kollar}. Given a Severi-Brauer scheme $f\colon P \to S$, there is a unique non-split extension of $\cO|_P$--modules
\[
0 \to \cO|_P \to \cF(P) \to \cT_{P/S} \to 0
\]
where $\cT_{P/S}$ is the tangent bundle of $P$ over $S$. Koll\'ar notes that the bundle $\cF(P)^*$, i.e., the dual, appears in \cite{Quillen}, leading to the name of the construction. When $P=\PP_S^n$, this unique non-split extension takes the form
\[
0 \to \cO|_{\PP_S^n} \to \cO_{\PP_S^n}(1)^d \to \cT_{\PP_S^n/S} \to 0
\]
which is the Euler sequence for relative projective space. Since $\cF(P)$ locally becomes isomorphic to $\cO_{\PP_S^n}(1)^d$, it is constant rank $d$ and thus the endomorphism algebra $\cEnd_{\cO|_P}(\cF(P))$ is an Azumaya algebra over $P$ of constant degree $d$. The pushforward retains this property, and so we set
\[
\cA_P = f_*(\cEnd_{\cO|_P}(\cF(P)))
\]
which is called the Azumaya $\cO$--algebra associated to $P$. In the case of $\PP_S^n$ it is clear that $\cEnd_{\cO|_{\PP_S^n}}(\cO_{\PP_S^n}(1)^d) \cong \Mat_d(\cO|_{\PP_S^n})$ and so $\cA_{\PP_S^n} = \Mat_d(\cO)$, showing that for general Severi-Brauer schemes $P$ the algebra $\cA_P$ is indeed an Azumaya $\cO$--algebra.

Let $P$ be a Severi-Brauer scheme. Using the equivalence between Severi-Brauer schemes and Azumaya algebras, say $P=\SB(\cA)$, we define the opposite Severi-Brauer scheme to be $P^\op = \SB(\cA^\op)$. In \cite{Kollar} (for Severi-Brauer varities over a field), this is called the dual and denoted $P^\vee$. Given a right ideal $\cJ\subset \cA^\op$ of constant rank $d$ which is locally a direct summand, its left annihilator $\lann\cJ$ is a left ideal of $\cA^\op$ of constant rank $d^2-d = d(d-1) = dn$ which is also locally a direct summand. Then, since left ideals in $\cA^\op$ are also right ideals in $\cA$ itself, this corresponds to a section of $\LowGSB_\cA$ with type $(n)$. Therefore, we see that
\[
\SB(\cA^\op) \cong X_{(n)}
\]
is also a fiber of the map $X \to \sqcup_{r\in \sP_n} S_r$ corresponding to $t_{\cGSB}$.

\subsection{A Variation on Quillen's Construction}\label{Quillen_variation}
Here we will work with relative projective space and various vector bundles on relative projective space and we will twist both the scheme and vector bundles simultaneously. In order to keep track of this simultaneous twisting and not get lost in isomorphisms, we will work with these objects' incarnations as stacks over $\Sch_S$. The punchline of this section is that for a Severi-Brauer scheme $f\colon P \to S$, the pushforward $f_*(\cF(P))$ can be given an algebra structure without first taking the endomorphism algebra, and $f_*(F(\SB(A))) \cong A$.

\subsubsection{The Relative Projective Stack}
For any $S$--scheme $f\colon X \to S$, the functor $\Sch_X \to \Sch_S$ sending an $X$--scheme $g\colon U \to X$ to the $S$--scheme $f\circ g \colon U \to S$ and which acts obviously on morphisms, gives $\Sch_X$ the structure of a stack over $\Sch_S$. This is the same stack one obtains when constructing a stack from the sheaf $h_X \colon \Sch_S \to \Sets$ represented by $X$. In the case of relative projective space $\PP_S^n \to S$, we may use the characterization of morphisms $U\to \PP_S^n$ from \cite[Tag 01NE]{Stacks} and consider the stack
\[
\PP^n \to \Sch_S
\]
with
\begin{enumerate}
\item objects $(U,\cL,(s_0,\ldots,s_n))$ where $U \in \Sch_S$, $\cL \colon \Sch_U \to \Ab$ is a line bundle, and $s_0,\ldots,s_n\in \cL(U)$ are sections which globally generate $\cL$,
\item morphisms $(g,\psi)\colon (U',\cL',(s_0',\ldots,s_n')) \to (U,\cL,(s_0,\ldots,s_n))$ where $g\colon U'\to U$ is a morphism in $\Sch_S$ and $\psi \colon \cL|_{U'} \iso \cL'$ is an isomorphism of $\cO|_{U'}$--line bundles such that $\psi(s_i|_{U'})=s_i'$, and
\item structure morphism sending $(U,\cL,(s_0,\ldots,s_n)) \mapsto U$ and $(g,\psi)\mapsto g$.
\end{enumerate}
This is not exactly the stack $\Sch_{\PP_S^n} \to \Sch_S$, since in any stack of the form $\Sch_X \to \Sch_S$ the fiber over $U$ does not contain any non-identity isomorphisms, while in $\PP^n(U)$ we have isomorphisms of the form
\[
(U,\cL,(s_0,\ldots,s_n)) \iso (U,\cL,(as_0,\ldots,as_n)) 
\]
for $a\in \GG_m(U)$. Nevertheless, the automorphism group of any object in $\PP^n(U)$ is trivial and so there is an equivalence of stacks $\Sch_{\PP_S^n} \cong \PP^n$, so we choose to work with $\PP^n \to \Sch_S$ in order to avoid working with equivalence classes of tuples up to scaling. Relative projective space $\PP_S^n$ has the line bundle $\cOSone$ with globally generating sections $x_0,\ldots,x_n$. A morphism of schemes $U\to \PP_S^n$ in $\Sch_S$ corresponds to the object $(U,\cOSone|_U,(x_0|_U,\ldots,x_n|_U)) \in \PP^n$ where the restriction to $U$ is along the given morphism.

The automorphism group of relative projective space is $\bAut_S(\PP_S^n) = \PGL_d$ and it acts on the stack $\PP^n \to \Sch_S$ in the following way. Consider $\varphi \in \PGL_d(S)$. The exact sequence of groups $1 \to \GG_m \to \GL_d \to \PGL_d \to 1$ gives us a long exact sequence in cohomology with first connecting morphism
\[
\PGL_d(S) \xrightarrow{\delta} \Pic(S).
\]
Let $\delta(\varphi)=\cI$. This means that there exists an isomorphism $A \colon \cO^d \to \cI^d$ of $\cO$--modules such that the induced isomorphism on the endomorphism algebras
\begin{align*}
\Mat_d(\cO) = \cEnd_\cO(\cO^d) &\iso \cEnd_\cO(\cI^d) = \Mat_d(\cO) \\
T &\mapsto A T A^{-1}
\end{align*}
is the isomorphism $\varphi$, viewing $\PGL_d = \bAut(\Mat_d(\cO))$. We may also view this more literally as matrix multiplication. The map $A$ can be written as a matrix $A\in \Mat_d(\cI)$ with entries in $\cI$, and then its inverse $A^{-1} \in \Mat_d(\cI^*)$ has entries in the dual line bundle. The product $ATA^{-1}$ can then be calculated as usual where the entries of $A^{-1}$ act by evaluation when encountering a section of $\cI$.

For any object $(U,\cL,(s_0,\ldots,s_n)) \in \PP^n$, by restricting and tensoring we get an isomorphism
\[
A|_U \otimes 1 \colon \cL^d = \cO|_U^d \otimes_{\cO|_U} \cL \iso \cI_U^d \otimes_{\cO|_U} \cL = (\cI|_U\otimes_{\cO|_U} \cL)^d
\]
which sends the section $(s_0,\ldots,s_n)$ to some section $(s_0',\ldots,s_n')\in (\cI|_U\otimes_{\cO|_U} \cL)^d$. The section $\varphi \in \PGL_d$ then acts on $\PP^n$ as a $\Sch_S$--stack isomorphism by
\begin{align*}
\PP^n &\xrightarrow{\varphi} \PP^n \\
(U,\cL,(s_0,\ldots,s_n)) &\mapsto (U,\cI|_U\otimes_{\cO|_U} \cL,(s_0',\ldots,s_n')) \\
(g,\psi) &\mapsto (g,1\otimes\psi).
\end{align*}

\subsubsection{Some Vector Bundles on $\PP^n$}
Any sheaf $\cF \colon \Sch_X \to \Sets$ can be viewed as a stack over $\Sch_X$ by placing each section of $\cF(U)$ as an object over $U\in \Sch_S$ and adding morphisms to encode restrictions. We do this for the sheaves $\cO|_{\PP_S^n}$, $\cOSone$, and the tangent bundle $\cT_{\PP_S^n/S}$ on $\Sch_{\PP_S^n}$ and we write them as stacks over $\PP^n$. We abuse notation and denote the resulting stacks the same way.

For the structure sheaf, the global sections $\cO(U)$ are intrinsic to $U$ and do not depend on the chosen morphism to $\PP_S^n$, and so as a stack it becomes
\[
\cO|_{\PP^n} \to \PP^n
\]
with
\begin{enumerate}
\item objects $(U,\cL,(s_0,\ldots,s_n),a\in \cO(U))$ where $(U,\cL,(s_0,\ldots,s_n))\in \PP^n$,
\item morphisms
\[
(g,\psi)\colon (U',\cL',(s_0',\ldots,s_n'),a'\in \cO(U'))\to (U,\cL,(s_0,\ldots,s_n),a\in \cO(U))
\]
where $(g,\psi)$ is a morphism between the underlying objects in $\PP^n$ such that $a|_{U'}=a'$, and
\item with the obvious structure morphism to $\PP^n$.
\end{enumerate}

For the line bundle $\cOSone$, the interpretation of an object $(U,\cL,(s_0,\ldots,s_n)) \in \PP^n$ is that $\cL$ is the restriction of $\cOSone$ to $U$. Therefore, the appropriate stack of sections is
\[
\cOone \to \PP^n
\]
with
\begin{enumerate}
\item objects $(U,\cL,(s_0,\ldots,s_n),\ell\in \cL(U))$ where $(U,\cL,(s_0,\ldots,s_n))\in \PP^n$,
\item morphisms
\[
(g,\psi)\colon (U',\cL',(s_0',\ldots,s_n'),\ell'\in \cL'(U')) \to (U,\cL,(s_0,\ldots,s_n),\ell\in \cL(U))
\]
where $(g,\psi)$ is a morphism between the underlying objects in $\PP^n$ such that $\psi(\ell|_{U'})=\ell'$, and
\item with the obvious structure morphism to $\PP^n$.
\end{enumerate}
The vector bundle $\cOone^d$ then appears similarly, where we instead have objects of the form $(U,\cL,(s_0,\ldots,s_n),v\in \cL^d(U))$.

Finally, for any morphism of schemes $X\to S$, the tangent bundle $\cT_{X/S} \colon \Sch_X \to \Sets$ can be defined by $\cT_{X/S}(Y) = \Ker(h_X(Y[\varepsilon]) \to h_X(Y))$ where $Y[\varepsilon]$ is the scheme of dual numbers over $Y$ and the morphism $Y \to Y[\varepsilon]$ is given by sending $\varepsilon \mapsto 0$ within the structure sheaves. In the context of projective space, this means that the tangent stacks is
\[
\cT_{\PP^n/S} \to \PP^n
\]
with
\begin{enumerate}
\item objects $(U,\cL,(s_0,\ldots,s_n),\cJ,(j_0,\ldots,j_n))$ where $(U,\cL,(s_0,\ldots,s_n))\in \PP^n$ and $\cJ$ is a line bundle of $U[\varepsilon]$ globally generated by $j_0,\ldots,j_n$ and such that $\cJ|_U = \cL$ and $j_i|_U = s_i$,
\item morphisms
\[
(g,\psi,\rho) \colon (U',\cL',(s_0',\ldots,s_n'),\cJ',(j_0',\ldots,j_n')) \to (U,\cL,(s_0,\ldots,s_n),\cJ,(j_0,\ldots,j_n))
\]
where $(g,\psi)$ is a morphism between the underlying objects in $\PP^n$ and $\rho \colon \cJ|_{U'[\varepsilon]} \iso \cJ$ is an isomorphism of $U'[\varepsilon]$--line bundles such that we have $\rho(j_k|_{U'[\varepsilon]})=j_k'$, which also implies that $\rho|_U=\psi$, and
\item with the obvious structure morphism to $\PP^n$.
\end{enumerate}
Similar to $\PP^n$ itself, while this stack does not have any non-trivial automorphisms, it does have some non-trivial isomorphisms in the fiber over an object in $\PP^n$. Namely, for any section $b\in \cO(U)$ we may scale by $1+b\varepsilon \in \GG_m(U[\varepsilon])$ and obtain an isomorphism
\[
(U,\cL,(s_0,\ldots,s_n),\cJ,(j_0,\ldots,j_n)) \iso (U,\cL,(s_0,\ldots,s_n),\cJ,(j_0 +b\varepsilon j_0,\ldots,j_n+b\varepsilon j_n))
\]
in the fiber $\cT_{\PP^n/S}((U,\cL,(s_0,\ldots,s_n))$. This appears as something more familiar for certain objects in this fiber, namely those where $\cJ = \cL\oplus \varepsilon\cL$. All such objects take the form
\[
(U,\cL,(s_0,\ldots,s_n),\cL+\varepsilon\cL,(s_0+\varepsilon t_0,\ldots,s_n+\varepsilon t_n))
\]
and the above object is isomorphic to all objects of the form
\[
(U,\cL,(s_0,\ldots,s_n),\cL+\varepsilon\cL,(s_0+\varepsilon(t_0+bs_0),\ldots,s_n+\varepsilon(t_n+bs_n)))
\]
for all $b\in \cO(U)$. That is to say, the isomorphism class containing these objects can be interpreted as the section $\overline{(t_0,\ldots,t_n)} \in \left(\cL^d/\langle(s_0,\ldots,s_n)\rangle\right)(U)$, where one may like to intuitively think of $\cL^d/\langle(s_0,\ldots,s_n)\rangle$ as the vector bundle perpendicular to the vector $(s_0,\ldots,s_n)$.

The Euler sequence for relative projective space can then be written in terms of $\PP^n$--stacks morphisms between objects. The map $\cO|_{\PP_S^n} \to \cOSone^d$ takes the form
\begin{align*}
\cO|_{\PP^n} &\to \cOone^d \\
(U,\cL,(s_0,\ldots,s_n),a\in\cO(U)) &\mapsto (U,\cL,(s_0,\ldots,s_n),(as_0,\ldots,as_n)\in \cL^d(U))
\end{align*}
and it behaves as one would expected on morphisms. The map $\cOSone^d \to \cT_{\PP_S^n/S}$ appears in terms of stacks as
\begin{align*}
\cOone^d &\to \cT_{\PP^n/S} \\
(U,\cL,(s_0,\ldots,s_n),v\in \cL^d(U)) &\to (U,\cL,(s_0,\ldots,s_n),\cL\oplus \varepsilon\cL,(s_0+\varepsilon v_0,\ldots,s_n +\varepsilon v_n))
\end{align*}
where $v_i$ are the coordinates of $v$. In the intuitive interpretation given above, this simply sends $v\in \cL^d(U)$ to $\overline{v}\in \left(\cL^d/\langle (s_0,\ldots,s_n)\rangle\right)(U)$.

\subsubsection{Twisting by $\PGL_d$ Actions}
For each of $\cO|_{\PP^n}$, $\cOone^d$, and $\cT_{\PP^n/S}$ we define an action of $\PGL_d$ by $\Sch_S$--stack morphisms which act compatibly with the $\PGL_d$ action on $\PP^n$ itself.

Let $\varphi \in \PGL_d(S)$ with associated $\cO$--module isomorphism $A \colon \cO^d \to \cI^d$ for a line bundle $\cI$ on $S$. Recall that it acts on $\PP^n$ by
\[
(U,\cL,(s_0,\ldots,s_n)) \mapsto (U,\cI|_U\otimes_{\cO|_U} \cL,(s_0',\ldots,s_n'))
\]
where the $s_i'$ are the coordinates of $(A\otimes 1)(s_0,\ldots,s_n) \in (\cI|_U\otimes_{\cO|_U} \cL)^d(U)$. The section $\varphi$ acts naturally on the structure sheaf, simply by
\begin{align*}
\cO|_{\PP^n} &\xrightarrow{\varphi} \cO|_{\PP^n} \\
(U,\cL,(s_0,\ldots,s_n),a\in \cO(U)) &\mapsto (U,\cI|_U\otimes_{\cO|_U} \cL,(s_0',\ldots,s_n'),a\in \cO(U)).
\end{align*}

We define the action of $\varphi$ on $\cOone^d$ by
\begin{align}
&\varphi(U,\cL,(s_0,\ldots,s_n),v\in \cL^d(U)) \label{eq_action_on_Oone^d} \\
=&(U,\cI|_U\otimes_{\cO|_U} \cL,(s_0',\ldots,s_n'),(A\otimes 1)(v) \in (\cI|_U\otimes_{\cO|_U}\cL)^d(U)). \nonumber
\end{align}

Finally, the definition of $\cT_{\PP^n/S}$ as consisting of morphisms in $\PP^n$ belonging to certain kernels means that the morphism $\varphi \colon \PP^n \to \PP^n$ naturally induces a compatible morphism on $\cT_{\PP^n/S}$. Namely, this is
\begin{align*}
&\varphi(U,\cL,(s_0,\ldots,s_n),\cJ,(j_0,\ldots,j_n)) \\
=&(U,\cI|_U\otimes_{\cO|_U}\cL,(s_0',\ldots,s_n'),\cI|_{U[\varepsilon]}\otimes_{\cO|_{U[\varepsilon]}}\cJ,(j_0',\ldots,j_n'))
\end{align*}
where $j_k'$ are the coordinates of $(A\otimes 1)(j_0,\ldots,j_n)$. For all of these three actions, $\varphi$ behaves on morphisms as one would expect. The definitions of these actions make it clear that morally $\varphi$ is acting by the associated isomorphism $A$ on all the relevant $d$--tuples and thus it is obvious that all of these actions commute with the respective structure morphisms down to $\PP^n$, which is what we mean by saying the actions are compatible with the action on $\PP^n$. It is also clear that the maps
\[
\cO|_{\PP^n} \to \cOone^d \to \cT_{\PP^n/S}
\]
in the Euler exact sequence are equivariant with respect to these $\PGL_d$--actions. Since the category of stacks satisfies descent, for example as explained in \cite[Example 1.11(i)]{Breen}, these $\PGL_d$--actions can be used to simultaneously twist $\PP^n$ as well as the exact sequence of sheaves on $\PP^n$ with respect to $\PGL_d$--torsors.

\begin{lem}\label{simul_twisting}
Let $P\to S$ be a Severi-Brauer scheme of constant relative dimension $n$ with corresponding $\PGL_d$--torsor $\cK$. Then, the twisting the diagram of stack morphisms
\[
\begin{tikzcd}
\cO|_{\PP^n} \ar[r] \ar[dr] & \cOone^d \ar[r] \ar[d] & \cT_{\PP^n/S} \ar[dl] \\
 & \PP^n &
\end{tikzcd}
\]
produces a diagram of stack morphisms over $\Sch_P$ represented by the exact sequence of $\cO|_P$--modules
\[
0 \to \cO|_P \to \cF(P) \to \cT_{P/S} \to 0.
\]
\end{lem}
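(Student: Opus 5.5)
We twist the whole diagram at once, working at the level of stacks, and then read off the three twisted objects separately. By \cite[Example 1.11(i)]{Breen}, descent for stacks means that a $\PGL_d$--torsor $\cK$ together with the $\PGL_d$--equivariant diagram of $\Sch_S$--stacks gives a twisted diagram
\[
\cK\wedge^{\PGL_d}\cO|_{\PP^n} \to \cK\wedge^{\PGL_d}\cOone^d \to \cK\wedge^{\PGL_d}\cT_{\PP^n/S},
\]
all lying over $\cK\wedge^{\PGL_d}\PP^n$. The plan has three steps. First, identify $\cK\wedge^{\PGL_d}\PP^n$ with the stack $\Sch_P$. Second, identify each twisted stack over it with the stack of sections of an $\cO|_P$--module. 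Third, check that the resulting sequence is exact, non-split, and has the correct outer terms, so that by uniqueness (\cite[Theorem 81]{Kollar}) the middle term is $\cF(P)$.

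\textbf{Step 1.} Since $\PP^n\simeq \Sch_{\PP^n_S}$ and $\bAut_S(\PP^n_S)=\PGL_d$, the twist of $\Sch_{\PP_S^n}$ by $\cK$ is $\Sch_P$. This is exactly how $P$ corresponds to $\cK$. Twisting commutes with passing from a representable sheaf to its stack, so no further work is needed here.

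\textbf{Step 2.} Each of the three stacks is fibered in sets over $\PP^n$ up to the equivalences already noted, and it is the stack of sections of a quasi-coherent $\cO|_{\PP^n_S}$--module. The $\PGL_d$--actions are by module maps on the fibers, namely $A\otimes 1$, which is additive and $\cO$--linear. Hence the twist is again fibered over $\Sch_P$ by modules. Over an étale cover trivializing $\cK$ each twisted object becomes a vector bundle of the same rank, so it is a locally free $\cO|_P$--module.

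\textbf{Step 3.} For the two outer terms we use naturality:
\begin{itemize}
\item $\PGL_d$ acts on $\cO|_{\PP^n}$ only through the base, so the twist is $\cO|_P$.
\item The action on $\cT_{\PP^n/S}$ is the differential of the action on $\PP^n$. The tangent bundle is defined functorially from dual numbers, so it commutes with twisting, and the twist is $\cT_{P/S}$.
\end{itemize}
Exactness of the Euler sequence can be checked locally, and locally the twisted sequence is the Euler sequence. So we obtain $0\to\cO|_P\to\cE\to\cT_{P/S}\to 0$ with $\cE$ the twist of $\cOone^d$.

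\textbf{Main obstacle: non-splitness.} This must hold globally on $P$, not just étale locally, and it is the step I expect to be hardest. I would argue as follows. The extension class lies in $\mathrm{Ext}^1(\cT_{P/S},\cO|_P)$. A splitting would be a section $\cT_{P/S}\to\cE$ of the quotient map. Its restriction to each fiber over a geometric point of $S$ is a splitting of the Euler sequence on $\PP^n_{\bar k}$, which is impossible, since for example $H^0$ of the twisted sequence shows that $\cO(1)^d$ has no summand isomorphic to $\cO$ in the relevant way. The splitting condition is compatible with base change, so it suffices to rule it out fiberwise. Kollár's uniqueness statement then identifies $\cE\cong\cF(P)$.
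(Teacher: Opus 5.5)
Your proposal is correct and is essentially the paper's argument: twist the $\PGL_d$--equivariant Euler diagram by $\cK$, identify the outer terms with $\cO|_P$ and $\cT_{P/S}$ because the actions on them are induced from the action on $\PP^n$, observe that the middle term is locally free and the sequence exact because both hold locally, and conclude with Koll\'ar's uniqueness. The step you flag as the main obstacle is actually immediate, since a global splitting would pull back to a splitting of the local Euler sequence (the paper says exactly this), and your restriction to geometric fibers is the same observation, with the small bonus that it makes explicit the fiberwise non-splitness that uniqueness genuinely requires when the base is not a field.
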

\begin{proof}
Since the $\PGL_d$ action on both the structure sheaf and the tangent space of $\PP^n$ were dictated by the action on $\PP^n$, we will obtain
\begin{align*}
\cK\wedge^{\PGL_d} \cO|_{\PP^n} &= \cO|_{\Sch_P}, \text{ and} \\
\cK\wedge^{\PGL_d} \cT_{\PP^n/S} &= \cT_{\Sch_P/S}
\end{align*}
after twisting. Furthermore, since the Euler exact sequence is $\PGL_d$--equivariant we will have morphisms
\[
\cO|_{\Sch_P} \to \cK\wedge^{\PGL_d} \cOone^d \to \cT_{\Sch_P/S}
\]
where all stacks are represented by vector bundles over $P$ since they are vector bundles locally. Therefore, we obtain a sequence of $\cO|_P$--modules on $\Sch_P$,
\[
0 \to \cO|_P \to \cK \wedge^{\PGL_d} \cOSone^d \to \cT_{P/S} \to 0
\]
which is exact and which is non-split since it is exact and non-split locally. However, by \cite[Theorem 81]{Kollar} there is a unique such exact sequence, meaning that
\[
\cK \wedge^{\PGL_d} \cOSone^d \cong \cF(P).
\]
This finishes the proof.
\end{proof}

\subsubsection{Induced Action on the Pushforward}
Throughout the remained of this section we fix $f\colon \PP_S^n \to S$ as the structure morphism of relative projective space. Given a sheaf $\cF \colon \Sch_{\PP_S^n} \to \Sets$, its pushforward with respect to $f$ is the sheaf
\begin{align*}
f_*(\cF) \colon \Sch_S &\to \Sets \\
U &\mapsto \cF(U\times_S \PP_S^n).
\end{align*}
The choices of fiber products $U\times_S \PP_S^n$ for each $U\in \Sch_S$ can be encoded as a functor $\Sch_S \to \Sch_{\PP_S^n} \cong \PP^n$. In particular, we use the functor
\begin{align*}
p \colon \Sch_S &\to \PP^n \\
U &\mapsto (\PP_U^n,\cO_{\PP_U^n}(1),(x_0,\ldots,x_n))
\end{align*}
and which acts naturally on morphisms. Then, if $\fF \to \PP^n$ is the stack associated to the sheaf $\cF$ on $\PP_S^n$, then the pushforward $f_*(\cF)$ on $S$ corresponds to the fiber product
\[
\begin{tikzcd}
p^{-1}(\fF) = \Sch_S\times_{\PP^n}\fF \ar[r] \ar[d] & \fF \ar[d] \\
\Sch_S \ar[r,"p"] & \PP^n.
\end{tikzcd}
\]

Now, given $\varphi \in \PGL_d(S)$ and the corresponding isomorphism $\varphi \colon \PP^n \to \PP^n$, the functor $\varphi\circ p \colon \Sch_S \to \PP^n$ encodes a different choice of fiber product with $\PP_S^n$ for each $U \in \Sch_S$. In particular, if $\varphi$ corresponds to the isomorphism $A\colon \cO^d \to \cI^d$ for a line bundle $\cI$ on $S$, then
\[
(\varphi\circ p)(U) = (\PP_U^n,\cI|_U\otimes_{\cO|_{\PP_U^n}}\cO_{\PP_U^n}(1),((Ax)_1,\ldots,(Ax)_n))
\]
where $x=(x_0,\ldots,x_n) \in \cO_{\PP_U^n}(1)^d$ and $(Ax)_i$ is the $i^\textrm{th}$ component of $(A\otimes 1)(x)$. Going back to schemes for a moment, if the ``obvious" choice of fiber product $U\times_S \PP_S^n$ given by $p$ is the diagram
\[
\begin{tikzcd}
\PP_U^n \ar[r,"\alpha"] \ar[d] & \PP_S^n \ar[d] \\
U \ar[r] & S,
\end{tikzcd}
\]
i.e., $(\PP_U^n \xrightarrow{\alpha} \PP_S^n) = p(U) = (\PP_U^n,\cO_{\PP_U^n}(1),(x_0,\ldots,x_n))$, then the diagram
\[
\begin{tikzcd}
\PP_U^n \ar[r,"\alpha"] \ar[d] & \PP_S^n \ar[r,"\varphi"] & \PP_S^n \ar[d] \\
U \ar[rr] & & S
\end{tikzcd}
\]
corresponds to the choice $(\varphi\circ p)(U)$. Since these are both fiber products for the same diagram, there is a unique isomorphism of schemes $\varphi|_U \colon \PP_U^n \iso \PP_U^n$ between the two fiber products. In particular, for each $U\in \Sch_S$ there is an isomorphism in $\PP^n$
\[
(\varphi|_U,\Id) \colon (\PP_U^n,\cI|_U\otimes_{\cO|_{\PP_U^n}}\cO_{\PP_U^n}(1),((Ax)_1,\ldots,(Ax)_n)) \iso (\PP_U^n,\cO_{\PP_U^n}(1),(x_0,\ldots,x_n))
\]
where the isomorphism between the line bundles is simply the identity because
\[
\cO_{\PP_U^n}(1)|_{\varphi|_U} = \cI|_U\otimes_{\cO|_{\PP_U^n}}\cO_{\PP_U^n}(1)
\]
and the restriction map $\cO_{\PP_U^n}(1)(\varphi|_U)$ sends $x_i \mapsto (Ax)_i$. These morphisms give us a natural isomorphism of functors $\varphi\circ p \iso p$, and therefore also a canonical isomorphism of $\Sch_S$--stacks
\[
\can \colon (\varphi\circ p)^{-1}(\fF) \iso p^{-1}(\fF)
\]
where the isomorphism between the fibers $(\varphi\circ p)^{-1}(\fF)(U) \iso p^{-1}(\fF)(U)$ is given by the restriction maps
\[
\cF(\varphi|_U^{-1}) \colon \cF((\varphi\circ p)(U)) \to \cF(p(U))
\]
of the sheaf $\cF$ along the inverse of the isomorphism. In other words, this is the equivalent of canonical isomorphism $f_*(\varphi^*(\cF)) \cong f_*(\cF)$.

The above is applicable to any stacks $\fF \to \PP^n$, but we now apply it to the case of $\cOone^d \to \PP^n$. It is a standard fact that $f_*(\cOSone) \cong \cO^d$ where the sections $x_0,\ldots,x_n$ form the $\cO$--module basis. In our setup we see the same thing since the objects in the fiber $p^{-1}(\cOone)(U)$ are of the form
\[
(\PP_U^n,\cO_{\PP_U^n}(1),(x_0,\ldots,x_n),v\in \cO_{\PP_U^n}(1)(\PP_U^n)).
\]
Removing unnecessary information, we can consider $p^{-1}(\cOone)$ as having objects of the form
\[
(U,v\in \cO_{\PP_U^n}(1)(\PP_U^n) = \Span_{\cO(U)}(\{x_0,\ldots,x_n\})
\]
which gives an obvious isomorphism $p^{-1}(\cOone) \iso \cO^d$. We may also identify $(\varphi\circ p)^{-1}(\cOone)$ this way.
\begin{lem}
Given $\varphi \in \PGL_d(S)$ as above, there is a natural isomorphism $(\varphi\circ p)^{-1}(\cOone) \cong \cI^d$ over $\Sch_S$. Using both this isomorphism and the identification $p^{-1}(\cOone) \iso \cO^d$, the canonical isomorphism
\[
\can \colon (\varphi\circ p)^{-1}(\cOone) \iso p^{-1}(\cOone)
\]
appears as
\[
A^{-1} \colon \cI^d \iso \cO^d.
\]
\end{lem}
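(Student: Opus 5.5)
We need to identify $(\varphi\circ p)^{-1}(\cOone)$ and compute the canonical isomorphism. The plan is to work fiberwise over a fixed $U\in\Sch_S$ and make everything explicit in terms of global sections on $\PP_U^n$.

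First, the identification. Objects of $(\varphi\circ p)^{-1}(\cOone)$ over $U$ are, after discarding the fixed data $(\PP_U^n,\cI|_U\otimes\cO_{\PP_U^n}(1),((Ax)_i))$, sections
\[
v\in (\cI|_U\otimes\cO_{\PP_U^n}(1))(\PP_U^n).
\]
Since $\cI|_U$ is pulled back from $U$, the projection formula together with $f_*(\cO_{\PP_U^n}(1))\cong\cO|_U^d$ (basis $x_0,\ldots,x_n$) gives
\[
(\cI|_U\otimes\cO_{\PP_U^n}(1))(\PP_U^n)=\cI(U)\otimes_{\cO(U)}\Span_{\cO(U)}(x_0,\ldots,x_n)=\cI(U)^d .
\]
Concretely, a section $\sum_i c_i\otimes x_i$ corresponds to $(c_0,\ldots,c_n)\in\cI(U)^d$. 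To justify the projection formula step, I would compute on a Zariski cover of $U$ trivializing $\cI$ and glue, checking that the result is independent of the trivialization. This is natural in $U$ and compatible with the morphisms of the stack, so it gives the isomorphism $(\varphi\circ p)^{-1}(\cOone)\cong\cI^d$.

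Second, compute $\can$. By construction, $\can$ on the fiber over $U$ is the restriction along $(\varphi|_U,\Id)$. That is, a section $w$ of the twisted bundle is sent to the section $w'$ of $\cO_{\PP_U^n}(1)$ with $\varphi|_U^*$-identification $w'\mapsto w$. The key fact is that this restriction map sends $x_i\mapsto (Ax)_i$ and is $\cO(U)$-linear (indeed $\cO(\PP^n_U)$-linear). Hence the map $\cO(U)^d\to\cI(U)^d$, $v\mapsto\sum_i v_i (Ax)_i$, is, in the coordinates above, multiplication by the matrix $A$ (up to the transpose convention on how $Ax$ expands in the $x_j$, which I will fix by writing $(Ax)_i=\sum_j A_{ij}x_j$ with $A_{ij}\in\cI(U)$). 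Since $\can$ is defined via the inverse isomorphism $\varphi|_U^{-1}$, it is the inverse of this map, namely $A^{-1}\colon\cI^d\to\cO^d$, where the entries of $A^{-1}\in\Mat_d(\cI^*)$ act by evaluation.

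Third, verify naturality in $U$ so that the fiberwise identifications assemble into an isomorphism of $\Sch_S$-stacks/sheaves. This is routine because every construction is compatible with base change along $U'\to U$.

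The main obstacle is bookkeeping the direction and transpose conventions. I need to be sure that the restriction along $\varphi|_U$ really sends the basis vector $x_i$ to $(Ax)_i$, rather than the basis being sent through $A^t$ or $A^{-1}$. Equivalently, I must confirm that the coordinate vector of the section $\sum_i c_i x_i$ transforms by $A$ and not $A^t$. I would settle this first in the split case $\cI=\cO$, with $A$ an honest matrix and $\varphi|_U$ the linear change of coordinates, by checking the formula on a single basis section. The general case would then follow by localizing on $S$ to trivialize $\cI$.
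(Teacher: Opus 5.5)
Your proposal is correct and follows the paper's argument. You identify the global sections of $\cI|_U\otimes\cO_{\PP_U^n}(1)$ with $\bigoplus_i \cI\cdot x_i\cong\cI^d$, use that restriction along $\varphi|_U$ sends $x_i\mapsto (Ax)_i$ (it acts by $A\otimes 1$), and conclude that $\can$, being restriction along $\varphi|_U^{-1}$, is $A^{-1}$. Your transpose worry resolves as the paper intends: with $(Ax)_i=\sum_j A_{ij}x_j$ the restriction is $v\mapsto vA$ in row coordinates, which is why the paper afterwards reads $p^{-1}(\cOone)$ as rows and writes $\can$ as $T\mapsto TA^{-1}$.
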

\begin{proof}
The objects in the fiber $(\varphi\circ p)^{-1}(\cOone)(U)$ are of the form
\[
(\PP_U^n,\cI|_U\otimes_{\cO|_{\PP_U^n}}\cO_{\PP_U^n}(1),((Ax)_1,\ldots,(Ax)_n),v\in \big(\cI|_U\otimes_{\cO|_{\PP_U^n}}\cO_{\PP_U^n}(1)\big)(\PP_U^n) )
\]
where the only variation is in the choice of $v$. Therefore, $(\varphi\circ p)^{-1}(\cOone)(U)$ is equivalent to the stack with objects
\[
(U,v\in \big(\cI|_U\otimes_{\cO|_{\PP_U^n}}\cO_{\PP_U^n}(1)\big)(\PP_U^n))
\]
where here
\[
\big(\cI|_U\otimes_{\cO|_{\PP_U^n}}\cO_{\PP_U^n}(1)\big)(\PP_U^n) = \bigoplus_{i=0}^n \cI\cdot x_i
\]
giving an isomorphism $(\varphi\circ p)^{-1}(\cOone) \iso \cI^d$.

The canonical isomorphism
\[
\can \colon (\varphi\circ p)^{-1}(\cOone) \iso p^{-1}(\cOone)
\]
comes from restriction along $\varphi^{-1}$ within $\cOone$. By definition, the restrictions in $\cOone$ along $\varphi$ correspond to maps of the form $A\otimes 1$, and the canonical isomorphism corresponds to $A^{-1}\otimes 1$ which after the pushforward is simply
\[
A^{-1} \colon \cI^d \iso \cO^d
\]
as claimed.
\end{proof}

If we consider $\cOone^d$ as consisting of column vectors and $p^{-1}(\cOone) \cong \cO^d$ as consisting of rows, and likewise for $(\varphi\circ p)^{-1}(\cOone)$ then we obtain $\cO$--module isomorphisms
\begin{align}
p^{-1}(\cOone^d) &\cong \Mat_d(\cO), \text{ and} \label{eq_pushforward_isomorphisms}\\
(\varphi\circ p)^{-1}(\cOone^d) &\cong \Mat_d(\cI). \nonumber
\end{align}
Since the canonical isomorphism $(\varphi\circ p)^{-1}(\cOone^d) \iso p^{-1}(\cOone^d)$ only acts within each row, writing $A^{-1} \in \Mat_d(\cI^*)$ as a matrix with entries in the dual line bundle, this canonical isomorphism simply appears as right multiplication by $A^{-1}$;
\begin{align*}
\can \colon \Mat_d(\cI) &\iso \Mat_d(\cO) \\
T &\mapsto TA^{-1}.
\end{align*}

As a final step, because the action of $\PGL_d$ on $\cOone^d$ is compatible with the action on $\PP^n$, we have the diagram
\[
\begin{tikzcd}
\cOone^d \ar[r,"\varphi"] \ar[d] & \cOone^d \ar[d] \\
\PP^n \ar[r,"\varphi"] & \PP^n
\end{tikzcd}
\]
which can be extended to the diagram
\[
\begin{tikzcd}
\cOone^d \ar[rr,"\varphi'"] \ar[dr] &[-5ex] &[-7ex] \varphi^{-1}(\cOone^d) \ar[rr] \ar[dl] &[-12ex] & \cOone^d \ar[d] \\
 & \PP^n \ar[rrr,"\varphi"] \ar[drr] & & & \PP^n \ar[dl] \\
 & & & \Sch_S \ar[bend left=15,dashed,ull,"p"] \ar[bend right=15,dashed,swap,ur,"p"] &
\end{tikzcd}
\]
where $\varphi'$ is the map uniquely determined by $\varphi^{-1}(\cOone^d)=\cOone^d\times_{\varphi} \PP^n$ being a fiber product and the maps $p\colon \Sch_S \to \PP^n$ are dashed since the diagram is only commutative without them. Hence, for each $\varphi \in \PGL_d(S)$ we get an automorphism
\[
p^{-1}(\cOone^d) \xrightarrow{p^{-1}(\varphi')} (\varphi \circ p)^{-1}(\cOone^d) \xrightarrow{\can} p^{-1}(\cOone^d)
\]
which defines an induced $\PGL_d$ action on $p^{-1}(\cOone^d)$ compatible with the action on $\PP^n$.
\begin{lem}
Using the identification $p^{-1}(\cOone^d) \cong \Mat_d(\cO)$, the induced action of $\PGL_d$ on $p^{-1}(\cOone^d)$ defined above agrees with the usual $\PGL_d$--action on $\Mat_d(\cO)$ by $\cO$--algebra isomorphisms.
\end{lem}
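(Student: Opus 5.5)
The plan is to compute the composite $\can\circ p^{-1}(\varphi')$ on a section of $p^{-1}(\cOone^d)(U)$ directly in matrix terms, using the identifications \eqref{eq_pushforward_isomorphisms}, and to check that the outcome is $T\mapsto ATA^{-1}$. This is exactly how $\varphi$ acts on $\Mat_d(\cO)$ via its associated isomorphism $A\colon \cO^d\to\cI^d$. Both the induced action and the usual action are compatible with restriction along $U'\to U$. Moreover, $\PGL_d$ is locally generated by sections whose associated $\cI$ is trivial. Together these mean it would even suffice to treat $\varphi$ coming from $A\in\GL_d(\cO(S))$. I will nevertheless keep $\cI$ general, since the bookkeeping is the same.

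\textbf{Step 1: the first map.} Take an object of $p^{-1}(\cOone^d)(U)$, i.e.\ a $d$--tuple $v\in \cO_{\PP_U^n}(1)^d(\PP_U^n)$. Write each coordinate $v_k=\sum_j T_{kj}x_j$; the rows $(T_{kj})_j$ assemble into the matrix $T\in\Mat_d(\cO(U))$. By \eqref{eq_action_on_Oone^d}, $\varphi$ sends $v$ to $(A\otimes1)(v)$, an object over $\varphi(p(U))=(\varphi\circ p)(U)$, and $p^{-1}(\varphi')$ records precisely this object. Here $A\otimes 1$ acts on the column index $k$, mixing the $d$ coordinates. The resulting coordinates are $\sum_k A_{ik}v_k=\sum_{k,j}A_{ik}T_{kj}x_j$.

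\textbf{Step 2: rewriting in the new generators.} Under the identification $(\varphi\circ p)^{-1}(\cOone)\cong\cI^d$ of the preceding lemma, sections of $\cI|_U\otimes\cO_{\PP_U^n}(1)$ must be rewritten in the $\cI\cdot x_i$ decomposition. The result is therefore the matrix $AT\in\Mat_d(\cI)$, with rows indexed by the $d$--tuple and columns by the $x_j$. I would pin down this convention carefully, because it controls whether the answer comes out as $AT$ or as $TA^t$.

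\textbf{Step 3: applying the canonical map.} Since $\can$ is right multiplication by $A^{-1}$, the composite sends $T\mapsto ATA^{-1}$, which is $\varphi(T)$. I would then confirm that the composite respects morphisms, so that it really is an action, using the functoriality of $\can$ as a natural isomorphism. This is formal.

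\textbf{Main obstacle.} The hard part is the index bookkeeping in Step 2. The action of $A\otimes1$ on $\cO_{\PP^n}(1)^d$ happens in the ``column'' slot, while the restriction along $\varphi|_U$ substitutes $x_i\mapsto (Ax)_i$ in the ``row'' slot. I must check that the first produces left multiplication by $A$ and the second, through $\can$ as established in the preceding lemma, produces right multiplication by $A^{-1}$, rather than, say, transposes. To settle this I would first test the split case $\cI=\cO$ with $A$ elementary, and only then write the general verification.
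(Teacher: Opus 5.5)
Your proposal is correct and follows the paper's proof. The map $p^{-1}(\varphi')$ acts on the column of coefficient rows as $T\mapsto AT\in\Mat_d(\cI)$, and $\can$ is right multiplication by $A^{-1}$ (as established just before the lemma), so the composite is $T\mapsto ATA^{-1}=\varphi(T)$. Your transpose worry also resolves directly: substituting $x_i\mapsto (Ax)_i=\sum_j A_{ij}x_j$ into $\sum_i c_i x_i$ gives the coefficient row $cA$, and the reduction to trivial $\cI$ is not needed.
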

\begin{proof}
In the definition of the $\PGL_d$ action on $\cOone^d$ given in \eqref{eq_action_on_Oone^d}, $\varphi$ acts on the chosen section $v\in (\cI|_{\PP_U^n}\otimes_{\cO|_{\PP_U^n}}\cO|_{\PP_U^n}(1))^d(U)$ by $A\otimes 1$. If we consider $v$ as a column vector, then $A\otimes 1$ may be written as a $d\times d$ matrix with entries in $(\cI|_{\PP_S^n}\otimes_{\cO|_{\PP_S^n}}\cO|_{\PP_S^n}(1))(\PP_S^n)=\cO(S)$ and it acts on $v$ simply by left multiplication. Then, when we apply the isomorphisms of \eqref{eq_pushforward_isomorphisms} the map $p^{-1}(\varphi')$ acts on the column of rows as left multiplication by $A$, i.e. it acts simply as
\begin{align*}
\Mat_d(\cO) &\to \Mat_d(\cI) \\
T &\mapsto AT.
\end{align*}
As discussed above, the canonical isomorphism $(\varphi \circ p)^{-1}(\cOone^d) \to p^{-1}(\cOone^d)$ appears as right multiplication by $A^{-1}$ and so together the induced action of $\varphi$ on $\Mat_d(\cO)$ is given by
\begin{align*}
\varphi \colon \Mat_d(\cO) &\iso \Mat_d(\cO) \\
T &\mapsto ATA^{-1}.
\end{align*}
This is of course equal to the usual action of $\varphi$ on $\Mat_d(\cO)$, so we are done.
\end{proof}

This discussion culminates in the promised punchline.
\begin{prop}\label{new_Quillen}
Let $f\colon P \to S$ be a Severi-Brauer scheme of constant relative dimension $n$. Let $\cF(P)$ be the unique vector bundle on $P$ fitting into the Euler exact sequence. Then, the pushforward $f_*(F(P))$ has a natural structure as an Azumaya $\cO$--algebra. Furthermore, for an Azumaya $\cO$--algebra $\cA$ of constant degree $d=n+1$,
\[
f_*\big(\cF(\SB(\cA))\big) \cong \cA.
\] 
\end{prop}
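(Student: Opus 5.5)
The proof will be a descent argument built from the last lemma. On the split model, the identification \eqref{eq_pushforward_isomorphisms} gives $p^{-1}(\cOone^d)\cong \Mat_d(\cO)$, and we give $p^{-1}(\cOone^d)$ the algebra structure transported from $\Mat_d(\cO)$. That lemma shows the induced $\PGL_d$--action on $p^{-1}(\cOone^d)$ is the usual conjugation action, which is by $\cO$--algebra automorphisms. So this algebra structure is $\PGL_d$--invariant, and it descends along any $\PGL_d$--torsor.

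Let $P$ be a Severi-Brauer scheme with torsor $\cK$. Three steps follow.

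\begin{enumerate}
\item By \Cref{simul_twisting}, twisting the pair $(\PP^n,\cOone^d)$ by $\cK$ gives $(\Sch_P,\cF(P))$.
\item Pushforward along the structure morphism commutes with this twisting. Concretely, $f_*(\cF(P))$ is the twist $\cK\wedge^{\PGL_d} p^{-1}(\cOone^d)$, taken for the induced action $\can\circ p^{-1}(\varphi')$.
\item Since the multiplication on $p^{-1}(\cOone^d)\cong\Mat_d(\cO)$ is equivariant, it twists to an $\cO$--bilinear multiplication on $f_*(\cF(P))$. Locally this algebra is $\Mat_d(\cO)$, so it is an Azumaya $\cO$--algebra of degree $d$. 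In fact it is $\cK\wedge^{\PGL_d}\Mat_d(\cO)$, the Azumaya algebra attached to $\cK$.
\end{enumerate}

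Naturality of the structure amounts to independence of choices. The isomorphism $\can$ is canonical, and the equivariance is the content of the previous lemma. A different trivialization changes the local identifications only by elements of $\PGL_d$, which act by algebra automorphisms.

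For the second claim, take $P=\SB(\cA)$. Its $\PGL_d$--torsor $\cK$ is the same torsor as that of $\cA$, namely $\cK=\cIsom(\Mat_d(\cO),\cA)$. This holds because the correspondence $\cA\mapsto\SB(\cA)$ in \eqref{eq_usual_Quillen} is compatible with twisting: $\SB(\Mat_d(\cO))\cong\PP^n_S$ is $\PGL_d$--equivariant, with $\PGL_d$ acting on right ideals by conjugation and on $\PP^n$ as described above. Therefore
\[
f_*(\cF(\SB(\cA)))\cong \cK\wedge^{\PGL_d}\Mat_d(\cO)\cong\cA.
\]

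I expect step 2 to be the main obstacle. To justify it, compare sections over a cover $U$ trivializing $\cK$, and check that the gluing data of $f_*(\cF(P))$ are exactly the maps $\can\circ p^{-1}(\varphi')$ rather than $p^{-1}(\varphi')$ alone. I would check this with the stack description, where restriction along $\varphi|_U$ is built into $\can$. A secondary point is that the equivariant isomorphism $\SB(\Mat_d(\cO))\cong\PP^n_S$ matches the conventions used for the action on $\PP^n$, so that the same torsor appears on both sides rather than its opposite. If it were the opposite, I would recheck the row/column conventions in \eqref{eq_pushforward_isomorphisms}.
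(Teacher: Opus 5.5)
Your proposal is correct and follows essentially the same route as the paper: give the split pushforward the algebra structure of $\Mat_d(\cO)$, observe that the induced $\PGL_d$--action is conjugation, twist via \Cref{simul_twisting} while commuting the pushforward with twisting, and then use that $\SB(\cA)$ and $\cA$ share the same $\PGL_d$--torsor. You single out two points as needing care: that the gluing data are $\can\circ p^{-1}(\varphi')$, and that no opposite torsor appears. The paper asserts both without further verification, so carrying out those checks would only make the argument more complete.
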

\begin{proof}
As detailed above, in the split case we have an $\cO$--module isomorphism $f_*(\cF(\PP_S^n))\cong \Mat_d(\cO)$. We use this isomorphism to equip $f_*(\cF(\PP_S^n))$ with the structure of an Azumaya $\cO$--algebra. 

Now, let $\cK$ be the $\PGL_d$--torsor corresponding to $P \to S$. Using \Cref{simul_twisting}, we may simultaneously twist $\PP_S^n$ and $\cF(\PP_S^n)$ by $\cK$ to obtain $P$ and $\cF(P)$. Because the induced $\PGL_d$--action on $f_*(\cF(\PP_S^n))$ is compatible with the action on $\PP_S^n$, we obtain that
\[
f_*(\cF(P)) = f_*\big(\cK\wedge^{\PGL_d}\cF(\PP_S^n)\big) = \cK\wedge^{\PGL_d} f_*(\cF(\PP_S^n)).
\]
The action of $\PGL_d$ on $f_*(\cF(\PP_S^n))$ is by Azumaya $\cO$--algebra automorphisms, so $f_*(\cF(P))$ naturally inherits the structure of an Azumaya $\cO$--algebra.

In the case $P=\SB(\cA)$, then the torsor $\cK$ is also the torsor corresponding to the Azumaya $\cO$--algebra $\cA$. Hence,
\[
f_*(\cF(P)) = \cK\wedge^{\PGL_d} f_*(\cF(\PP_S^n)) = \cK\wedge^{\PGL_d} \Mat_d(\cO) \cong \cA.
\]
This finishes the proof.
\end{proof}

This provides a variation on Quillen's construction, since instead of the usual categorical equivalence of \eqref{eq_usual_Quillen}, we may also use the equivalence
\begin{align*}
\mathfrak{Azu}_d \leftrightarrow &\mathfrak{SB}_n \\
\cA \mapsto &\SB(\cA) \\
f_*\big(\cF(P)\big) \reflectbox{$\mapsto$} &P.
\end{align*}
with the algebra structure on $f_*\big(\cF(P)\big)$ given as in \Cref{new_Quillen}.

\subsection{Outer Type Severi-Brauer Schemes}\label{outer_SB_schemes}
Unsurprisingly, the relationship between inner type Severi-Brauer schemes and our upcoming definition of outer type Severi-Brauer schemes closely resembles the relationship between Azumaya algebras and Azumaya algebras with an involution of the second kind.

Recall that an Azumaya algebra with involution of the second kind is the data of $(f\colon L \to S, \cB, \tau)$ where $L\to S$ is a degree $2$ \'etale cover with a canonical order two $S$--automorphism $i\colon L \to L$, $\cB$ is an Azumaya algebra $\cO|_L$--algebra, and $\tau \colon f_*(\cB) \to f_*(\cB)$ is an $i^*$--semilinear involution of the $f_*(\cO|_L)=\cL$ algebra $f_*(\cB)$. The data of such a semilinear involution is equivalent to an $\cL$--linear anti-isomorphism $f_*(\cB) \to f_*(\cB)\otimes_{i^*}\cL$, which in turn is equivalent to an $\cL$--linear isomorphism
\[
\tilde{\tau}\colon f_*(\cB) \iso f_*(\cB)^\op \otimes_{i^*}\cL.
\]
We consider the category of Azumaya $\cO$--algebras of degree $d$ as a subcategory of the category of Azumaya algebras of degree $d$ with involution of the second kind by sending an Azumaya $\cO$--algebra $\cA$ to
\[
(S\sqcup S \to S, (\cA,\cA^\op),\tau)
\]
where the canonical automorphism of $S\sqcup S$ is the switch and $\tau \colon \cA\times\cA^\op \to \cA\times\cA^\op$ is the switch-opposite, i.e., $\tau(a,b^\op) = (b,a^\op)$. Here, for the equivalent $\cO\times\cO$--linear map we simply have that $\tilde{\tau}=\Id$. Thus, mirroring this setup for the Severi-Brauer scheme $\SB(\cA)$, the associated outer-type Severi-Brauer scheme associated to $\cA$ should be the data of
\[
\SB(\cA)\sqcup \SB(\cA^\op) \to S \sqcup S
\]
along with an $S\sqcup S$--scheme isomorphism, in this case the identity,
\[
\SB(\cA)\sqcup \SB(\cA^\op) \iso \big(\SB(\cA)^\op \sqcup \SB(\cA^\op)^\op) \times_{\sw} (S\sqcup S).
\]
Equivalently, since we are now working with schemes we can write this as the data of an $S$--scheme isomorphism $\tau$ such that the diagram
\[
\begin{tikzcd}
\SB(\cA)\sqcup \SB(\cA^\op) \ar[r,"\tau"] \ar[d] & \big(\SB(\cA)\sqcup \SB(\cA^\op)\big)^\op \ar[d] \\
S\sqcup S \ar[r,"\sw"] & S \sqcup S
\end{tikzcd}
\]
is a fiber product diagram. In the case when $\cA=\Mat_d(\cO)$, we take the split object to be
\[
\begin{tikzcd}
\PP_S^n \sqcup \PP_S^n \ar[r,"\tau_0"] \ar[d] & \PP_S^n \sqcup \PP_S^n \ar[d] \\
S\sqcup S \ar[r,"\sw"] & S \sqcup S
\end{tikzcd}
\]
where $\tau_0$ is simply the switch since we may take $(\PP_S^n)^\op = \PP_S^n$ due to the isomorphism $\Mat_d(\cO)\iso \Mat_d(\cO)^\op$ given by the transpose.

Thus, generalizing this setup to match the situation with $(L\to S,\cB,\tau)$ above, we arrive the following natural definition.
\begin{defn}\label{defn_outer_SB_scheme}
An \emph{outer type Severi-Brauer scheme} of constant relative dimension $n$ is the data of $(f\colon L \to S,P \to L,\tau)$ where
\begin{enumerate}
\item $L\to S$ is a degree $2$ \'etale cover with canonical automorphism $i\colon L \to L$,
\item $P \to L$ is a Severi-Brauer scheme over $L$ of constant relative dimension $n$,
\item $\tau \colon P \iso P^\op$ is an $S$--scheme isomorphism making the diagram
\[
\begin{tikzcd}
P \ar[r,"\tau"] \ar[d] & P^\op \ar[d] \\
L \ar[r,"i"] & L
\end{tikzcd}
\]
commute,
\end{enumerate}
and all such that \'etale locally over $S$ this data is isomorphic to $(S\sqcup S \to S, \PP_S^n\sqcup \PP_S^n, \tau_0)$.
\end{defn}

From an Azumaya algebra with involution of the second kind $(f\colon L \to S, \cB, \tau)$, one may consider $\cB$ simply as an Azumaya $\cO|_L$--algebra and obtain the Severi-Brauer scheme $\SB(\cB) \to L$. However, the involution $\tau \colon f_*(\cB) \to f_*(\cB)$ is only defined on the pushforward and does not immediately lead to a suitable map $\SB(\cB) \iso \SB(\cB^\op)$ needed to define an outer-type Severi-Brauer scheme. Instead, we construct the associated outer type Severi-Brauer scheme as in the inner case, from fibers of the type morphism.

\begin{lem}\label{op_iso_for_outer_SB}
Let $(f\colon L \to S,\cB,\tau)$ be an Azumaya algebra with involution of the second type of constant degree $d$ and set $\bG = \bU_{(\cB,\tau)}$. There is an isomorphism of sheaves $\LowGSB_{(\cB,\tau)} \iso \LowGSB_{(\cB^\op,\tau)}$ which fits into the commutative diagram
\[
\begin{tikzcd}
\LowGSB_{(\cB,\tau)} \ar[r,"\sim"] \ar[d] & \LowGSB_{(\cB^\op,\tau)} \ar[d] \\
\cOD{\bG} \ar[r,"\und^\vee"] & \cOD{\bG}
\end{tikzcd}
\]
where the downward maps are the respective type morphisms as given in \Cref{defn_outer_GSB_type}.
\end{lem}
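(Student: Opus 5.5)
The plan is to build the isomorphism on the constant-rank presheaves and then pass to sheafifications. First, $(f\colon L\to S,\cB^\op,\tau)$ is again an Azumaya algebra with involution of the second kind. The same $\cO$--linear map $\tau$, viewed on $f_*(\cB)^\op = f_*(\cB^\op)$, is $i^*$--semilinear and anti-multiplicative there. It also has the same unitary group, since $\tau(b)=b^{-1}$ is insensitive to reversing multiplication, so $\bG$ and $\cOD{\bG}$ are unchanged. Right ideals of $f_*(\cB^\op)$ are exactly left ideals of $f_*(\cB)$, and $\tau$ exchanges left and right $\cL$--ideals. This suggests defining
\[
\ConIdeal_{(\cB,\tau)} \to \ConIdeal_{(\cB^\op,\tau)},\qquad (0\subseteq \cI_1\subseteq\ldots\subseteq\cI_\ell\subseteq f_*(\cB)) \mapsto (0\subseteq \tau(\cI_\ell)\subseteq\ldots\subseteq\tau(\cI_1)\subseteq f_*(\cB)),
\]
which reverses order so that the result is again increasing. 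A natural alternative is $\cI_j\mapsto \lann\cI_{j}$ taken as right ideals of $\cB^\op$, listed in increasing order. Since the flag is symmetric, $\tau(\cI_{\ell+1-j}) = \lann\cI_j$ up to reindexing, so the two descriptions agree.

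I would check well-definedness locally in the split model of \Cref{example_split_outer_ConIdeal}, with $\cB^\op$ identified with $(\Mat_d(\cO),\Mat_d(\cO))$ via the transpose. There, $\tau(\cI_j)$ has $\cO$--rank $\rank(\cI_j)$ and $\tau$ is an anti-automorphism. Hence constancy of rank, being a local direct summand, and strict positivity of $\cL$--gaps all carry over; for the gaps one also uses that $\tau$ swaps the two factors, which is harmless because $\gap_\cL$ is a minimum over the factors. Symmetry, meaning being fixed by \eqref{ConIdeal_LB_involution} for $\cB^\op$, should follow from \Cref{left_ann_facts}\ref{left_ann_facts_v} together with the identity relating the annihilator in $\cB^\op$ to the annihilator in $\cB$. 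Applying the same construction from $\cB^\op$ back to $\cB$ gives an inverse, so the presheaf map is an isomorphism. Sheafifying with the analogue of \Cref{sheafification_of_hermitian_ConFlag} then yields $\LowGSB_{(\cB,\tau)}\iso\LowGSB_{(\cB^\op,\tau)}$.

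For commutativity with the type morphisms, it is enough to work on constant-rank sections in the split case, because $\widetilde{t_{\cGSB}}$ is the twist of \eqref{eq_split_outer_SB_type}. A split section
\[
0\subseteq \cI_1\times(\lann\cI_\ell)^t\subseteq\ldots
\]
has type $(\rank\cI_j/d)_j$. Its image, read in the first factor after transposing, has components of rank $d^2-\rank\cI_{\ell+1-j}$. Its type is therefore $(d-r_\ell,\ldots,d-r_1)=\vr^\vee$, exactly as in the $\ZZ/2\ZZ$--equivariance computation preceding \Cref{defn_outer_GSB_type}. Equality on a cover then gives commutativity of the diagram globally.

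The main obstacle is making the identifications canonical. Over the split cover, the chosen isomorphism $\cB^\op\cong$ split model and the $\PGL_d\rtimes\ZZ/2\ZZ$--torsor for $(\cB^\op,\tau)$ must be compatible with those for $(\cB,\tau)$. Otherwise the local type computation does not descend to the twisted type morphism into $\cOD{\bG}$. I would handle this by showing that the local map is $\PGL_d\rtimes\ZZ/2\ZZ$--equivariant, where $\PGL_d$ acts on the target through $\theta'$, and by taking the twisting torsors for $\cB$ and $\cB^\op$ to be the same torsor. The twisted map then automatically intertwines $\widetilde{t_{\cGSB}}$ with $\und^\vee$.
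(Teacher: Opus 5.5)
Your proposal is correct and follows essentially the paper's route. The paper defines the map on $\ConIdeal_{(\cB,\tau)}$ by sending a symmetric flag to $0\subseteq\lann\cI_\ell\subseteq\cdots\subseteq\lann\cI_1$, viewed as right ideals of $f_*(\cB^\op)$, then checks symmetry and gaps, sheafifies, and compares types in the split case using $\rank_\cO(\lann\cI)=d^2-\rank_\cO(\cI)$; your equivariance argument spells out the twisting step that the paper only asserts.

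One slip to fix: $\tau$ preserves inclusions (it is $\lann$ that reverses them), so your $\tau$-description must read $\tau(\cI_1)\subseteq\cdots\subseteq\tau(\cI_\ell)$ with the indices not reversed. Its $j$-th term is then $\tau(\cI_j)=\lann\cI_{\ell+1-j}$, which agrees termwise with the annihilator description.
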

\begin{proof}
We show that we have an isomorphism fitting into such a diagram for the underlying presheaves of constant rank flags. Set $\cL=f_*(\cO|_L)$. Recall from \Cref{defn_ConIdeal_Bt} that $\LowGSB_{(\cB,\tau)} = (\ConIdeal_{(\cB,\tau)})^\sharp$ where $\ConIdeal_{(\cB,\tau)}$ consists of flags of right $\cL$--ideals of $f_*(\cB)$ with strictly positive $\cL$--gap between subsequent components of the flag and such that
\[
(0\subseteq \cI_1 \subseteq \ldots \subseteq \cI_\ell \subseteq f_*(\cB)) = (0\subseteq \tau(\lann\cI_\ell) \subseteq \ldots \subseteq \tau(\lann\cI_1) \subseteq f_*(\cB)).
\]
For an $\cL$--right ideal $\cI\subseteq f_*(\cB)$, the left annihilator $\lann\cI$ is an $\cL$--left ideal which may therefore be viewed as an $\cL$--right ideal in $f_*(\cB^\op)$. Furthermore, left annihilator of $\lann\cI$ in $f_*(\cB^\op)$ is the same as the right annihilator of $\lann\cI$ in $f_*(\cB)$, which is just $\cI$ viewed as an $\cL$--left ideal in $f_*(\cB^\op)$. Therefore, if we consider
\[
0\subseteq \lann\cI_\ell \subseteq \ldots \subseteq \lann\cI_1 \subseteq f_*(\cB^\op)
\]
as a flag of right ideals, then $\tau\big(\lann(\lann\cI_j)\big) = \tau(\cI_j) = \lann\cI_{\ell+1-j}$ due to the symmetric property of the original flag. The required property on $\cL$--gaps will also be preserved, hence
\[
(0\subseteq \lann\cI_\ell \subseteq \ldots \subseteq \lann\cI_1 \subseteq f_*(\cB^\op)) \in \ConIdeal_{(\cB^\op,\tau)}
\]
and this clearly defines an isomorphism of presheaves $\ConIdeal_{(\cB,\tau)} \iso \ConIdeal_{(\cB^\op,\tau)}$.

Sufficiently locally, when $\ConIdeal_{(\cB,\tau)}$ is isomorphic to $\ConIdeal_{((\Mat_d(\cO),\Mat_d(\cO)),\tau_d)}$ this map appears as
\begin{align*}
&(0 \subseteq \cI_1\times (\lann\cI_\ell)^t \subseteq \ldots \subseteq \cI_\ell\times(\lann\cI_1)^t \subseteq \Mat_d(\cO)\times\Mat_d(\cO)) \\
\mapsto&(0 \subseteq (\lann\cI_\ell)\times \cI_1^t \subseteq \ldots \subseteq (\lann\cI_1)\times \cI_\ell^t \subseteq \Mat_d(\cO)^\op\times\Mat_d(\cO))^\op).
\end{align*}
Since for a right ideal $\cI\subseteq \Mat_d(\cO)$ of constant rank, $\rank_\cO(\lann\cI) = d^2-\rank(\cI)$, we see that we have a commutative diagram
\[
\begin{tikzcd}
\ConIdeal_{((\Mat_d(\cO),\Mat_d(\cO)),\tau_d)} \ar[r,"\sim"] \ar[d] & \ConIdeal_{((\Mat_d(\cO),\Mat_d(\cO))^\op,\tau_d)} \ar[d] \\
\cP_n \ar[r,"\und^\vee"] & \cP_n
\end{tikzcd}
\] 
where the downward maps are those defining the type morphism. Hence, after sheafifying and twisting we obtain an isomorphism $\LowGSB_{(\cB,\tau)} \iso \LowGSB_{(\cB^\op,\tau)}$ which fits into the commutative diagram
\[
\begin{tikzcd}
\LowGSB_{(\cB,\tau)} \ar[r,"\sim"] \ar[d] & \LowGSB_{(\cB^\op,\tau)} \ar[d] \\
\cOD{\bG} \ar[r,"\und^\vee"] & \cOD{\bG}
\end{tikzcd}
\]
as claimed, finishing the proof.
\end{proof}

Because the sheaf $\LowGSB_{(\cB,\tau)}$ is isomorphic to the sheaf of parabolic subgroups $\cPar_{\bG}$ for $\bG = \bU_{(\cB,\tau)}$, we know it is representable by some scheme $X\in \Sch_S$. Likewise, $\LowGSB_{(\cB^\op,\tau)}$ is represented by some $X'$. Thus, translating the diagram of \Cref{op_iso_for_outer_SB}, we have a diagram
\begin{equation}\label{eqn_scheme_diagram_before_fiber}
\begin{tikzcd}
X \ar[r,"\sim"] \ar[d] &[3ex] X' \ar[d] \\
\Of(\Dyn(\bG)) \ar[r,"\Of(\Dyn(i))"] & \Of(\Dyn(\bG))
\end{tikzcd}
\end{equation}
where $\Of(\Dyn(i))$ is as in \eqref{eq_OfDyn_i}. In particular, it acts as the order two isomorphism $i\colon L \to L$ on the $L_{((1),(n))}$ component of $\Of(\Dyn(\bG))$. Note that this is the component which is locally isomorphic to $S_{(1)} \sqcup S_{(n)}$. It is not the $S_{(1,n)}$ component which is isomorphic to $S$ since $(1,n)^\vee = (1,n)$.

\begin{lem}\label{fiber_is_SB(B)}
The fiber over the component $L_{((1),(n))}$ of the map $X \to \Of(\Dyn(\bG))$ is isomorphic to $\SB(\cB) \to L$.
\end{lem}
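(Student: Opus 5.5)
The plan is to describe the fiber as a sheaf on $\Sch_L$ and compare it with the functor represented by $\SB(\cB)$. A morphism $T\to L_{((1),(n))}$ over $S$ is the same as an $S$--scheme $T$ with a morphism $T\to L$. The fiber of $X\to\Of(\Dyn(\bG))$ over this component therefore represents the subsheaf of $\LowGSB_{(\cB,\tau)}$, viewed over $\Sch_L$, of flags whose type $\widetilde{t_{\cGSB}}$ lands in that component. So it suffices to build a natural isomorphism, functorial in $T\in\Sch_L$, between
\begin{itemize}
\item flags of type in $L_{((1),(n))}$, and
\item right ideals of $\cB|_T$ of constant rank $d$ that are locally direct summands.
\end{itemize}
Once this is done, representability of $\SB(\cB)$ identifies the two schemes over $L$.

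\textbf{Step 1: the split case.} Work first with $(S\sqcup S\to S,(\Mat_d(\cO),\Mat_d(\cO)),\tau_d)$. Through \eqref{eq_ConIdeal_iso}, the type in $\cP_n$ is computed from the first factor, and the component $L_{((1),(n))}$ splits as $S_{(1)}\sqcup S_{(n)}$.
\begin{itemize}
\item Over $S_{(1)}$, a constant-rank section of $\ConIdeal_{((\Mat_d,\Mat_d),\tau_d)}$ of type $(1)$ is $0\subseteq \cI\times(\lann\cI)^t\subseteq \Mat_d\times\Mat_d$ with $\rank\cI=d$.
\item Over $S_{(n)}$, a section of type $(n)$ is $0\subseteq \cJ\times(\lann\cJ)^t\subseteq\Mat_d\times\Mat_d$ with $\rank\cJ=dn$, so $(\lann\cJ)^t$ has rank $d$.
\end{itemize}
A $T$--point of $L=S\sqcup S$ lying over the first copy of $S$ should send the flag to its first-factor ideal $\cI$. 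A $T$--point over the second copy should send it to its second-factor ideal $(\lann\cJ)^t$, which is a rank $d$ right ideal of the second copy of $\Mat_d(\cO)$. Intrinsically, the rule is this: a $T$--point of $L$ singles out one factor of $f_*(\cB)|_T=\cB|_{T_1}\times\cB|_{T_2}$, and we take the component of $\cI_1$ in that factor. In both cases this gives a bijection onto rank $d$ locally direct summand right ideals of $\cB$ over $T$. The symmetry condition determines the other factor, and the $\cL$--gap condition is automatic since the ranks are $d$ and $d^2-d$.

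\textbf{Step 2: descent to the general case.} I would define the map globally by the intrinsic rule above, $\overline{\cI}\mapsto$ (component of $\cI_1$ in the factor of $f_*(\cB)|_T$ corresponding to $T\to L$). Applying it to the constant-rank presheaf $\ConIdeal_{(\cB,\tau)}$ and then sheafifying gives a morphism of sheaves over $\Sch_L$. This map commutes with the $\PGL_d\rtimes\ZZ/2\ZZ$--action:
\begin{itemize}
\item the $\ZZ/2\ZZ$ part swaps the factors and also acts by $i$ on $L$;
\item the $\PGL_d$ part acts on ideals by algebra automorphisms.
\end{itemize}
Hence it is the twist of the split isomorphism by $\cK'$, using \Cref{severi_brauer_twist}. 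Since it is locally an isomorphism, it is an isomorphism.

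\textbf{Main obstacle.} The hard part is showing that the identification over the $S_{(n)}$ half really produces $\SB$ of the correct algebra in the correct factor, rather than $\SB(\cB^\op)$, and that it is compatible with the fact that $L_{((1),(n))}\to S$ is a twist through $i$. I would handle this by writing the map intrinsically as "take the $\cB|_T$--component of $\cI_1$", so that no choice of split factor enters. I would then check equivariance against the switch-transpose $\tau_d$ explicitly, using the computation $\tau_d(\lann(\cI\times\cJ))=(\lann\cJ)^t\times(\lann\cI)^t$ from \Cref{left_ann_facts}.
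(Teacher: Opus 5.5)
Your argument is correct, but it takes a somewhat different route from the paper. The paper never writes down a global comparison map. It argues as follows:
\begin{enumerate}
\item The split fiber over $S_{(1)}\sqcup S_{(n)}$ is $\PP^n_S\sqcup\PP^n_S\to S\sqcup S$. Here $\PGL_d$ acts by $\psi\sqcup\theta'(\psi)$, because the second copy is really $\SB(\Mat_d(\cO)^\op)$, and $\ZZ/2\ZZ$ acts by the switch.
\item By the Azumaya/Severi--Brauer equivalence, twisting this by the cocycle of $(L\to S,\cB,\tau)$ yields $\SB(\cB)\to L$.
\item By \Cref{severi_brauer_twist}, the fiber is the same twist.
\end{enumerate}

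You instead construct a canonical morphism and check it after splitting. For $s\colon T\to L$ you split $f_*(\cB)|_T\cong s^*\cB\times(i\circ s)^*\cB$. You then send the (necessarily length-one, self-dual) flag to the $s^*\cB$-component of $\cI_1$.

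This has two advantages. First, it turns the $\SB(\cB)$ versus $\SB(\cB^\op)$ subtlety, which the paper handles through the $\theta'$-action on the second $\PP^n_S$, into a concrete check. That check is that the identification $L_{((1),(n))}\cong L$, with $S_{(1)}$ the first copy, matches the split type being read off from the first factor. Second, it exposes how little is genuinely local. Since $\tau$ carries the $s^*\cB$-factor onto the other factor, the condition $\cI_1=\tau({\lann}\cI_1)$ forces $\cI_1=\cI'\times\tau({\lann}\cI')$. So only the rank/type matching needs to be verified in the split model.

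Once you have the intrinsic map, you do not actually need \Cref{severi_brauer_twist} or the equivariance check. A morphism of sheaves that is locally an isomorphism is an isomorphism. The paper's route is shorter, but it leaves the matching of the two $\PGL_d\rtimes\ZZ/2\ZZ$-actions (on the fiber and on $\PP^n_S\sqcup\PP^n_S$) largely implicit.
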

\begin{proof}
We argue via twisting. The Azumaya algebra with involution of the second kind $(f\colon L \to S,\cB,\tau)$ corresponds to a $\PGL_d\rtimes \ZZ/2\ZZ$ cocycle over some cover of $S$, say over $\{T_i \to S\}_{i\in I}$ and the cocycle $(\varphi_{ij})_{i,j\in I}$. Let $(\sigma_{ij})_{i,j\in I}$ be the image of this cocycle in $\ZZ/2\ZZ$, which is a cocycle corresponding to the \'etale cover $L\to S$.

The fact that $(\varphi_{ij})_{i,j\in I}$ is a cocycle for $(f\colon L \to S,\cB,\tau)$  means that the local $\cO|_{T_{ij}}$--algebra isomorphisms
\[
\begin{tikzcd}
\Mat_d(\cO|_{T_{ij}})\times\Mat_d(\cO|_{T_{ij}}) \ar[r,"\varphi_{ij}"] & \Mat_d(\cO|_{T_{ij}})\times\Mat_d(\cO|_{T_{ij}}) \\
\cO|_{T_{ij}}\times \cO|_{T_{ij}} \ar[r,"\sigma_{ij}"] \ar[u] & \cO|_{T_{ij}}\times \cO|_{T_{ij}} \ar[u]
\end{tikzcd}
\]
glue $\cO\times\cO \to \Mat_d(\cO)\times\Mat_d(\cO)$ into $\cL \to f_*(\cB)$. We now convert this diagram to a diagram of schemes in two simultaneous steps. By \cite[Tag 01SA]{Stacks} we may consider the $\sigma_{ij}$ as isomorphisms of the scheme $T_{ij}\sqcup T_{ij} \to T_{ij}$. Then, under \cite[Tag 01SB]{Stacks}, the algebra $\Mat_d(\cO|_{T_{ij}})\times\Mat_d(\cO|_{T_{ij}})$ corresponds to the matrix algebra $\Mat_d(\cO|_{T_{ij}\sqcup T_{ij}})$ on the disjoint union. Then, since $\cL \to f_*(\cB)$ over $S$ corresponds via \cite[Tag 01SB]{Stacks} to the $\cO|_L$--algebra $\cB$ over $L$, we see that simultaneously twisting $S\sqcup S$ and $\Mat_d(\cO|_{S\sqcup S})$ by the cocycle produces $L$ and $\cB$.

In turn, we replace $\Mat_d(\cO|_{T_{ij}\sqcup T_{ij}})$ by its Sever-Brauer scheme, which is $\PP_{T_{ij}\sqcup T_{ij}}^n \cong \PP_{T_{ij}}^n \sqcup \PP_{T_{ij}}^n$ and obtain a diagram
\[
\begin{tikzcd}
\PP_{T_{ij}}^n \sqcup \PP_{T_{ij}}^n \ar[r,"\varphi_{ij}"] \ar[d] & \PP_{T_{ij}}^n \sqcup \PP_{T_{ij}}^n \ar[d] \\
T_{ij}\sqcup T_{ij} \ar[r,"\sigma_{ij}"] & T_{ij}\sqcup T_{ij}
\end{tikzcd}
\]
where $\PGL_d\rtimes\ZZ/2\ZZ$ acts on $\PP_S^n \sqcup \PP_S^n$ by $\psi\sqcup \theta'(\psi)$ for $\psi\in \PGL_d$ and by the switch as usual for sections of $\ZZ/2\ZZ$. Recall that $\theta'$ is our chosen outer automorphism of $\PGL_d$ corresponding to the inverse transpose on $\GL_d$. Due to the categorical equivalence between Azumaya algebra and Severi-Brauer schemes, our cocycle must twist $\PP_S^n \sqcup \PP_S^n \to S\sqcup S$ into $\SB(\cB)\to L$.

Now, we consider the split case. By \eqref{eq_ConIdeal_iso} we have the isomorphism
\[
\LowGSB_{\Mat_d(\cO)} \iso \LowGSB_{((\Mat_d(\cO),\Mat_d(\cO)),\tau_d)}
\]
which is used to define the type morphism in general. Since $\bU_{((\Mat_d(\cO),\Mat_d(\cO)),\tau_d)} \cong \GL_d$, we know that $L=S\sqcup S$ and
\[
L_{((1),(n))} = S_{(1)} \sqcup S_{(n)}
\]
in $\Of(\Dyn(\GL_d))$. We know for an Azumaya $\cO$--algebra $\cA$ that both the fibers over $S_{(1)}$ and $S_{(n)}$ of $\LowGSB_{\cA}$ are represented by $\SB(\cA)$ and $\SB(\cA^\op)$ respectively. In particular, in the split case the fiber over $S_{(1)} \sqcup S_{(n)}$ is
\[
\PP_S^n \sqcup \PP_S^n \to S \sqcup S.
\]
Here, the induced $\PGL_d$--action on $\PP_S^n \sqcup \PP_S^n$ is by $\psi \sqcup \theta'(\psi)$ since the second $\PP_S^n \cong \SB(\Mat_d(\cO)^\op)$.

Finally, \Cref{severi_brauer_twist} tells us that $\LowGSB_{(\cB,\tau)}$ is the twist of $\LowGSB_{((\Mat_d(\cO),\Mat_d(\cO)),\tau_d)}$ by $(\varphi_{ij})_{i,j\in I}$, and therefore its fiber over $L_{((1),(n))}$ will be the twist of the fiber over $S_{(1)} \sqcup S_{(n)}$ in the split case. By the preceding discussion, this twist is $\SB(\cB)$, so we are done.
\end{proof}

Now when we consider the fiber over $L_{((1),(n)}$ in \eqref{eqn_scheme_diagram_before_fiber}, applying \Cref{fiber_is_SB(B)} twice tells us that we obtain a diagram
\[
\begin{tikzcd}
\SB(\cB) \ar[r,"\tau"] \ar[d] & \SB(\cB^\op) \ar[d] \\
L \ar[r,"i"] & L.
\end{tikzcd}
\]
It is clear from construction that this is locally isomorphic to the split diagram as required in \Cref{defn_outer_SB_scheme}.
\begin{defn}\label{defn_outer_SB}
Let $(f\colon L \to S,\cB,\tau)$ be an Azumaya algebra with involution of the second type of constant degree $d$. We define the data of the diagram appearing above to be the \emph{outer Severi-Brauer scheme} associated to $(f\colon L \to S,\cB,\tau)$, denoted $\SB(\cB,\tau)$.
\end{defn}

There is also an outer version of Quillen's construction which reconstructs an Azumaya algebra with involution of the second type from an outer Severi-Brauer scheme.
\begin{thm}
Let $f\colon L \to T$ be a degree $2$ \'etale cover, $g\colon P\to L$ a Severi-Brauer scheme and $\tau \colon P \iso P^\op$ an isomorphism defining an outer type Severi-Brauer scheme. Then, there is a canonical isomorphism $\cF(P)\iso \tau^*(\cF(P^\op))$ arising from $\tau$ such that the pushfoward
\[
f_*\big(g_*(\cF(P))\big) \iso f_*\big(g_*(\tau^*(\cF(P^\op)))\big) \cong f_*\big(g_*(\cF(P^\op))\big)\otimes_{i^*} \cL
\]
is equivalent to the information of an $i^*\colon \cL \to \cL$ semi-linear involution
\[
\tau' \colon f_*\big(g_*(\cF(P))\big) \to f_*\big(g_*(\cF(P))\big)
\]
making $(L\to S,g_*(\cF(P)),\tau')$ an Azumaya algebra with involution of the second kind.

In the case that $P=\SB(\cB,\tau)$ is the outer type Severi-Brauer scheme as defined in \Cref{defn_outer_SB}, then $g_*(\cF(P)) = \cB$ and $\tau'=\tau$.
\end{thm}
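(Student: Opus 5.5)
The plan is to build everything by descent from the split model $(S\sqcup S\to S,\PP_S^n\sqcup\PP_S^n,\tau_0)$, exactly as in \Cref{new_Quillen}, but with the larger group $\PGL_d\rtimes\ZZ/2\ZZ$ in place of $\PGL_d$.

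\textbf{Split case.} For the split outer Severi-Brauer scheme, $g_*(\cF(\PP^n_{S\sqcup S}))$ is $(\Mat_d(\cO),\Mat_d(\cO))$ by \Cref{new_Quillen}. The isomorphism $\tau_0$ is the switch, where the second copy is identified with $(\PP_S^n)^\op=\SB(\Mat_d(\cO)^\op)$ via the transpose. I will compute the induced isomorphism $\cF(P)\iso\tau_0^*(\cF(P^\op))$ directly on the Euler sequences. The key input is uniqueness: by \cite[Theorem 81]{Kollar} the non-split extension of $\cT$ by $\cO$ is unique, and $\tau_0$ identifies $\cT_{P/L}$ with $\tau_0^*\cT_{P^\op/L}$ compatibly over $i$. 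So $\tau_0^*$ of the Euler sequence of $P^\op$ is again the Euler sequence of $P$, which gives the isomorphism. It is canonical once I normalize it to be the identity on the $\cO|_P$ subobject, since an endomorphism of the extension fixing both ends is the identity. Tracking the row/column identifications \eqref{eq_pushforward_isomorphisms}, I expect pushforward to $S$ to send $(B_1,B_2)\mapsto(B_2^t,B_1^t)$, that is, to recover $\tau_d$ of \eqref{split_second_involution}. This also shows $\tau'$ is an $i^*$-semilinear involution in this case.

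\textbf{Equivariance.} Next I check that this construction is equivariant for $\PGL_d\rtimes\ZZ/2\ZZ$, acting by $\psi\sqcup\theta'(\psi)$ and the switch, which is the action used in \Cref{fiber_is_SB(B)}. For the $\PGL_d$ part I reuse the stack-level action \eqref{eq_action_on_Oone^d} on each copy. The content is that $\theta'$ on the opposite copy corresponds to $A\mapsto (A^{-1})^t$ on $\cOone^d$. Under the transpose identification this intertwines $T\mapsto ATA^{-1}$ with the switch-transpose. The $\ZZ/2\ZZ$ part is just the symmetry of the switch.

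\textbf{Twisting.} The outer Severi-Brauer scheme is by definition a twist of the split one by a $\PGL_d\rtimes\ZZ/2\ZZ$-torsor $\cK$. Using descent for stacks as in \Cref{simul_twisting}, I simultaneously twist $L$, $P$, $P^\op$, $\tau$, the Euler sequences and the canonical isomorphism. Twisting the split isomorphism then gives the global canonical isomorphism $\cF(P)\iso\tau^*(\cF(P^\op))$. Its pushforward is the twist of the split $\tau_d$, hence an $i^*$-semilinear involution $\tau'$ on $f_*g_*\cF(P)=\cK\wedge(\Mat_d(\cO)\times\Mat_d(\cO))$. Being an involution of the second kind is local, so it is inherited from the split case.

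\textbf{Recovering $(\cB,\tau)$.} For $P=\SB(\cB,\tau)$, the proof of \Cref{fiber_is_SB(B)} shows the relevant torsor is the same cocycle $(\varphi_{ij})$ that defines $(L\to S,\cB,\tau)$. So the twisted algebra with involution is $\cK\wedge(\Mat_d(\cO)^{2},\tau_d)=(\cB,\tau)$.

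\textbf{Main obstacle.} The hard step is the split computation: verifying that the canonical Euler-sequence isomorphism over the transpose identification $(\PP^n)^\op=\PP^n$ really pushes forward to the switch-transpose, and is $\theta'$-equivariant rather than off by an inverse or a transpose. I would first do it on points $(U,\cL,(s_i))$, using the description $\cL^d/\langle s\rangle$ of the tangent bundle.
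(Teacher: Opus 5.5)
Your proposal is correct and follows essentially the paper's proof. Both equip the split model $(S\sqcup S\to S,\PP_S^n\sqcup\PP_S^n,\tau_0)$ with the $\PGL_d\rtimes\ZZ/2\ZZ$--action ($\psi\sqcup\theta'(\psi)$ and the switch), check that the switch on $\cO(1)^d$ covering $\tau_0$ is equivariant and pushes forward to the switch--transpose $\tau_d$, then twist by the torsor and use the same cocycle to recover $(\cB,\tau)$; your added characterization of the isomorphism as the unique one that is the identity on $\cO|_P$ and $d\tau$ on tangent bundles (using $\Hom(\cT_{P/L},\cO|_P)=0$) makes ``canonical'' precise where the paper leaves it implicit, and it even makes the equivariance check automatic.
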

\begin{proof}
The action of $\PGL_d$ defined on the pair $(\PP_S^n,\cOSone)$ used in \Cref{new_Quillen} naturally extends to a $\PGL_d\rtimes \ZZ/2\ZZ$--action on the pair $(\PP_S^n\sqcup \PP_S^n, \cO_{\PP_S^n\sqcup \PP_S^n}(1))$. By acting on the second component by $\theta'(\varphi)$ for $\varphi \in \PGL_d$. Then, if we consider the diagram
\[
\begin{tikzcd}
\cO_{\PP_S^n\sqcup \PP_S^n}(1) \ar[r,"\sw"] \ar[d] & \cO_{\PP_S^n\sqcup \PP_S^n}(1) \ar[d] \\
\PP_S^n\sqcup \PP_S^n \ar[r,"\sw"]& \PP_S^n\sqcup \PP_S^n
\end{tikzcd}
\]
it will be equivariant with repsect to the $\PGL_d\rtimes\ZZ/2\ZZ$-action provided we act by the opposite cocycle on the right. Hence, if we twist by a cocycle for $P\to L$, we obtain
\[
\begin{tikzcd}
\cF(P) \ar[r,"\tau'"] \ar[d] & \cF(P^\op) \ar[d] \\
P \ar[r,"\tau"] & P^\op
\end{tikzcd}
\]
where $\tau'$ is equivalent to information of an isomorphism $\cF(P)\iso \tau^*(\cF(P^\op))$. Pushing this forward along $g$ produces an $\cO|_L$--algebra isomorphism
\[
g_*(\cF(P)) \iso g_*(\tau^*(\cF(P^\op))) \cong i^*(g_*(\cF(P^\op))).
\]
using the Azumaya algebra structure on these pushforwards from \Cref{new_Quillen}. Pushing forward further produces an $\cL$--linear isomorphism
\[
f_*\big(g_*(\cF(P))\big) \iso f_*\big(g_*(\tau^*(\cF(P^\op)))\big) \cong f_*\big(g_*(\cF(P^\op))\big)\otimes_{i^*} \cL
\]
as claimed, which is equivalent to an $\cL$--semi-linear involution $\tau' \colon f_*\big(g_*(\cF(P))\big) \to f_*\big(g_*(\cF(P))\big)$. Looking locally in the split case, since we applied both the switch and acted on the right by the opposite cocycle, the resulting map
\[
\tau' = f_*\big(g_*(\sw)\big) \colon \Mat_d(\cO)\times \Mat_d(\cO) \iso \Mat_d(\cO)\times \Mat_d(\cO)
\]
will be the switch transpose $\tau_d$ as in \eqref{split_second_involution}. Thus, $(L\to S, g_*(\cF(P)),\tau')$ is an Azumaya algebra with involution of the second kind.

Since this process recovers $(S\sqcup S\to S,(\Mat_d(\cO),\Mat_d(\cO),\tau_d)$ in the split case and the induced $\PGL_d\rtimes \ZZ/2\ZZ$--action will be the usual one, it is clear that when we twist by a cocycle corresponding to $\SB(\cB,\tau)$ we recover $(L\to S,\cB,\tau)$ as claimed.
\end{proof}

\end{document}